\numberwithin{equation}{section}
\renewcommand{\email}[2][]{%
  \ifx\emails\@empty\relax\else{\g@addto@macro\emails{,\space}}\fi%
  \@ifnotempty{#1}{\g@addto@macro\emails{\textrm{(#1)}\space}}%
  \g@addto@macro\emails{#2}%
}
\newtheorem{theorem}{Theorem}[section]
\newtheorem{lemma}[theorem]{Lemma}
\newtheorem{proposition}[theorem]{Proposition}
\newtheorem{conjecture}[theorem]{Conjecture}
\newtheorem{corollary}[theorem]{Corollary}
{ \theoremstyle{definition}
\newtheorem{definition}[theorem]{Definition}}
{ \theoremstyle{remark}
\newtheorem{remark}[theorem]{Remark}}
\def\Re{ \mathsf{Re}}
\def\Im{ \mathsf{Im}}
\def\fB{ \mathsf{B}}
\def\LB{ M}
\newcommand{\R}{\mathbb{R}}
\renewcommand{\P}{\mathbb{P}}
\newcommand{\topc}{K}
\newcommand{\g}{G}
\newcommand{\tsigma}{d}
\title{Spatial tightness at the edge of Gibbsian line ensembles}
\author[G. Barraquand]{Guillaume Barraquand}
\address{G. Barraquand,
Laboratoire de physique de l'{\'e}cole normale sup\'erieure, ENS, Universit{\'e} PSL, CNRS, Sorbonne Universit{\'e}, Universit{\'e} de Paris, Paris, France}
\email{guillaume.barraquand@ens.fr}
\author[I. Corwin]{Ivan Corwin}
\address{I. Corwin, Department of Mathematics, Columbia University, New York, NY 10027, USA} \email{ivan.corwin@gmail.com}
\author[E. Dimitrov]{Evgeni Dimitrov}
\address{E. Dimitrov, Department of Mathematics, Columbia University, New York, NY 10027, USA} \email{esd2138@columbia.edu}
\begin{document}

\maketitle

\begin{abstract}
Consider a sequence of Gibbsian line ensembles, whose lowest labeled curves (i.e., the edge) have tight one-point marginals. Then, given certain technical assumptions on the nature of the Gibbs property and underlying random walk measure, we prove that the entire spatial process of the edge is tight. We then apply this black-box theory to the log-gamma polymer Gibbsian line ensemble, which we construct. The edge of this line ensemble is the transversal free energy process for the polymer, and our theorem implies tightness with the ubiquitous KPZ class $2/3$ exponent, as well as Brownian absolute continuity of all the subsequential limits.

A key technical innovation which fuels our general result is the construction of a continuous grand monotone coupling of Gibbsian line ensembles with respect to their boundary data (entrance and exit values, and bounding curves). {\em Continuous} means that the Gibbs measure varies continuously with respect to varying the boundary data, {\em grand} means that all uncountably many boundary data measures are coupled to the same probability space, and {\em monotone} means that raising the values of the boundary data likewise raises the associated measure. This result applies to a general class of Gibbsian line ensembles where the underlying random walk measure is discrete time, continuous valued and log-convex, and the interaction Hamiltonian is nearest neighbor and convex.
\end{abstract}

\tableofcontents

%
\section{Introduction and main results}

%

Gibbs measures are ubiquitous in statistical mechanics and probability theory. Subject to given boundary conditions, they are measures, which are proportional to the exponential of a sum of local energy contributions. Gibbsian line ensembles are a special class of Gibbs measures, which have received considerable attention in the past two decades owing, in part, to their occurrence in integrable probability.

A Gibbsian line ensemble can be thought of as a collection of labeled random walks, whose joint law is reweighed by a Radon-Nikodym derivative, proportional to the exponential of the sum of local interaction energies between consecutively labeled curves. {\em Local} means that the energies only depend on the values of nearby curves both in terms of the time and label. A simple example of a Gibbsian line ensemble is a collection of random walks conditioned not to touch or cross each other (e.g. level lines of random rhombus or domino tilings). In this case the local energy is infinity or zero depending on whether the touching or crossing occurs or does not. Dyson Brownian motion  with $\beta=2$ is a continuous space and time limit of such ensembles \cite{EK08}. 

Besides providing a compact way to describe a large class of measures, the structure of a Gibbsian line ensemble can be utilized to great benefit when studying their asymptotic scaling limits. Starting with \cite{CorHamA}, there has been a fruitful development of techniques, which leverage the Gibbs property of Gibbsian line ensembles to prove their tightness under various scalings, given only one-point tightness information about their top curve -- see for instance \cite{CorHamK, CD, CGH19, HamA, HamB, HamC, HamD, CHH19, DV18, DNV19, CIW, caputo2019, Wu19, DREU}. In \cite{CorHamA}, this program was initiated through the study of $N$ one-dimensional Brownian bridges, conditioned to start at time $-N$ and end at time $N$ at the origin and not intersect in the time interval $(-N,N)$. These measures are called {\em Brownian watermelons} and they are closely related to Dyson Brownian motion with $\beta=2$. The limiting line ensemble, which arises in that case, is the {\em Airy line ensemble}. Among its many distinctions, this line ensemble forms the foundation of the entire Kardar-Parisi-Zhang (KPZ) fixed point through its role in the construction of the {\em Airy sheet} in \cite{DOV18}.

Since \cite{CorHamA}, a number of other important examples of Gibbsian line ensembles have arisen. One natural context is in describing the level-lines of two-dimensional interfaces conditioned to stay positive \cite{CIW, caputo2019}. Another is in models arising in integrable probability where the Gibbs property is born in the branching structure of the symmetric polynomials, from which integrable models are defined -- see for example \cite{CD} in the case of Hall-Littlewood processes.

Our present study is prompted by our interest in the {\em log-gamma polymer} \cite{Sep12}, which, through a connection to Whittaker processes \cite{COSZ}, can be related to the lowest labeled curve of a Gibbsian line ensemble (see Section \ref{sec:introlog}). The structure of the local energy in this model is considerably more complicated than that of non-touching or crossing random walks.  
\vspace{2mm}

The primary aim of our work is to develop a black-box theory (Theorem \ref{informalintrotightness}), which proves tightness and Brownian absolute continuity of the lowest labeled curve of a Gibbsian line ensemble given tightness of its one-point marginal distribution. We develop this theory for a general class of line ensembles, in which the underlying random walk measure has continuous jumps and scales diffusively to Brownian motion, and in which the interaction energy is such that a key stochastic monotonicity property holds (Lemma \ref{MonCoup}). The first subsection of this introduction, Section \ref{sec:introtight}, contains a statement of our black-box theory (various definitions and terminology are introduced in more detail in the main text). 

The secondary aim of our work is to apply our black-box theory to the log-gamma polymer line ensemble, that we construct in Section \ref{sec:introlog}, and conclude that the polymer free energy has transversal fluctuation exponent $2/3$, as expected by KPZ universality. In Section \ref{sec:introlog} we recall the definition of the log-gamma polymer as well as describe the nature of the Gibbsian line ensemble, into which it embeds. Combining this with the one-point tightness, proved recently in \cite{BCDA}, we apply our black-box theory and arrive at the advertised transversal fluctuation behavior, see Theorem \ref{ThmTight}.

%
\subsection{Tightness and Brownian absolute continuity for general Gibbsian line ensembles}\label{sec:introtight}

Gibbsian line ensembles have been studied mostly in the context of interacting Brownian paths \cite{CorHamK, CGH19, HamA, HamB, HamC, HamD, CHH19, DV18, CIW, caputo2019} or in the fully discrete setting where the path's time and value are indexed by the integers \cite{CD, DNV19, DREU}. In this paper, we will be dealing with discrete time, continuous valued Gibbsian line ensembles -- see Figure \ref{LineEnsembleFig} for an illustration and Section \ref{Section4.1} for a precise definition. Informally, these are measures on collections of curves $\mathfrak{L}= \big(L_i\big)_{i\in \llbracket 1,\topc\rrbracket}$ so that each $L_i$ is the linear interpolation of a function from $\llbracket T_0, T_1 \rrbracket\to \mathbb{R}$ for some $\topc\geq 2$ and some integer interval $\llbracket T_0, T_1 \rrbracket\subset \mathbb{Z}$. Here and throughout the paper, we write $\llbracket a,b \rrbracket = \{a, a+1, \dots, b\}$ for two integers $b \geq a$. The key property, which these measures enjoy, is a resampling invariance, which we refer to as the {\em (partial) $(H,H^{RW})$-Gibbs property}. The function $H^{RW}$ is the {\em random walk Hamiltonian}, and the function $H$ is the {\em interaction Hamiltonian}. We describe this Gibbs property informally here. For any
$k_1\leq k_2$ with $k_1,k_2 \in \llbracket 1,\topc-1 \rrbracket$ and any $a<b$ with $a, b\in\llbracket T_0+1, T_1-1 \rrbracket$, the law of the curves $L_{k_1},\ldots, L_{k_2}$ on the interval $\llbracket a, b \rrbracket$ is a reweighting of random walk bridges according to a specific Radon-Nikodym derivative. The random walk bridges have starting and ending values to match the values of $L_{k_1},\ldots, L_{k_2}$ at $a$ and $b$, respectively, and have jump increments with density proportional to $G(x)=e^{-H^{RW}(x)}$. The Radon-Nikodym derivative, which reweighs this measure is proportional to
\begin{equation}
\prod_{i=k_1-1}^{k_2}\prod_{m=a}^{b-1} e^{-H\left(L_{i+1}(m+1)-L_i(m)\right)}.
\label{RadonNikodymIntroeq}
\end{equation}
The fact that we assume that $k_2\leq \topc-1$ means that we are fixing the curve indexed by $\topc$ and not resampling it. This is why we use the term {\em partial} in describing this Gibbs property. In this text we will primarily be concerned with the behavior of $L_1$, the lowest labeled curve. If we restrict our attention to just the few lowest labeled curves, i.e. $L_1$ through  $L_{\topc}$, of a Gibbsian line ensemble with more curves, the Gibbs property transfers to the restriction provided that we do not resample the $\topc$-th curve. We will generally drop the term {\em partial} and just refer to this as the Gibbs property.

The above definition implies that on a given domain the law of the line ensemble is determined only by the boundary values of the domain, and independent of what lies outside. In this sense, this is similar to a spatial version of the Markov property. Section \ref{Section4.1} contains a precise definition of the $(H,H^{RW})$-Gibbsian line ensembles, that we have described above.

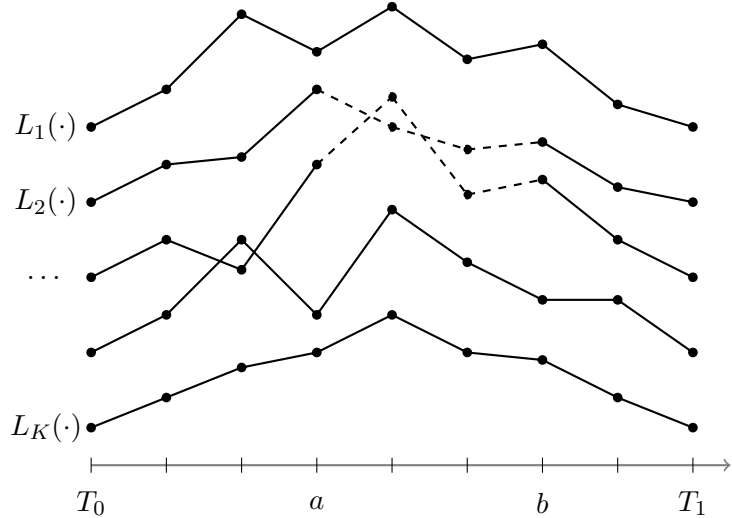
\begin{figure}
\begin{tikzpicture}[scale=1]
\filldraw[thick] (-1,4.5) circle(0.05) -- (0,5) circle(0.05) -- (1,6) circle(0.05)  -- (2,5.5) circle(0.05) -- (3,6.1) circle(0.05) -- (4,5.4) circle(0.05) --  (5,5.6) circle(0.05) -- (6,4.8) circle(0.05) -- (7,4.5) circle(0.05);
\filldraw[thick] (-1,3.5) circle(0.05) -- (0,4) circle(0.05) -- (1,4.1) circle(0.05)  -- (2,5) circle(0.05);
\filldraw[thick, dashed] (2,5) -- (3,4.5) circle(0.05) -- (4,4.2) circle(0.05) -- (5,4.3);
\filldraw[thick]  (5,4.3) circle(0.05) -- (6,3.7) circle(0.05)-- (7,3.5) circle(0.05);
\filldraw[thick]  (-1,2.5) circle(0.05) -- (0,3) circle(0.05) -- (1,2.6) circle(0.05)  -- (2,4) circle(0.05);
\filldraw[thick, dashed] (2,4)  -- (3,4.9) circle(0.05) -- (4,3.6) circle(0.05) -- (5,3.8) ;
\filldraw[thick] (5,3.8) circle(0.05)-- (6,3) circle(0.05)-- (7,2.5) circle(0.05);
\filldraw[thick] (-1,1.5) circle(0.05) -- (0,2) circle(0.05) -- (1,3) circle(0.05)  -- (2,2) circle(0.05) -- (3,3.4) circle(0.05) -- (4,2.7) circle(0.05) -- (5,2.2) circle(0.05) -- (6,2.2) circle(0.05)-- (7,1.5) circle(0.05);
\filldraw[thick] (-1,0.5) circle(0.05) -- (0,0.9) circle(0.05) -- (1,1.3) circle(0.05)  -- (2,1.5) circle(0.05) -- (3,2) circle(0.05) -- (4,1.5) circle(0.05) -- (5,1.4) circle(0.05) -- (6,0.9) circle(0.05)-- (7,0.5) circle(0.05);
\draw[gray, thick, -> ] (-1,0)  -- (7.5,0);
\foreach \x in {-1,...,7}
\draw (\x, 0.1) -- (\x,-0.1);
\draw (-1,-0.5) node{$T_0$};
\draw (7,-0.5) node{$T_1$};
\draw (2,-0.5) node{$a$};
\draw (5,-0.5) node{$b$};
\draw (-1.6,4.5) node{$ L_1(\cdot)$};
\draw (-1.6,3.5) node{$ L_2(\cdot)$};
\draw (-1.6,2.5) node{$\dots$};
\draw (-1.6,0.5) node{$ L_\topc(\cdot)$};
\end{tikzpicture}
\caption{A discrete line ensemble $\mathfrak{L}$. We illustrate the Gibbs property. The distribution of lines $L_i$ indexed by $i\in \llbracket 2,3 \rrbracket$ on the interval $\llbracket a,b\rrbracket$ (corresponding to the portion of lines that are dashed in the picture) is absolutely continuous with respect to the law of random walk bridges (with law determined by $H^{RW}$) joining $L_i(a)$ to $L_i(b)$, with Radon-Nikodym derivative proportional to \eqref{RadonNikodymIntroeq}.}
\label{LineEnsembleFig}
\end{figure}

\medskip
We can now state our main result on general Gibbsian line ensembles. In words, our theorem says that the Gibbs property propagates one-point tightness to spatial tightness of the lowest labeled curve in a sequence of general Gibbsian line ensembles and that all subsequential limits are absolutely continuous with respect to a suitably scaled Brownian bridge measure. The diffusive scaling in defining $f_N(s)$ in \eqref{eq:fnsintro} is present (and expected), because the underlying random walk measure dictated by $H^{RW}$ is assumed to converge to Brownian motion under diffusive scaling.

Since our theorem pertains to the lowest labeled curve of a Gibbsian line ensemble, it is phrased in terms of line ensembles with two curves $L_1$ and $L_2$. This could arise as the marginal of an ensemble on more curves. Since we are working with the {\em (partial)} Gibbs property, taking this marginal preserves our ability to apply the Gibbs resampling to the $L_1$ curve.

In the statement of our theorem, we make certain assumptions on $H$ and $H^{RW}$, that are given in Definitions \ref{AssH} and  \ref{AssHR}, respectively. Most of these are mild growth bounds that hold in typical examples. The fundamental assumption that we make on both $H$ and $H^{RW}$ is convexity. This is key because it implies that our $(H,H^{RW})$-Gibbsian line ensembles enjoy a monotone coupling, whereby shifting the boundary data for the line ensemble up results in the measure shifting up. This result is shown as Lemma \ref{MonCoup}. 

The following definition is useful in stating our main theorem.

\begin{definition}\label{def:intro}
 Assume that $H^{RW}$ satisfies the conditions of Definition \ref{AssHR} and $H$ those of Definition \ref{AssH}. Fix $\alpha > 0$, $p \in \mathbb{R}$ and $T > 0$. Suppose we are given a sequence $\{T_N\}_{N = 1}^\infty$ with $T_N \in \mathbb{N}$ and that $\{\mathfrak{L}^N\}_{N = 1}^\infty$ is a sequence of $\llbracket 1, 2 \rrbracket \times \llbracket -T_N, T_N \rrbracket$-indexed line ensembles $\mathfrak{L}^N = (L^N_1, L^N_2)$. We say that the sequence $\big\{\mathfrak{L}^N\big\}_{N=1}^{\infty}$ is $(\alpha,p,T)$--{\em good}, if there exists $N_0=N_0(\alpha,p,T)>0$, such that for $N \geq N_0$
\begin{itemize}
\item $T_N > T N^{\alpha} + 1$ and $\mathfrak{L}^N$ satisfies the $(H, H^{RW})$-Gibbs property;
\item for each $s \in [-T, T]$ the sequence $ N^{-\alpha/2}\big(L_1^N(\lfloor sN^{\alpha} \rfloor ) - p s N^{\alpha}\big) $ is tight.\\
(In other words, we have one-point tightness of the top curve under scaling of space by $N^{\alpha}$ and fluctuations by $N^{\alpha/2}$.)
\end{itemize}
\end{definition}
In words, the above definition states that $L^N_1$ is a sequence of random curves, which globally have a slope $p \in \mathbb{R}$; moreover, when the line of slope $p$ is subtracted from $L^N_1$ the resulting sequence of random curves scaled horizontally by $N^{-\alpha}$ and vertically by $N^{-\alpha/2}$ has tight one-point marginals over a fixed interval $[-T,T]$. The assumption $T_N > T N^{\alpha} + 1$ is merely there to ensure that the rescaled lines (and consequently their marginals) are well-defined on $[-T,T]$. 

It is possible to formulate Definition \ref{def:intro} for line ensembles containing more than two curves; however, if we have such a line ensemble, the nature of the Gibbs property allows us to restrict it to the top two curves, and then this restricted line ensemble of two curves will satisfy the same assumptions as above. Since all of our results describe the behavior of $L_1^N$, there is no loss in generality in assuming that our line ensemble has exactly (rather than at least) two curves.

We may now state our main black-box theorem, whose proof is a combination of Theorems \ref{PropTightGood} and \ref{ACBB} in the main text.
\begin{theorem}\label{informalintrotightness}
Fix $\alpha, T > 0$ and $p \in \mathbb{R}$ and let $\big\{\mathfrak{L}^N = (L^N_1, L^N_2)\big\}_{N=1}^{\infty}$ be an $(\alpha, p, T+3)$--good sequence of line ensembles. For $N \geq N_0(\alpha, p, T + 3)$ (where $N_0(\alpha, p, T + 3)$  is as in Definition \ref{def:intro} and exists by our assumption of being $(\alpha, p, T+3)$--good) let $f_N(x)$ be given by
\begin{equation}\label{eq:fnsintro}
f_N(x) := N^{-\alpha/2}\big(L_1^N( xN^{\alpha}  ) - p x N^{\alpha}\big)
\end{equation}
Let $\mathbb{P}_N$ denote the law of $f_N$ as a random variable in $(C[-T,T], \mathcal{C})$, where $\mathcal{C}$ is the Borel $\sigma$-algebra coming from the topology of uniform convergence in $C[-T,T]$. Then, the sequence of distributions $\mathbb{P}_N$ is tight in $N$. 
Furthermore, all subsequential limits of $\mathbb{P}_N$ are absolutely continuous with respect to the Brownian bridge with variance $2T\sigma_p^2$ ( the absolute continuity statement is explained in Definition \ref{DACB} and $\sigma_p$ is defined in terms of $H^{RW}$ in Definition \ref{AssHR} and represents the diffusion coefficient of the Brownian bridge whose domain is $[-T,T]$, hence the factor of $2T$).
\end{theorem}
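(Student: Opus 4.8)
The plan is to obtain tightness in $(C[-T,T],\mathcal{C})$ from its two classical ingredients -- one-point tightness, which is built into the definition of an $(\alpha,p,T+3)$--good sequence, and control of the modulus of continuity of $f_N$ -- and then to upgrade every subsequential limit to Brownian absolute continuity in the sense of Definition \ref{DACB}. The structural input used throughout is the $(H,H^{RW})$-Gibbs property together with the monotone coupling of Lemma \ref{MonCoup}: with these one repeatedly resamples $L_1^N$ on subintervals, sandwiches the resampled curve between explicit (reweighted) random walk bridges with increment density $G(x)=e^{-H^{RW}(x)}$ whose endpoint values are controlled, and then invokes the diffusive approximation of such bridges by Brownian bridges of variance $2T\sigma_p^2$ guaranteed by the hypotheses of Definition \ref{AssHR}.

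\emph{Step 1: an a priori band.} I would first show that for every $\e>0$ there is $M$ with $\limsup_N \P\big(\sup_{x\in[-(T+1),T+1]}|f_N(x)|>M\big)<\e$, i.e. that after diffusive scaling $L_1^N$ is trapped in a band over an interval slightly larger than $[-T,T]$. The lower bound is the easy half: choosing integers $a_N,b_N\approx\pm(T+2)N^\alpha$ and resampling $L_1^N$ on $\llbracket a_N,b_N\rrbracket$ makes it a reweighting of a $G$-bridge with endpoints $L_1^N(a_N),L_1^N(b_N)$; convexity of $H$ makes the pair interaction supermodular, so by Lemma \ref{MonCoup} the reweighted curve stochastically dominates the corresponding curve of the ensemble with $L_2^N$ pushed down, which is an explicit $G$-bridge with moderate endpoints (moderate by one-point tightness at $a_N,b_N$) and hence stays above a moderate level with high probability by a maximal estimate for $G$-bridges. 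The upper bound on $L_1^N$ is the main obstacle. Here monotonicity runs the wrong way -- the interaction with $L_2^N$ tends to pull $L_1^N$ \emph{up}, so there is no one-sided domination by a simple object -- and a wide, tall ``peak'' of $L_1^N$ is not excluded by the light increment tails of Definition \ref{AssHR} (which do exclude thin spikes). The route I would take is: resampling $L_1^N$ on $\llbracket a_N,b_N\rrbracket$, the conditional law of $L_1^N$ given $L_1^N(a_N),L_1^N(b_N),L_2^N|_{\llbracket a_N,b_N\rrbracket}$ equals the $G$-bridge law times $\prod_m e^{-H(L_2^N(m+1)-L_1^N(m))}$ divided by a normalizing constant $Z$; the product is bounded above by a constant depending only on $b_N-a_N$ (via the lower bound on $H$ in Definition \ref{AssH}), so $\P(\sup L_1^N\ge h\mid\cdots)$ is controlled once $Z$ is bounded below, and ``$Z$ small'' is itself a rare event that one handles using one-point tightness of $L_1^N$ at an interior point $c\in(a_N,b_N)$, since $\P(L_1^N(c)\ge h)=\E[\,(\cdots)/Z\,]$ is small. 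A pigeonhole over a handful of candidate positions for $a_N,b_N,c$ -- using one-point tightness at each, which is why one works over $[-(T+3),T+3]$ -- makes this rigorous.

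\emph{Step 2: modulus of continuity and tightness.} On the band event of Step 1, I would fix a fine mesh of points in $[-T,T]$ and resample $L_1^N$ on each mesh interval. The resampled curve is a $G$-bridge with now-bounded endpoints reweighted by \eqref{RadonNikodymIntroeq}; by the growth bounds on $H$ in Definition \ref{AssH} the reweighting factor is bounded away from $0$ and $\infty$ on the band event, so on each mesh interval $L_1^N$ is, up to a bounded Radon--Nikodym factor, a plain rescaled $G$-bridge. A strong (KMT-type) coupling of $G$-bridges to Brownian bridges of variance $2T\sigma_p^2$ transfers equicontinuity to $f_N$, giving $\lim_{\delta\downarrow0}\limsup_N\P\big(\sup_{|x-y|\le\delta}|f_N(x)-f_N(y)|>\e\big)=0$, and combined with one-point tightness this yields tightness of $\{\P_N\}$ by the standard criterion on $C[-T,T]$. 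This is Theorem \ref{PropTightGood}.

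\emph{Step 3: Brownian absolute continuity.} Let $\P_\infty$ be a subsequential limit and $f_\infty\sim\P_\infty$. For a subinterval $[a,b]\subset[-T,T]$ I would show the conditional law of $f_\infty|_{[a,b]}$ given $(f_\infty(a),f_\infty(b))$ is absolutely continuous with respect to the Brownian bridge of variance $2T\sigma_p^2$ tied to those endpoint values, which is exactly Definition \ref{DACB}. At the prelimit level $f_N|_{[a,b]}$ is a rescaled $G$-bridge reweighted by \eqref{RadonNikodymIntroeq}, whose Radon--Nikodym derivative against the rescaled plain $G$-bridge lies in a fixed compact subset of $(0,\infty)$ on the band event; passing to the limit along the subsequence, controlling these derivatives by the band estimates of Step 1, and using the convergence of the rescaled plain $G$-bridge to the Brownian bridge of variance $2T\sigma_p^2$, yields the claim (alternatively, sandwich $f_\infty$ between two such Brownian bridges via Lemma \ref{MonCoup}). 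This is Theorem \ref{ACBB}, and together with Steps 1--2 it proves the theorem; as noted, I expect essentially all of the genuine difficulty to sit in the upper bound of Step 1.
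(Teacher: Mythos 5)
Your high-level architecture (a priori band, then modulus of continuity via Gibbs resampling plus strong coupling, then Brownian absolute continuity) matches the paper's decomposition into Theorems \ref{PropTightGood} and \ref{ACBB}, and you correctly identify the upper bound in Step~1 as the genuine obstacle. However, the resolution you propose for that obstacle has a gap that I do not think can be patched along the lines you sketch.

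You propose to bound $\mathbb{P}(\sup L_1^N \ge h)$ by $\mathbb{P}_{G\text{-bridge}}(\sup \ge h)/Z$ and then argue that $\{Z \text{ small}\}$ is rare using the one-point tightness of $L_1^N$ at an interior point $c$. The difficulty is that the identity $\mathbb{P}(L_1^N(c)\ge h)=\mathbb{E}[\mathbb{E}_{G\text{-bridge}}[W_H\,{\bf 1}\{L_1(c)\ge h\}]/Z]$ is just the tower property and gives no control on $Z$: when $Z$ is small the numerator is also small (both vanish together as $L_2^N$ rises), so smallness of the left side is compatible with $Z$ being tiny on a non-negligible set. Any attempt to make $\mathbb{E}[1/Z\cdot{\bf 1}_{\cdots}]$ finite, or to run a size-biasing argument as in Lemma \ref{LemmaAP1}, requires an a priori \emph{upper} bound on $L_2^N$ in the $N^{\alpha/2}$-scale (this is hypothesis (1) of Lemma \ref{LemmaAP1}). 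But in the paper's dependency chain, that upper bound on $L_2^N$ (Lemma \ref{PropSup2}) is itself deduced from the supremum bound on $L_1^N$ (Lemma \ref{PropSup}) via Lemma \ref{NoExplodeS4}, and only then does the $Z$-lower bound (Proposition \ref{PropMain}) become available. Your proposal inverts this order and is therefore circular: the ingredient you reach for to close the upper bound on $L_1^N$ presupposes that upper bound.

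The paper closes this loop without any $Z$ estimate. In the proof of the upper half of Lemma \ref{PropSup} one decomposes over the last time $n$ at which $L_1^N(n)-pn$ exceeds the threshold, resamples $L_1^N$ on $\llbracket t_3^-,n\rrbracket$, and uses the monotone coupling of Lemma \ref{MonCoup} in the \emph{favorable} direction: the free $H^{RW}$-bridge is stochastically \emph{below} the resampled curve, and a free bridge running from a moderate left endpoint to a very high $L_1^N(n)$ is itself high at the intermediate time $t_1^-$ with probability at least $1/3$ (Lemma \ref{LemmaHalfS4}); hence $L_1^N(t_1^-)$ is high with probability at least $1/3$ on that event, contradicting one-point tightness at $t_1^-$. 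This is a genuinely different idea from the pigeonhole and $Z$-lower-bound route you propose, and it is the key step that breaks the circularity. Your Steps~2 and 3 are broadly consistent with the paper's Theorems \ref{PropTightGood} and \ref{ACBB} once Proposition \ref{PropMain} is in hand (modulo a minor point: the Radon--Nikodym factor $W_H/Z$ is bounded \emph{above} by $1/Z$ on the good event, which suffices, but it is not bounded below away from $0$ as you state), so essentially all of the missing content is precisely the first-passage and free-bridge domination argument for Lemma \ref{PropSup}.
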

\begin{remark} With a bit of work, the assumption that $\big\{\mathfrak{L}^N = (L^N_1, L^N_2)\big\}_{N=1}^{\infty}$ is $(\alpha, p, T+3)$-good can be replaced with being $(\alpha, p, T+\epsilon)$-good for some $\epsilon > 0$. In words, we can ensure the tightness and Brownian subsequential limits of the restrictions of our curves to $[-T,T]$ starting from a one-point marginal tightness on a slightly bigger interval $[-T -\epsilon, T+\epsilon]$. Our choice of $\epsilon = 3$ is purely cosmetic and made to simplify the notation and proofs in the main text.
\end{remark}

\subsubsection{Comparison to previous literature}

Since there is now a fairly large literature studying Gibbsian line ensembles and their tightness, we briefly describe how our work fits into and extends this literature. In particular, the two main innovations of this paper are that (1) we deal with a very general class of Gibbs properties and (2) we provide a completely new approach to proving the key stochastic monotonicity (see Lemma \ref{MonCoup}). Regarding the second point, our new approach allows us to construct a monotone coupling, which compared to previous results is completely explicit, holds in greater generality and enjoys remarkable topological properties -- we elaborate on these statements in this section as well as in the remarks that follow Lemma \ref{MonCoup}.

Previous work on Gibbsian line ensembles have mainly focused on systems where the underlying random walk is a Brownian motion
\cite{CorHamK, CGH19, HamA, HamB, HamC, HamD, CHH19, DV18, CIW, caputo2019}. Recently, there have been some studies of discrete underlying random walks, which have jumps that are Bernoulli or geometric \cite{CD,DNV19, DREU}. To move to general random walks, we utilize a recently developed bridge extension of the KMT strong coupling \cite{KMT1}, which was developed in \cite{DW19}. That work was, in fact, developed for application to this present paper and the related work of \cite{Wu19}.

 \cite[Chapter 3]{Wu19} works with a different but related form of our $(H,H^{RW})$-Gibbsian line ensemble, in which the interaction Hamiltonian $H$ depends on the scaling parameter $N$ (which also indexes a sequence of line ensembles as in Theorem \ref{informalintrotightness}) and converges in a suitable $N$-dependent scale to an exponential function. The main result of \cite[Chapter 3]{Wu19} is that if the lowest labeled curve is tight in the same $N$-dependent scale, in which $H$ becomes an exponential, then the entire line ensemble is tight in that scaling. Moreover, \cite[Chapter 3]{Wu19} shows that all subsequential limits enjoy the same exponential Brownian Gibbs property that was introduced in \cite{CorHamK} in the context of the KPZ line ensemble.

In contrast to the work of \cite{Wu19}, we deal with general interaction Hamiltonians $H$, which are not scaling with $N$. Though we presently only prove tightness of the lowest labeled curve, we expect the entire edge of the line ensemble is similarly tight and that all subsequential limits enjoy the non-intersecting Brownian Gibbs property introduced in \cite{CorHamA} in the context of the Airy line ensemble.

The other main innovation in our current work is a completely new approach to proving stochastic monotonicity (see Lemma \ref{MonCoup}). This property is key to the entire Gibbsian line ensemble machinery, since it enables us to reduce various interacting systems of random walks to estimates about single random walks or interacting random walks with simpler boundary conditions. Until now, the only approach that people have taken to proving stochastic monotonicity for Gibbsian line ensembles is through Markov chain Monte Carlo (MCMC) methods. Specifically, given the boundary data for a Gibbsian line ensemble, the measure on the curves can be sampled by running a MCMC until it reaches its stationary state, which is the desired measure. The key to proving the stochastic monotonicity is to show that for a pair of ordered boundary data the MCMC can be coupled to maintain ordering. In more detail, we start both chains off at their lowest possible configurations (which should be ordered due to the ordering of the boundary data) and run them until they reach stationarity. This provides a coupling of the two measures, which clearly satisfies the right ordering to imply stochastic monotonicity.

The MCMC approach to proving stochastic monotonicity was first implemented in \cite{CorHamA} in the context of non-intersecting Bernoulli random walks (see also \cite{DREU}) and then extended to non-intersecting Brownian bridges in  \cite{CorHamA} via a limit transition. The treatment in \cite{CorHamA} of this limit from discrete to continuous was terse and short on details, though this issue has been since remedied in \cite{DM20}.
In the case of $H$-Brownian Gibbs line ensembles (such as the KPZ line ensemble) \cite{CorHamK} implemented a similar scheme (also short on details), which relied on proving a stochastic monotonicity for Bernoulli random walks subject to a convex interaction Hamiltonian and then transferring that monotonicity to the Brownian setting through a diffusive scaling limit. 

In the context of the $(H,H^{RW})$-Gibbsian line ensembles, which we consider herein, \cite{Wu19} implemented the MCMC approach. As in the work of \cite{CorHamA,CorHamK}, \cite{Wu19} first worked with a discrete approximation (though no longer Bernoulli random walks since the aim was to access general continuous jump distributions). In the discrete case the MCMC approach provides the desired coupling for any two ordered boundary data, and in the proof of \cite[Lemma 3.1.11]{Wu19} it was shown that this monotonicity transfers to the continuous limit. In the present paper, we introduce a new method, which works directly with the distribution function of the line ensemble. This way there is no need to approximate or pass to limits.

Our construction, which is the content of Lemma \ref{MonCoup}, provides a continuous grand monotone coupling of Gibbsian line ensembles with respect to their boundary data (entrance and exit values, and bounding curves). {\em Continuous} means that the Gibbs measure varies continuously with respect to varying the boundary data, {\em grand} means that all uncountably many boundary data measures are coupled to the same probability space, and {\em monotone} means that raising the values of the boundary data likewise raises the associated measure. This result applies to a general class of Gibbsian line ensembles where the underlying random walk measure is discrete time, continuous valued and log-convex, and the interaction Hamiltonian is nearest neighbor and convex. 

One advantage of our continuous grand monotone coupling is that, unlike previous works that showed that two line ensembles with fixed boundary data (that are ordered) can be monotonically coupled, our result shows that {\em all} line ensembles with {\em all possible} boundary data can be simultaneously monotonically coupled. One might be able to improve the MCMC argument in the discrete setting to prove a similar statement; however, there is some delicacy in showing that the convergence of the discrete approximations to the limit happens simultaneously for all boundary data, since the latter form an uncountable set. 

An additional advantage of our continuous grand monotone coupling is that it is completely explicit, and the space, on which the line ensembles are coupled, is a standard Borel space (in fact it is nothing but the unit cube $(0,1)^n$ of appropriate dimension, with the Borel $\sigma$-algebra and Lebesgue measure). The explicit realization of the coupling probability space as a topological space allows one to probe the topological properties of the coupling. In this direction we establish, as part of Lemma 2.10, the continuity of the map that takes as input the boundary data and an elementary outcome in our probability space (i.e. a point in $(0,1)^n$) and gives as output a line ensemble evaluated at this outcome. While we do not use this statement in our proofs, we hope that such a statement can find future applications. The general point here is that our construction makes it possible to extract topological information regarding our line ensemble and its dependence on the boundary data, which was previously unattainable via the MCMC coupling techniques.

Our continuous grand monotone coupling result presently only holds for one curve, which suffices for the purposes of the present paper. It would be interesting to see an extension of our construction to arbitrary number of curves although presently it does not seem to be straightforward.
\vspace{-2mm}

\subsubsection{Natural extensions}
We close out our discussion on general Gibbsian line ensembles by identifying a few natural extensions to our results and methods that we believe merit further investigation.

This paper focuses on the lowest labeled curve of general $(H,H^{RW})$-Gibbsian line ensembles. This is because our application (see Section \ref{sec:introlog}) only requires such control. However, it would be natural to extend our tightness result to the entire edge of the line ensemble and, moreover, to show that all subsequential limits enjoy the non-intersecting Brownian Gibbs property. The reason for the non-intersecting Gibbs property is because the curves should separate in the $N^{\alpha}$-scale and by our assumption that $H(x)$ goes to infinity as $x$ does, we should see a limiting hard-wall potential emerge.  Lemma \ref{MonCoup}, our monotone coupling result, is restricted to only deal with the lowest labeled curve though we expect that a more general coupling for arbitrarily many curves should be provable via an extension of our new method.

Our $(H,H^{RW})$-Gibbsian line ensembles are not the most general for which one could hope to prove tightness results. For instance, the underlying random walk Hamiltonian $H^{RW}$ could be inhomogeneous, varying within the line ensemble. The Radon-Nikodym derivative in \eqref{RadonNikodymIntroeq} could also involve a more general type of local interaction than just between pairs $L_i(m)$ and $L_{i+1}(m+1)$. Alternatively, one could release the condition on convexity of either $H^{RW}$ or $H$. All of these variations  arise in some form when considering the Gibbsian line ensemble associated to the stochastic vertex models introduced in \cite{CP16}. We believe it is worth further study to determine how stochastic monotonicity is affected by these variations and to classify the general hypotheses, under which it holds. In the case of a non-convex interaction Hamiltonian, it is already known from \cite{CD} that the stochastic monotonicity can be lost. However, \cite{CD} showed that a weaker version of that property still holds and is sufficient to prove tightness of the lowest labeled curved of the Hall-Littlewood Gibbsian line ensemble considered therein.

\vspace{-2mm}

%
\subsection{Application to the log-gamma polymer}\label{sec:introlog}

The main motivation behind our black-box (Theorem \ref{informalintrotightness}) is its application to the log-gamma polymer \cite{Sep12}. We start this section by introducing the model. The connection between the log-gamma polymer and an $(H,H^{RW})$-Gibbsian line ensembles is recorded as Corollary \ref{cor:loggammalineensemble}. This is a corollary of Proposition \ref{PropLOG}, which is stated and proved in Section \ref{Section8}, and follows with some work from the results of \cite{COSZ}.  We then state a one-point tightness (actually limit theorem) result that is proved in our companion paper \cite{BCDA}. We close out this section by combining the line ensemble interpretation with the one-point tightness (using Theorem \ref{informalintrotightness}) to show transversal tightness of the log-gamma polymer free energy with the ubiquitous $2/3$ KPZ universality class exponent. We also briefly mention some other applications of the Gibbs property.

\subsubsection{The log-gamma polymer}
Recall that a random variable $X$ is said to have the inverse-gamma distribution with parameter $\theta > 0 $ if its density against Lebesgue measure is given by 
$$f_\theta(x) = {\bf 1} \{ x > 0 \}\Gamma(\theta)^{-1}  x^{-\theta - 1} \exp( - x^{-1}).$$
 Fixing $\theta > 0$, we let $d = \left( d_{i,j} : i \geq 1, j \geq 1\right)$ denote the semi-infinite random matrix of i.i.d. entries $d_{i,j}$ that are inverse-gamma distributed with the same $\theta$ parameter. A  directed lattice path is an up-right path on $\mathbb{Z}^2$, which makes unit steps in the positive coordinate directions (see Figure \ref{halfspaceloggamma}). Given $n,N \geq 1$, we let $\Pi_{n,N}$ denote the set of directed paths $\pi$ in $\mathbb{Z}^2$ from $(1,1)$ to $(n,N)$. Given a directed path $\pi$, we define its weight $w(\pi)$ to be the product of all $d_{i,j}$ where $(i,j)$ are vertices contained in the path $\pi$:
\begin{equation}\label{PathWeight}
w(\pi) = \prod_{(i,j) \in \pi} d_{i,j}
\end{equation}
From this we define the partition function $Z^{n,N}$ to be the sum over all weights
\begin{equation}\label{PartitionFunct}
Z^{n,N} = \sum_{ \pi \in \Pi_{n,N}} w(\pi).
\end{equation}
The logarithm of the partition function is called the free energy.

\begin{figure}
	\begin{center}
		\begin{tikzpicture}[scale=0.8]
		\begin{scope}
		\draw (0,0) node[anchor = north]{{\footnotesize $(1,1)$}};
		\draw (9,6) node[anchor=south]{{\footnotesize $(n,N)$}};
		\draw[->, >=stealth'] (9,2.5) node[anchor=west]{{\footnotesize $d_{i,j}$}} to[bend right] (7,2);
		\draw[dotted, gray] (0,0) grid (9,6);
		\draw[ultra thick] (0,0) -- (1,0) -- (2,0) -- (2,1) -- (2,2) -- (3,2) -- (4,2) -- (4,3) -- (5,3) -- (6,3) -- (6,4) -- (7,4) -- (8,4) -- (8,6) -- (9,6);
		\end{scope}
		\end{tikzpicture}
	\end{center}
	\caption{A directed lattice path $\pi\in \Pi_{n,N}$ in the log-gamma polymer model. }
	\label{halfspaceloggamma}
\end{figure}
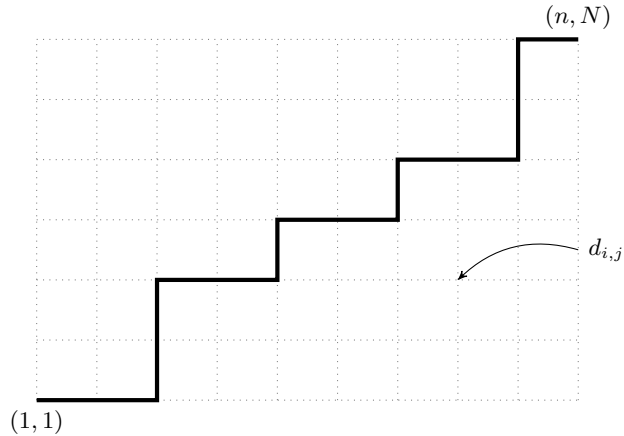

\subsubsection{Embedding the log-gamma polymer in a line ensemble} In Section \ref{Section8.2} we prove that the log-gamma polymer can be embedded as the lowest labeled curve in a discrete line ensemble, that satisfies the $(H,H^{RW})$-Gibbs property, where
\begin{equation}\label{Hamiltonianintro}
H^{RW}(x) = \theta x + e^{-x} + \log \Gamma (\theta)\qquad \textrm{and}\qquad H(x) = e^x,
\end{equation}
with $\theta > 0$ as in the definition of the log-gamma polymer model. This result, stated as Proposition \ref{PropLOG}, implies that the other lines in this log-gamma line ensemble have meaning in terms of polymer partition functions for multiple paths in the log-gamma environment. Similar results have appeared in \cite[Section 3.4]{Wu19} and \cite{JO20} as well. For this introduction, we will simply record a corollary of Proposition \ref{PropLOG} as needed to apply our black-box tightness result.

\begin{corollary}\label{cor:loggammalineensemble}
Let $H,H^{RW}$ be as in (\ref{Hamiltonianintro}). Fix $\topc, N \in \mathbb{N}$ with $N \geq \topc \geq 2$ and Let $T_0, T_1 \in \mathbb{N}$ be such that $T_0 < T_1$ and $T_0 \geq \topc$. Then, we can construct a probability space with measure $\mathbb{P}$ that supports a $\llbracket 1, \topc \rrbracket \times \llbracket T_0, T_1 \rrbracket$-indexed line ensemble $\mathfrak{L} = (L_{1}, \dots, L_\topc)$ such that:
\begin{enumerate}
\item the $\mathbb{P}$-distribution of $(L_1(n): n\in\llbracket T_0, T_1 \rrbracket )$ is the same as that of $ ( \log Z^{n,N}: n\in\llbracket T_0, T_1 \rrbracket )$;
\item $\mathfrak{L}$ satisfies the $(H,H^{RW})$-Gibbs property.
\end{enumerate}
\end{corollary}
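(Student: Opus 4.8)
The plan is to obtain Corollary \ref{cor:loggammalineensemble} as a specialization of Proposition \ref{PropLOG}, whose proof runs through the geometric RSK (gRSK) correspondence together with the explicit formula for the law of its image, due to \cite{COSZ}. Here I sketch the substantive steps.

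\emph{Construction.} Write $d|_{n,N}$ for the $n\times N$ top-left sub-array of inverse-gamma weights, and let $z^{(n,N)}=\mathrm{gRSK}(d|_{n,N})$, whose ``shape'' is a vector of $\min(n,N)$ positive reals $z^{(n,N)}=(z^{(n,N)}_1,\dots,z^{(n,N)}_{\min(n,N)})$. Set $L_i(n):=\log z^{(n,N)}_i$ for $i\in\llbracket 1,\topc\rrbracket$ and $n\in\llbracket T_0,T_1\rrbracket$, and extend by linear interpolation in $n$. These are well defined on the whole interval precisely because $\min(n,N)\ge\min(T_0,N)\ge\topc$ under the hypotheses $T_0\ge\topc$ and $N\ge\topc$ --- this is the role of those two inequalities. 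The basic gRSK identity says that $z^{(n,N)}_1=Z^{n,N}$ (and, more generally, $z^{(n,N)}_1\cdots z^{(n,N)}_i$ is the partition function of $i$ non-intersecting up-right paths, which is what lends the remaining curves their polymer meaning), so $L_1(n)=\log Z^{n,N}$, which is part (1).

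\emph{The Gibbs property.} Passing from $d|_{n-1,N}$ to $d|_{n,N}$ amounts to inserting the row $(d_{n,1},\dots,d_{n,N})$, which gRSK performs through a sequence of explicit local moves, each a monomial (birational) map and hence volume-preserving in logarithmic coordinates. Inverting these moves expresses the fresh weights $d_{n,\cdot}$, together with the array layers just below the shape, as monomials in $(z^{(n-1,N)},z^{(n,N)})$; feeding this change of variables into the product density $\prod_{i,j} f_\theta(d_{i,j})$ and taking logarithms, one finds that the joint density of $(L_i(n):1\le i\le\topc,\ T_0\le n\le T_1)$ --- after integrating out the auxiliary sub-shape layers, which decouple --- is, up to boundary contributions at $n=T_0$ and along the triangular edge of the array, proportional to
\[
\prod_{i,n} e^{-H^{RW}\left(L_i(n+1)-L_i(n)\right)}\cdot\prod_{i,n} e^{-H\left(L_{i+1}(n+1)-L_i(n)\right)} ,
\]
with $H^{RW}(x)=\theta x+e^{-x}+\log\Gamma(\theta)$ the negative log-density of $\log d_{i,j}$ and $H(x)=e^{x}$ emerging from the ratio terms of the local moves. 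This is exactly the pattern of \eqref{RadonNikodymIntroeq}: conditioning on everything outside $\llbracket k_1,k_2\rrbracket\times\llbracket a,b\rrbracket$ (for $1\le k_1\le k_2\le\topc-1$ and $T_0<a<b<T_1$) leaves a law on $(L_{k_1},\dots,L_{k_2})$ over $\llbracket a,b\rrbracket$ which is a reweighting of $H^{RW}$-random-walk bridges with the prescribed entrance and exit data $L_\bullet(a),L_\bullet(b)$ and bounding curves $L_{k_1-1},L_{k_2+1}$, the Radon-Nikodym derivative being \eqref{RadonNikodymIntroeq}. The restriction $k_2\le\topc-1$ is exactly why tracking $\topc$ shape coordinates (rather than $\topc+1$) suffices: $L_\topc$ serves as the frozen lower bounding curve and is never resampled. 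Finally one checks that $H$ and $H^{RW}$ meet the conditions of Definitions \ref{AssH} and \ref{AssHR}, convexity being immediate, which establishes part (2).

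\emph{Main obstacle.} The crux is the density computation: one must choose the precise gRSK coordinates named $L_i(n)$ so that increments along a fixed curve carry exactly the $\theta x+e^{-x}$ weight, the across-label couplings are exactly $e^{L_{i+1}(n+1)-L_i(n)}$ with the correct index shift, and --- after integrating out the lower array layers --- no residual couplings survive. This is in effect a re-derivation of the COSZ density in a parametrization adapted to line ensembles; handling the boundary of the triangular array (the first row and column of $d$) without generating spurious terms is the delicate part, while once the interior identity is secured the remaining bookkeeping is routine.
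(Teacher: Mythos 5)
Your proposal takes a genuinely different route from the paper. In the paper, Corollary~\ref{cor:loggammalineensemble} is a one-line restatement of Proposition~\ref{PropLOG}, and Proposition~\ref{PropLOG} itself is proved by invoking the COSZ Markov-chain machinery: the intertwining relation (Proposition~\ref{intertwineProp}) yields Theorem~\ref{markovProj}, which identifies the shape vector $y(n)=\phi(z(n))$ as a Markov chain with the explicit transition kernel $\bar{P}_\theta^N$; in log-coordinates this becomes $\hat{P}_\theta^N$ in \eqref{TransLogChain}, whose product structure is plugged into the criterion of Lemma~\ref{S4AltGibbs} to verify the $(H,H^{RW})$-Gibbs property; and finally the limit transition $M\to\infty$ (Proposition~\ref{MCConv}) combined with Lemma~\ref{S4WeakGibbs} transfers the Gibbs property to the actual gRSK/partition-function process. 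Your plan instead runs a direct gRSK density computation, in the spirit of \cite{JO20}, bypassing both the intertwining and the $M\to\infty$ limit.

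The gap is the step you call the ``delicate part'' but leave unproved: the claim that, after integrating out the sub-shape (Gelfand--Tsetlin) layers, the joint density of the shape sequence factors into exactly the Gibbs form. This is not a routine bookkeeping exercise; it is precisely the content of COSZ's Markov-projection theorem, and the decoupling you assert is the conclusion of the intertwining identity $\bar{P}_\theta^N K_\theta^N = \bar{K}_\theta^N \Pi_\theta^N$, not something visible term by term from the gRSK local moves. Without either re-deriving that projection or explicitly invoking \cite{COSZ} (or \cite{JO20}) for the shape-process transition kernel, the argument is incomplete. A secondary gap: once you have a candidate joint density, the Gibbs property in the sense of Definition~\ref{DefLGGP} is a statement about conditional laws, and the paper verifies it via the tower-property criterion of Lemma~\ref{S4AltGibbs} together with a careful integration (as in \eqref{LeftTower}--\eqref{RightTower}); your sketch asserts the conditional structure directly from the unconditioned density formula, which needs justification. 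On the other hand, your route, if the density computation were carried out, would have the advantage of not needing the $M$-parameterized initial condition and the weak-limit argument (Proposition~\ref{MCConv} + Lemma~\ref{S4WeakGibbs}), since the gRSK image of $d|_{n,N}$ is already the object of interest.
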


\begin{proof}
This follows immediately from Proposition \ref{PropLOG} by identifying the notation $z_{N,1}(n)$ used there with $Z^{n,N}$ defined this introduction.
\end{proof}

\begin{remark}\label{S1RemWS}
The strict-weak polymer model \cite{CSS15, OO15} also enjoys a relationship to a similar $(H,H^{RW})$-Gibbsian line ensemble. We expect that all of the results, which are proved in our present work for the log-gamma polymer, can be likewise proved for the strict-weak polymer. The only technical input, which would need to be developed, are analogous asymptotic results to those proved for the log-gamma polymer in \cite{BCDA} (see Section \ref{sec:asyBCDA} for the precise results).
\end{remark}

\subsubsection{Asymptotic fluctuations of the log-gamma polymer free energy}\label{sec:asyBCDA}

For each $N$, Corollary \ref{cor:loggammalineensemble} provides an embedding of the log-gamma polymer free energy $\log Z^{n,N}$, as a process in $n$, as the top curve of a $(H,H^{RW})$-Gibbsian line ensemble.
We are interested in the large $N$ and $n$ limit of this process. In order to apply our black-box theory (Theorem \ref{informalintrotightness}), we need information about the one-point fluctuations of $\log Z^{n,N}$. This is accomplished in \cite{BCDA}. We first recall the necessary notation and then recall the relevant result proved therein.

\begin{definition}
Let $\Psi(x)$ denote the digamma function, defined by
\begin{equation}\label{digammaS1}
\Psi(z) = \frac{\Gamma'(z)}{\Gamma(z)} = - \gamma_{E} + \sum_{n = 0}^\infty \left[\frac{1}{n + 1} - \frac{1}{n+z} \right],
\end{equation}
with $\gamma_{E}$ denoting the Euler constant.
Define the function
\begin{equation}\label{DefLittleg}
g_\theta(z) = \frac{\sum_{n =0}^\infty \frac{1}{(n+\theta - z)^2}}{ \sum_{n = 0}^\infty \frac{1}{(n+z)^2}} = \frac{\Psi'(\theta -z)}{\Psi'(z)},
\end{equation}
and observe that it is a smooth, strictly increasing bijection from $(0, \theta)$ to $(0, \infty)$. The inverse function $g_\theta^{-1}: (0, \infty) \rightarrow (0,\theta)$ is also a strictly increasing smooth bijection. For $x \in (0,\infty)$, define the function
\begin{equation}\label{HDefLLN}
h_\theta(x) = x \cdot  \Psi(g_\theta^{-1}(x)) + \Psi( \theta - g_\theta^{-1}(x)),
\end{equation}
which is easily seen to be a smooth function on $(0, \infty)$. Finally, for  $x \in (0,\infty)$, define the function
\begin{equation}\label{DefSigma}
\tsigma_\theta(x) = \left[ \sum_{n = 0}^\infty \frac{x}{\big(n+g_\theta^{-1}(x)\big)^3} +  \sum_{n = 0}^\infty \frac{1}{\big(n+\theta - g_\theta^{-1}(x)\big)^3} \right]^{1/3}.
\end{equation}
\end{definition}

\medskip
We now consider $\log  Z^{n,N}$ for $n$ and $N$ going to infinity with a ratio which is approximately $r \in (0, \infty)$. We recall one of the main results of \cite{BCDA} which shows that as $N$ and $n$ tend to $\infty$ the one-point marginals of a properly centered and scaled version of $\log  Z^{n,N}$ tend to the GUE Tracy-Widom distribution \cite{TWPaper}. For $n,N\geq 1$ define the rescaled free energy
\begin{equation}\label{ResFE}
\mathcal{F}(n,N) := \frac{ \log Z^{n,N} + N h_{\theta}(n/N)}{N^{1/3} \tsigma_{\theta}(n/N)}  .
\end{equation}

\begin{proposition}\cite[Theorem 1.2]{BCDA}\label{thm:BCDA}
Let $\theta ,r>0$ be given. Assume that $n$ and $N$ go to infinity in such a way that the sequence $n/N$ converges to $r$. Then, for all $y\in \R$,
$$
\lim_{N\to \infty} \P\big(\mathcal{F}(n,N)\leq y\big) = F_{GUE}(y).
$$
\end{proposition}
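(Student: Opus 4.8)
This is \cite[Theorem 1.2]{BCDA}, and below we outline how one proves it. The plan is to combine an exact Fredholm determinant formula for the Laplace transform of $Z^{n,N}$ with a steepest descent asymptotic analysis, and then to convert the resulting limit of Laplace transforms into the stated convergence in distribution.

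\emph{Exact formula.} Via the geometric RSK correspondence of \cite{COSZ}, the law of $Z^{n,N}$ is a marginal of a Whittaker measure, and the Laplace transform $\E\big[\exp(-u Z^{n,N})\big]$ (for $u$ in a suitable right half-plane) admits a Fredholm determinant representation $\det(I + K_u)$ on $L^2$ of a vertical contour, where $K_u$ is an explicit Mellin--Barnes kernel built from ratios of $\Gamma$-functions, the dependence on $u$ entering through a factor $u^s$ inside an auxiliary integral over a vertical $s$-contour. In the regime $n,N\to\infty$ with $n/N\to r$, the exponentially large part of the integrand is of the form $e^{N\mathfrak g_{n/N}(\cdot)}$ for an explicit function $\mathfrak g_r$ assembled from $\log\Gamma$, and the functions $g_\theta,h_\theta,\tsigma_\theta$ introduced above are precisely the critical-point data of $\mathfrak g_r$: the location $v_c(r)$ of its (degenerate) critical point, the critical value $\mathfrak g_r(v_c)$, and the normalized third derivative $\mathfrak g_r'''(v_c)$ respectively. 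In particular the centering $-Nh_\theta(n/N)$ and scale $N^{1/3}\tsigma_\theta(n/N)$ in \eqref{ResFE} are forced by this structure.

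\emph{Steepest descent.} Set $u=u_N(y):=\exp\!\big(Nh_\theta(n/N)-yN^{1/3}\tsigma_\theta(n/N)\big)$, so that $u_N Z^{n,N}=\exp\!\big(N^{1/3}\tsigma_\theta(n/N)(\mathcal F(n,N)-y)\big)$. One then analyzes $\det(I+K_{u_N})$ by deforming the $v$-, $v'$- and $s$-contours so that they pass through $v_c(n/N)$ along directions of steepest descent for $\mathrm{Re}\,\mathfrak g_{n/N}$, keeping track of any residues crossed. A localization estimate shows that portions of the contours at macroscopic distance from $v_c$ contribute negligibly; one then rescales $v,v',s$ around $v_c$ by $N^{-1/3}$ (with the proportionality constants governed by $\tsigma_\theta$) and Taylor-expands $\mathfrak g_{n/N}$ to cubic order. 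The constant term cancels against the $h_\theta$ centering, the linear term produces the shift by $y$, and the cubic term produces the Airy integrand, so that the rescaled kernel converges — uniformly on compact sets and with Gaussian-type tail bounds sufficient for trace-norm convergence — to a contour representation of the Airy kernel. Hence $\E\big[\exp(-u_N(y)Z^{n,N})\big]\to \det(I-\mathrm{Ai})_{L^2(y,\infty)}=F_{GUE}(y)$.

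\emph{From Laplace transforms to the CDF, and the main obstacle.} Since $0\le \exp(-u_N(y)Z^{n,N})\le 1$ and, writing $t=N^{1/3}\tsigma_\theta(n/N)\to\infty$ and $x=\mathcal F(n,N)$, one has the elementary bounds $\mathbf 1\{x\le y-\delta\}\,e^{-e^{-t\delta}}\le e^{-e^{t(x-y)}}\le \mathbf 1\{x< y+\delta\}+e^{-e^{t\delta}}$, taking expectations and letting $N\to\infty$ gives, for every $\delta>0$,
$$\limsup_{N}\P\big(\mathcal F(n,N)\le y-\delta\big)\le F_{GUE}(y)\le \liminf_{N}\P\big(\mathcal F(n,N)\le y+\delta\big),$$
and continuity of $F_{GUE}$ then upgrades this to $\P(\mathcal F(n,N)\le y)\to F_{GUE}(y)$ for all $y$. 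The delicate part is the steepest descent step: one must exhibit, for general $\theta>0$ and general $r>0$, explicit contours along which $\mathrm{Re}\,\mathfrak g_r$ is suitably monotone — a global statement about combinations of $\mathrm{Re}\,\log\Gamma$ in the complex plane, with the added nuisance that the Mellin--Barnes poles of the $s$-integrand must be kept on the correct side throughout the deformation — and to establish tail bounds uniform in $N$ strong enough to pass to the limit inside the Fredholm determinant. A secondary, minor point is that $n/N$ only converges to $r$ rather than equalling it, so $v_c(n/N)$ and the centering and scale drift with $N$; this is absorbed by the continuity and local uniformity in $r$ of all quantities involved.
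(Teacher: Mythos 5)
The paper does not prove this proposition; it imports it verbatim from \cite[Theorem 1.2]{BCDA} and only supplies a remark reconciling the two equivalent forms of the centering and scale. There is therefore no in-paper proof to compare your outline against.

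That said, your outline accurately summarizes the strategy used in the cited companion paper: the Laplace transform $\E[e^{-uZ^{n,N}}]$ is expressed as a Fredholm determinant with a Mellin--Barnes-type kernel built from $\Gamma$-function ratios (coming from the Whittaker/Macdonald process structure of \cite{COSZ}), a steepest-descent analysis around the degenerate critical point of the exponent yields the Airy kernel after cubic Taylor expansion and $N^{-1/3}$ rescaling, and the elementary sandwich inequality you record together with the continuity of $F_{GUE}$ upgrades convergence of Laplace transforms to convergence of the CDF. You also correctly flag the genuine technical obstacle: producing explicit steepest-descent contours valid for every $\theta>0$ and every slope $r>0$, controlling the pole structure during the deformation, and obtaining tail estimates uniform enough for trace-norm convergence of the kernels; overcoming these restrictions is precisely what \cite{BCDA} contributes beyond earlier treatments that only handled special parameter ranges. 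Your observation that the functions $g_\theta,h_\theta,\tsigma_\theta$ are the critical-point data (location, critical value, normalized third derivative) of the exponent, and that the centering $-Nh_\theta$ and scale $N^{1/3}\tsigma_\theta$ in \eqref{ResFE} are forced by that structure, is the right way to see where Definition 1.5's expressions come from. The only caveat is that what you have written is a road map rather than a proof: the contour constructions, localization, and dominated-convergence-for-determinants estimates are each substantial, and none of them is supplied here. For the purposes of this paper that is fine, since the result is taken as input; anyone wanting the details must consult \cite{BCDA}.
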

\begin{remark} We mention here that \cite[Theorem 1.2]{BCDA} was formulated with $\mathcal{F}(n,N)$ defined by
$$\frac{ \log Z^{n,N} + n h_{\theta}(N/n)}{n^{1/3} \tsigma_{\theta}(N/n)};$$
however, this is readily seen to agree with (\ref{ResFE}) once we utilize the fact that $g_\theta^{-1}(1/x) = \theta - g_\theta^{-1}(x).$
\end{remark}

In light of Proposition \ref{thm:BCDA}, we are lead to define a centered and scaled spatial process $f_N^{LG}(\cdot)$ for the free energy.

\begin{definition}\label{RandomCurve} Fix any  $T >0$, $\theta > 0$ and $r \in (0,\infty)$. Suppose that $N$ is sufficiently large so that $rN \geq 2 + TN^{2/3}$. For each $x \in [-T-N^{-2/3}, T + N^{-2/3}]$ such that $xN^{2/3}$ is an integer, we define $n = \lfloor rN \rfloor + xN^{2/3}$ and
\begin{equation}\label{LGCurve}
f_N^{LG}(x)=N^{-1/3}\Big(\log  Z^{n,N} + h_{\theta}(r)N + h_\theta'(r) x N^{2/3}\Big),
\end{equation}
and then extend $f_N^{LG}$ to all points $x \in [-T, T]$, by linear interpolation.
The above construction provides a random continuous curve in the space $(C[-T,T], \mathcal{C})$ -- the space of continuous functions on $[-T, T]$ with the uniform topology and Borel $\sigma$-algebra $\mathcal{C}$ (see e.g. Chapter 7 in \cite{Bill}) -- and we denote its law by $\mathbb{P}_N$.
\end{definition}

\medskip
We now combine the $(H,H^{RW})$-Gibbs property for the log-gamma line ensemble, constructed in Corollary \ref{cor:loggammalineensemble}, with the convergence in Proposition \ref{thm:BCDA}. These provide the input to apply Theorem \ref{informalintrotightness} and lead to the following transversal tightness and Brownian absolute continuity result for the log-gamma polymer free energy.
\begin{theorem}\label{ThmTight}
Fix any $T, \theta, r >0$. Then, the laws $\P_N$ of $f_N^{LG}\big([-T,T]\big)$ (see Definition \ref{RandomCurve}) form a tight sequence in $N$. Moreover, any subsequential limit $\P_{\infty}$ is absolutely continuous with respect to the Brownian bridge with variance $ 2T\Psi'(g_{\theta}^{-1}(r))$ (see Definition \ref{DACB}) . 
\end{theorem}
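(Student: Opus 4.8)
The plan is to deduce Theorem~\ref{ThmTight} from the black-box Theorem~\ref{informalintrotightness}, applied to a sequence of recentered two-curve $(H,H^{RW})$-Gibbsian line ensembles built out of the log-gamma polymer. Throughout one uses the scaling exponent $\alpha = 2/3$ and the slope
\[ p \;=\; -h_\theta'(r) \;=\; -\Psi\big(g_\theta^{-1}(r)\big), \]
where the second equality comes from the chain rule together with the identity $\tfrac{d}{du}\big[g_\theta(u)\Psi(u)+\Psi(\theta-u)\big] = g_\theta'(u)\Psi(u)$ (which uses $g_\theta(u)\Psi'(u) = \Psi'(\theta-u)$). One first checks that the Hamiltonians in \eqref{Hamiltonianintro} satisfy Definitions~\ref{AssHR} and~\ref{AssH}: $H(x) = e^x$ is convex with the admissible growth; $H^{RW}(x) = \theta x + e^{-x} + \log\Gamma(\theta)$ has $(H^{RW})'' = e^{-x} > 0$ and the required mild bounds (linear as $x \to +\infty$, exponential as $x \to -\infty$); and $e^{-H^{RW}(x)} = \Gamma(\theta)^{-1} e^{-\theta x - e^{-x}}$ is a log-concave probability density with one-sided exponential moments. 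These verifications are routine.

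Fix $N$ large enough that $rN \geq 2 + (T+3)N^{2/3}$, put $T_N := \lceil (T+3)N^{2/3} \rceil + 2 \in \N$ (so that $T_N > (T+3)N^{2/3} + 1$), and invoke Corollary~\ref{cor:loggammalineensemble} with $\topc = 2$, $T_0 = \lfloor rN \rfloor - T_N$ and $T_1 = \lfloor rN \rfloor + T_N$; for $N$ large these satisfy $N \geq \topc$ and $\topc \leq T_0 < T_1$. This produces a $\llbracket 1,2 \rrbracket \times \llbracket T_0, T_1 \rrbracket$-indexed $(H,H^{RW})$-Gibbsian line ensemble $(L_1, L_2)$ with $(L_1(n))_{n}$ equal in law to $(\log Z^{n,N})_{n}$. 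Define the recentered ensemble $\mathfrak{L}^N = (L_1^N, L_2^N)$ on $\llbracket 1,2 \rrbracket \times \llbracket -T_N, T_N \rrbracket$ by $L_i^N(s) := L_i(\lfloor rN \rfloor + s) + h_\theta(r)\,N$. Since $H^{RW}$ acts only on increments and $H$ only on the differences $L_{i+1}(m+1) - L_i(m)$, both the time re-indexing $s \mapsto s + \lfloor rN \rfloor$ and the addition of the global constant $h_\theta(r) N$ leave the underlying random walk bridge law and the Radon-Nikodym derivative \eqref{RadonNikodymIntroeq} invariant; hence $\mathfrak{L}^N$ again satisfies the $(H,H^{RW})$-Gibbs property.

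It remains to verify the one-point tightness of Definition~\ref{def:intro} for $\{\mathfrak{L}^N\}$ with parameters $(2/3, p, T+3)$. Fix $s \in [-T-3, T+3]$ and set $n = n_N := \lfloor rN \rfloor + \lfloor sN^{2/3} \rfloor$, so that $n/N \to r$, $\tsigma_\theta(n/N) \to \tsigma_\theta(r)$, and $n/N - r = sN^{-1/3} + O(N^{-1})$. Substituting $\log Z^{n,N} = -Nh_\theta(n/N) + N^{1/3}\tsigma_\theta(n/N)\,\mathcal{F}(n,N)$ from \eqref{ResFE} and Taylor-expanding $h_\theta$ about $r$ (the linear term cancels $-psN^{2/3}$ up to an $O(1)$ floor error because $h_\theta'(r) = -p$; the quadratic term equals $-\tfrac{N}{2}h_\theta''(r)(n/N-r)^2 = -\tfrac12 h_\theta''(r)s^2\,N^{1/3}(1+o(1))$; all higher-order terms and the Lagrange remainder are $O(1)$), one obtains
\[ N^{-1/3}\Big( L_1^N\big( \lfloor sN^{2/3} \rfloor \big) - p\,s\,N^{2/3} \Big) \;=\; -\tfrac12 h_\theta''(r)\,s^2 \;+\; \tsigma_\theta(n/N)\,\mathcal{F}(n,N) \;+\; o(1). \]
By Proposition~\ref{thm:BCDA}, $\mathcal{F}(n_N, N) \Rightarrow F_{GUE}$, hence is tight; multiplying by the convergent deterministic sequence $\tsigma_\theta(n_N/N)$ and adding the bounded deterministic term $-\tfrac12 h_\theta''(r)s^2$ preserves tightness. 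Thus $\{\mathfrak{L}^N\}$ is $(2/3, p, T+3)$--good. I expect this step -- keeping track of the deterministic curvature correction and the floor errors so that nothing escapes the critical $N^{1/3}$ scale -- to be the only point requiring genuine care, though it is entirely elementary; there is no substantive obstacle, since the hard analytic work resides in Theorem~\ref{informalintrotightness} and in \cite{BCDA}.

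Finally, apply Theorem~\ref{informalintrotightness} with $\alpha = 2/3$, the slope $p$ above, and interval $[-T,T]$: it yields tightness in $(C[-T,T], \mathcal{C})$ of the law of $f_N(x) = N^{-1/3}\big( L_1^N(xN^{2/3}) - p\,x\,N^{2/3} \big)$, with every subsequential limit absolutely continuous (in the sense of Definition~\ref{DACB}) with respect to the Brownian bridge of variance $2T\sigma_p^2$. At every $x$ with $xN^{2/3} \in \Z$ one has $L_1^N(xN^{2/3}) = \log Z^{\lfloor rN \rfloor + xN^{2/3}, N} + h_\theta(r)N$ and $-pxN^{2/3} = h_\theta'(r)xN^{2/3}$, so $f_N(x)$ equals the right-hand side of \eqref{LGCurve}; as both $f_N$ and $f_N^{LG}$ of Definition~\ref{RandomCurve} are the linear interpolations of their values at such points, $f_N \equiv f_N^{LG}$, so the law $\P_N$ of $f_N^{LG}$ is tight with the stated absolute-continuity property. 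It only remains to identify $\sigma_p^2$. By Definition~\ref{AssHR}, $\sigma_p^2$ is the variance of the increment law with density proportional to $e^{-H^{RW}(x) + tx}$, for the unique tilt $t$ making the mean equal to $p$. Here this density is proportional to $e^{-(\theta - t)x - e^{-x}}$, i.e. the law of $-\log G$ with $G \sim \mathrm{Gamma}(\theta - t, 1)$, which has mean $-\Psi(\theta - t)$ and variance $\Psi'(\theta - t)$. Imposing $-\Psi(\theta - t) = p = -\Psi(g_\theta^{-1}(r))$ and using injectivity of $\Psi$ on $(0,\infty)$ gives $\theta - t = g_\theta^{-1}(r)$, whence $\sigma_p^2 = \Psi'(g_\theta^{-1}(r))$, and the limiting Brownian bridge has variance $2T\Psi'(g_\theta^{-1}(r))$, as claimed.
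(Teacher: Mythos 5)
Your proposal is correct and follows essentially the same route as the paper: reduce to Theorem~\ref{informalintrotightness} via the log-gamma Gibbsian line ensemble of Corollary~\ref{cor:loggammalineensemble}, recenter by $\lfloor rN\rfloor$ and $h_\theta(r)N$, verify the Hamiltonian hypotheses and the $(2/3,-h_\theta'(r),T+3)$--good one-point tightness via Taylor expansion and Proposition~\ref{thm:BCDA}, and identify $\sigma_p^2=\Psi'(g_\theta^{-1}(r))$. The only cosmetic differences are that you track the explicit curvature constant $-\tfrac12 h_\theta''(r)s^2$ where the paper absorbs it into an $O(1)$ error, and you compute $\sigma_p^2$ via exponential tilting rather than by differentiating the explicit cumulant generating function $\Lambda(t)=\log\Gamma(\theta-t)-\log\Gamma(\theta)$; both are equivalent.
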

\begin{proof}[Proof sketch] Here we provide a sketch of the proof of Theorem \ref{ThmTight}. The goal is to explain how the different statements in the introduction fit together to produce the result. A complete proof to the theorem can be found in Section \ref{Section8.3}.

Let $\LB = \lfloor rN + (T+3)N^{2/3} + 2 \rfloor$ and fix any $\topc\geq 2$. For each $N\geq \topc$, Corollary \ref{cor:loggammalineensemble} provides us with a $\llbracket 1, \topc \rrbracket \times \llbracket K, \LB \rrbracket$-indexed line ensemble, which we will denote $\mathfrak{\tilde{L}}^N$, whose lowest labeled curve $\big(\tilde{L}^N_1(n):n\in \llbracket \topc,\LB \rrbracket \big)$ has the same law as $\big(\log Z^{n,N}:n\in \llbracket\topc, \LB \rrbracket \big)$. Moreover, this line ensemble enjoys the $(H,H^{RW})$-Gibbs property with $H$ and $H^{RW}$ given in \eqref{Hamiltonianintro}.

We define the $\llbracket 1, 2 \rrbracket \times \llbracket -T_N, T_N \rrbracket$-indexed line ensemble $\mathfrak{L}^N$ by setting
$$  L_i^N(x) = \tilde{L}_i^N(x+ \lfloor rN\rfloor) + N h_\theta(r) \mbox{ for $i = 1,2$ and $x \in \llbracket -T_N, T_N\rrbracket$},$$
where $T_N = \lfloor (T+3)N^{2/3} + 2 \rfloor$. If $f_N$ denotes the function in (\ref{eq:fnsintro}) for the line ensemble $\mathfrak{L}^N$ with $\alpha = 2/3$ and $p = -  h_\theta'(r)$, then one observes that $f_N$ and $f_N^{LG}\big([-T,T]\big)$ have the same distribution. Consequently, Theorem \ref{informalintrotightness} would imply the present theorem provided we can show that the sequence $ \mathfrak{L}^N$ is $(2/3, - h_\theta'(r), T+3)$-good in the sense of Definition \ref{def:intro}. 

The strength of Theorem \ref{informalintrotightness} is that it reduces our problem to verifying that $ \mathfrak{L}^N$ satisfies all the assumptions in Definition \ref{def:intro}. We first need to show that $H$ and $H^{RW}$ in (\ref{Hamiltonianintro}) satisfy the assumptions in Definitions \ref{AssHR} and \ref{AssH}. For example, one of these assumptions is that $H$ and $H^{RW}$ are both convex, which is clear from (\ref{Hamiltonianintro}). There are more assumptions, which we will not discuss presently, but verifying all of them is straightforward due to the explicit nature of $H$ and $H^{RW}$ in (\ref{Hamiltonianintro}) and takes only several lines (see Section \ref{Section8.3}).

The second thing we need to check is that $\mathfrak{L}^N$ satisfies the $(H,H^{RW})$-Gibbs property, which is immediate as we know the latter to be true for $\mathfrak{\tilde{L}}^N$ and the Gibbs property is maintained upon horizontal and vertical shifts. Finally, we need to check that $f_N^{LG}$ has one-point tight marginals, which is a consequence of Proposition \ref{thm:BCDA}. 

To summarize, if one ignores the technical assumptions on $H$ and $H^{RW}$ (which can be verified), there are two key parts that need to be checked in Definition \ref{def:intro} -- the $(H,H^{RW})$-Gibbs property and the one-point tightness. In our case, the former is seen to hold by Corollary \ref{cor:loggammalineensemble} and the latter by Proposition \ref{thm:BCDA}. Once these two pieces are in place, our black-box result (Theorem \ref{informalintrotightness}) completely takes over and establishes the tightness and Brownian continuity of all subsequential limits of $f_N^{LG}$. 
\end{proof}
\begin{remark}
There are other KPZ class models whose spatial processes can be embedded into Gibbsian line ensembles. For some of these models, similar tightness and Brownian absolute continuity results (like Theorem \ref{ThmTight}) have been demonstrated. In particular, there are similar results for:
Brownian LPP \cite{CorHamA}, the O'Connell-Yor polymer model and KPZ equation \cite{CorHamK}, and the asymmetric simple exclusion process and stochastic six vertex model \cite{CD}. For the integrable models of last passage percolation (which are related to discrete Gibbsian line ensembles with Bernoulli, geometric and exponential jump distributions) \cite{DNV19} addresses the question of tightness assuming finite dimensional convergence to the Airy line ensemble. In the case when the Gibbsian line ensembles have the structure of avoiding Bernoulli random walkers, \cite{DREU} proves tightness of the full line ensembles assuming one-point tightness of its lowest indexed curve. For the log-gamma polymer, the recent work \cite{Wu19} applies the Gibbs property to the weak-noise scaled free energy. This means that the parameter $\theta$, which controls the inverse-gamma distributions is tuned to go to infinity in a suitable manner as the dimensions of the polymer $N$ and $n$ go to infinity. In terms of the line ensemble, this means that the Gibbs property is changing with $N$ and in the limit becomes the exponential Brownian Gibbs property that was introduced in \cite{CorHamK} in the context of the KPZ line ensemble.
\end{remark}
\begin{remark}\label{RemKPZ}
Theorem \ref{ThmTight} states that when we view $\log Z^{n,N}$ as spatial processes in $n$, then as $N$ tends to infinity this sequence of processes (properly shifted) forms a tight sequence of non-trivial random continuous curves under a transversal scaling by $N^{2/3}$ and fluctuation scaling by $N^{1/3}$.  This demonstrates that the ubiquitous KPZ exponents hold for this model. The transversal $2/3$ exponent was previously demonstrated (in terms of non-trivial fluctuations of the polymer measure) for the log-gamma polymer with stationary boundary conditions in \cite{Sep12}. The information (e.g. tightness and Brownian absolute continuity), contained in Theorem \ref{ThmTight}, is of a rather different nature than the results proved in \cite{Sep12}.
\end{remark}

By KPZ universality, one expects that the sequence $f^{LG}_N$ in Theorem \ref{ThmTight} is not only tight but in fact convergent to some affine transformation of the Airy$_2$ process (shifted by a parabola). In the remainder of this section we formulate a precise conjecture (Conjecture \ref{Conj}) that details this convergence. The statement of Conjecture \ref{Conj} involves certain constants which we will introduce presently. After we state the conjecture we explain how our choice of constants supports its validity using the results of this paper. In Section \ref{appendixKPZscaling} we explain the KPZ scaling theory from  \cite{ krug1992amplitude,spohn2012kpz} for the log-gamma polymer and show that it also agrees with Conjecture \ref{Conj}.

Let us denote by $A_{\theta}(r)=\Psi'(g_{\theta}^{-1}(r))$ the diffusion coefficient appearing in Theorem \ref{ThmTight}. Notice that the function $d_{\theta}$ defined in \eqref{DefSigma} can be expressed in terms of $A_{\theta}(r)$ and $h_{\theta}''(r)$ as 
\begin{equation}\label{Defdvariant}
d_{\theta}(r) = \left( \frac{A_{\theta}(r)^2}{2 h_\theta''(r)} \right)^{1/3}.
\end{equation}
One can deduce the latter by noticing that $h'(r) = \Psi(g_\theta^{-1}(r))$ and $g_\theta(r) = \Psi'(\theta-r) /\Psi'(r).$

It is convenient to define another function 
\begin{equation}\label{DefKappa}
\kappa_{\theta}(r) = \left(  \frac{2A_{\theta}(r)}{h_{\theta}''(r)^2}\right)^{1/3}, 
\end{equation}
so that we have the relations 
\begin{equation}\label{simplification}
\frac{A_{\theta}(r) \kappa_{\theta}(r)}{2d_{\theta}(r)^2} = \frac{h_\theta''(r)\kappa_{\theta}(r)^2}{2 d_{\theta}(r)}=1.
\end{equation}

\begin{definition}\label{DefFLG2} Fix any $\theta,r > 0$. Let $\tilde{T}_N = \lfloor N^{2/3} \log N \rfloor$, $A_N = \kappa_\theta(r)^{-1} \tilde{T}_N N^{-2/3}$ and suppose that $N$ is sufficiently large, so that $rN \geq \tilde{T}_N + 2$.  For each $ x \in [-A_N, A_N] $, such that $\kappa_\theta(r) xN^{2/3}$ is an integer, we define $n = \lfloor rN \rfloor + \kappa_\theta(r) xN^{2/3}$ and
\begin{equation}\label{LGCurve2}
\tilde{f}_N^{LG}(x)=2^{-1/2}d_\theta(r)^{-1} N^{-1/3}\Big(\log  Z^{n,N} + h_{\theta}(r)N + h_\theta'(r) \kappa_\theta(r) x N^{2/3}\Big),
\end{equation}
and then extend $\tilde{f}_N^{LG}$ to $\mathbb{R}$ by:
\begin{itemize}
\item linear interpolation on the interval $ [- A_N, A_N]$;
\item constant extension outside the interval $ [- A_N, A_N]$, i.e. we put $\tilde{f}_N^{LG}(x) = \tilde{f}_N^{LG}(A_N)$ when $x \geq A_N$ and $\tilde{f}_N^{LG}(x) = \tilde{f}_N^{LG}(-A_N)$ when $x \leq -A_N$.
\end{itemize}
In this way, $\tilde{f}_N^{LG}$ becomes a random variable taking value in the space $C(\mathbb{R})$ of continuous functions on $\mathbb{R}$ with the topology of uniform convergence over compacts and Borel $\sigma$-algebra $\mathcal{C}$. 
\end{definition}

In words, the conjecture below states that if $\tilde{f}_N^{LG}$ are as in Definition \ref{DefFLG2}, then they converge (as random variables in $(C(\mathbb{R}), \mathcal{C})$) to a suitably scaled and parabolically shifted version of the Airy$_2$ process from \cite{prahofer2002scale}. We mention that the Airy$_2$ process is a random continuous process in $C(\mathbb{R})$, and the extension we performed in Definition \ref{DefFLG2} was to embed $ \tilde{f}_N^{LG}$ (initially defined at a restricted set of lattice sites) into $C(\mathbb{R})$. The precise definition of $\tilde{T}_N$ and the extension outside $[-A_N, A_N]$ is not important since we are dealing with the topology of uniform convergence over compacts. In particular, all that matters is that the sequence of intervals $[- A_N, A_N]$ increases to $\mathbb{R}$.

\begin{conjecture}\label{Conj} Fix any $\theta, r >0$ and let $\tilde{f}_N^{LG}$ be as in Definition \ref{DefFLG2}. Then, as $N \rightarrow \infty$ the random functions $\tilde{f}_N^{LG}$ converge weakly in $(C(\mathbb{R}), \mathcal{C})$ to $\mathcal{L}^{Airy}_1(x) = 2^{-1/2} (\mathcal{A}(x) - x^2)$, where $\mathcal{A}$ is the Airy$_2$ process from \cite{prahofer2002scale}.
\end{conjecture}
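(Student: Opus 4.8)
Conjecture \ref{Conj} is open, so I describe the route I would pursue; it splits into process-level tightness, which Theorem \ref{ThmTight} already supplies, and a unique identification of the limit, which is the substance of the conjecture. The key preliminary observation is that on every compact interval $\tilde{f}_N^{LG}$ is an affine reparametrization and rescaling of the curve $f_N^{LG}$ of Definition \ref{RandomCurve}; hence Theorem \ref{ThmTight} already delivers tightness of $\{\tilde{f}_N^{LG}\}$ in $(C(\mathbb{R}),\mathcal{C})$ together with local absolute continuity of every subsequential limit with respect to Brownian motion. It therefore suffices to identify the limit. Moreover, writing $n/N = r + \kappa_\theta(r) x N^{-1/3}$ and Taylor expanding $h_\theta$ to second order gives
\[
\log Z^{n,N} + h_\theta(r)N + h_\theta'(r)\kappa_\theta(r)\,x\,N^{2/3}
= -\tfrac12 h_\theta''(r)\kappa_\theta(r)^2 x^2 N^{1/3} + N^{1/3} d_\theta(r)\,\mathcal{F}(n,N) + o(N^{1/3}),
\]
so that after multiplying by $2^{-1/2} d_\theta(r)^{-1} N^{-1/3}$ and invoking $\tfrac{h_\theta''(r)\kappa_\theta(r)^2}{2 d_\theta(r)} = 1$ from \eqref{simplification} one obtains $\tilde{f}_N^{LG}(x) = 2^{-1/2}\big(\mathcal{F}(n,N) - x^2\big) + o(1)$, uniformly on compacts. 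Thus the conjecture is equivalent to the process-level convergence of $x \mapsto \mathcal{F}\big(\lfloor rN\rfloor + \kappa_\theta(r) x N^{2/3},\,N\big)$ to the Airy$_2$ process $\mathcal{A}$, the remaining relation $\tfrac{A_\theta(r)\kappa_\theta(r)}{2 d_\theta(r)^2} = 1$ serving as the consistency check that the Brownian diffusion coefficient of $2^{-1/2}\mathcal{A}$ matches the one appearing in Theorem \ref{ThmTight}.

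\emph{Route 1 (integrable asymptotics).} Extend the one-point limit theorem of \cite{BCDA} (Proposition \ref{thm:BCDA}) to a multi-point statement: for any $k$ and any $x_1 < \cdots < x_k$, the joint law of $\big(\mathcal{F}(n_j,N)\big)_{j=1}^k$ with $n_j = \lfloor rN\rfloor + \kappa_\theta(r) x_j N^{2/3}$ converges to that of $\big(\mathcal{A}(x_j)\big)_{j=1}^k$. Through its relation to Whittaker measures, the log-gamma partition function admits Fredholm-determinant representations, and the desired multi-point convergence should follow from a steepest-descent analysis of the corresponding correlation kernel, with the extended Airy kernel appearing in the limit at the same critical point used in \cite{BCDA}. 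Finite-dimensional convergence together with the tightness of Theorem \ref{ThmTight} then yields weak convergence in $(C(\mathbb{R}),\mathcal{C})$, and the preliminary reduction above completes the proof.

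\emph{Route 2 (Gibbsian characterization).} Alternatively, carry the analysis of this paper from the lowest curve to the entire edge. First, extend Lemma \ref{MonCoup} to a monotone coupling for arbitrarily many curves and use it, with Proposition \ref{thm:BCDA} as the one-point input, to prove tightness of the full rescaled line ensemble (one of the natural extensions flagged in Section \ref{sec:introtight}). Second, show that every subsequential limit obeys the non-intersecting Brownian Gibbs property of \cite{CorHamA}: the hard wall emerges because consecutive curves separate on the $N^{2/3}$ scale while $H(x)=e^x\to\infty$. Third, invoke the characterization of the parabolically-shifted Airy line ensemble as the unique Brownian Gibbs line ensemble with the correct parabolic curvature and GUE Tracy--Widom one-point marginal, the curvature being pinned by the Taylor expansion above and the marginal by Proposition \ref{thm:BCDA}.

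The main obstacle, in either route, is precisely this identification of the limit. Route 1 is a substantial but essentially self-contained problem in integrable probability: the correlation kernel for the log-gamma polymer is more involved than the determinantal kernels of exactly-solvable last-passage percolation, and obtaining the uniform tail estimates needed to pass to the limit in the multi-point Fredholm determinants is the delicate point. Route 2 requires the multi-curve monotone coupling --- Lemma \ref{MonCoup} presently treats only a single curve --- together with a rigorous derivation of the exact hard-wall Brownian Gibbs property for the subsequential limits and a careful verification of the hypotheses of the available uniqueness theorems for the Airy line ensemble.
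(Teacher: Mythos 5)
The statement is a conjecture; the paper supplies no proof, only heuristic support, and you correctly flag it as open. Your preliminary Taylor reduction is accurate: using $h_\theta(n/N)=h_\theta(r)+h_\theta'(r)\kappa_\theta(r)xN^{-1/3}+\tfrac12 h_\theta''(r)\kappa_\theta(r)^2x^2N^{-2/3}+O(N^{-1})$ together with the tightness of $\mathcal{F}(n,N)$ and $d_\theta(n/N)=d_\theta(r)+O(N^{-1/3})$, one does get $\tilde{f}_N^{LG}(x)=2^{-1/2}(\mathcal{F}(n,N)-x^2)+O(N^{-1/3})$ uniformly on compacts after applying \eqref{simplification}, and the observation $\tilde{f}_N^{LG}(x)=2^{-1/2}d_\theta(r)^{-1}f_N^{LG}(\kappa_\theta(r)x)$ together with Theorem \ref{ThmTight} (run over all $T$) gives tightness in $C(\mathbb{R})$. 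Your Route~2 is, essentially verbatim, the paper's own discussion following the conjecture: extend Lemma \ref{MonCoup} and the tightness machinery to the full line ensemble, argue that subsequential limits inherit the non-intersecting Brownian Gibbs property of \cite{CorHamA} because $H(x)=e^x$ becomes a hard wall at the $N^{2/3}$ scale, match the diffusion coefficient via \eqref{simplification} and the one-point marginal via Proposition \ref{thm:BCDA}, and then appeal to an Airy-line-ensemble characterization theorem; the paper records exactly this list of ingredients (conditions (1), (2), (2'), (2'')) and cites \cite{DM20} for the known characterization results and the gap between them and what is available. Your Route~1 (multi-point Fredholm asymptotics from Whittaker measures) is a genuinely different strategy that the paper does not develop; the paper only gestures briefly at another integrable-flavored possibility via \cite{virag2020heat}. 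The one piece of the paper's supporting discussion you omit is the KPZ scaling theory computation in Section \ref{appendixKPZscaling}, but that is used only to corroborate the choice of $\kappa_\theta$ and $d_\theta$, not as a proof route, so its omission is harmless. In short, you correctly reproduce the paper's heuristic and add a plausible alternative integrable route, with no gaps given that a proof is not expected here.
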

Here we give some credence to the conjecture by appealing to our Gibbsian line ensemble interpretation of the log-gamma polymer and the results of this paper.

Firstly, we observe that by definition we have $\tilde{f}_N^{LG}(x) = 2^{-1/2}d_\theta(r)^{-1} f_N^{LG}(\kappa_\theta x)$ and so by Theorem \ref{ThmTight} we know that $\tilde{f}_N^{LG}$ form a tight sequence of random curves in $C(\mathbb{R})$. As we mentioned a few times before, the free energy $\log Z^{N,n}$ can be embedded as the lowest labeled curve in a discrete line ensemble that satisfies the $(H,H^{RW})$-Gibbs property. The results of the present paper show that the lowest indexed curve of this ensemble is tight; however, one expects that the full line ensemble is tight and moreover that all subsequential limits satisfy the non-intersecting Brownian Gibbs property introduced in \cite{CorHamA}. The latter Gibbs property is the natural limit of the $(H,H^{RW})$-Gibbs property we deal with, and roughly states that the local structure of paths is that of non-intersecting Brownian bridges with a fixed diffusion parameter. If we assume that the latter tightness statement for the full log-gamma line ensemble is true, then Proposition \ref{thm:BCDA} and Theorem \ref{ThmTight} would imply that any subsequential limit of $\tilde{f}_N^{LG}$ can be realized as the lowest indexed curve of a line ensemble $\mathcal{L} = \{ \mathcal{L}_i \}_{i =1}^\infty$ such that:
\begin{enumerate}
\item $\mathcal{L}$ satisfies the non-intersecting Brownian Gibbs property of \cite{CorHamA} (where the Brownian bridges have diffusion parameter $1$);
\item  the random variables $\mathcal{L}_1$ and $\mathcal{L}_1^{Airy}$ have the same one-point marginal distribution.
\end{enumerate}
The first property can be deduced from the fact that the diffusion coefficient becomes $\frac{\kappa_{\theta}(r)}{2d_{\theta}(r)^2}$ times the diffusion coefficient in Theorem \ref{ThmTight} (in view of the relation $\tilde{f}_N^{LG}(x) = 2^{-1/2}d_\theta(r)^{-1} f_N^{LG}(\kappa_\theta x)$), which equals to $1$ by \eqref{simplification}. The second property can be deduced from Proposition \ref{thm:BCDA} and the fact that for each $x \in \mathbb{R}$ the random variable $\mathcal{A}(x)$ has the Tracy-Widom distribution.
 
If one replaces condition (2) above with the stronger condition that
\begin{enumerate}
\item[(2')]  the random variables $\mathcal{L}_1$ and $\mathcal{L}_1^{Airy}$ have the same finite dimensional distribution,
\end{enumerate}
then \cite{DM20} showed that $\mathcal{L}$ is equal to the parabolic Airy line ensemble $\mathcal{L}^{Airy} = \{\mathcal{L}_i^{Airy}\}_{i = 1}^\infty$ of \cite{CorHamA}. It seems plausible that conditions (1) and (2) uniquely pinpoint $\mathcal{L}^{Airy}$; however, this has not been proved so far. If true the latter statement together with the (also unproved) tightness and Brownian Gibbs structure of all subsequential limits of the log-gamma line ensemble would establish Conjecture \ref{Conj} and its natural generalization to the full line ensemble. It may also be possible that Conjecture \ref{Conj} can be approached by the method announced recently in \cite{virag2020heat}.

Characterizing the Airy line ensemble by conditions of the form (1) and (2) dates back to \cite[Conjecture 3.2]{CorHamA}, which suggested yet another characterization by condition (1) above and the condition that
\begin{enumerate}
\item[(2'')]  $\mathcal{L}$ is {\em extremal}, {\em shift-invariant} and $\mathbb{E}[\mathcal{L}_1(0)] = \mathbb{E}[\mathcal{L}^{Airy}_1(0)]$.
\end{enumerate}
We refer the interested reader to \cite[Conjecture 3.2]{CorHamA}, \cite[Conjecture 1.7]{CorwinSun} and \cite[Conjecture 1.4]{DM20} for further discussion of the latter conjecture and definition of the terms ``extremal'' and ``shift-invariant''.

Overall, the above discussion suggests that the scaling we have performed in Conjecture \ref{Conj} ensures the correct diffusion parameter and one-point marginal of the limit, which gives some  credence to its validity. In Section \ref{appendixKPZscaling}, we give some further support for the validity of Conjecture \ref{Conj}, based on the KPZ scaling theory of the log-gamma polymer.
\subsection*{Outline}
Section \ref{Section4} contains a number of foundational definitions and results about $(H,H^{RW})$-Gibbsian line ensembles. Section \ref{Section4.1} provides a detailed definition of discrete $(H,H^{RW})$-Gibbsian line ensembles (Definition \ref{DefLGGP}). Section \ref{Section4.2} contains the statement and proof of our continuous grand monotone coupling result (Lemma \ref{MonCoup}). Finally, Section \ref{Section4.3} contains additional technical assumptions (see Definition \ref{AssHR}) that we make on the random walk Hamiltonian $H^{RW}$ to be able to strongly couple it to a Brownian bridge. A number of the highly technical (and at times, measure theoretic) proofs from Section \ref{Section4} are deferred to Section \ref{Section11}.

Section \ref{Section5} contains a restatement and proof of the first part (tightness) of Theorem \ref{informalintrotightness} (see Theorem \ref{PropTightGood} therein). In the course of that proof we utilize Lemmas \ref{PropSup}, \ref{PropSup2} and \ref{LemmaAP1}. These three key lemmas are proved in \ref{Section6}.
Section \ref{Section7} contains a restatement and proof of the second part (Brownian absolute continuity) of Theorem \ref{informalintrotightness} (see Theorem \ref{ACBB}).

Section \ref{Section8} finally pivots back to the log-gamma polymer. Section \ref{Section8.1} recalls some of the results in \cite{COSZ}, namely a Markov chain formulation for the image of the log-gamma polymer weight matrix under the geometric RSK correspondence. In Section \ref{Section8.2}, we prove that this Markov chain has the structure of a $(H,H^{RW})$-Gibbsian line ensemble. In particular, the polymer free energy arises as the lowest labeled curve. The complete proof of Theorem \ref{ThmTight} is given in Section \ref{Section8.3}. In Section \ref{appendixKPZscaling}, we explain the KPZ scaling theory for the log-gamma polymer and show that it is in agreement with Conjecture \ref{Conj} and the discussion that follows it in Section \ref{sec:asyBCDA} above.

\subsection*{Acknowledgments}
The authors wish to thank their anonymous referees for many instances of helpful feedback. I.C. is partially supported by the NSF grants DMS:1811143, DMS:1664650 and DMS:1937254, as well as a Packard Foundation Fellowship for Science and Engineering, a W. M. Keck Foundation Science and Engineering Grant, a Simons Foundation Fellowship and a Visiting Professorship at the Miller Institute for Basic Research in Science.. G.B. was partially supported by NSF grant DMS:1664650 as well. E.D. is partially supported by the Minerva Foundation Fellowship and NSF grant DMS:2054703.

%
\section{Gibbsian line ensembles}\label{Section4}
In this section, we introduce the notion of a discrete $(H,H^{RW})$-Gibbsian line ensemble and establish some of its properties.

%
\subsection{Discrete $(H,H^{RW})$-Gibbsian line ensembles}\label{Section4.1}
In this section, we introduce the notion of a discrete line ensemble and the $(H,H^{RW})$-Gibbs property. Our discussion will parallel that of \cite[Section 3.1]{Wu19}, which in turn goes back to \cite[Section 2.1]{CorHamK} and \cite[Section 3.1]{CD}.

\begin{definition}\label{YVec} For a finite set $J \subset \mathbb{Z}^2$ we let $Y(J)$ denote the space of functions $f: J \rightarrow \mathbb{R}$ with the Borel $\sigma$-algebra $\mathcal{D}$ coming from the natural identification of ${Y}(J)$ with $\mathbb{R}^{|J|}$. We think of an element of $Y(J)$ as a $|J|$-dimensional vector, whose coordinates are indexed by $J$. In particular, if $f(j) = x_j \in \mathbb{R}$ for $j \in J$, we will denote this vector by $(x_j: j \in J)$.
\end{definition}
If $a ,b \in \mathbb{Z}$ satisfy $a < b$, we let $\llbracket a,b \rrbracket$ denote the set $\{a, a+1, \dots, b\}$. We will use this $\llbracket\cdot,\cdot\rrbracket$ notation in general in this paper.
\begin{definition}\label{DefDLE}
Let $k_1, k_2, T_0, T_1 \in \mathbb{Z}$ with $k_1 \leq k_2$, $T_0 < T_1$  and denote $\Sigma = \llbracket k_1, k_2 \rrbracket$. A $\Sigma \times \llbracket T_0, T_1 \rrbracket$-{\em indexed discrete line ensemble $\mathfrak{L}$ }  is a random variable, defined on a probability space $(\Omega, \mathcal{B}, \mathbb{P})$, taking values in $Y(\Sigma \times \llbracket T_0, T_1 \rrbracket)$ as in Definition \ref{YVec}, such that $\mathfrak{L}$ is a $(\mathcal{B}, \mathcal{D})$-measurable function.
\end{definition}

The way we think of a $\Sigma \times \llbracket T_0, T_1 \rrbracket$-indexed discrete line ensemble $\mathfrak{L}$ is as a random $(k_2 - k_1 +1) \times (T_1 - T_0 + 1)$ matrix, whose rows are indexed by $\Sigma$ and whose columns are indexed by $\llbracket T_0, T_1 \rrbracket$. For $i \in \Sigma$ we let $L_i(\omega)$ denote the $i$-th row of this random matrix, and then $L_i$ is a $Y(\llbracket T_0, T_1 \rrbracket)$-valued random variable on $(\Omega, \mathcal{B}, \mathbb{P})$. Conversely, if we are given $k_2 - k_1 +1$ random $Y(\llbracket T_0, T_1 \rrbracket)$-valued random variables $L_{k_1}, \dots, L_{k_2}$, defined on the same probability space, then we can define a $\Sigma \times \llbracket T_0, T_1 \rrbracket$-indexed discrete line ensemble $\mathfrak{L}$ through $\mathfrak{L}(\omega)(i,j) = L_i(\omega)(j)$. Consequently, a $\Sigma \times \llbracket T_0, T_1 \rrbracket$-indexed discrete line ensemble $\mathfrak{L}$ is equivalent to having $k_2 - k_1 +1$ random $Y(\llbracket T_0, T_1 \rrbracket)$-valued random variables $L_{k_1}, \dots, L_{k_2}$ on the same probability space and depending on the context we will switch between these two formulations. For $i \in \Sigma$ and $j \in \llbracket T_0, T_1 \rrbracket$, we denote by $L_i(j): \Omega \rightarrow \mathbb{R}$ the function $L_i(j)(\omega) = L_i(\omega)(j)$ and observe that the latter are real random variables on $(\Omega, \mathcal{B}, \mathbb{P})$. If $A \subset \Sigma \times \llbracket T_0, T_1 \rrbracket$ we write $\mathfrak{L}\vert_{A} : \Omega \rightarrow Y(A)$ to denote the function $\mathfrak{L} \vert_A(\omega)(a) = \mathfrak{L}(\omega)(a)$ for $a \in A$. If $\llbracket a, b \rrbracket \subset \llbracket T_0, T_1 \rrbracket$ and $i \in \Sigma$, we denote the random vector $(L_i(a), \dots, L_i(b)) \in Y(\llbracket a, b\rrbracket)$ by $L_i\llbracket a, b \rrbracket$.

Observe that one can view an indexed set of real numbers $L(j)$ for $j \in \llbracket T_0, T_1 \rrbracket $ as a continuous curve by linearly interpolating the points $(j, L(j))$ -- see Figure \ref{LineEnsembleFig} for an illustration of such an interpolation for a discrete line ensemble. This allows us to define $ (\mathfrak{L}(\omega)) (i, s)$ for non-integer $s \in [T_0,T_1]$ by linear interpolation and to view discrete line ensembles as line ensembles in the sense of \cite{CorHamA}. Specifically, by linear interpolation we can extend $L_i(\omega)$ to a continuous curve on $[T_0, T_1]$ and in this way we can view it as a random variable on $(\Omega, \mathcal{B}, \mathbb{P})$ taking values in $(C[T_0,T_1], \mathcal{C})$  -- the space of continuous functions on $[T_0,T_1]$ with the uniform topology and Borel $\sigma$-algebra $\mathcal{C}$ (see e.g. Chapter 7 in \cite{Bill}). We will denote this random continuous curve by $L_i[T_0,T_1]$. We will often slightly abuse notation and suppress the $\omega$ from the above notation as one does for usual random variables, writing for example  $\{L_i(j) \in A\}$ in place of either $\{\omega \in \Omega: L_i(j)(\omega) \in A\}$ or $\{\omega \in \Omega: L_i(\omega)(j) \in A\}$ (notice that these sets are the same and in general the definitions are consistent so that the suppression of $\omega$ does not lead to any ambiguity).

\begin{definition}\label{DefBridge} Let $H^{RW}: \mathbb{R} \rightarrow \mathbb{R}$ be a continuous function and $\g(x) = e^{-H^{RW}(x)}$. We assume that $\g(x)$ is bounded and $\int_{\mathbb{R}} \g(x) dx = 1$. Let $Y_1, Y_2, \dots$ be i.i.d. random variables with density $\g(\cdot)$ and let $S^x_n =x + Y_1 + \cdots + Y_n$ denote the random walk with jumps $Y_m$ started from $x$. We denote by $\g^x_n(\cdot)$ the density of $S^x_n$ and note that
\begin{equation}\label{RWN}
\g^x_n(y) = \g^0_n(y - x) = \int_\mathbb{R} \cdots \int_{\mathbb{R}} \g(y_1) \cdots \g(y_{n-1}) \cdot \g(y - x - y_1 - \cdots - y_{n-1}) dy_1 \cdots dy_{n-1}.
\end{equation}
Given $x, y \in \mathbb{R}$ and $a, b\in \mathbb{Z}$ with $a < b$, we let $S(x,y; a, b) = \{ S_m(x,y;a,b) \}_{m = a}^b$ denote the process with the law of $\{S_m^x\}_{m = 0}^{b-a}$, conditioned so that $S_{b-a}^x = y$. We call this process an $H^{RW}$ {\em random walk bridge} between the points $(a,x)$ and $(b,y)$. Explicitly, viewing $S(x,y; a, b)$ as a random vector taking values in $Y(\llbracket a,b \rrbracket)$, we have that its distribution is given by the density
\begin{equation}\label{RWB}
\g(y_a, \dots, y_b; x,y; a,b) = \frac{\delta_x(y_a) \cdot \delta_y(y_b) \cdot \prod_{m = a+1}^{b} \g(y_m - y_{m-1}) }{\g_{b-a}^x(y)},
\end{equation}
 where we recall that $\delta_z$ is the Dirac delta measure at $z$. As before, we can also view $S(x,y; a, b)$ as a random continuous curve between the points $(a,x)$ and $(b,y)$ once we linearly interpolate the points $(m, S_m(x,y; a, b))$ for $m \in \llbracket a, b \rrbracket$.
\end{definition}

\begin{definition}\label{Pfree}
Let $H^{RW}$ be as in Definition \ref{DefBridge}. Fix $k_1 \leq k_2$, $a < b$ with $k_1, k_2, a,b \in \mathbb{Z}$ and two vectors $\vec{x}, \vec{y} \in \mathbb{R}^{k_2 - k_1 + 1}$. A $\llbracket k_1, k_2 \rrbracket \times \llbracket a, b\rrbracket$-indexed discrete line ensemble $L_{k_1}, \dots, L_{k_2}$ is called a {\em free $H^{RW}$ bridge line ensemble} with entrance data $\vec{x}$ and exit data $\vec{y}$ if its law $\mathbb{P}_{H^{RW}}^{k_1, k_2, a, b, \vec{x}, \vec{y}}$ is that of $k_2 - k_1 + 1$ independent $H^{RW}$ random walk bridges indexed by $\llbracket k_1, k_2 \rrbracket$ with the $i$-th bridge $L_{k_1 +i-1}$ being between the points $(a, x_i)$ and $(b, y_i)$ for $i \in \llbracket 1, k_2 -k_1 + 1\rrbracket$, see \eqref{RWB}. We write $\mathbb{E}_{H^{RW}}^{k_1, k_2, a, b, \vec{x}, \vec{y}}$ for the expectation with respect to this measure. When the parameters $k_1, k_2, a,b, \vec{x}, \vec{y}$ are clear from context we will drop them from the notation and simply write $\mathbb{P}_{H^{RW}}$ and $\mathbb{E}_{H^{RW}}$. Observe that the measure remains unchanged upon replacing $(k_1, k_2)$ with $(k_1 + m, k_2+m)$ for some $m \in \mathbb{Z}$, except for a reindexing of the $L_i$'s.

An {\em interaction Hamiltonian} $H$ is defined to be any continuous function $H: [-\infty, \infty) \rightarrow [0, \infty)$ such that $H(- \infty) = 0$. Suppose we are given an interaction Hamiltonian $H$ and two functions $f: \llbracket a, b \rrbracket \rightarrow \mathbb{R} \cup \{ \infty \}$ and $g : \llbracket a,b \rrbracket \rightarrow \mathbb{R} \cup \{-\infty\}$. We define the $\llbracket k_1, k_2 \rrbracket \times \llbracket a, b \rrbracket$-indexed $(H, H^{RW})$ line ensemble with entrance data $\vec{x}$ and exit data $\vec{y}$ and boundary data $(f,g)$ to be the law $\mathbb{P}_{H,H^{RW}}^{k_1, k_2, a,b, \vec{x}, \vec{y}, f, g}$ on $L_{k_1}, \dots, L_{k_2} : \llbracket a, b \rrbracket \rightarrow \mathbb{R}$ given in terms of the following Radon-Nikodym derivative (with respect to the free $H^{RW}$ bridge line ensemble $\mathbb{P}_{H^{RW}}^{k_1, k_2, a, b, \vec{x}, \vec{y}}$):
\begin{equation}\label{RND}
 \frac{d \mathbb{P}_{H,H^{RW}}^{k_1, k_2, a ,b, \vec{x}, \vec{y},f,g}}{d\mathbb{P}_{H^{RW}}^{k_1, k_2, a, b, \vec{x}, \vec{y}}} (L_{k_1}, \dots, L_{k_2}) = \frac{ W_{H}^{k_1, k_2, a ,b,f,g} (L_{k_1}, \dots, L_{k_2}) }{Z_{H,H^{RW}}^{k_1, k_2, a ,b, \vec{x}, \vec{y},f,g}}.
\end{equation}
Here, we call $L_{k_1 - 1} = f$ and $L_{k_2 + 1} = g$ and define the {\em Boltzmann weight}
\begin{equation}\label{WH}
W_{H}^{k_1, k_2, a ,b,f,g} (L_{k_1}, \dots, L_{k_2}) : = \exp \left( - \sum_{i = k_1 - 1}^{k_2}  \sum_{ m = a}^{b-1} { H} (L_{i + 1}(m + 1) - L_{i}(m)) \right),
\end{equation}
and the {\em normalizing constant}
\begin{equation}\label{AccProb}
Z_{H,H^{RW}}^{k_1, k_2, a ,b, \vec{x}, \vec{y},f,g} := \mathbb{E}_{H^{RW}}^{k_1, k_2, a, b, \vec{x}, \vec{y}} \Big[ W_{H}^{k_1, k_2, a ,b,f,g} (L_{k_1}, \dots, L_{k_2})  \Big],
\end{equation}
where we recall that on the right side in \eqref{AccProb} the vectors $L_{k_1}, \dots, L_{k_2}$ are distributed according to the measure $\mathbb{P}_{H^{RW}}^{k_1, k_2, a, b, \vec{x}, \vec{y}}$. Notice that, by our assumption on $f$ and $g$, we have that the argument of $H$ in \eqref{WH} is always in $[-\infty, \infty)$ and so $W_{ H}^{k_1, k_2, a ,b,f,g}  \in (0,1]$ almost surely, which implies that $Z_{H,H^{RW}}^{k_1, k_2, a ,b, \vec{x}, \vec{y},f,g} \in (0,1]$ and we can indeed divide by this quantity in (\ref{RND}). We write the expectation with respect to $\mathbb{P}_{H,H^{RW}}^{k_1, k_2, a ,b, \vec{x}, \vec{y},f,g}$ as $\mathbb{E}_{H,H^{RW}}^{k_1, k_2, a ,b, \vec{x}, \vec{y},f,g}$.
\end{definition}

The key definition of this section is the following (partial) $(H, H^{RW})$-Gibbs property. The term {\em (partial)} means that we do not allow resampling of highest labeled curve $L_{\topc}$. The full Gibbs property would allow for resampling that without changing the overall measure. This partial Gibbs property is nice because it is preserved under restricting curves labeled by $1,\ldots, \topc$ to curves labeled by $1,\ldots \topc'$ for $\topc'<\topc$. Since we will be entirely making use of this partial Gibbs property, we will drop the term {\em (partial)} throughout the paper, besides in the below definition.

\begin{definition}\label{DefLGGP}
Let $H^{RW}$ and $H$ be as in Definition \ref{Pfree}. Fix $\topc \geq 1$, two integers $T_0 < T_1$ and set $\Sigma = \llbracket 1,\topc \rrbracket$. Suppose that $\mathbb{P}$ is the probability distribution of a $\Sigma\times \llbracket T_0, T_1 \rrbracket$-indexed discrete line ensembles $\mathfrak{L} = (L_1, \dots, L_\topc)$ and adopt the convention that $L_0 = \infty$. We say that $\mathbb{P}$ satisfies the {\em(partial) $(H, H^{RW})$-Gibbs property} if the following holds. Fix any $k_1, k_2 \in \llbracket 1, \topc-1\rrbracket$ with $k_1 \leq k_2$ and $a,b \in \llbracket T_0, T_1 \rrbracket$ with $a < b$ and set $\mathbf{k} = \llbracket k_1, k_2 \rrbracket$. Then, we have the following distributional equality
$$\mbox{Law} \left( \mathfrak{L}\vert_{\mathbf{k}  \times \llbracket a, b \rrbracket} \mbox{ conditional on } \mathfrak{L}\vert_{\Sigma \times \llbracket T_0, T_1 \rrbracket \setminus \mathbf{k}  \times \llbracket a + 1, b - 1 \rrbracket} \right) =\mathbb{P}_{H, H^{RW}}^{k_1, k_2, a ,b, \vec{x}, \vec{y},f,g} .$$
Here, we have set $f = L_{k_1 - 1}$, $g = L_{k_2 + 1}$, $\vec{x} = (L_{k_1}(a), \dots L_{k_2}(a))$ and $\vec{y} = (L_{k_1}(b), \dots L_{k_2}(b))$.

Let us elaborate on what the above statement means. A $\Sigma \times \llbracket T_0, T_1 \rrbracket$-indexed line ensemble $\mathfrak{L}$ enjoys the $(H, H^{RW})$-Gibbs property if and only if for any $\mathbf{k}  = \llbracket k_1, k_2 \rrbracket \subset \llbracket 1, \topc-1 \rrbracket$ and $\llbracket a, b\rrbracket \subset \llbracket T_0, T_1 \rrbracket$ and any bounded Borel-measurable function $F$ from $Y( \mathbf{k}  \times \llbracket a, b \rrbracket)$ (here $Y$ is as in Definition \ref{YVec}) to $\mathbb{R}$, we have $\mathbb{P}$-almost surely
\begin{equation}\label{GibbsEq}
\mathbb{E} \Big[ F \big( \mathfrak{L}\vert_{\mathbf{k}  \times \llbracket a, b \rrbracket} \big)  \big{\vert} \mathcal{F}_{ext} (\mathbf{k}  \times \llbracket a + 1, b - 1 \rrbracket) \Big]  = \mathbb{E}_{H,H^{RW}}^{k_1, k_2, a ,b, \vec{x}, \vec{y},f,g}  \big[F( \tilde{\mathfrak{L}}) \big],
\end{equation}
where $\vec{x}, \vec{y}, f$ and $g$ are defined in the previous paragraph and the $\sigma$-algebra $\mathcal{F}_{ext}$ is defined as
\begin{equation}\label{GibbsCond}
 \mathcal{F}_{ext} (\mathbf{k}  \times \llbracket a + 1, b - 1 \rrbracket) := \sigma \left( L_i(s): (i,s) \in \Sigma \times \llbracket T_0, T_1 \rrbracket \setminus \mathbf{k}  \times  \llbracket a + 1, b - 1 \rrbracket \right).
\end{equation}
On the right side of (\ref{GibbsEq}), the variable $\tilde{\mathfrak{L}}$ has law $\mathbb{P}_{H,H^{RW}}^{k_1, k_2, a ,b, \vec{x}, \vec{y},f,g} $.
\end{definition}
\begin{remark} \label{CondGibbs} It is worth mentioning that the right side of (\ref{GibbsEq}) is measurable with respect to  $\mathcal{F}_{ext} (\mathbf{k}  \times \llbracket a + 1, b - 1 \rrbracket)$ and thus equation (\ref{GibbsEq}) makes sense. Indeed, we will show in Lemma \ref{ContinuousGibbsCond} that for any bounded measurable function $F$ on $Y(\llbracket k_1, k_2 \rrbracket \times \llbracket a, b\rrbracket)$ we have that $\mathbb{E}_{H,H^{RW}}^{k_1, k_2, a ,b, \vec{x}, \vec{y},f,g}  \big[F( \tilde{\mathfrak{L}}) \big]$ is a measurable function of $(\vec{x}, \vec{y}, f, g) \in Y(V_L) \times Y(V_R) \times Y(V_T) \times Y(V_B) $, where $V_L =  \llbracket k_1, k_2 \rrbracket  \times \{a \}$, $V_R = \llbracket k_1, k_2 \rrbracket  \times \{b \}$, $V_T =  \{k_1 - 1\} \times \llbracket a,b \rrbracket$ and $V_B = \{k_2 + 1 \} \times \llbracket a, b\rrbracket$ (the $L$, $R$, $T$ and $B$ stand for left, right, top and bottom, respectively). In particular, the right side of (\ref{GibbsEq}) is measurable with respect to $\sigma( L_i(j): (i,j) \in V_L \cup V_R \cup V_T \cup V_B) \subset  \mathcal{F}_{ext} (\mathbf{k}  \times \llbracket a + 1, b - 1 \rrbracket) $.
\end{remark}
\begin{remark} \label{restrict} From Definition \ref{DefLGGP}, it is clear that for $\topc^\prime \leq \topc$ and $\llbracket a,b \rrbracket \subset \llbracket T_0, T_1\rrbracket$ we have that the induced law on $L_i(j)$ for $(i,j) \in \llbracket 1, \topc^\prime \rrbracket \times \llbracket a,b \rrbracket $ from $\mathbb{P}$ also satisfies the $(H, H^{RW})$-Gibbs property, as an $\llbracket 1, \topc^\prime \rrbracket \times \llbracket a, b \rrbracket$-indexed line ensemble. Also, if $\topc = 1$, then the conditions in Definition \ref{DefLGGP} are void, meaning that any $\{1 \} \times \llbracket T_0, T_1\rrbracket$-indexed line ensemble satisfies the $(H, H^{RW})$-Gibbs property.
\end{remark}

In the remainder of this section, we present two foundational results, whose proofs are postponed to Section \ref{Section11.1}. Lemma \ref{S4AltGibbs} provides another formulation of the $(H, H^{RW})$-Gibbs property, and Lemma \ref{S4WeakGibbs} shows that the  $(H, H^{RW})$-Gibbs property survives weak limits.

\begin{lemma}\label{S4AltGibbs} Let $H^{RW}$ and $H$ be as in Definition \ref{Pfree}. Fix $\topc \geq 2$, two integers $T_0 < T_1$ and set $\Sigma = \llbracket 1,\topc \rrbracket$. Define sets $A =  \llbracket 1, \topc-1 \rrbracket \times \llbracket T_0 + 1, T_1 - 1\rrbracket$ and $B =\Sigma\times \llbracket T_0, T_1 \rrbracket \setminus A$.  Suppose that $\mathbb{P}$ is a probability distribution on  a $\Sigma\times \llbracket T_0, T_1 \rrbracket$-indexed discrete line ensemble $\mathfrak{L} = (L_1, \dots, L_\topc)$. Then, the following two statements are equivalent:
\begin{enumerate}
\item $\mathbb{P}$ satisfies the $(H, H^{RW})$-Gibbs property;
\item For any bounded continuous functions $f_{i,j}$ on $\mathbb{R}$ with $(i,j) \in \llbracket 1, \topc \rrbracket \times \llbracket T_0 , T_1 \rrbracket$, we have
\begin{equation}\label{S4towerFun}
\begin{split}
& \mathbb{E}\Bigg[\prod_{i = 1}^\topc \prod_{j = T_0}^{T_1} f_{i,j}(  L_i(j)  )\Bigg]  = \\
& \mathbb{E} \Bigg[ \prod_{(i,j) \in B}f_{i,j}( L_i(j) ) \cdot  \mathbb{E}_{H, H^{RW}}^{1, \topc-1, T_0, T_1, \vec{x}, \vec{y},\infty,L_\topc\llbracket T_0,T_1\rrbracket} \bigg[\prod_{(i,j) \in A} f_{i,j}( \tilde{L}_i(j)  ) \bigg]  \Bigg],
\end{split}
\end{equation}
where $\vec{x} =  (L_{1}(T_0), \dots, L_{\topc-1}(T_0))$, $\vec{y} = (L_{1}(T_1), \dots, L_{\topc-1}(T_1))$ and $\tilde{\mathfrak{L}} = (\tilde{L}_1, \dots, \tilde{L}_{\topc-1})$ is distributed according to
 $ \mathbb{P}_{H, H^{RW}}^{1, \topc-1, T_0, T_1, \vec{x}, \vec{y},\infty,L_\topc\llbracket T_0,T_1\rrbracket} $.
\end{enumerate}
Moreover, if $\vec{z} \in [-\infty, \infty)^{T_1 - T_0 +1}$ and $\vec{x}, \vec{y} \in \mathbb{R}^{\topc-1}$, then $\mathbb{P}_{H, H^{RW}}^{1, \topc-1, T_0, T_1, \vec{x}, \vec{y},\infty,\vec{z}}$  from Definition \ref{Pfree} satisfies the $(H,H^{RW})$-Gibbs property in the sense that (\ref{GibbsEq}) holds for all $1 \leq k_1 \leq k_2 \leq \topc-1$, $T_0 \leq a < b \leq T_1$ and bounded Borel-measurable $F$ on $Y( \llbracket k_1, k_2 \rrbracket \times \llbracket a, b \rrbracket)$.
\end{lemma}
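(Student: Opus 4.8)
The plan is to handle the ``moreover'' clause first, since it is a self-contained consistency statement about the bridge measure and is used in the proof of $(2)\Rightarrow(1)$; then $(1)\Rightarrow(2)$ is a one-line application of the Gibbs property, and $(2)\Rightarrow(1)$ follows by combining a functional monotone class argument with the ``moreover'' clause. Set $\nu := \mathbb{P}_{H,H^{RW}}^{1,\topc-1,T_0,T_1,\vec{x},\vec{y},\infty,\vec{z}}$. Since $\g(x)=e^{-H^{RW}(x)}$ is everywhere strictly positive (hence so are its convolution powers $\g^x_n$), and since the Boltzmann weight in \eqref{WH} lies in $(0,1]$ so that the normalizing constant in \eqref{RND} is positive, the measure $\nu$ admits an explicit density on $Y(\llbracket 1,\topc-1\rrbracket\times\llbracket T_0,T_1\rrbracket)$: a product of one-step jump densities $\g(L_i(m)-L_i(m-1))$, Dirac masses $\delta_{x_i}(L_i(T_0))\,\delta_{y_i}(L_i(T_1))$ pinning the outer endpoints, and the factor $\exp\big(-\sum_{i=0}^{\topc-1}\sum_{m=T_0}^{T_1-1}H(L_{i+1}(m+1)-L_i(m))\big)$ with the conventions $L_0\equiv\infty$, $L_\topc\equiv\vec z$. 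Fixing $1\le k_1\le k_2\le\topc-1$ and $T_0\le a<b\le T_1$ and disintegrating this density over the coordinates outside $\llbracket k_1,k_2\rrbracket\times\llbracket a+1,b-1\rrbracket$, the only factors retaining dependence on the remaining coordinates are $\prod_{i=k_1}^{k_2}\prod_{m=a+1}^{b}\g(L_i(m)-L_i(m-1))$ and $\exp\big(-\sum_{i=k_1-1}^{k_2}\sum_{m=a}^{b-1}H(L_{i+1}(m+1)-L_i(m))\big)$, in which $L_i(a),L_i(b)$ for $k_1\le i\le k_2$ and the curves $L_{k_1-1},L_{k_2+1}$ appear only as frozen parameters. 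Comparing with Definition \ref{Pfree}, this is, up to its own normalization, exactly the density of $\mathbb{P}_{H,H^{RW}}^{k_1,k_2,a,b,\vec{x}',\vec{y}',f',g'}$ with $\vec x'=(L_{k_1}(a),\dots,L_{k_2}(a))$, $\vec y'=(L_{k_1}(b),\dots,L_{k_2}(b))$, $f'=L_{k_1-1}$, $g'=L_{k_2+1}$; hence $\nu$ satisfies \eqref{GibbsEq} for that sub-box.

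For $(1)\Rightarrow(2)$, apply \eqref{GibbsEq} with $k_1=1$, $k_2=\topc-1$, $a=T_0$, $b=T_1$, so that $f=L_0=\infty$, $g=L_\topc\llbracket T_0,T_1\rrbracket$, and $\mathcal{F}_{ext}(\mathbf{k}\times\llbracket T_0+1,T_1-1\rrbracket)=\sigma(L_i(j):(i,j)\in B)$. Taking $F(\tilde{\mathfrak{L}})=\prod_{(i,j)\in A}f_{i,j}(\tilde L_i(j))$ (bounded and continuous), multiplying both sides of \eqref{GibbsEq} by the $\mathcal{F}_{ext}$-measurable variable $\prod_{(i,j)\in B}f_{i,j}(L_i(j))$, and taking expectations, the tower property produces \eqref{S4towerFun} verbatim (using that $\prod_{i=1}^{\topc}\prod_{j=T_0}^{T_1}f_{i,j}(L_i(j))$ factors as the $B$-product times the $A$-product).

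For $(2)\Rightarrow(1)$ I would argue in two steps. First, \eqref{S4towerFun} says that $\mathbb{E}\big[\Phi\cdot\prod_{(i,j)\in A}f_{i,j}(L_i(j))\big]=\mathbb{E}[\Phi\cdot G]$ for all bounded continuous $f_{i,j}$, where $\Phi=\prod_{(i,j)\in B}f_{i,j}(L_i(j))$ and $G=\mathbb{E}_{H,H^{RW}}^{1,\topc-1,T_0,T_1,\vec x,\vec y,\infty,L_\topc\llbracket T_0,T_1\rrbracket}\big[\prod_{(i,j)\in A}f_{i,j}(\tilde L_i(j))\big]$ is $\mathcal{F}_{ext}(\mathbf{k}\times\llbracket T_0+1,T_1-1\rrbracket)$-measurable by Remark \ref{CondGibbs}. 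The variables $\Phi$ of this form are closed under multiplication and generate $\sigma(L_i(j):(i,j)\in B)=\mathcal{F}_{ext}(\mathbf{k}\times\llbracket T_0+1,T_1-1\rrbracket)$, so a functional monotone class argument upgrades the identity to $\mathbb{E}[F'(\mathfrak{L}|_A)\mid\mathcal{F}_{ext}]=\mathbb{E}_{H,H^{RW}}^{1,\topc-1,T_0,T_1,\vec x,\vec y,\infty,L_\topc\llbracket T_0,T_1\rrbracket}[F'(\tilde{\mathfrak{L}}|_A)]$ for all bounded Borel $F'$ on $Y(A)$; since the outer endpoint coordinates $L_i(T_0),L_i(T_1)$ for $1\le i\le\topc-1$ are $\mathcal{F}_{ext}$-measurable and deterministically equal to the entrance/exit data of $\tilde{\mathfrak{L}}$, this is equivalent to \eqref{GibbsEq} for the full box $(1,\topc-1,T_0,T_1)$. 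Second, fix a general sub-box $1\le k_1\le k_2\le\topc-1$, $T_0\le a<b\le T_1$, and observe $\mathcal{F}_{ext}(\llbracket k_1,k_2\rrbracket\times\llbracket a+1,b-1\rrbracket)\supseteq\mathcal{F}_{ext}(\llbracket 1,\topc-1\rrbracket\times\llbracket T_0+1,T_1-1\rrbracket)$. Conditioning on the smaller $\sigma$-algebra (under which, by the first step, $\mathfrak{L}$ restricted to $\llbracket 1,\topc-1\rrbracket\times\llbracket T_0,T_1\rrbracket$ has law $\nu=\mathbb{P}_{H,H^{RW}}^{1,\topc-1,T_0,T_1,\vec x,\vec y,\infty,L_\topc\llbracket T_0,T_1\rrbracket}$), then applying the already-established ``moreover'' clause for $\nu$ on that sub-box and composing with the tower property, one gets \eqref{GibbsEq} for $(k_1,k_2,a,b)$ under $\mathbb{P}$. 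This yields the $(H,H^{RW})$-Gibbs property and finishes $(2)\Rightarrow(1)$.

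The main obstacle is the ``moreover'' clause: converting the formal density/disintegration computation into a rigorous conditional-law statement in the presence of the Dirac-pinned outer endpoints (which may coincide with the sub-box endpoints $a$ or $b$) and of possibly infinite bottom data $\vec z$ (harmless since $H(-\infty)=0$, but it must be accounted for), and, relatedly, checking that the right-hand sides of \eqref{GibbsEq} and \eqref{S4towerFun} are measurable with respect to the appropriate $\sigma$-algebras (cf. Remark \ref{CondGibbs} and Lemma \ref{ContinuousGibbsCond}). This is precisely the measure-theoretic bookkeeping deferred to Section \ref{Section11.1}; the monotone-class upgrade and the tower-property composition in $(2)\Rightarrow(1)$, as well as $(1)\Rightarrow(2)$, are by contrast entirely routine.
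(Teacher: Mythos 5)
Your proposal is correct and the $(1)\Rightarrow(2)$ direction follows the paper, but your $(2)\Rightarrow(1)$ takes a genuinely different and more structural route than the paper's.

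In the paper, $(2)\Rightarrow(1)$ is established by (a) approximating the continuous $f_{i,j}$ by indicator products to pass from \eqref{S4towerFun} to a CDF identity, (b) reducing the claim to the key identity \eqref{GibbsRed2} which asserts the sub-box conditional equality at the level of indicator functions, and (c) verifying \eqref{GibbsRed2} \emph{for each sub-box directly} by writing down explicit iterated-integral representations of both sides (using the density form of the bridge measure on the full box) and matching them via Fubini and cancellation. The ``moreover'' clause is then proved last, by repeating essentially the same integral manipulation with the random boundary marginal $\mu_B$ replaced by a point mass at the deterministic $(\vec x,\vec y,\vec z)$. Your proposal reverses this order: you prove the ``moreover'' clause first, as a self-contained disintegration statement about the explicit bridge law $\nu$, then (2) combined with a monotone-class upgrade gives \eqref{GibbsEq} for the single full box $(1,\topc-1,T_0,T_1)$, and finally you descend to an arbitrary sub-box not by re-integrating but by a ``conditioning twice'' argument: since $\mathcal{F}_{ext}(\textrm{full box})\subseteq\mathcal{F}_{ext}(\textrm{sub-box})$ and the regular conditional law of $\mathfrak L|_A$ given $\mathcal{F}_{ext}(\textrm{full box})$ is $\nu$, the regular conditional law of $\mathfrak L|_C$ given the larger $\sigma$-algebra is obtained by further conditioning under $\nu$, which is exactly what the ``moreover'' clause computes. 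This trades the paper's explicit multi-variable Fubini computation for a standard iterated-conditioning argument, which is tidier, although it does require carefully stating the regular-conditional-distribution facts you lean on. One technicality you compress slightly: passing from \eqref{S4towerFun} (continuous test functions on both $A$ and $B$) to \eqref{GibbsEq} (bounded Borel $F$ and arbitrary $\mathcal{F}_{ext}$-measurable events) requires \emph{two} separate upgrades --- a monotone-class argument on the $B$-side and a continuous-to-Borel approximation on the $A$-side (the paper does the latter explicitly in its Step 3); your phrase ``a functional monotone class argument'' folds both into one breath. And as you yourself flag, a rigorous proof of the ``moreover'' clause still amounts to roughly the same density/disintegration computation the paper carries out (its Step 7); you have not avoided that work, only avoided redoing it for each sub-box of the random ensemble, which is where your approach genuinely saves effort.
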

Lemma \ref{S4AltGibbs} provides a sufficient condition for a line ensemble to satisfy the $(H,H^{RW})$-Gibbs property, which is easier to verify in practice. In addition, it shows that for any deterministic vectors $\vec{x}, \vec{y} \in \mathbb{R}^{\topc-1}$ and $\vec{z} \in  [-\infty, \infty)^{T_1 - T_0 +1}$ the distribution $\mathbb{P}_{H, H^{RW}}^{1, \topc-1, T_0, T_1, \vec{x}, \vec{y},\infty,\vec{z}}$ satisfies the $(H,H^{RW})$-Gibbs property. The latter fact for $\vec{z} \in Y(\{\topc\} \times \llbracket T_0, T_1 \rrbracket)$ (i.e. when all the entries of $\vec{z}$ are finite) follows from (\ref{S4towerFun}), and the content of the second part of the lemma is that one can replace some (or all) of the entries of $\vec{z}$ by $-\infty$, while still retaining the Gibbs property. Finally, the lemma shows a certain self-consistency of Definition \ref{DefLGGP}, that we explain here. If a $\Sigma \times   \llbracket T_0, T_1 \rrbracket$-indexed line ensemble $\mathfrak{L}$ satisfies (\ref{GibbsEq}) with $k_1 = 1, k_2 = \topc-1, a = T_0$ and $b= T_1$, then it satisfies (\ref{S4towerFun}) and so, by Lemma \ref{S4AltGibbs}, we conclude that (\ref{GibbsEq}) holds for any choice of $k_1, k_2, a,b$. In plain words, satisfying the conditional distribution equality of (\ref{GibbsEq}) for a rectangular box $K \times \llbracket a,b\rrbracket$ implies that it holds for all rectangular sub-boxes, and this consistency of the definition means that to prove that a line ensemble satisfies the $(H,H^{RW})$-Gibbs property it suffices to check it for the largest box, which is essentially the first statement of Lemma \ref{S4AltGibbs}.

\begin{lemma}\label{S4WeakGibbs} Let $H$ and $H^{RW}$ be as in Definition \ref{Pfree}. Fix $\topc \geq 2$, two integers $T_0 < T_1$ and set $\Sigma = \llbracket 1, \topc \rrbracket$. Suppose that $\mathbb{P}_n$ is a sequence of probability distributions on  $\Sigma\times \llbracket T_0, T_1 \rrbracket$-indexed discrete line ensembles, such that for each $n$ we have that $\mathbb{P}_n$ satisfies the $(H,H^{RW})$-Gibbs property. If $\mathbb{P}_n$ converges weakly to a measure $\mathbb{P}$, then $\mathbb{P}$ also satisfies the $(H,H^{RW})$-Gibbs property.
\end{lemma}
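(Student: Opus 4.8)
The plan is to use the equivalent characterization of the Gibbs property given in Lemma \ref{S4AltGibbs}, namely the test-function identity \eqref{S4towerFun}, since this formulation is built entirely out of expectations of bounded continuous functions and is therefore well-suited to passing through a weak limit. So the first step is to fix arbitrary bounded continuous functions $f_{i,j}$ on $\mathbb{R}$ indexed by $(i,j) \in \llbracket 1,\topc\rrbracket \times \llbracket T_0, T_1\rrbracket$, and to show that the two sides of \eqref{S4towerFun} agree when the expectation is taken under $\mathbb{P}$. Since by hypothesis each $\mathbb{P}_n$ satisfies the Gibbs property, Lemma \ref{S4AltGibbs} gives us that \eqref{S4towerFun} holds with $\mathbb{E}$ replaced by $\mathbb{E}_n$ (the expectation under $\mathbb{P}_n$). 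The left-hand side of \eqref{S4towerFun} is the $\mathbb{P}_n$-expectation of the bounded continuous functional $\mathfrak{L} \mapsto \prod_{i,j} f_{i,j}(L_i(j))$ on $Y(\Sigma \times \llbracket T_0, T_1\rrbracket) \cong \mathbb{R}^{|\Sigma|(T_1-T_0+1)}$, so $\mathbb{P}_n \Rightarrow \mathbb{P}$ immediately gives convergence of the left-hand sides.

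The substance is therefore in the right-hand side of \eqref{S4towerFun}: we must show that the functional
$$
\Phi(\mathfrak{L}) := \prod_{(i,j)\in B} f_{i,j}(L_i(j)) \cdot \mathbb{E}_{H,H^{RW}}^{1,\topc-1,T_0,T_1,\vec{x},\vec{y},\infty, L_\topc\llbracket T_0, T_1\rrbracket}\Bigg[ \prod_{(i,j)\in A} f_{i,j}(\tilde{L}_i(j)) \Bigg]
$$
is a bounded continuous function of $\mathfrak{L}$ (where $\vec{x} = (L_1(T_0),\dots,L_{\topc-1}(T_0))$ and $\vec{y}=(L_1(T_1),\dots,L_{\topc-1}(T_1))$ are read off from $\mathfrak{L}$), because then $\mathbb{P}_n \Rightarrow \mathbb{P}$ forces $\mathbb{E}_n[\Phi] \to \mathbb{E}[\Phi]$, and combining with the left-hand side convergence we conclude that \eqref{S4towerFun} holds under $\mathbb{P}$; Lemma \ref{S4AltGibbs} then yields the Gibbs property for $\mathbb{P}$. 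Boundedness of $\Phi$ is clear since each $f_{i,j}$ is bounded and the inner expectation is an average of a product of such. For continuity, the first factor $\prod_{(i,j)\in B} f_{i,j}(L_i(j))$ is obviously continuous in $\mathfrak{L}$. The inner expectation depends on $\mathfrak{L}$ only through the boundary data $(\vec{x},\vec{y},\infty, L_\topc\llbracket T_0,T_1\rrbracket)$, and each of these coordinates (namely the entrance/exit values and the bottom bounding curve $L_\topc$) is a continuous — indeed linear, coordinate-projection — function of $\mathfrak{L} \in \mathbb{R}^{|\Sigma|(T_1-T_0+1)}$. So the needed fact is that the map sending boundary data $(\vec{x},\vec{y},g)$ (with $f \equiv \infty$ fixed) to $\mathbb{E}_{H,H^{RW}}^{1,\topc-1,T_0,T_1,\vec{x},\vec{y},\infty,g}\big[\prod_{(i,j)\in A} f_{i,j}(\tilde L_i(j))\big]$ is continuous. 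This is precisely the continuity-in-boundary-data assertion of the ``continuous'' part of the continuous grand monotone coupling, i.e. it follows from (a special case of, or the same analysis underlying) Lemma \ref{ContinuousGibbsCond} / Lemma \ref{MonCoup}; concretely, one writes the inner expectation via the Radon-Nikodym formula \eqref{RND}–\eqref{AccProb} as a ratio of integrals against the free-bridge density \eqref{RWB}, where both numerator and denominator are integrals of a bounded continuous integrand ($W_H$ times $\prod f_{i,j}$, respectively $W_H$ alone) depending continuously on $(\vec x, \vec y, g)$ — and continuity of $W_H$ in $g$ uses that $H$ is continuous on $[-\infty,\infty)$ — against a fixed reference measure, so dominated convergence gives joint continuity, while the denominator $Z_{H,H^{RW}} \in (0,1]$ stays bounded away from $0$ locally (this is where one must be slightly careful, but it follows from the same continuity plus positivity of $Z$).

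The main obstacle I anticipate is exactly this measurability-and-continuity of the conditional expectation map $(\vec x,\vec y,f,g)\mapsto \mathbb{E}_{H,H^{RW}}^{\cdots}[F(\tilde{\mathfrak L})]$ — the subtlety being the presence of the infinite boundary value ($f=\infty$, and more generally entries of $\vec z$ equal to $-\infty$ in the setting of Lemma \ref{S4AltGibbs}) and the need to ensure the normalizing constant does not degenerate in the limit. However, the excerpt explicitly flags (Remark \ref{CondGibbs}) that Lemma \ref{ContinuousGibbsCond} establishes the measurability of this map, and the continuity refinement needed here is of the same flavor and is part of what Lemma \ref{MonCoup} delivers; so I would simply invoke those results rather than reprove them. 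Modulo that input, the proof is the short weak-convergence argument sketched above: reduce to \eqref{S4towerFun} via Lemma \ref{S4AltGibbs}, observe both sides are $\mathbb{P}_n$-expectations of bounded continuous functionals of $\mathfrak{L}$, and pass to the limit.
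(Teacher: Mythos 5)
Your proposal is correct and follows essentially the same route as the paper: reduce to the tower-function characterization \eqref{S4towerFun} via Lemma \ref{S4AltGibbs}, observe that both sides are expectations of bounded continuous functionals of $\mathfrak{L}$ (with the right-hand side's continuity supplied by Lemma \ref{ContinuousGibbsCond}), pass through the weak limit, and apply Lemma \ref{S4AltGibbs} once more. The concerns you flag about the infinite boundary value and nondegeneracy of $Z_{H,H^{RW}}$ are exactly those handled inside the proof of Lemma \ref{ContinuousGibbsCond}, so citing it (as the paper does) is sufficient.
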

Let us explain the significance of Lemma \ref{S4WeakGibbs} for this paper. In Section \ref{Section8}, we will demonstrate a way to interpret the log-gamma polymer as a $(H,H^{RW})$-Gibbsian line ensemble $\mathfrak{L}$ for a certain choice of $H^{RW}$ and $H$. This will be done by taking a limit of a sequence of line ensembles $\mathfrak{L}^n$ that weakly converges to $\mathfrak{L}$ as $n \rightarrow \infty$. It will be easy to check that each of the $\mathfrak{L}^n$ satisfies the $(H,H^{RW})$-Gibbs property and then Lemma \ref{S4WeakGibbs} will imply that so does the limit $\mathfrak{L}$.

%
\subsection{Continuous grand monotone coupling lemma}\label{Section4.2} The goal of this section is to establish a continuous grand monotone coupling lemma for  $Y(\llbracket T_0, T_1 \rrbracket)$-valued random variables, whose laws are given by $\mathbb{P}_{H, H^{RW}}^{1, 1, T_0 ,T_1, x, y, \infty ,\vec{z}}$ as in Definition \ref{Pfree}. This result is given as Lemma \ref{MonCoup} and is one of the main results we prove about line ensembles, satisfying the $(H,H^{RW})$-Gibbs property.

The laws $\mathbb{P}_{H, H^{RW}}^{1, 1, T_0 ,T_1, x, y, \infty ,\vec{z}}$ arise as the marginal $L_1$ of a $\llbracket 1, k \rrbracket \times \llbracket T_0,T_1 \rrbracket$ -indexed line ensemble $\mathfrak{L}$, that satisfies the $(H,H^{RW})$-Gibbs property with $\vec{z} = L_2\llbracket T_0, T_1 \rrbracket$, given in terms of the second labeled curve $L_2$. In order to simplify our notation, we write $\mathbb{P}_{H,H^{RW}}^{T_0,T_1, x,y,\vec{z}}$ in place of $\mathbb{P}_{H,H^{RW}}^{1, 1, T_0 ,T_1, x, y, \infty ,\vec{z}}$. We will also denote $W_{H}^{1, 1, T_0 ,T_1,\infty ,\vec{z}}(\ell)$ by $W_H(T_0, T_1, \ell,\vec{z})$ and $Z_{H,H^{RW}}^{1, 1, T_0 ,T_1, x, y,\infty,\vec{z}}$ by $Z_{H,H^{RW}}(T_0, T_1, x,y,\vec{z})$. The random vector, whose law is $\mathbb{P}_{H, H^{RW}}^{T_0,T_1, x,y,\vec{z}}$, will typically be denoted by $\ell$ and as in Section \ref{Section4.1} we can think of it as a random continuous curve in $(C[T_0,T_1], \mathcal{C})$ by linearly interpolating the points $(i, \ell(i))$ for $i = T_0, \dots, T_1$ (see the discussion after Definition \ref{DefDLE}). We refer to $\mathbb{P}_{H,H^{RW}}^{T_0,T_1, a,b,\vec{z}}$-distributed random curves $\ell$ as $(H, H^{RW})$-curves.

The main result of this section is the following continuous grand monotone coupling. 

\begin{lemma}\label{MonCoup} Let $T \in \mathbb{N}$ satisfy $T \geq 2$ and assume that $H, H^{RW}$ are as in Definition \ref{Pfree}. Then, the following statements hold.
\begin{enumerate}[label={\Roman*)}]
 \item (Grand coupling) There exists a probability space $( \Omega^T, \mathcal{F}^T,\mathbb{P}^T)$ that supports random vectors $\ell^{T,x,y, \vec{z}} \in \mathbb{R}^{T}$ for all $x,y \in \mathbb{R}$ and $\vec{z} \in [-\infty, \infty)^{T}$, such that under $\mathbb{P}^T$ the random vector $\ell^{T,x,y, \vec{z}}$ has law $\mathbb{P}_{H, H^{RW}}^{ 1,T,x, y, \vec{z}}$ as in the beginning of this section.
 \item (Monotone coupling) Moreover, if we further suppose that $H$ and $H^{RW}$ are convex and $H$ is increasing, then for any fixed $x,y,x',y' \in \mathbb{R}$ with $x \leq x'$, and $y \leq y'$, and $\vec{z}, \vec{z}^{\hspace{0.5mm}\prime} \in [-\infty, \infty)^T$ with $z_i \leq z'_i$ for $i = 1, \dots, T$, we have $\mathbb{P}^{T}$-almost surely that $\ell^{T,x,y, \vec{z}}(i) \leq \ell^{T,x',y', \vec{z}^{\hspace{0.5mm}\prime}}(i)$ for $i = 1, \dots, T$.
 \item (Continuous coupling) If $T \geq 3$, the probability space $( \Omega^T, \mathcal{F}^T,\mathbb{P}^T)$ in part I can be taken to be $(0,1)^{T-2}$ with the Borel $\sigma$-algebra and Lebesgue measure. If $T = 2$, then $( \Omega^T, \mathcal{F}^T,\mathbb{P}^T)$ can be taken to be the space with a single point $\omega_0$, discrete $\sigma$-algebra and the measure that assigns unit mass to the point $\omega_0$. 

Furthermore, the construction in part I can be made so that the map $\Phi^T: \mathbb{R} \times \Omega^T \times \mathbb{R} \times [-\infty, \infty)^T \rightarrow \mathbb{R}^{T} \times  [-\infty, \infty)^T$, defined by
$$\Phi^T(x,\omega,y, \vec{z}) = (\ell^{T,x,y, \vec{z}}(\omega), \vec{z})$$ 
is a homeomorphism between the spaces $ \mathbb{R} \times \Omega^T \times \mathbb{R} \times [-\infty, \infty)^T $ and $ \mathbb{R}^{T} \times  [-\infty, \infty)^T$. In the last statement, $\Omega^T = (0,1)^{T-2}$ and we endow it with the subspace topology from $\mathbb{R}^{T-2}$ (and discrete topology if $T = 2$) and the space $\mathbb{R} \times \Omega^T \times \mathbb{R} \times [-\infty, \infty)^T $ has the product topology.
\end{enumerate}
\end{lemma}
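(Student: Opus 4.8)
The plan is to build the coupling by an explicit ``inverse-CDF'' construction, iterated coordinate by coordinate, and then to read off the monotonicity and the homeomorphism property from the structure of that construction. First I would set up the one-dimensional building block: given the boundary data $(x,y,\vec z)$ and the already-constructed values of $\ell$ at sites outside an interval, the conditional law of $\ell$ at a single free site $m\in\llbracket 2,T-1\rrbracket$ (conditioning on $\ell(m-1),\ell(m+1)$ and $\vec z$) is, by the $(H,H^{RW})$-Gibbs property and Definition \ref{Pfree}, absolutely continuous with a density on $\mathbb{R}$ proportional to $\g(u-\ell(m-1))\g(\ell(m+1)-u)e^{-H(z_m-u)}$. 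Since $\g=e^{-H^{RW}}$ with $H^{RW}$ convex and $H$ convex, this density is log-concave in $u$ (as a product of log-concave factors, with $e^{-H(z_m-u)}$ log-concave because $H$ is convex — and this remains true when $z_m=-\infty$, in which case the last factor is identically $1$). A log-concave density has a continuous, strictly increasing CDF on the interior of its support, hence a continuous inverse CDF $Q_m(\cdot\,;x,y,\vec z,\text{neighbors}):(0,1)\to\mathbb{R}$. I would feed the coordinate $\omega_{m}\in(0,1)$ into $Q_m$.

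Next I would organize these one-site inversions into a single pass that produces the whole vector $\ell^{T,x,y,\vec z}$ deterministically from $\omega=(\omega_1,\dots,\omega_{T-2})\in(0,1)^{T-2}$. There is a subtlety: the conditional law at site $m$ depends on $\ell(m-1)$ and $\ell(m+1)$, so one cannot simply resample sites independently. The clean way is to condition sequentially: realize $\ell(2)$ from its \emph{marginal} conditional density given only $(x,y,\vec z)$ (obtained by integrating out $\ell(3),\dots,\ell(T-1)$; this marginal is again log-concave since log-concavity is preserved under marginalization of a jointly log-concave density, and the joint density $\prod_{m}\g(\ell(m+1)-\ell(m))e^{-H(z_{m+1}-\ell(m))}$ with $\ell(1)=x,\ell(T)=y$ is jointly log-concave), then realize $\ell(3)$ from its conditional density given $(x,y,\vec z,\ell(2))$, and so on, using $\omega_1$ for $\ell(2)$, $\omega_2$ for $\ell(3)$, etc. Each step uses a continuous, strictly increasing inverse-CDF map, and each such map depends continuously on all the data conditioned upon; composing, $\omega\mapsto\ell^{T,x,y,\vec z}(\omega)$ and hence $(x,\omega,y,\vec z)\mapsto(\ell^{T,x,y,\vec z}(\omega),\vec z)$ is continuous. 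Pushing forward Lebesgue measure on $(0,1)^{T-2}$ through this chain of conditional inversions recovers $\mathbb P_{H,H^{RW}}^{1,T,x,y,\vec z}$ by the disintegration/chain rule for densities; this gives part I and the forward direction of continuity in part III. The case $T=2$ is trivial since there are no free sites.

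For the monotone coupling (part II) I would argue that each conditional inverse-CDF map is monotone in the data in the right way: raising $x$, $y$, $\vec z$, or any previously realized neighbor value shifts each conditional density ``to the right'' in the sense of the usual stochastic-dominance (monotone-likelihood-ratio) order, so its inverse CDF increases pointwise in $\omega$-coordinate. Concretely, for the single-site density $\propto\g(u-a)\g(b-u)e^{-H(z-u)}$, increasing $a$, $b$, or $z$ multiplies by a factor that is log-increasing in $u$ (using log-concavity of $\g$, i.e. convexity of $H^{RW}$, and convexity/monotonicity of $H$ for the $z$-shift), which is exactly an MLR upward shift; MLR dominance implies stochastic dominance, hence a pointwise-larger inverse CDF. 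The marginalization steps preserve this: if a jointly log-concave density is replaced by one that dominates it coordinatewise in MLR in the conditioned variables, the marginal of the free variable also shifts up. Inducting through the sequential construction — with the inductive hypothesis that all earlier-realized coordinates have moved up — gives $\ell^{T,x,y,\vec z}(\omega)\le\ell^{T,x',y',\vec z'}(\omega)$ for every fixed $\omega$, which is stronger than the almost-sure statement claimed. Here $H$ increasing is needed precisely so that $e^{-H(z-u)}$ increasing in $z$ gives a monotone (rather than merely MLR-comparable in both directions) effect, and so that the $z_i=-\infty$ degenerate case sits consistently at the bottom of the order.

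Finally, for the homeomorphism claim in part III I would exhibit a continuous inverse to $\Phi^T$. Given a target vector $(\ell(1),\dots,\ell(T))$ and $\vec z$, set $x=\ell(1)$, $y=\ell(T)$, and recover $\omega$ by applying, in order, the conditional CDFs: $\omega_1 = F_{2}(\ell(2)\,;x,y,\vec z)$, $\omega_2 = F_3(\ell(3)\,;x,y,\vec z,\ell(2))$, and so on, where each $F_m$ is the (strictly increasing, continuous) CDF inverted in the forward construction. Each $F_m$ is continuous in its site argument and jointly continuous in the conditioned data, so $\Phi^T{}^{-1}:(\vec\ell,\vec z)\mapsto(\ell(1),\omega,\ell(T),\vec z)$ is continuous, and one checks $\Phi^T\circ(\Phi^T)^{-1}=\mathrm{id}$ and vice versa directly from the fact that CDF and inverse-CDF of a strictly increasing continuous distribution function are mutual inverses. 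I expect the main obstacle to be the technical bookkeeping around the degenerate factors (entries $z_i=-\infty$) and around the boundaries of the supports — one must make sure the marginal conditional densities at each stage genuinely have full support $\mathbb{R}$ (which follows from $\g>0$ everywhere, since $\g=e^{-H^{RW}}$ and $H^{RW}$ is real-valued and continuous) so that the CDFs are honest strictly-increasing bijections $\mathbb{R}\to(0,1)$ and no points are lost; and to verify carefully that log-concavity is preserved under the particular marginalizations used, which is the Prékopa-type statement one must invoke cleanly. The measurability/continuity-of-conditional-expectations input needed to make ``the conditional density'' well-defined and continuous in the data is essentially Lemma \ref{ContinuousGibbsCond} referenced in Remark \ref{CondGibbs}, so I would lean on that rather than reproving it.
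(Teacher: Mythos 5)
Your construction is in the same spirit as the paper's: both build $\ell^{T,x,y,\vec z}$ from a point of $(0,1)^{T-2}$ by iterated inverse-CDF sampling of conditional marginals (the paper peels coordinates off from the right boundary inward, you from the left outward; this is a mirror image of the same chain-rule scheme), and both recover the homeomorphism by composing CDFs with their inverses. That part, and the observations about full support, are fine.

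The gap is in the monotonicity step. You assert that if a jointly log-concave density on the free coordinates is replaced by one that MLR-dominates it coordinatewise in the conditioned variables, then the marginal of the free variable shifts up. This is false as stated: take a bivariate Gaussian $(U,V)$ with negative correlation (still log-concave) and multiply its density by $e^{\lambda V}$, $\lambda>0$, which is an MLR upshift in $V$; the marginal of $U$ then shifts \emph{down}, since the mean moves to $\lambda\Sigma e_2$ whose first coordinate is $-\lambda\rho<0$. Log-concavity controls unimodality and tails but says nothing about the \emph{sign} of the dependence, which is precisely what governs whether an upshift far away propagates towards the coordinate being marginalized. What you actually need is a positive-dependence hypothesis: the joint density should satisfy the FKG lattice condition $\partial^2_{u_i u_j}\log f\ge 0$ for $i\neq j$ (log-supermodularity), or equivalently in this nearest-neighbour setting the pairwise kernels $\g(u_{m+1}-u_m)$ should be TP$_2$. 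In the present model this \emph{does} hold, because convexity of $H^{RW}$ gives $\partial^2_{u_m u_{m+1}}\log f=(H^{RW})''(\cdot)\ge 0$ and the $e^{-H(z_{m+1}-u_m)}$ factors are single-variable and contribute no cross terms; so the conclusion you want is true, but the route is an FKG/Holley argument or a Karlin-type composition lemma for TP$_2$ kernels, not Pr\'ekopa. Pr\'ekopa's theorem (preservation of log-concavity under marginalisation) is orthogonal and is at most useful for CDF regularity, which you already get for free from $\g>0$ everywhere.

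The paper's Lemma \ref{TechMonL} sidesteps all of this by proving the needed TP$_2$/MLR inequality for the marginal kernel $h^{c,\vec z}_n$ directly by induction on $n$, using nothing beyond the two elementary convexity inequalities (\ref{S6GConv}) and (\ref{S6HConv}); this is essentially a bare-hands Karlin composition argument specialised to the chain structure, and it also handles the $z_i=-\infty$ degenerations without any separate case analysis. Your approach can be made rigorous by replacing the log-concavity appeal with a correctly stated FKG/Karlin appeal plus the supermodularity check above, but as written the monotonicity step rests on a general principle that is not true.
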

\begin{remark}
We observe that when $\vec{z} = (- \infty)^T$, then $\mathbb{P}_{H, H^{RW}}^{ 1,T,x, y,\vec{z}}$ is precisely $\mathbb{P}_{H^{RW}}^{1, T, x,y}$ -- the law of a $H^{RW}$ random walk bridge between the points $(1,x)$ and $(T,y)$. In this case, Lemma \ref{MonCoup} generalizes the main theorem in \cite{Efron}.
\end{remark}
\begin{remark}\label{S2MonCR}
Let us briefly explain the main idea behind the proof of Lemma \ref{MonCoup}. We will construct the probability spaces $( \Omega^T, \mathcal{F}^T,\mathbb{P}^T)$ and the random variables $\ell^{T,x,y, \vec{z}}$ by induction on $T$, with base case $T = 2$ being trivial. Assuming the construction for $T = k$ and going to $T = k +1$, we will construct $\ell^{k+1,x,y, \vec{z}}$ by first constructing a probability space that supports the random variables $\ell^{k+1,x,y,\vec{z}}(k)$, whose law is the marginal of $\ell^{k+1,x,y, \vec{z}}$ in the $k$-th coordinate. These random variables will be built from the inverse conditional CDFs (depending on $k+1, x,y, \vec{z}$), applied to the {\em same} uniform random variable. A consequence of Lemma \ref{TechMonL}, which we prove below, is that the variables $\ell^{k+1,x,y,\vec{z}}(k)$ will be stochastically monotone in the variables $(x,y,\vec{z})$. Once we have the marginal laws $\ell^{k+1,x,y,\vec{z}}(k)$ constructed, we construct the rest of the entries $\ell^{k+1,x,y,\vec{z}}(\cdot )$ by appealing to our available by induction hypothesis construction of $\ell^{k,x,y,\vec{z}}$. The construction of $( \Omega^T, \mathcal{F}^T,\mathbb{P}^T)$ and $\ell^{k+1,x,y,\vec{z}}$ with the desired laws does not rely on the convexity of $H$ and $H^{RW}$. The convexity of $H$ and $H^{RW}$ is used in proving that the coupling is monotone, and at its core is Lemma \ref{TechMonL}, which as mentioned above allows us to monotonically couple the marginal laws $\ell^{k+1,x,y, \vec{z}}(k)$, whose densities are the functions $h^{c, \vec{z}}_{k}$ appearing in that lemma.
\end{remark}

Before we go to the proof of Lemma \ref{MonCoup} we establish the following preliminary result, which is central for the proof.
\begin{lemma}\label{TechMonL} Assume that $H^{RW}$ and $H$ are as in Definition \ref{Pfree}, are convex, and that $H$ is increasing. Given $c,y \in \mathbb{R}$, $n \in \mathbb{N}$ with $n\geq 2$ and $\vec{z} \in [-\infty, \infty)^{n+1}$, we define
\begin{equation}\label{S22DefH}
h^{c, \vec{z}}_{n }(y) = \int_{\mathbb{R}^{n-1}}  \prod_{i = 1}^{n}\g(x_i - x_{i-1})  e^{-H(z_{i+1} - x_i)} dx_1 \cdots dx_{n-1},
\end{equation}
where $x_0 = c$, $x_n = y$. We also set $h^c_1(y) = \g(y-c) e^{-H(z_{2} - y)}$ if $n = 1$.

Suppose that $a,b,c,d,s,t \in \mathbb{R}$ with $a \leq c$, $b \leq d$, $s \leq t$ and $\vec{u}, \vec{v} \in [-\infty,\infty)^{n+1}$ with $u_i \leq v_i$ for $i = 1, \dots, n$. Then, we have
\begin{equation}\label{conv1}
\frac{\int_{-\infty}^s h^{a,\vec{u}}_n(x) \g(b-x)dx }{\int_{-\infty}^s h^{c,\vec{v}}_n(x)\g (d-x)dx} \geq \frac{\int_{-\infty}^t h^{a,\vec{u}}_n(x) \g(b-x)dx }{\int_{-\infty}^t h^{c,\vec{v}}_n(x) \g(d-x)dx}.
\end{equation}
In particular, we have
\begin{equation}\label{conv2}
\frac{\int_{-\infty}^s h^{a,\vec{u}}_n(x) \g(b-x)dx }{\int_{-\infty}^\infty h^{a,\vec{u}}_n(x) \g(b-x)dx } \geq \frac{\int_{-\infty}^s h^{c,\vec{v}}_n(x)\g (d-x)dx}{\int_{-\infty}^\infty h^{c,\vec{v}}_n(x) \g(d-x)dx}.
\end{equation}
\end{lemma}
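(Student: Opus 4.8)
The statement \eqref{conv2} follows from \eqref{conv1} by taking $s \to t = \infty$ in the denominators—more precisely, rearranging \eqref{conv1} with the choices $(a,\vec u) = (c,\vec v)$ on top of the right fraction isn't quite it; rather, one applies \eqref{conv1} directly with the second denominator being the full integral. So the real content is \eqref{conv1}. The plan is to prove \eqref{conv1} by reducing it to a one-dimensional total-positivity (monotone likelihood ratio) statement about the two densities $x \mapsto h^{a,\vec u}_n(x)\g(b-x)$ and $x \mapsto h^{c,\vec v}_n(x)\g(d-x)$ on $\R$. Concretely, \eqref{conv1} is equivalent to the assertion that the ratio of the cumulative distribution functions associated to these two (unnormalized) densities is nonincreasing in the upper limit of integration; and a clean sufficient condition for that is that the likelihood ratio
\[
x \longmapsto \frac{h^{c,\vec v}_n(x)\,\g(d-x)}{h^{a,\vec u}_n(x)\,\g(b-x)}
\]
is nondecreasing in $x$. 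This is the standard fact that $\mathrm{MLR}$ ordering implies stochastic ordering of all truncations; I would either cite it or give the two-line cross-multiplication proof (for $s \le t$ and $x \le s < w \le t$, compare $h^{a,\vec u}(x)\g(b-x) h^{c,\vec v}(w)\g(d-w)$ with the swapped product).

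So everything comes down to: \emph{the likelihood ratio is monotone}, equivalently the pair of functions $\big(h^{a,\vec u}_n(\cdot)\g(b-\cdot),\ h^{c,\vec v}_n(\cdot)\g(d-\cdot)\big)$ is totally positive of order $2$ as a function of $x$ and the two-element index set, given $a\le c$, $b\le d$, $u_i\le v_i$. I would prove this by induction on $n$, peeling off one integration variable at a time. The base case $n=1$ is $h^{a,u}_1(x)\g(b-x) = \g(x-a)\g(b-x)e^{-H(u_2-x)}$ versus $\g(x-c)\g(d-x)e^{-H(v_2-x)}$; monotonicity of the ratio amounts to showing $x \mapsto \frac{\g(x-c)}{\g(x-a)}$ is nondecreasing (log-concavity of $\g$, i.e. convexity of $H^{RW}$, since $a\le c$), $x\mapsto \frac{\g(d-x)}{\g(b-x)}$ is nondecreasing (same, using $b\le d$), and $x \mapsto e^{-H(v_2-x)+H(u_2-x)} = e^{H(u_2-x)-H(v_2-x)}$ is nondecreasing (convexity \emph{and} monotonicity of $H$: since $u_2\le v_2$, the difference $H(u_2-x)-H(v_2-x)$ has derivative $-H'(u_2-x)+H'(v_2-x)\ge 0$ because $H'$ is nondecreasing and $v_2-x\ge u_2-x$; one handles the $u_i=-\infty$ case using $H(-\infty)=0$). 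A product of nondecreasing positive functions is nondecreasing, giving the base case.

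For the inductive step I would write $h^{a,\vec u}_n(x) = \int_\R h^{a,\vec u}_{n-1}(w)\,\g(x-w)\,e^{-H(u_{n+1}-x)}\,dw$—wait, one must be careful with where the $e^{-H(z_{i+1}-x_i)}$ factors sit; the recursion is $h^{a,\vec u}_n(x) = e^{-H(u_{n+1}-x)}\int_\R h^{a,\vec u'}_{n-1}(w)\,\g(x-w)\,dw$ where $\vec u'$ is $\vec u$ with the last coordinate dropped (indices shifted appropriately). The outer factor $e^{-H(u_{n+1}-x)}$ vs $e^{-H(v_{n+1}-x)}$ contributes a monotone ratio exactly as in the base case. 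For the integral part, I would invoke the \emph{basic composition formula} (Karlin's theorem): if $K_1(\,\cdot\,,w)$ is $\mathrm{TP}_2$ in (two-element index, $w$)—which is the inductive hypothesis applied to $h_{n-1}$—and $K_2(x,w) := \g(x-w)$ is $\mathrm{TP}_2$ in $(x,w)$—which is log-concavity of $\g$, i.e. convexity of $H^{RW}$—then the composition $\int K_1(\cdot,w)K_2(x,w)\,dw$ is $\mathrm{TP}_2$ in (index, $x$). Multiplying by the $\mathrm{TP}_2$ factors coming from $\g(b-x)$ vs $\g(d-x)$ and the Hamiltonian exponentials (which are $\mathrm{TP}_2$ in (index, $x$) as rank-one-in-log, i.e. products $\phi_i(\text{index})\psi(x)$-type, actually log-additively separable so trivially $\mathrm{TP}_2$) preserves $\mathrm{TP}_2$, closing the induction.

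\textbf{Main obstacle.} The genuinely delicate point is the bookkeeping of the $[-\infty,\infty)$-valued coordinates $u_i, v_i$ together with the convention $H(-\infty)=0$: one must check that the monotone-ratio inequalities survive when some entries are $-\infty$ (the factor $e^{H(u_i-x)-H(v_i-x)}$ degenerates to $e^{0 - H(v_i-x)} = e^{-H(v_i-x)}$, still nondecreasing in $x$ since $H$ is nondecreasing), and that all the integrals are finite so Fubini/the composition formula applies (this uses $\g$ bounded and integrable, $H\ge 0$ so the exponentials are $\le 1$). A secondary technical nuisance is making the "$\mathrm{MLR}\Rightarrow$ ratio of CDFs monotone" step rigorous when the densities need not be normalized and may have overlapping but not identical supports—but cross-multiplication handles this directly without needing any regularity. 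The cleanest write-up probably avoids explicitly naming $\mathrm{TP}_2$ and instead just tracks the single scalar quantity "$h^{c,\vec v}_n(x)\g(d-x)\,/\,h^{a,\vec u}_n(x)\g(b-x)$ is nondecreasing in $x$" through the induction, which is exactly the MLR statement and keeps the convexity inputs ($\log \g$ concave, $H$ convex and increasing) visible at each step.
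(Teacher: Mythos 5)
Your proposal is correct and is essentially the paper's proof: the paper also reduces \eqref{conv1} to the monotonicity of the likelihood ratio $x\mapsto h^{c,\vec v}_n(x)\g(d-x)/\big(h^{a,\vec u}_n(x)\g(b-x)\big)$, peels off the $\g(b-\cdot)$ versus $\g(d-\cdot)$ factor and the last Hamiltonian factor via the same $\mathrm{TP}_2$/convexity inequalities, and then closes an induction in $n$ using the one-variable recursion for $h^{c,\vec z}_n$. The only surface difference is that you invoke Karlin's basic composition formula for the inductive integral step, whereas the paper proves that step directly by splitting the double integral over $\{x_k<y_k\}$ and $\{y_k<x_k\}$ and symmetrizing—i.e.\ the paper inlines exactly the two-line proof of the $\mathrm{TP}_2$ composition fact you cite.
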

\begin{proof}

Observe first that all the integrals are well-defined by our assumption on $H,H^{RW}$. Also it is clear that we can obtain (\ref{conv2}) from (\ref{conv1}) upon taking the limit as $t \rightarrow \infty$. We thus focus on establishing (\ref{conv1}).

Note that if $F$ is convex, $\alpha \leq \beta$ and $\Delta \geq  0$, then
\begin{equation}\label{convexFun}
F(\alpha + \Delta) - F(\alpha) \leq F(\beta+\Delta) -F(\beta),
\end{equation}
which can be deduced from \cite[Theorem 24.1 and Corollary 24.2.1]{Rockafellar}. In particular, using that $H^{RW}$ is convex we deduce from (\ref{convexFun}) that for $\alpha,\beta,\gamma,\delta \in \mathbb{R}$, $\alpha \leq \beta$ and $\gamma \leq \delta$ we have 
\begin{equation}\label{S6GConv}
\begin{split}
&\g(\delta - \beta) \g(\gamma-\alpha) \geq \g(\delta-\alpha) \g(\gamma-\beta),
\end{split}
\end{equation}
and using the convexity of $H$, and the fact that it is increasing, we see for $\alpha, \beta \in \mathbb{R}$, $\gamma,\delta \in [-\infty, \infty)$ with $\alpha \leq \beta$ and $\gamma \leq \delta$
\begin{equation}\label{S6HConv}
\begin{split}
&e^{-H(\delta - \beta)} e^{-H(\gamma-\alpha)} \geq e^{-H(\delta-\alpha)} e^{-H(\gamma-\beta)}.
\end{split}
\end{equation}

Define the function $A(y; a,b, \vec{z}) := \log \big( \int_{-\infty}^y h_n^{a, \vec{z}}(x) \g(b-x)dx \big)$ and notice that, for fixed $a,b, \vec{z}$, the latter function is differentiable in $y$ and its derivative is given by
$$\frac{d A(y; a,b, \vec{z})}{dy} = \frac{ h_n^{a, \vec{z}}(y) \g(b-y)}{ \int_{-\infty}^y h_n^{a, \vec{z}}(x) \g(b-x)dx}.$$
Upon taking logarithms on both sides of (\ref{conv1}), we see that our goal is to show that
$$A(s; a,b, \vec{u}) - A(s; c,d, \vec{v}) \geq A(t;a,b, \vec{u} ) - A(t; c,d, \vec{v}),$$
and so it suffices to show that $\partial_y\big[A(y;a,b, \vec{u}) - A(y;c,d, \vec{v})\big] \leq 0$ or, equivalently,
$$ \frac{ h_n^{a, \vec{u}}(y) \g(b-y)}{ \int_{-\infty}^y h_n^{a, \vec{u}}(x) \g(b-x)dx} \leq \frac{ h_n^{c, \vec{v}}(y) \g(d-y)}{ \int_{-\infty}^y h_n^{c, \vec{v}}(x) \g(d-x)dx}$$
for all $y \in \mathbb{R}$. Cross-multiplying the above, we see that it suffices to show that for all $x,y \in \mathbb{R}$ with $x \leq y$ we have
\begin{equation*}
\begin{split}
h_n^{c, \vec{v}}(x) \g(d-x)h_n^{a, \vec{u}}(y) \g(b-y) \leq h_n^{a, \vec{u}}(x) \g(b-x)h_n^{c, \vec{v}}(y) \g(d-y).
\end{split}
\end{equation*}
In view of (\ref{S6GConv}), applied to $\alpha= x, \beta = y, \gamma = b, \delta = d$ (recall that $d \geq b$ by assumption), the last inequality would hold if for $x \geq y$ we have
\begin{equation}\label{IndIneq}
\begin{split}
h_n^{c, \vec{v}}(x) h_n^{a, \vec{u}}(y) \leq h_n^{a, \vec{u}}(x) h_n^{c, \vec{v}}(y).
\end{split}
\end{equation}

We proceed to prove (\ref{IndIneq}) by induction on $n \geq 1$. When $n = 1$, (\ref{IndIneq}) is equivalent to
\begin{equation*}
 \g(x-c) e^{-H(v_{2} - x)} \g(y -a) e^{-H(u_{2} - y)} \leq  \g(x-a) e^{-H(u_{2} - x)} \g(y -c) e^{-H(v_{2} - y)} ,
\end{equation*}
which holds from (\ref{S6GConv}), applied to $\alpha = x, \beta = y, \gamma = a, \delta = c$ (recall that $a \leq c$ by assumption), and (\ref{S6HConv}), applied to $\alpha = x, \beta = y, \gamma = u_2, \delta = v_2$ (recall that $u_2 \leq v_2$ by assumption).

Suppose that we now know (\ref{IndIneq}) for $n = k$ and we wish to show it for $n = k+1$. Using (\ref{S22DefH}), we can rewrite (\ref{IndIneq}) for $n = k+1$ as
\begin{equation*}
\begin{split}
&\int_{ \mathbb{R}} h_k^{c, \vec{v}_{k+1}}(x_k) \g(x - x_k) e^{-H(v_{k+2} - x )} dx_k \cdot \int_{ \mathbb{R}} h_k^{a, \vec{u}_{k+1}}(y_k) \g(y - y_k) e^{-H(u_{k+2} - y)} dy_k \leq \\
&\int_{ \mathbb{R}} h_k^{a, \vec{u}_{k+1}}(x_k) \g(x - x_k) e^{-H(u_{k+2} - x)} dx_k \cdot \int_{ \mathbb{R}} h_k^{c, \vec{v}_{k+1}}(y_k) \g(y - y_k) e^{-H(v_{k+2} -  y)} dy_k,
\end{split}
\end{equation*}
where $\vec{v}_{k+1} = (v_1, \dots, v_{k+1})$ and $\vec{u}_{k+1} = (u_1, \dots, u_{k+1})$. Applying (\ref{S6HConv}) with $\alpha = x, \beta = y, \gamma = u_{k+2}, \delta = v_{k+2}$, we see that the above inequality would hold if we can show
\begin{equation*}
\begin{split}
&\iint_{ \mathbb{R}^2} h_k^{c, \vec{v}_{k+1}}(x_k) \g(x - x_k)  h_k^{a, \vec{u}_{k+1}}(y_k) \g(y - y_k)  dx_k dy_k \leq \\
&\iint_{ \mathbb{R}^2} h_k^{a, \vec{u}_{k+1}}(x_k) \g(x - x_k)   h_k^{c, \vec{v}_{k+1}}(y_k) \g(y - y_k)  dx_kdy_k.
\end{split}
\end{equation*}
Splitting the above integrals over $\{x_k < y_k\}$ and $\{y_k < x_k\}$ and swapping the $x_k, y_k$ labels over the region $\{y_k < x_k\}$, we see that  (\ref{IndIneq}) for $n = k+1$ would follow if we can show
\begin{equation}\label{IndIneq2}
\begin{split}
\iint_{x_k < y_k } &\Big{(}h_k^{c, \vec{v}_{k+1}}(x_k) \g(x - x_k)   h_k^{a, \vec{u}_{k+1}}(y_k) \g(y - y_k)+  \\
& h_k^{c, \vec{v}_{k+1}}(y_k) \g(x -y_k)  h_k^{a, \vec{u}_{k+1}}(x_k) \g(y - x_k) \Big{)}dx_kdy_k \leq \\
\iint_{x_k < y_k } &\Big{(}h_k^{a, \vec{u}_{k+1}}(x_k) \g(x - x_k)   h_k^{c, \vec{v}_{k+1}}(y_k) \g(y - y_k)  + \\
&h_k^{a, \vec{u}_{k+1}}(y_k) \g(x - y_k)  h_k^{c, \vec{v}_{k+1}}(x_k) \g(y - x_k)\Big{)} dx_kdy_k.
\end{split}
\end{equation}

In order to prove (\ref{IndIneq2}), it suffices to show that the integrand on the left-hand side is pointwise dominated by that on the right-hand side. This is equivalent to proving that if $x_k < y_k$ we have
\begin{equation*}
\begin{split}
A B XY + CD ZW \leq CD XY + ABZW, \mbox{ or equivalently } (AB - CD) (XY - ZW) \leq 0, \mbox{ where }\\
\end{split}
\end{equation*}
$$A = h_k^{c, \vec{v}_{k+1}}(x_k) ,  B=  h_k^{a, \vec{u}_{k+1}}(y_k) ,  X = \g(x - x_k) , Y = \g(y - y_k) , $$
$$ C = h_k^{c, \vec{v}_{k+1}}(y_k)  , D=   h_k^{a, \vec{u}_{k+1}}(x_k), Z =  \g(x -y_k), W =  \g(y - x_k) .$$
We observe that 
\begin{equation*}
XY - ZW \geq 0 \iff  \g(x - x_k) \g(y - y_k)  \geq \g(x -y_k) \g(y - x_k) , 
\end{equation*}
and the latter holds from (\ref{S6GConv}), applied to $\alpha = x_k, \beta = y_k, \gamma = x, \delta = y$ (recall that $x \leq y$ and $x_k \leq y_k$ by assumption). In addition, we have by the induction hypothesis (\ref{IndIneq}) for $n = k$
\begin{equation*}
AB = h_k^{c, \vec{v}_{k+1}}(x_k)h_k^{a, \vec{u}_{k+1}}(y_k)  \leq h_k^{c, \vec{v}_{k+1}}(y_k)  h_k^{a, \vec{u}_{k+1}}(x_k) =  CD.
\end{equation*}
The last two inequalities give $AB \leq CD$ and $XY \geq ZW$ so that $(AB - CD) (XY - ZW) \leq 0$ or, equivalently, $A B XY + CD ZW \leq CD XY + ABZW$. This proves (\ref{IndIneq2}), and so (\ref{IndIneq}) holds for $n = k+1$. This concludes the induction step and we conclude (\ref{IndIneq}) for all $n \in \mathbb{N}$, which completes the proof of the lemma.
\end{proof}

\begin{proof}[Proof of Lemma \ref{MonCoup}] In Remark \ref{S2MonCR} we gave an outline of the main ideas of the proof of the lemma and here we present the details. For clarity, we split the proof into several steps. In the first step, we explain our construction of the probability space $( \Omega^T, \mathcal{F}^T,\mathbb{P}^T)$ and the random vectors $\ell^{T,x,y, \vec{z}} \in \mathbb{R}^{T}$ for all $x,y \in \mathbb{R}$ and $\vec{z} \in [-\infty, \infty)^{T}$ on this space. In the second step, we make two claims about the function $\Phi^T$ in the statement of the lemma and assuming the validity of these claims prove the parts I and II of the lemma. The two claims are proved in Steps 3 and 4, and in Step 5 we conclude the proof of part III of the lemma.\\

{\bf \raggedleft Step 1.} In this step, we explain how to construct the probability space $( \Omega^T, \mathcal{F}^T,\mathbb{P}^T)$ and the random vectors $\ell^{T,x,y, \vec{z}} \in \mathbb{R}^{T}$ by induction on $T \geq 2$. If $T = 2$ we take $\Omega^T$ to be a set with one point $\omega_0$, $\mathcal{F}^T$ to be the discrete $\sigma$-algebra, and $\mathbb{P}^T$ to be the unit mass at $\omega_0$. The random vectors $\ell^{T,x,y, \vec{z}}$ are then defined by $\ell^{T,x,y, \vec{z}} = (x,y)$ and clearly satisfy the conditions of the lemma.

Suppose we have constructed our desired space for $T = k \geq 2$. We now explain how the construction goes for $T = k+1$.

Notice that a $\mathbb{P}_{H, H^{RW}}^{ 1,T,x, y,\vec{z}}$-distributed random vector has density $h(y_1, \dots, y_T; x,y,\vec{z})$, given by
\begin{equation}\label{S24RWB}
 \frac{\delta_x(y_1) \cdot \delta_y(y_T) \cdot  \prod_{i = 2}^{T-1}\g(y_i - y_{i-1})  e^{-H(z_{i+1} - y_i)}\g(y_T-y_{T-1})}{\int_{\mathbb{R}^{T-2}}   \prod_{i = 2}^{T-1}\g(y_i - y_{i-1})  e^{-H(z_{i+1} - y_i)}\g(y_T-y_{T-1}) dy_2 \cdots dy_{T-1} },
\end{equation}
where $y_1 = x$ and $y_T = y$ in the denominator. Define for $\xi_1, \xi_2 \in \mathbb{R}$ and $\vec{z} \in \mathbb{R}^{T}$ the function
\begin{equation}\label{EqS2F}
F_{\xi_1,\xi_2}^{\vec{z},k} (s): = \frac{\int_{-\infty}^s h^{\xi_1,\vec{z}}_k(r) \g(\xi_2-r)dr }{\int_{-\infty}^\infty h^{\xi_1,\vec{z}}_k(r) \g(\xi_2-r)dr },
\end{equation}
where $h^{\xi_1,\vec{z}}_k$ is as in (\ref{S22DefH}). Then, $F_{\xi_1,\xi_2}^{\vec{z},k} (s)$ is precisely the marginal cumulative distribution function of $y_k$ under $h(y_1, \dots, y_T; x,y,\vec{z})$ with $ y_1 = \xi_1$ and $y_{T} = \xi_2$.

We now construct a probability space as follows. Let $\big((0,1), \mathcal{B}((0,1)),\lambda \big)$  be the space $(0,1)$ with the Borel $\sigma$-algebra and usual Lebesgue measure. This space supports the uniform random variable $U_{k-1}(r) = r$. We take the product space of the probability spaces $(\Omega^k, \mathcal{F}^k, \mathbb{P}^k)$ (we have this by the induction hypothesis) and the space $\big((0,1), \mathcal{B}((0,1)), \lambda \big)$. This will be our space $( \Omega^{k+1}, \mathcal{F}^{k+1},\mathbb{P}^{k+1})$. We next show how to construct $\ell^{k+1,x,y, \vec{z}}$ with the desired properties.

Given $x,y \in \mathbb{R}$, we construct $\ell^{k+1,x,y,\vec{z}}$ as follows:
\begin{enumerate}
\item set $\ell^{k+1,x,y,\vec{z}}(k+1) = y$;
\item set $\ell^{k+1,x,y,\vec{z}}(k) = \big[F^{\vec{z},k}_{x, y}\big]^{-1}(U_{k-1}) =:Y_k$;
\item set $\ell^{k+1,x,y,\vec{z}}(i) = \ell^{k,x,Y_k,\vec{z}_k}(i)$ for $i = 1, \dots, k-1$, where $\ell^{k,x,y, \vec{z}_k}$ are the random variables on $(\Omega^k, \mathcal{F}^k,\mathbb{P}^k )$, that have been constructed by induction hypothesis and $\vec{z}_k = (z_1, \dots, z_k)$.
\end{enumerate}
Notice that by assumption we know that $F^{\vec{z},k}_{x,y}$ is strictly increasing and defines a bijection between $(0,1)$ and $\mathbb{R}$. In particular, $\big[F^{\vec{z},k}_{a, b}\big]^{-1}(U_{k-1})$ is well-defined. This concludes the construction when $T = k +1$ and the general construction now proceeds by induction on $k$. \\

{\bf \raggedleft Step 2.} In this step we show that the construction of Step 1 satisfies parts I and II of the lemma. From our construction in Step 1, it is clear that $( \Omega^T, \mathcal{F}^T,\mathbb{P}^T)$ is nothing but $(0,1)^{T-2}$ with the Borel $\sigma$-algebra and Lebesgue measure (with the convention we had in the statement of the lemma for $T = 2$). We make the following two claims about the function $\Phi^T$ from the statement of the lemma. We claim that
\begin{equation}\label{S2Phi1}
\mbox{ the function $\Phi^T$ is a bijection between $\mathbb{R} \times \Omega^T \times \mathbb{R} \times [-\infty, \infty)^T $ and $ \mathbb{R}^{T} \times  [-\infty, \infty)^T$ }
\end{equation}
and if $[\Phi^T]^{-1}$ denotes its inverse function then for any sequence $(\vec{w}^n, \vec{z}^n)$ converging to $(\vec{w}^\infty, \vec{z}^\infty)$ in $ \mathbb{R}^{T} \times  [-\infty, \infty)^T$ we have that 
\begin{equation}\label{S2Phi2}
\lim_{n \rightarrow \infty} [\Phi^T]^{-1} (\vec{w}^n, \vec{z}^n) = [\Phi^T]^{-1}(\vec{w}^\infty, \vec{z}^\infty).
\end{equation}
The claims in (\ref{S2Phi1}) and (\ref{S2Phi2}) will be proved in Steps 3 and 4 below. Here, we assume their validity and conclude the proof of the first part of the lemma. \\

Notice that if we fix $\vec{z} \in [-\infty, \infty)^T$, then by (\ref{S2Phi1}) and (\ref{S2Phi2}) we know that the function $[\Phi^T]^{-1} (\vec{w}, \vec{z})$ defines a continuous bijection between $\mathbb{R}^T$ and $\mathbb{R} \times (0,1)^{T-2} \times \mathbb{R}$ as a function of $\vec{w}$. By the Invariance of domain theorem \cite[Theorem 36.5]{Munk2} we see that $\Phi^T$ is also continuous and, hence, by restriction for fixed $x,y \in \mathbb{R}$, we have that $\ell^{T,x,y, \vec{z}}(\omega)$ is a continuous function of $\omega$. In particular, all the vector-valued functions $\ell^{T,x,y, \vec{z}}$ we defined in Step 1 are random vectors.

We next check that $\ell^{T,x,y, \vec{z}}(\omega)$ has the law $\mathbb{P}_{H, H^{RW}}^{ 1,T,x, y, \vec{z}}$ as in the beginning of this section. We establish this by induction on $T \geq 2$, with base case $T = 2$ being trivially true. Assuming the result for $T = k$ and going to $T = k +1$, we note that what we have done in our construction from Step 1 is set $\ell^{k+1,x,y,\vec{z}}(k+1)  = y$ and sampled $\ell^{k+1,x,y,\vec{z}}(k) $ from the marginal law of $y_k$ under $h(y_1, \dots, y_{k+1}; x,y, \vec{z})$. Subsequently, we sampled a conditionally on $\ell^{k+1,x,y,\vec{z}}(k)$ independent  random vector $(\ell^{k+1,x,y,\vec{z}}(1), \dots, \ell^{k+1,x,y,\vec{z}}(k-1))$ whose law is the same as the marginal law of $(y_1, \dots, y_{k-1})$ under $h(y_1, \dots, y_{k+1}; a,b, \vec{z})$ conditioned on $y_k$. All of this implies that $\ell^{k+1,x,y,\vec{z}}$ indeed has law $\mathbb{P}_{H, H^{RW}}^{ 1 ,k+1,x, y,\vec{z}}$, and so the result holds for $T = k +1$ and by induction for all $T \geq 2$. The last three paragraphs establish part I of the lemma.

In the remainder of this step, we prove part II of the lemma. As before, we argue by induction on $T \geq 2$, with base case $T = 2$ being trivially true. Assuming the result for $T = k$, we verify the monotonicity when $T = k +1$.

Suppose that $x,y,x',y' \in \mathbb{R}$ with $x \leq x'$ and $y \leq y'$ and $\vec{z}, \vec{z}^{\hspace{0.5mm}\prime} \in [-\infty, \infty)^{k+1}$ with $z_i \leq z'_i$ for $i = 1, \dots, k+1$ are given. We want to show that $\ell^{k+1,x, y, \vec{z}}(i) \leq \ell^{k+1,x', y', \vec{z}^{\hspace{0.5mm}\prime}}(i)$ for $i = 1, \dots, k+1$. We know that $\ell^{k+1,x, y, \vec{z}}(k+1)= y \leq y' =\ell^{k+1,x', y', \vec{z}^{\hspace{0.5mm}\prime}}(k+1)$ by construction. In addition, by (\ref{conv2}) (here we use our assumption that $H$ and $H^{RW}$ are convex and $H$ is increasing) we know that
\begin{equation*}
\begin{split}
&F_{x,y}^{\vec{z},k} (s) \leq F_{x',y'}^{\vec{z}^{\hspace{0.5mm}\prime},k}(s),
\end{split}
\end{equation*}
which implies that
$$Y_k : = \ell^{k+1,x, y, \vec{z}}(k) = \big[F^{\vec{z},k}_{x, y}\big]^{-1}(U_{k-1})  \leq\big[F^{\vec{z}^{\hspace{0.5mm}\prime},k}_{x', y'}\big]^{-1}(U_{k-1})  =  \ell^{k+1,x' y', \vec{z}^{\hspace{0.5mm}\prime}}(k) =: Y_k'.$$
 Finally, we have by the induction hypothesis and our construction that
$$\ell^{k+1,x,y,\vec{z}}(i)  = \ell^{k, x, Y_k, \vec{z}_k}(i) \leq \ell^{k, x', Y_k', \vec{z}^{\hspace{0.5mm}\prime}_k}(i) = \ell^{k+1,x',y',\vec{z}^{\hspace{0.5mm}\prime}}(i) $$
 for $i = 1, \dots, k-1$, where $\vec{z}_k = (z_1, \dots, z_k)$ and $\vec{z}^{\hspace{0.5mm}\prime}_k = (z_1', \dots, z_k')$. This proves that the random vectors $\ell^{T,x,y,\vec{z}}$ satisfy the monotonicity conditions in part II when $T = k +1$, and the general result now follows by induction.\\

{\bf \raggedleft Step 3.} In this step, we prove (\ref{S2Phi1}). We define functions $\Psi^T:  \mathbb{R}^{T} \times  [-\infty, \infty)^T \rightarrow \mathbb{R} \times (0,1)^{T-2} \times \mathbb{R} \times [-\infty, \infty)^T $, with $(0,1)^0$ denoting the set containing the single point $\omega_0$, by induction on $T \geq 2$ as follows. If $T = 2$, then the map is given by 
$$\Psi^T(\vec{w}, \vec{z}) = (w_1, \omega_0, w_2, \vec{z}),$$
where $\vec{w} = (w_1, \dots, w_T)$. Assuming that we have defined $\Psi^{k}$, we define $\Psi^{k+1}$ for $k \geq 2$ through 
$$\Psi^{k+1}(\vec{w}, \vec{z}) = ( \Psi^k(\vec{w}_k, \vec{z}_k) \vert_{k-1} , F^{\vec{z},k}_{w_1, w_{k+1}}(w_k), w_{k+1}  ,\vec{z}),$$
where $F_{\xi_1,\xi_2}^{\vec{z},k}$ is as in (\ref{EqS2F}), $\vec{w}_k = (w_1, \dots, w_k)$, $\vec{z}_k = (z_1, \dots, z_k)$ and $\Psi^k(\vec{w}_k, \vec{z}_k) \vert_{k-1}$ denotes the image of $(\vec{w}_k, \vec{z}_k)$ under the (inductively constructed) $\Psi^k$ projected to the first $k-1$ coordinates. This gives the definition of $\Psi^{k+1}$ and the general construction proceeds by induction on $k$.

We claim that for all $T \geq 2$ the function $\Psi^T$ is a left and right inverse to the function $\Phi^T$. If true, the latter will clearly imply (\ref{S2Phi1}).

When $T = 2$, the latter is trivial, since the maps are basically the identity map, except that $\Psi^2$ inserts the coordinate $\omega_0$ between $w_1$ and $w_2$ in the vector $(\vec{w}, \vec{z})$, while $\Phi^2$ removes it. Suppose we know the result when $T = k \geq 2$ and wish to show it for $T = k +1$. We want to show that 
\begin{equation}\label{ASDF1}
\Phi^{k+1} \left( \Psi^{k+1}(\vec{w}, \vec{z}) \right) = (\vec{w}, \vec{z}) \mbox{ and }\Psi^{k+1} \left( \Phi^{k+1}( w_1, \vec{u}, w_{k+1}, \vec{z})\right)  = ( w_1, \vec{u},  w_{k+1}, \vec{z}),
\end{equation}
for all $\vec{w} \in \mathbb{R}^{k+1}$, $\vec{z} \in [-\infty, \infty)^{k+1}$ and $\vec{u} \in (0,1)^{k-1}$. 

Using the inductive definition of $\Psi^{k+1}$, we see that to show the first equality in (\ref{ASDF1}) it suffices to show that 
$$\Phi^{k+1} \left( ( \Psi^k(\vec{w}_k, \vec{z}_k) \vert_{k-1} , F^{\vec{z},k}_{w_1, w_{k+1}}(w_k), w_{k+1}  ,\vec{z}) \right) =  (\vec{w}, \vec{z}),$$
where $\vec{w}_k$ and $\vec{z}_k$ are as above. Using the inductive definition of $\Phi^{k+1}$ and the fact that 
$$[F^{\vec{z},k}_{w_1, w_{k+1}}]^{-1} (F^{\vec{z},k}_{w_1, w_{k+1}}(w_k)) = w_k,$$
we see that the latter is equivalent to 
$$\left( \Phi^{k}( \Psi^k(\vec{w}_k, \vec{z}_k) \vert_{k-1},w_k, \vec{z}_k) \vert_{k-1},  w_k, w_{k+1}, \vec{z} \right) =(\vec{w}, \vec{z}),$$
where $\Phi^k(\cdot) \vert_{k-1}$ denotes the projection to the first $k-1$ coordinates. Since by induction hypothesis we know that 
$$ \Phi^{k}( \Psi^k(\vec{w}_k, \vec{z}_k) \vert_{k-1}, w_k, \vec{z}_k) \vert_{k-1} = \Phi^{k}( \Psi^k(\vec{w}_{k}, \vec{z}_k)) \vert_{k-1} = (w_1, \dots, w_{k-1}),$$ 
we see that the left side of (\ref{ASDF1}) is satisfied for $k+1$ and the general result now follows by induction.

Similarly, using the inductive definition of $\Phi^{k+1}$, we see that to show the second equality in (\ref{ASDF1}) it suffices to show that 
$$\Psi^{k+1} \left( \Phi^{k}( w_1, \vec{u}_{k-2}, [F^{\vec{z},k}_{w_1, w_{k+1}}]^{-1}(u_{k-1}), \vec{z}_k)\vert_{k-1}, [F^{\vec{z},k}_{w_1, w_{k+1}}]^{-1}(u_{k-1}), w_{k+1} , \vec{z})\right)  = ( w_1, \vec{u},  w_{k+1}, \vec{z}),$$
where $\Phi^k(\cdot) \vert_{k-1}$ is as above, $\vec{u} = (u_1, \dots, u_{k-1})$ and 
$$\vec{u}_{k-2} = \begin{cases} (u_1, \dots, u_{k-2}) &\mbox{ if $k \geq 3$ and } \\
                                 \omega_0 &\mbox{ if $k = 2$.}
\end{cases}
$$
Using the inductive definition of $\Psi^{k+1}$ and the fact that 
$$F^{\vec{z},k}_{w_1, w_{k+1}} \left( [F^{\vec{z},k}_{w_1, w_{k+1}}]^{-1}(u_{k-1})\right)= u_{k-1},$$
we see that it suffices to show
$$\left(\Psi^{k} \left( \Phi^{k}( w_1,  \vec{u}_{k-2}, [F^{\vec{z},k}_{w_1, w_{k+1}}]^{-1}(u_{k-1}), \vec{z}_k)\right) \vert_{k-1}, u_{k-1}, w_{k+1} , \vec{z}\right)  = ( w_1, \vec{u},  w_{k+1}, \vec{z}),$$
where $\Psi^k(\cdot)\vert_{k-1}$ is as above. Since by induction hypothesis we know that 
$$ \Psi^{k} \left( \Phi^{k}( w_1, \vec{u}_{k-2}, [F^{\vec{z},k}_{w_1, w_{k+1}}]^{-1}(u_{k-1}), \vec{z}_k)\right) \vert_{k-1} = (w_1, \vec{u}_{k-2}),$$
we see that the right side of (\ref{ASDF1}) is satisfied for $k+1$ and the general result now follows by induction. \\

{\bf \raggedleft Step 4.} In this step, we prove (\ref{S2Phi2}). In view of our work in Step 3, we know that $[\Phi^T]^{-1}$ is nothing but the function $\Psi^T$ we constructed in that step. Thus, we want to prove that 
\begin{equation}\label{S2Phi3}
\lim_{n \rightarrow \infty} \Psi^T(\vec{w}^n, \vec{z}^n) =\Psi^T(\vec{w}^\infty, \vec{z}^\infty),
\end{equation}
provided that $(\vec{w}^n, \vec{z}^n) \rightarrow (\vec{w}^\infty, \vec{z}^\infty)$ in $ \mathbb{R}^{T} \times  [-\infty, \infty)^T$.

As usual, we prove (\ref{S2Phi3}) by induction on $T \geq 2$, with base case $T = 2$ being trivially true by the definition of $\Psi^T$. Assuming the result for $T = k$, we show that it holds when $T = k+1$. Using the inductive definition of $\Psi^{k+1}$, we see that to show (\ref{S2Phi3}) it suffices to prove that
$$\lim_{n \rightarrow \infty} ( \Psi^k(\vec{w}^n_k, \vec{z}^n_k) \vert_{k-1} , F^{\vec{z}^n,k}_{w_1^n, w^n_{k+1}}(w^n_k), w^n_{k+1}  ,\vec{z}^n) = ( \Psi^k(\vec{w}^\infty_k, \vec{z}^\infty_k) \vert_{k-1} , F^{\vec{z}^\infty,k}_{w_1^\infty, w^\infty_{k+1}}(w^\infty_k), w^\infty_{k+1}  ,\vec{z}^\infty),$$
where we recall that $\Psi^k(\cdot) \vert_{k-1}$ is the projection to the first $k-1$ coordinates. Our assumption that $(\vec{w}^n, \vec{z}^n) \rightarrow (\vec{w}^\infty, \vec{z}^\infty)$ and our induction hypothesis reduce the validity of the last statement to
\begin{equation}\label{S2Phi3.5}
 \lim_{n \rightarrow \infty}F^{\vec{z}^n,k}_{w_1^n, w^n_{k+1}}(w^n_k) = F^{\vec{z}^\infty,k}_{w_1^\infty, w^\infty_{k+1}}(w^\infty_k),
\end{equation}
which by the definition of $F_{\xi_1,\xi_2}^{\vec{z}} $ in (\ref{EqS2F}) is equivalent to
\begin{equation}\label{S2Phi4}
\lim_{n \rightarrow \infty}  \frac{\int_{-\infty}^{w^n_k} h^{w_1^n,\vec{z}^n}_{k}(r) \g(w_{k+1}^n-r)dr }{\int_{-\infty}^\infty h^{w_1^n,\vec{z}^n}_{k}(r) \g(w_{k+1}^n-r)dr }  = \frac{\int_{-\infty}^{w^\infty_k} h^{w_1^\infty,\vec{z}^\infty}_{k}(r) \g(w_{k+1}^\infty-r)dr }{\int_{-\infty}^\infty h^{w_1^\infty,\vec{z}^\infty}_{k}(r) \g(w_{k+1}^\infty-r)dr }.
\end{equation}
We show that the numerators and denominators on the left side of (\ref{S2Phi4}) converge to the numerator and the denominator on the right side, respectively. As the proofs are very similar, we only prove this statement for the numerators, which using the definition of $h^{c, \vec{z}}_{n }$ in (\ref{S22DefH}) boils down to
\begin{equation*}
\begin{split}
&\lim_{n \rightarrow \infty}  \int_{-\infty}^{w^n_k}  \int_{\mathbb{R}^{k-1}}  \prod_{i = 1}^{k+1}\g(x_i - x_{i-1})  \prod_{i = 1}^{k}e^{-H(z^n_{i+1} - x_i)} dx_1 \cdots dx_{k} = \\
&  \int_{-\infty}^{w^\infty_k}  \int_{\mathbb{R}^{k-1}}  \prod_{i = 1}^{k+1}\g(x_i - x_{i-1}) \prod_{i = 1}^{k} e^{-H(z^\infty_{i+1} - x_i)} dx_1 \cdots dx_{k} ,
\end{split}
\end{equation*}
where on the left $x_0 = w_1^n$ and $x_{k+1} = w^n_{k+1}$, while on the right $x_0 = w_1^\infty$ and $x_{k+1} = w^{k+1}_\infty$. Applying the change of variables $y^n_i = x_i +w_k^n$ in the top line above, and $y^\infty_i = x_i + w_k^\infty$ in the second, we see that it suffices to prove
\begin{equation}\label{S2Phi5}
\begin{split}
&\lim_{n \rightarrow \infty}  \int_{-\infty}^{0}  \int_{\mathbb{R}^{k-1}}  \prod_{i = 1}^{k+1}\g(y^n_i - y^n_{i-1})  \prod_{i = 1}^{k}e^{-H(z^n_{i+1} - y^n_i + w_k^n)} dy^n_1 \cdots dy^n_{k} = \\
&  \int_{-\infty}^{0}  \int_{\mathbb{R}^{k-1}}  \prod_{i = 1}^{k+1}\g(y^\infty_i - y^\infty_{i-1}) \prod_{i = 1}^{k} e^{-H(z^\infty_{i+1} - y^\infty_i + w_k^\infty)} dy^\infty_1 \cdots dy^\infty_{k},
\end{split}
\end{equation}
where $y^n_0 =w_1^n + w_k^n$ and $y^n_{k+1} = w_{k+1}^n + w_k^n$ for $n \in \mathbb{N} \cup \{\infty\}$.

Notice that by the continuity of $\g$ and $H$ we know that the integrands on the top line of (\ref{S2Phi5}) converge pointwise to the integrand on the bottom. The fact that the integrals also converge then follows from the Generalized dominated convergence theorem (see \cite[Theorem 4.17]{Royden}) with dominating functions
$$f_n (y_1, \dots, y_k) = \prod_{i = 1}^{k+1}\g(y_i - y_{i-1}), \mbox{ where $y_0 =w_1^n + w_k^n$ and $y_{k+1} = w_{k+1}^n + w_k^n$ } .$$
Let us elaborate on the last argument briefly. Since $H \geq 0$ by assumption, we know that $f_n$ dominate the integrands on the top line of (\ref{S2Phi5}). Furthermore, by the continuity of $\g$ we conclude that $f_n$ converge pointwise to $f_\infty$, which has the same form as $f_n$ with $y_0 =w_1^\infty + w_k^\infty$ and $y_{k+1} = w_{k+1}^\infty + w_k^\infty$. To conclude the application of the Generalized dominated convergence theorem, we need to show
$$\lim_{n \rightarrow \infty}  \int_{-\infty}^{0}  \int_{\mathbb{R}^{k-1}} \prod_{i = 1}^{k+1}\g(y^n_i - y^n_{i-1})  dy^n_1 \cdots dy^n_k =  \int_{-\infty}^{0}  \int_{\mathbb{R}^{k-1}} \prod_{i = 1}^{k+1}\g(y^\infty_i - y^\infty_{i-1})  dy^\infty_1 \cdots dy^\infty_k.$$
Changing variables $\tilde{y}_i = y_i^n - y_{i-1}^n$ for $i = 1, \dots, k$, we see that the latter is equivalent to 
\begin{equation*}
\begin{split}
\lim_{n \rightarrow \infty}  & \int_{\mathbb{R}^{k}} \prod_{i = 1}^{k}\g(\tilde{y}_i) \cdot \g \left(w_{k+1}^n - \sum_{i = 1}^k \tilde{y}_i - w_1^n \right)\cdot {\bf 1} \left\{ w_1^n + \sum_{i = 1}^k \tilde{y_i} \leq 0 \right\} d\tilde{y}_1 \cdots d\tilde{y}_k = \\
&    \int_{\mathbb{R}^{k}} \prod_{i = 1}^{k} \g(\tilde{y}_i) \cdot \g \left(w_{k+1}^\infty - \sum_{i = 1}^k \tilde{y}_i  - w_1^\infty \right) \cdot{\bf 1} \left\{ w_1^\infty + \sum_{i = 1}^k \tilde{y_i} \leq 0 \right\}  d\tilde{y}_1 \cdots d\tilde{y}_k.
\end{split}
\end{equation*}
The last equation is now a consequence of the dominated convergence theorem (see \cite[Theorem 4.16]{Royden}) with dominating function $\|\g \|_\infty \cdot \prod_{i = 1}^{k}\g(\tilde{y}_i) $ (note that the latter is integrable on $\mathbb{R}^k$ as $\g$ is bounded and integrable on $\mathbb{R}$). We thus conclude that the Generalized dominated convergence theorem is applicable and implies (\ref{S2Phi5}). This proves that the numerators in (\ref{S2Phi4}) converge and one can analogously show the same holds for the denominators, which concludes the proof of (\ref{S2Phi3}) when $T = k+1$. The general result now follows by induction.\\

{\bf \raggedleft Step 5.} In this step, we prove part III of the lemma. We already observed in Step 2 that our construction gives for $T \geq 3$ that $( \Omega^T, \mathcal{F}^T,\mathbb{P}^T)$ is $(0,1)^{T-2}$ with the Borel $\sigma$-algebra and Lebesgue measure, and when $T = 2$ it is the trivial probability space with a single point. Furthermore, we showed in Step 3 that $\Phi^T$ is a bijection and in Step 4 that its inverse $\Psi^T$ is continuous. Thus, we only need to prove that $\Phi^T$ is itself continuous. As usual, we establish this statement by induction on $T \geq 2$, with base case $T = 2$ being trivially true by the definition of $\Phi^T$. Assuming the result for $T = k$, we now prove it for $T = k +1$, which boils down to establishing
\begin{equation}\label{S2Phi6}
\begin{split}
&\lim_{n \rightarrow \infty}  \Phi^{k+1}(w_1^n, \vec{u}^n, w_{k+1}^n, \vec{z}^n) = \Phi^{k+1}(w_1^\infty, \vec{u}^\infty, w_{k+1}^\infty, \vec{z}^\infty), 
\end{split}
\end{equation}
provided that $ \lim_{n \rightarrow \infty} (w_1^n, \vec{u}^n, w_{k+1}^n, \vec{z}^n) = (w_1^\infty, \vec{u}^\infty, w_{k+1}^\infty, \vec{z}^\infty)$ in $\mathbb{R} \times (0,1)^{T-2} \times \mathbb{R} \times [-\infty, \infty)^T $. 

\begin{equation*}
\begin{split}
\lim_{n \rightarrow \infty} &\left( \Phi^{k}( w^n_1, \vec{u}^n_{k-2}, [F^{\vec{z}^n,k}_{w^n_1, w^n_{k+1}}]^{-1}(u^n_{k-1}), \vec{z}^n_k)\vert_{k-1}, [F^{\vec{z}^n,k}_{w^n_1, w^n_{k+1}}]^{-1}(u^n_{k-1}), w^n_{k+1} , \vec{z}^n)\right) = \\
&\left( \Phi^{k}( w^\infty_1, \vec{u}^\infty_{k-2}, [F^{\vec{z}^\infty,k}_{w^\infty_1, w^\infty_{k+1}}]^{-1}(u^\infty_{k-1}), \vec{z}^\infty_k)\vert_{k-1}, [F^{\vec{z}^\infty,k}_{w^\infty_1, w^\infty_{k+1}}]^{-1}(u^\infty_{k-1}), w^\infty_{k+1} , \vec{z}^\infty)\right).
\end{split}
\end{equation*}
Our assumption that $(w_1^n, \vec{u}^n, w_{k+1}^n, \vec{z}^n) \rightarrow (w_1^\infty, \vec{u}^\infty, w_{k+1}^\infty, \vec{z}^\infty)$ and our induction hypothesis reduce the validity of the last statement to 
\begin{equation}\label{S2Phi7}
\begin{split}
&\lim_{n \rightarrow \infty}  A_n = A_\infty, \mbox{where } A_n =   [F^{\vec{z}^n,k}_{w^n_1, w^n_{k+1}}]^{-1}(u^n_{k-1}) \mbox{ for $n \in \mathbb{N} \cup \{\infty \}$}.
\end{split}
\end{equation}

We now show that $A_n$ is a bounded sequence and all its subsequential limits are equal to $A_\infty$, which proves (\ref{S2Phi7}). Suppose first that $A_{n_m}$ converges to $\infty$ along some subsequence $n_m$. Then, by monotonicity of the function $F^{\vec{z}^n,k}_{w^n_1, w^n_{k+1}}$ we know that for any $a \in \mathbb{R}$
$$u^\infty_{k-1} = \lim_{m \rightarrow \infty} u^{n_m}_{k-1} =\lim_{m \rightarrow \infty}  F^{\vec{z}^{n_m},k}_{w^{n_m}_1, w^{n_m}_{k+1}}(A_{n_m}) \geq \limsup_{m \rightarrow \infty}  F^{\vec{z}^{n_m},k}_{w^{n_m}_1, w^{n_m}_{k+1}}(a) = F^{\vec{z}^{\infty},k}_{w^{\infty}_1, w^{\infty}_{k+1}}(a),$$ 
where in the last equality we used (\ref{S2Phi3.5}). Letting $a \rightarrow \infty$ above, we see $u^\infty_{k-1} \geq 1$, which is a contradiction as $u^\infty_{k-1}  \in (0,1)$.

Analogously, if $A_{n_m}$ converges to $-\infty$ along some subsequence $n_m$ then we have for any $a \in \mathbb{R}$
$$u^\infty_{k-1} = \lim_{m \rightarrow \infty} u^{n_m}_{k-1} =\lim_{m \rightarrow \infty}  F^{\vec{z}^{n_m},k}_{w^{n_m}_1, w^{n_m}_{k+1}}(A_{n_m}) \leq \liminf_{m \rightarrow \infty}  F^{\vec{z}^{n_m},k}_{w^{n_m}_1, w^{n_m}_{k+1}}(a) = F^{\vec{z}^{\infty},k}_{w^{\infty}_1, w^{\infty}_{k+1}}(a),$$ 
where in the last equality we used (\ref{S2Phi3.5}). Letting $a \rightarrow -\infty$, we see $u^\infty_{k-1} \leq 0$, which is a contradiction as $u^\infty_{k-1}  \in (0,1)$. 

Finally, suppose that $A_{n_m}$ converges to $B_\infty$ along some subsequence $n_m$. Then,
$$u^\infty_{k-1} = \lim_{m \rightarrow \infty} u^{n_m}_{k-1} =\lim_{m \rightarrow \infty}  F^{\vec{z}^{n_m},k}_{w^{n_m}_1, w^{n_m}_{k+1}}(A_{n_m})  = F^{\vec{z}^{\infty},k}_{w^{\infty}_1, w^{\infty}_{k+1}}(B_\infty),$$ 
where in the last equality we used (\ref{S2Phi3.5}). Applying $[F^{\vec{z}^\infty,k}_{w^\infty_1, w^\infty_{k+1}}]^{-1}$ to both sides, we see that $A_\infty = B_\infty$, as desired. This shows that all subsequential limits of $A_n$ are equal to $A_\infty$, which together with the boundedness of the sequence proves (\ref{S2Phi7}) and thus (\ref{S2Phi6}) holds. The general result now follows by induction.
\end{proof}

%
\subsection{Properties of $H^{RW}$-random walk bridges}\label{Section4.3} A special case of the measures $\mathbb{P}_{H,H^{RW}}^{T_0,T_1, x,y,\vec{z}}$ we considered in Section \ref{Section4.2} is when $\vec{z} = (-\infty)^{T_1 - T_0}$. In this case, our assumption that $H(-\infty) = 0$ implies that $\mathbb{P}_{H, H^{RW}}^{T_0,T_1, x,y,\vec{z}}$ becomes the law of a $H^{RW}$-random walk bridge between the points $(T_0, x)$ and $(T_1,y)$, see (\ref{RWB}). We denote such measures by $\mathbb{P}^{T_0,T_1,x,y}_{H^{RW}}$ and write $\mathbb{E}^{T_0,T_1,x,y}_{H^{RW}}$ for their expectation. In this section, we derive several results about the measures $\mathbb{P}^{T_0,T_1,x,y}_{H^{RW}}$, that rely on a strong coupling between random walk bridges and Brownian bridges from \cite{DW19} -- recalled here as Proposition \ref{KMT}. The advantage of this strong coupling is that it allows us to estimate the probabilities of various events under $\mathbb{P}^{T_0,T_1,x,y}_{H^{RW}}$ by comparing them to ones involving a Brownian bridge, for which exact computations are easier. In order to apply Proposition \ref{KMT}, we need to make several assumptions on the function $H^{RW}$, summarized in the following definition. 

\begin{definition}\label{AssHR} We make the following five assumptions on $H^{RW}$.

{\bf \raggedleft Assumption 1.} We assume that $H^{RW}: \mathbb{R} \rightarrow \mathbb{R}$ is a continuous convex function and $\g(x) = e^{-H^{RW}(x)}$. We assume that $\g(x)$ is bounded and $\int_{\mathbb{R}} \g(x) dx = 1$.\\

If $X$ is a random variable with density $g$, we denote
\begin{equation}\label{S2S1E1}
M_X(t) := \mathbb{E} \big[ e^{t X} \big], \hspace{3mm}  \phi_X(t) := \mathbb{E} \big[ e^{itX} \big], \hspace{3mm} \Lambda(t) :=\log M_X(t), \hspace{3mm} \mathcal{D}_\Lambda := \{ x: \Lambda(x) < \infty\}.
\end{equation}

{\bf \raggedleft Assumption 2.} We assume that $\mathcal{D}_\lambda$ contains an open neighborhood of $0$.

It is easy to see that $\mathcal{D}_{\Lambda}$ is a connected set and, hence, an interval. We denote $(A_{\Lambda}, B_{\Lambda})$ the interior of $\mathcal{D}_\Lambda$, where $A_{\Lambda} < 0$ and $B_{\Lambda} > 0$ by Assumption 2. We write $M_X(u)$ for all $u \in D = \{u \in \mathbb{C}: A_\Lambda< \Re(u) < B_\Lambda \}$ to mean the (unique) analytic extension of $M_X(x)$ to $D$, afforded by \cite[Lemma 2.1]{DW19}.\\

{\bf \raggedleft Assumption 3.} We assume that the function $\Lambda(\cdot)$ is lower semi-continuous on $\mathbb{R}$.\\

Under Assumptions 1,2 and 3 for a given $p \in \mathbb{R}$ the quantity $\Lambda''( (\Lambda')^{-1}(p))$ is well-defined -- see \cite[Section 2.1]{DW19}. For brevity, we write $\sigma_p^2 := \Lambda''( (\Lambda')^{-1}(p))$. \\

{\raggedleft \bf Assumption 4.} We assume that for every $B_\Lambda > t> s > A_\Lambda$ there exist constants $K(s,t)>0$ and $p(s,t) > 0$, such that $\left|M_X(z)\right| \leq \frac{K(s,t)}{(1 + |\Im(z)|)^{p(s,t)}}$, provided $s\leq \Re(z) \leq t$.\\

{\bf \raggedleft Assumption 5.} We suppose that there are constants $ D, d > 0$, such that at least one of the following statements holds
\begin{equation}\label{S2S1E2}
\mbox{1. }\g(x) \leq De^{-dx^2} \mbox{ for all $x \geq 0$ \qquad or\qquad  2. }\g(x) \leq De^{-dx^2} \mbox{ for all $x \leq 0$}.
\end{equation}
\end{definition}
\begin{remark} As mentioned before, our goal is to use a strong coupling result for random walk bridges from \cite{DW19}, which is a certain analogue of the classical {\em KMT}-coupling result from \cite{KMT1, KMT2}. Assumption 1 (except for the convexity part) is essentially ensuring that the random walk underlying the random walk bridge has a single interval of support. This allows one to condition on its endpoints and make sure that the corresponding bridge law in (\ref{RWB}) is well-defined. The convexity assumption on $H^{RW}$ is made so that we can apply our monotone coupling Lemma \ref{MonCoup}. Assumptions 2 and 4 are also somewhat natural as they were also needed in $KMT$'s original work \cite{KMT1,KMT2}. Assumptions 3 and 5 are a bit more technical and we refer to \cite[Section 2.3]{DW19} for a more detailed discussion of their significance. We also mention that any convex $H^{RW}$ such that $\liminf_{|x| \rightarrow \infty} x^{-2} H^{RW}(x) > 0$ satisfies the assumptions in Definition \ref{AssHR}.
\end{remark}

If $W_t$ denotes a standard one-dimensional Brownian motion and $\sigma > 0$, then the process
$$B^{\sigma}_t = \sigma (W_t - t W_1), \hspace{5mm} 0 \leq t \leq 1,$$
is called a {\em Brownian bridge (conditioned on $B_0 = 0, B_1 = 0$)  with variance $\sigma^2$.}  With the above notation we state the strong coupling result we use.
\begin{proposition}\label{KMT}
Suppose $H^{RW}$ satisfies the assumptions of Definition \ref{AssHR}. Let $p \in \mathbb{R}$ and $\sigma_p^2$ be as in that definition. There exist constants $0 < C, a, \tilde{\alpha} < \infty$ (depending on $p$ and $H^{RW}$), such that for every positive integer $T$, there is a probability space on which are defined a Brownian bridge $B^\sigma$ with variance $\sigma^2 = \sigma_p^2$ and a family of random curves $\ell^{(T,z)}$ on $[0,T]$, which is parameterized by $z \in \mathbb{R}$, such that $\ell^{(T,z)}$  has law $\mathbb{P}^{0,T,0,z}_{H^{RW}}$ and
\begin{equation}\label{KMTeq}
\mathbb{E}\big[ e^{a \Delta(T,z)} \big] \leq C e^{\tilde{\alpha}  (\log T)^2}e^{|z- p T|^2/T}, \mbox{ where $\Delta(T,z)=  \sup_{0 \leq t \leq T} \big|\hspace{-1mm} \sqrt{T} B^\sigma_{t/T} + \frac{t}{T}z - \ell^{(T,z)}(t) \big|.$}
\end{equation}
Here we recall that $\ell^{(T,z)}(s)$ was defined for non-integer $s$ by linear interpolation.
\end{proposition}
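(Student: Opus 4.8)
The statement is recalled verbatim from \cite{DW19}, so strictly speaking no new argument is needed; here I only sketch the strategy that \cite{DW19} carries out, which is a bridge adaptation of the dyadic Koml\'os--Major--Tusn\'ady scheme \cite{KMT1,KMT2}.

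The first thing I would do is remove the drift and the dependence on $z$ by an exponential tilt. For $\theta$ in the interior of $\mathcal{D}_\Lambda$, replacing the jump density $\g$ by $\g_\theta(x) = e^{\theta x - \Lambda(\theta)}\g(x)$ changes the law of the free random walk but leaves the bridge law $\P^{0,T,0,z}_{H^{RW}}$ unchanged, because the corresponding Radon--Nikodym factor telescopes into a function of the endpoints only. Taking $\theta = (\Lambda')^{-1}(z/T)$, I may therefore assume the jumps are centered at $z/T$ with variance $\Lambda''(\theta)$. The prescribed variance $\sigma_p^2 = \Lambda''((\Lambda')^{-1}(p))$ then enters because the bound must be uniform in $z$: the slack factor $e^{|z-pT|^2/T}$ is there to absorb both the discrepancy between $\Lambda''(\theta)$ and $\sigma_p^2$ and the cost $\asymp \g^0_T(z)^{-1} \asymp \sqrt{T}\,e^{c|z-pT|^2/T}$ of conditioning the walk to land at an atypical endpoint, the latter estimate being a consequence of a \emph{uniform} local limit theorem. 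Establishing such a uniform local CLT, with Gaussian error bounds valid on the whole strip $A_\Lambda < \Re < B_\Lambda$, is where Assumptions 2--4 of Definition \ref{AssHR} (analytic continuation of $M_X$ to the strip and its polynomial decay along vertical lines) get used.

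Next I would build the coupling top--down along a dyadic tree, assuming first that $T$ is a power of $2$. Both bridges agree at $0$ and at $z$ by construction; on a dyadic subinterval of half-length $m$ with midpoint $w$, and given the two bridges at the endpoints of that subinterval, I would couple their midpoint values by the quantile map $\ell(w) = F^{-1}(\Phi(B(w)))$, where $F$ is the conditional c.d.f.\ of the random-walk bridge's midpoint --- proportional to a product of two $m$-step densities $\g^0_m$ --- and $\Phi$ that of the matching Gaussian bridge midpoint. An Edgeworth expansion for $F$, uniform in $m$ and in the tilt by Assumptions 2--4 (with Assumption 5 keeping the conditional densities from degenerating), yields a quantitative quantile bound $|\ell(w)-B(w)| \le C(1 + |B(w)-\E B(w)|^2)/m$ up to an exponentially small remainder, together with the exponential-moment control on the left inherited from Assumption 2. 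Summing midpoint discrepancies along the tree, using a chaining bound over the $2^k$ subintervals at level $k$ (each of length $T/2^k$) together with the per-interval exponential moment estimate, a geometric summation over the $\log_2 T$ levels produces the prefactor $e^{\tilde\alpha(\log T)^2}$; linear interpolation between dyadic sites contributes only lower order because the jumps have sub-exponential tails (Assumption 2). Finally I would undo the tilt and treat a general $T$ by peeling off a short initial segment.

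I expect the main obstacle to be the \emph{uniformity} of the local-limit and Edgeworth estimates used in the previous two steps: one needs the Gaussian approximation to the conditional midpoint densities to hold with constants that do not deteriorate either as the dyadic scale $m$ shrinks or as the tilt parameter (equivalently $z$) ranges over $\mathbb{R}$. This uniformity, rather than the hierarchical bookkeeping, is the technical heart of the matter, and it is exactly what the analyticity and vertical-decay hypotheses (Assumptions 2--4 in Definition \ref{AssHR}) are designed to supply. The complete argument is in \cite{DW19}.
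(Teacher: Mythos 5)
Your approach is the same as the paper's: the result is quoted from \cite[Theorem 2.3]{DW19}, and the paper's own ``proof'' consists of a single paragraph checking that the hypotheses there are satisfied. Your extended sketch of the internal strategy of \cite{DW19} (tilting, uniform local CLT, dyadic quantile coupling, Edgeworth expansion) is a fair description of that paper, but it is not the content of the proof the present paper supplies.

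The one thing you actually need and omit is precisely that hypothesis check. \cite[Theorem 2.3]{DW19} is stated under six assumptions, C1--C6. Assumptions 1--5 of Definition~\ref{AssHR} are set up to match C1--C5 directly, but C6 is not an explicit hypothesis in Definition~\ref{AssHR}. The paper's proof observes that C6 holds because $H^{RW}$ is convex, invoking \cite[Lemma 7.2]{DW19}, which derives C6 from log-concavity of the jump density. If you cite \cite{DW19} without addressing C6 you have not verified that the theorem applies; this is exactly the step the paper's proof exists to record, and it is where the convexity assumption of Definition~\ref{AssHR} (beyond its role in the monotone coupling, Lemma~\ref{MonCoup}) earns its keep. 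Everything else in your write-up is a correct but unnecessary tour of the internals of \cite{DW19}.
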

\begin{proof}
This result is a special case of \cite[Theorem 2.3]{DW19}. Indeed, Assumptions 1-5 in Definition \ref{AssHR} imply Assumptions C1-C5 in \cite[Section 2.1]{DW19}. In addition, Assumption C6 in \cite[Section 2.1]{DW19} is satisfied in view of \cite[Lemma 7.2]{DW19} and the fact that $H^{RW}$ is convex.
\end{proof}

In the lemmas below we consider measures $\mathbb{P}^{T_0,T_1,x,y}_{H^{RW}}$, with $H^{RW}$ satisfying the above assumptions. The random variable, whose law is $\mathbb{P}^{T_0,T_1,x,y}_{H^{RW}}$, will usually be denoted by $\ell$. We recall that this is a $Y(\llbracket T_0, T_1 \rrbracket)$-valued random variable and for $i \in \llbracket T_0, T_1\rrbracket$ we denote its $i$-th entry by $\ell(i)$. As explained in Section \ref{Section4.1}, we also think of $\ell$ as a random continuous curve on $[T_0, T_1]$, formed by linearly interpolating the points $(i, \ell(i))$ for $i \in \llbracket T_0, T_1\rrbracket$. \\

Below, we list several lemmas, whose proofs are postponed until Section \ref{Section11.2}. We provide a brief informal explanation of what each result says after it is stated. After we state all the lemmas we explain the underlying theme behind their proofs.
\begin{lemma}\label{LemmaHalfS4} Let $\ell$ have distribution $\mathbb{P}^{0,T,x,y}_{H^{RW}}$, with $H^{RW}$ satisfying the assumptions in Definition \ref{AssHR}. Let $M_1, M_2 \in \mathbb{R}$ and $p \in \mathbb{R}$ be given. Then, we can find $W_0 = W_0(p,M_2 - M_1) \in \mathbb{N}$, such that for $T \geq W_0$, $x \geq M_1 T^{1/2}$, $y \geq pT + M_2 T^{1/2}$ and $s \in [0,T]$ we have
\begin{equation}\label{halfEq1S4}
\mathbb{P}^{0,T,x,y}_{H^{RW}}\Big( \ell(s)  \geq \frac{T-s}{T} \cdot M_1 T^{1/2} + \frac{s}{T} \cdot \big(p T + M_2 T^{1/2}\big) - T^{1/4} \Big) \geq \frac{1}{3}.
\end{equation}
\end{lemma}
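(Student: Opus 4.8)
The plan is to reduce the statement about the random walk bridge $\ell$ to an analogous statement about a Brownian bridge via the strong coupling in Proposition \ref{KMT}, and then to use an explicit Gaussian computation for the Brownian bridge. First I would set up notation: let $\tilde\ell(t) := \ell(t) - x - \tfrac{t}{T}(y-x)$ be the bridge with its affine interpolant removed, so that $\tilde\ell$ has the law of $\ell^{(T,y-x)}$ from Proposition \ref{KMT} recentered (after shifting the starting point, which translation-invariance of $H^{RW}$ permits), and let $B^\sigma$ be the coupled Brownian bridge with $\sigma^2 = \sigma_p^2$. The event in \eqref{halfEq1S4} can be rewritten, after subtracting the linear interpolation of the endpoints $(0,x)$ and $(T, y)$, as
\begin{equation*}
\ell(s) - \frac{T-s}{T}x - \frac{s}{T}y \geq -\frac{T-s}{T}\big(x - M_1 T^{1/2}\big) - \frac{s}{T}\big(y - pT - M_2 T^{1/2}\big) - T^{1/4}.
\end{equation*}
By hypothesis $x \geq M_1 T^{1/2}$ and $y \geq pT + M_2 T^{1/2}$, so the first two terms on the right are $\leq 0$; hence it suffices to lower bound the probability that $\ell(s)$ minus the endpoint interpolation is $\geq -T^{1/4}$, i.e. that $\sqrt{T}\,B^\sigma_{s/T} + \tfrac{s}{T}(y-x) - \tilde\ell(s) \geq -T^{1/4} - \sqrt{T}\,B^\sigma_{s/T}$ rearranged appropriately; more cleanly, using $|\sqrt{T}B^\sigma_{s/T} + \tfrac{s}{T}(y-x) - \tilde\ell(s)| \le \Delta(T, y-x)$ from \eqref{KMTeq}, it is enough to show
\begin{equation*}
\mathbb{P}\Big( \sqrt{T}\,B^\sigma_{s/T} \geq -\tfrac{1}{2}T^{1/4} \Big) \geq \tfrac{1}{2} \quad\text{and}\quad \mathbb{P}\big( \Delta(T,y-x) \leq \tfrac{1}{2}T^{1/4}\big) \geq \tfrac{5}{6},
\end{equation*}
for $T$ large depending only on $p$ and $M_2 - M_1$, since then a union bound gives probability $\geq 1/2 - 1/6 = 1/3$.

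For the first of these, $\sqrt{T}B^\sigma_{s/T}$ is a centered Gaussian with variance $\sigma_p^2 \cdot T \cdot \tfrac{s}{T}(1-\tfrac{s}{T}) \leq \sigma_p^2 T / 4$; but that bound is in the wrong direction, so instead I note that for any centered Gaussian $G$, $\mathbb{P}(G \geq -c) \geq 1/2$ for every $c \geq 0$ by symmetry — this holds regardless of the variance, so the first inequality is immediate with no largeness of $T$ needed (and in fact we get $\geq 1/2$, with room to spare). For the second, apply Markov's inequality to \eqref{KMTeq}:
\begin{equation*}
\mathbb{P}\big(\Delta(T, y-x) > \tfrac12 T^{1/4}\big) \leq e^{-\frac{a}{2}T^{1/4}} \, \mathbb{E}\big[e^{a\Delta(T,y-x)}\big] \leq C e^{-\frac{a}{2}T^{1/4}} e^{\tilde\alpha(\log T)^2} e^{|(y-x) - pT|^2/T}.
\end{equation*}
Here is where the hypotheses on $x,y$ enter quantitatively: we need $|(y-x) - pT|^2/T$ to be controlled. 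This is the main subtlety, because $x$ and $y$ are only bounded below, not above, so $y - x$ could be far from $pT$; I would handle this by invoking the monotone coupling (Lemma \ref{MonCoup}) to reduce to the boundary case $x = M_1 T^{1/2}$, $y = pT + M_2 T^{1/2}$: raising $x$ or $y$ raises the curve pointwise, hence only helps the lower-bound event in \eqref{halfEq1S4}, so it suffices to prove the estimate at these extremal values. With $x = M_1 T^{1/2}$ and $y = pT + M_2 T^{1/2}$ we get $(y-x) - pT = (M_2 - M_1)T^{1/2}$, so $|(y-x)-pT|^2/T = (M_2-M_1)^2$, a constant depending only on $M_2 - M_1$. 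Then the right-hand side above is $C e^{(M_2-M_1)^2} e^{\tilde\alpha(\log T)^2 - \frac a2 T^{1/4}}$, which tends to $0$ as $T \to \infty$ since $T^{1/4}$ dominates $(\log T)^2$; hence it is $\leq 1/6$ for all $T \geq W_0(p, M_2 - M_1)$, as required.

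The main obstacle is the reduction to the extremal boundary values via monotonicity: one must check that the event in \eqref{halfEq1S4} is indeed monotone under raising the endpoint data and — if one wants to stay within the hypotheses of Lemma \ref{MonCoup} — that the comparison curve with the larger (extremal) boundary data is still a legitimate $(H, H^{RW})$-curve; here $\vec z = (-\infty)^T$ and $H \equiv 0$ on the relevant range, so $\mathbb{P}^{0,T,x,y}_{H^{RW}}$ is exactly the random-walk-bridge case of Lemma \ref{MonCoup}, and monotonicity in $(x,y)$ applies directly. An alternative that sidesteps monotonicity entirely is to keep $x,y$ general and carry the factor $e^{|(y-x)-pT|^2/T}$ along, absorbing it by strengthening the event: since the "slack" in the event beyond what the coupling costs is $-\tfrac{T-s}{T}(x - M_1 T^{1/2}) - \tfrac{s}{T}(y - pT - M_2 T^{1/2})$, which grows with $x - M_1T^{1/2}$ and $y - pT - M_2 T^{1/2}$, while $|(y-x)-pT| \leq |x - M_1 T^{1/2}| + |y - pT - M_2 T^{1/2}| + |M_2 - M_1|T^{1/2}$, one can check the quadratic exponential loss is beaten by the linear gain in the event once $T$ is large; but the monotonicity route is cleaner and I would present that one.
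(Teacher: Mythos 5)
Your proof is correct and follows essentially the same route as the paper's: monotone coupling (Lemma \ref{MonCoup}) to reduce to the extremal boundary values $x = M_1T^{1/2}$, $y = pT + M_2T^{1/2}$, then the KMT coupling of Proposition \ref{KMT} together with the Gaussian symmetry of $B^\sigma_{s/T}$ and a Chebyshev bound on $\Delta(T,y-x)$. The only cosmetic difference is that you split the $T^{1/4}$ slack into two halves rather than using $\mathbb{P}(B^\sigma_{s/T}\geq 0)=1/2$ and the full $T^{1/4}$ as the paper does.
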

\begin{remark}
If $M_1, M_2 = 0$, then Lemma \ref{LemmaHalfS4} states that if a random walk bridge $\ell$ is started from $(0,x)$ and terminates at $(T,y)$, which are above the straight line of slope $p$, then at any given time $s \in [0,T]$ the probability that $\ell(s)$ goes a modest distance below the straight line of slope $p$ is upper bounded by $ 2/3$.
\end{remark}

\begin{lemma}\label{LemmaMinFreeS4} Let $\ell$ have distribution $\mathbb{P}^{0,T,0,y}_{H^{RW}}$, with $H^{RW}$ satisfying the assumptions in Definition \ref{AssHR}. Let $M > 0$, $p \in \mathbb{R}$ and $\epsilon > 0$ be given. Then, we can find $W_1=W_1(M,p, \epsilon) \in \mathbb{N}$ and $A=A(M,p, \epsilon) > 0$, such that for $T \geq W_1$, $ y \geq p T -  MT^{1/2}$ we have
\begin{equation}\label{minFree1S4}
\mathbb{P}^{0,T,0,y}_{H^{RW}}\Big( \inf_{s \in [ 0, T]}\big( \ell(s) -  ps \big) \leq -AT^{1/2} \Big) \leq \epsilon.
\end{equation}
\end{lemma}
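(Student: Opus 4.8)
The plan is to reduce the statement, via the monotone coupling of Lemma~\ref{MonCoup}, to the case where the exit datum $y$ is within $O(T^{1/2})$ of $pT$, and then to compare the random walk bridge with a Brownian bridge using the strong coupling of Proposition~\ref{KMT}. The event $\mathcal{E}_A := \big\{\inf_{s\in[0,T]}(\ell(s)-ps)\leq -AT^{1/2}\big\}$ is a decreasing event in the path $\ell$ (raising $\ell$ pointwise can only make it fail), so by the $\vec z=(-\infty)^T$ case of Lemma~\ref{MonCoup} (the remark following that lemma) the map $y\mapsto \mathbb{P}^{0,T,0,y}_{H^{RW}}(\mathcal{E}_A)$ is non-increasing. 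Hence, since $y\geq pT-MT^{1/2}=:y_0$, it is enough to bound $\mathbb{P}^{0,T,0,y_0}_{H^{RW}}(\mathcal{E}_A)$.

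Next I would invoke Proposition~\ref{KMT} with $z=y_0$: on the resulting probability space there is a random curve $\ell^{(T,y_0)}$ with law $\mathbb{P}^{0,T,0,y_0}_{H^{RW}}$ and a Brownian bridge $B^\sigma$ with variance $\sigma^2=\sigma_p^2$ such that, with $\Delta(T,y_0)$ as in \eqref{KMTeq},
$$\mathbb{E}\big[e^{a\Delta(T,y_0)}\big]\leq C e^{\tilde{\alpha}(\log T)^2}e^{|y_0-pT|^2/T}=C e^{\tilde{\alpha}(\log T)^2+M^2},$$
since $|y_0-pT|=MT^{1/2}$ by the choice of $y_0$. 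For every $s\in[0,T]$ one has the exact decomposition
$$\ell^{(T,y_0)}(s)-ps=\sqrt{T}\,B^\sigma_{s/T}+\tfrac{s}{T}(y_0-pT)+\Big(\ell^{(T,y_0)}(s)-\sqrt{T}\,B^\sigma_{s/T}-\tfrac{s}{T}y_0\Big),$$
and since $\tfrac{s}{T}(y_0-pT)=-\tfrac{s}{T}MT^{1/2}\geq -MT^{1/2}$ for $s\in[0,T]$ while the last bracket is bounded in absolute value by $\Delta(T,y_0)$, taking the infimum over $s$ gives
$$\inf_{s\in[0,T]}\big(\ell^{(T,y_0)}(s)-ps\big)\geq \sqrt{T}\,\inf_{t\in[0,1]}B^\sigma_t-MT^{1/2}-\Delta(T,y_0).$$

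It remains to choose the constants. Using the classical formula $\mathbb{P}\big(\inf_{t\in[0,1]}B^\sigma_t\leq -a\big)=e^{-2a^2/\sigma_p^2}$, pick $a_0=a_0(\sigma_p,\epsilon)>0$ so that this probability is at most $\epsilon/2$ for $a=a_0$, and set $A:=M+a_0+2$. By Markov's inequality, $\mathbb{P}\big(\Delta(T,y_0)>T^{1/2}\big)\leq C e^{\tilde{\alpha}(\log T)^2+M^2-aT^{1/2}}$, which tends to $0$ as $T\to\infty$; choose $W_1=W_1(M,p,\epsilon)\in\mathbb{N}$ so that this bound is at most $\epsilon/2$ for $T\geq W_1$. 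On the intersection of the two complementary good events one gets $\inf_{s\in[0,T]}(\ell^{(T,y_0)}(s)-ps)\geq -(M+a_0+1)T^{1/2}>-AT^{1/2}$, so $\mathbb{P}^{0,T,0,y_0}_{H^{RW}}(\mathcal{E}_A)\leq \epsilon/2+\epsilon/2=\epsilon$, and by the first paragraph the same bound holds for all $y\geq y_0$, which is the claim. The one genuinely important step is the monotone-coupling reduction: without it the KMT error term $e^{|y-pT|^2/T}$ is uncontrolled when $y$ is much larger than $pT$ (which the hypothesis $y\geq pT-MT^{1/2}$ permits), and pushing the exit datum down to $y_0$ is exactly what makes that term harmless; everything afterwards is a routine Markov-plus-reflection-principle estimate, the only mild point being that the $(\log T)^2$ loss in \eqref{KMTeq} is absorbed by the $T^{1/2}$ in the exponent.
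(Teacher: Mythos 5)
Your proof is correct and follows essentially the same route as the paper: a monotone-coupling reduction (Lemma~\ref{MonCoup} with $\vec z=(-\infty)^T$) to the extremal exit datum $y_0=pT-MT^{1/2}$, then the KMT bridge coupling of Proposition~\ref{KMT}, a triangle-inequality split into the KMT error event $\{\Delta(T,y_0)>T^{1/2}\}$ (killed by taking $W_1$ large) and a Brownian-bridge low excursion event (killed by taking $A$ large via the reflection formula). The paper's write-up merely phrases the linear-drift observation $\tfrac{s}{T}(y_0-pT)\geq -MT^{1/2}$ as an inclusion of events rather than as a pointwise decomposition, but the underlying estimate is identical.
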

\begin{remark} Roughly, Lemma \ref{LemmaMinFreeS4} states that if a random walk bridge $\ell$ is started from $(0,0)$ and terminates at $(T,y)$ with $(T,y)$ not significantly lower than the straight line of slope $p$, then the event that $\ell$ goes significantly below the straight line of slope $p$ is very unlikely.
\end{remark}

\begin{lemma}\label{LemmaTailS4} Let $\ell$ have distribution $\mathbb{P}^{0,T,x,y}_{H^{RW}}$, with $H^{RW}$ satisfying the assumptions in Definition \ref{AssHR}. Let $M_1,M_2 > 0$ and $p \in \mathbb{R}$ be given. Then, we can find $W_2 = W_2(M_1,M_2,p) \in \mathbb{N}$, such that for $T \geq W_2$, $ x \geq -M_1T^{1/2}$, $ y \geq pT -  M_1T^{1/2}$ and $\rho \in \{-1, 0 , 1\}$ we have
\begin{equation}\label{halfEq2S4}
\mathbb{P}^{0,T,x,y}_{H^{RW}}\bigg( \ell(\lfloor T/2\rfloor  + \rho)  \geq \frac{M_2T^{1/2} + p T}{2} - T^{1/4} \bigg) \geq (1/2) (1 - \Phi^{v}(M_1 + M_2) ),
\end{equation}
where $\Phi^{v}$ is the cumulative distribution function  of a Gaussian random variable with mean $0$ and variance $v = \sigma_p^2/4$.
\end{lemma}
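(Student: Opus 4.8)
The plan is to deduce the estimate from the strong coupling of Proposition~\ref{KMT}, after first using stochastic monotonicity to reduce to the extremal boundary data. The event $\{\ell(\lfloor T/2\rfloor + \rho) \ge \tfrac{M_2 T^{1/2} + pT}{2} - T^{1/4}\}$ is increasing in the curve $\ell$, and $\mathbb{P}^{0,T,x,y}_{H^{RW}}$ is (after an obvious shift of the time index) the law $\mathbb{P}_{H^{RW}}^{1,T+1,x,y}$, i.e.\ the case $\vec z = (-\infty)^{T+1}$ in Lemma~\ref{MonCoup}. Hence the monotone coupling of that lemma (in which raising $x$ and $y$ raises $\ell$) lets me assume $x = -M_1 T^{1/2}$ and $y = pT - M_1 T^{1/2}$, the smallest values the hypotheses permit. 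This reduction is not a cosmetic convenience: for general admissible $x$ the recentred bridge has endpoint $z := y-x$ arbitrarily far from $pT$, so the factor $e^{|z - pT|^2/T}$ in \eqref{KMTeq} would be useless; with the extremal choice $z = pT$ this factor is exactly $1$.

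With $x,y$ so chosen, a bridge from $(0,x)$ to $(T,y)$ is distributed as $x$ plus a bridge from $(0,0)$ to $(T,z)$ with $z = pT$. I would invoke Proposition~\ref{KMT} with this $z$ to obtain, on one probability space, a Brownian bridge $B^\sigma$ of variance $\sigma^2 = \sigma_p^2$ and a curve $\ell^{(T,z)}\sim \mathbb{P}^{0,T,0,z}_{H^{RW}}$ with $\mathbb{E}\big[e^{a\Delta(T,z)}\big] \le C e^{\tilde\alpha(\log T)^2}$; Markov's inequality then gives $\mathbb{P}(\Delta(T,z) > T^{1/4}/2) \to 0$ as $T\to\infty$ (this is where $W_2$ acquires its dependence on $p$ and $H^{RW}$, through the constants $C,a,\tilde\alpha$). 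Write $s_* = (\lfloor T/2\rfloor+\rho)/T$, so that $s_* \to 1/2$ uniformly over $\rho\in\{-1,0,1\}$, and note $\tfrac{\lfloor T/2\rfloor+\rho}{T}\,z = p(\lfloor T/2\rfloor+\rho) \ge \tfrac{pT}{2} - 2|p|$. On the event $\{\Delta(T,z)\le T^{1/4}/2\}$ and once $T$ is large enough that $2|p| \le T^{1/4}/4$, unwinding the definitions shows that the Gaussian event $\{B^\sigma_{s_*} \ge M_1 + \tfrac{M_2}{2} - \tfrac14 T^{-1/4}\}$ forces $\ell(\lfloor T/2\rfloor+\rho) \ge \tfrac{M_2 T^{1/2}+pT}{2} - T^{1/4}$; the slack $-T^{1/4}$ in the statement is exactly what absorbs the coupling error $T^{1/4}/2$ together with the $O(|p|)$ discretisation error.

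It remains to estimate the Gaussian probability. Since $B^\sigma_{s_*}\sim N\big(0,\sigma_p^2\, s_*(1-s_*)\big)$ exactly, $s_*(1-s_*)\to 1/4$, and the threshold $M_1 + \tfrac{M_2}{2} - \tfrac14 T^{-1/4}$ stays below $M_1 + M_2/2$ and converges to it, one gets $\mathbb{P}\big(B^\sigma_{s_*} \ge M_1 + \tfrac{M_2}{2} - \tfrac14 T^{-1/4}\big) \to 1 - \Phi^v(M_1 + M_2/2)$ with $v = \sigma_p^2/4$. Combining this with $\mathbb{P}(\Delta(T,z) > T^{1/4}/2)\to 0$ yields
\[
\liminf_{T\to\infty}\ \mathbb{P}^{0,T,x,y}_{H^{RW}}\Big(\ell(\lfloor T/2\rfloor+\rho) \ge \tfrac{M_2 T^{1/2}+pT}{2} - T^{1/4}\Big)\ \ge\ 1 - \Phi^v(M_1+M_2/2)\ >\ 1 - \Phi^v(M_1+M_2),
\]
using $M_2>0$ and strict monotonicity of $\Phi^v$. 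The right-hand side already exceeds the claimed $\tfrac12\big(1-\Phi^v(M_1+M_2)\big)$, so choosing $W_2 = W_2(M_1,M_2,p)$ large enough that all of the above convergences have taken effect (uniformly over $\rho\in\{-1,0,1\}$) completes the proof. I expect the only delicate point to be the first step — checking that Lemma~\ref{MonCoup} legitimately applies to the pure random walk bridge measures and that the reduction is to the correct extremal data — after which Steps~2--3 are routine error bookkeeping.
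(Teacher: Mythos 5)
Your proposal is correct and follows essentially the same route as the paper's own proof: reduce to the extremal boundary data $x=-M_1T^{1/2}$, $y=pT-M_1T^{1/2}$ via the monotone coupling of Lemma~\ref{MonCoup} (with $\vec z \equiv -\infty$), observe that the re-centred bridge then has endpoint $z=y-x=pT$ so the factor $e^{|z-pT|^2/T}$ in \eqref{KMTeq} is exactly $1$, and pass to the Brownian bridge via Proposition~\ref{KMT} and a Chebyshev/Markov bound on $\Delta(T,z)$, with the midpoint variance $\sigma_p^2\,s_*(1-s_*)\to\sigma_p^2/4=v$ giving the stated constant. If anything, your bookkeeping is a bit more explicit than the paper's in isolating the $O(|p|)$ discretisation error $pT(\tfrac12-s_*)$ and showing it is swallowed by the $T^{1/4}$ slack, a step the paper's proof leaves implicit.
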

\begin{remark} Lemma \ref{LemmaTailS4} states that  if a random walk bridge $\ell$ is started from $(0,x)$ and terminates at $(T,y)$, with these points not significantly lower than the straight line of slope $p$, then its mid-point would lie well above the straight line of slope $p$, at least with some quantifiably tiny probability.
\end{remark}

\begin{lemma}\label{LemmaAwayS4} Let $\ell$ have distribution $\mathbb{P}^{0,T,x,y}_{H^{RW}}$ with $H^{RW}$ satisfying the assumptions in Definition \ref{AssHR}. Let $p \in \mathbb{R}$ be given. Then we can find $W_3 = W_3(p) \in \mathbb{N}$ such that for $T \geq W_3$, $ x \geq T^{1/2}$, $ y \geq pT +  T^{1/2}$
\begin{equation}\label{awayS4}
\mathbb{P}^{0,T,x,y}_{H^{RW}}\Big( \inf_{s \in [0,T]} \big( \ell(s) -ps \big)+ T^{1/4} \geq 0 \Big) \geq \frac{1}{2} \left(1 - \exp\left(\frac{-2}{\sigma_p^2}\right)\right).
\end{equation}
\end{lemma}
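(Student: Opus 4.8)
The plan is to reduce, via the monotone coupling of Lemma~\ref{MonCoup}, to a random walk bridge with canonical endpoints, and then to compare that bridge to a Brownian bridge through the strong coupling of Proposition~\ref{KMT}, for which the desired estimate follows from the reflection principle. First I would note that the event $E := \{\inf_{s\in[0,T]}(\ell(s)-ps)+T^{1/4}\geq 0\}$ is increasing: if $\ell\leq\ell'$ pointwise on $[0,T]$ and $E$ holds for $\ell$, then it holds for $\ell'$. The $\vec{z}=(-\infty)^T$ special case of Lemma~\ref{MonCoup} (in which $\mathbb{P}_{H,H^{RW}}^{1,T,x,y,\vec{z}}$ coincides with the random walk bridge law $\mathbb{P}_{H^{RW}}^{1,T,x,y}$ for any admissible convex increasing $H$, e.g.\ $H\equiv 0$, so only the convexity of $H^{RW}$ from Definition~\ref{AssHR} is used), after the obvious shift of the time index, produces a coupling in which raising the entrance and exit data raises the bridge pointwise almost surely. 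Hence for all $x\geq T^{1/2}$ and $y\geq pT+T^{1/2}$ one has $\mathbb{P}^{0,T,x,y}_{H^{RW}}(E)\geq \mathbb{P}^{0,T,T^{1/2},\,pT+T^{1/2}}_{H^{RW}}(E)$, and it suffices to treat $x=T^{1/2}$, $y=pT+T^{1/2}$. Since a bridge from $(0,T^{1/2})$ to $(T,pT+T^{1/2})$ has the law of $T^{1/2}+\tilde\ell$ with $\tilde\ell\sim\mathbb{P}^{0,T,0,pT}_{H^{RW}}$, the event $E$ becomes $\{\inf_{s\in[0,T]}(\tilde\ell(s)-ps)\geq -T^{1/2}-T^{1/4}\}$.

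Next I would apply Proposition~\ref{KMT} with $z=pT$: on a suitable probability space there is a Brownian bridge $B^\sigma$ of variance $\sigma^2=\sigma_p^2$ and a curve $\tilde\ell$ of law $\mathbb{P}^{0,T,0,pT}_{H^{RW}}$ such that, writing
\[
\Delta:=\sup_{0\leq t\leq T}\Bigl|\sqrt{T}\,B^\sigma_{t/T}+pt-\tilde\ell(t)\Bigr|,
\]
one has $\mathbb{E}\bigl[e^{a\Delta}\bigr]\leq C e^{\tilde\alpha(\log T)^2}$, the factor $e^{|z-pT|^2/T}$ from \eqref{KMTeq} being $1$ since $z=pT$. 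On the event $\{\Delta\leq T^{1/4}\}\cap\{\inf_{u\in[0,1]}B^\sigma_u\geq -1\}$ we get, for every $t\in[0,T]$, that $\tilde\ell(t)-pt\geq \sqrt{T}\,B^\sigma_{t/T}-\Delta\geq -T^{1/2}-T^{1/4}$, so
\[
\mathbb{P}\Bigl(\inf_{s\in[0,T]}(\tilde\ell(s)-ps)\geq -T^{1/2}-T^{1/4}\Bigr)\ \geq\ \mathbb{P}\Bigl(\inf_{u\in[0,1]}B^\sigma_u\geq -1\Bigr)-\mathbb{P}\bigl(\Delta>T^{1/4}\bigr).
\]

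To finish, the reflection principle for the Brownian bridge gives $\mathbb{P}(\inf_{u\in[0,1]}B^\sigma_u\leq -1)=\exp(-2/\sigma_p^2)$, so the first term on the right equals $1-\exp(-2/\sigma_p^2)$; Markov's inequality combined with the moment bound gives $\mathbb{P}(\Delta>T^{1/4})\leq C e^{\tilde\alpha(\log T)^2-aT^{1/4}}$. Since $T^{1/4}$ eventually dominates $(\log T)^2$, and the constants $C,a,\tilde\alpha$ from Proposition~\ref{KMT} together with $\sigma_p$ depend only on $p$ and $H^{RW}$, there is $W_3=W_3(p)\in\mathbb{N}$ with $C e^{\tilde\alpha(\log T)^2-aT^{1/4}}\leq\tfrac12\bigl(1-\exp(-2/\sigma_p^2)\bigr)$ for all $T\geq W_3$. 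Combining the two displayed estimates, and then undoing the vertical shift by $T^{1/2}$ and the monotone-coupling reduction in the endpoints, yields \eqref{awayS4}.

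The main thing to be careful about is bookkeeping rather than a genuine difficulty: one must check that the monotone-coupling reduction is legitimate when $\vec{z}=(-\infty)^T$ (so that the auxiliary interaction Hamiltonian appearing in Lemma~\ref{MonCoup} is irrelevant and only the convexity of $H^{RW}$, which is part of Definition~\ref{AssHR}, gets used), and that every constant produced along the way depends only on $p$ and $H^{RW}$, so that $W_3$ can indeed be chosen as a function of $p$ alone. The Brownian-bridge minimum identity and the Markov-inequality tail bound are routine.
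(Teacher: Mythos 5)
Your proof is correct and follows the same route as the paper's own argument: reduce to the extremal endpoints $x=T^{1/2}$, $y=pT+T^{1/2}$ via the monotone coupling of Lemma~\ref{MonCoup} with $\vec{z}=(-\infty)^T$, shift to a bridge law $\mathbb{P}^{0,T,0,pT}_{H^{RW}}$, apply the strong coupling of Proposition~\ref{KMT} with $z=pT$, invoke the reflection-principle identity $\mathbb{P}(\min_{[0,1]}B^\sigma \leq -1)=e^{-2/\sigma_p^2}$, and absorb the coupling error $\mathbb{P}(\Delta>T^{1/4})$ by taking $W_3$ large. The only cosmetic difference is that you present the endpoint shift as adding $T^{1/2}$ to a $(0,pT)$-bridge whereas the paper phrases it via shift invariance of the bridge measure; these are the same step.
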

\begin{remark}
Lemma \ref{LemmaAwayS4} states that  if a random walk bridge $\ell$ is started from $(0,x)$ and terminates at $(T,y)$ with $(0,x)$ and $(T,y)$ well above the line of slope $p$ then at least with some positive probability $\ell$ will not fall significantly below the line of slope $p$.
\end{remark}

We need the following definition for our next result. For a function $f \in C[a,b]$ we define its {\em modulus of continuity} by
\begin{equation}\label{MOCS4}
w(f,\delta) = \sup_{\substack{x,y \in [a,b]\\ |x-y| \leq \delta}} |f(x) - f(y)|.
\end{equation}
\begin{lemma}\label{MOCLemmaS4} Let $\ell$ have distribution $\mathbb{P}^{0,T,0,y}_{H^{RW}}$, with $H^{RW}$ satisfying the assumptions in Definition \ref{AssHR}. Let $M > 0$ and $p \in \mathbb{R}$ be given. For each positive $\epsilon$ and $\eta$, there exist a $\delta > 0$ and $W_4 = W_4(M, p, \epsilon, \eta) \in \mathbb{N}$, such that  for $T \geq W_4$ and $|y - pT| \leq MT^{1/2}$ we have
\begin{equation}\label{MOCeqS4}
\mathbb{P}^{0,T,0,y}_{H^{RW}}\Big( w\big({f^\ell},\delta\big) \geq \epsilon \Big) \leq \eta,
\end{equation}
where $f^\ell(u) = T^{-1/2}\big(\ell(uT) - puT\big)$  for $u \in [0,1]$.
\end{lemma}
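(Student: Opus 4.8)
The plan is to transfer the estimate to a Brownian bridge using the strong coupling of Proposition~\ref{KMT}, and then combine a standard modulus-of-continuity bound for the bridge with a Markov-type tail bound on the coupling error. Since the event in \eqref{MOCeqS4} depends only on the law of $\ell$, I would assume without loss of generality that $\ell = \ell^{(T,y)}$ is the curve produced by Proposition~\ref{KMT} for the given $p$, defined on a probability space together with a Brownian bridge $B^\sigma$ of variance $\sigma^2 = \sigma_p^2$, with the exponential bound \eqref{KMTeq} in force (with constants $a, C, \tilde{\alpha}$ depending only on $p$ and $H^{RW}$). Taking $t = uT$ with $u \in [0,1]$ in the definition of $\Delta(T,y)$ in \eqref{KMTeq} gives $\sup_{u \in [0,1]} | \ell(uT) - \sqrt{T} B^\sigma_u - u y | \leq \Delta(T,y)$. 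Dividing by $T^{1/2}$ and recalling $f^\ell(u) = T^{-1/2}(\ell(uT) - puT)$, I get the decomposition
\[ f^\ell(u) = B^\sigma_u + u\, c_T + R_T(u), \qquad c_T := T^{-1/2}(y - pT), \qquad \sup_{u \in [0,1]} |R_T(u)| \leq T^{-1/2}\Delta(T,y), \]
with $|c_T| \leq M$ by the hypothesis $|y - pT| \leq M T^{1/2}$.

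Next I would invoke the subadditivity of the modulus of continuity: the map $u \mapsto u c_T$ has modulus $|c_T|\delta \le M\delta$, and the modulus of $R_T$ is at most $2\sup_u |R_T(u)| \le 2 T^{-1/2}\Delta(T,y)$, so
\[ w(f^\ell, \delta) \leq w(B^\sigma, \delta) + M\delta + 2T^{-1/2}\Delta(T,y). \]
Now fix $\epsilon, \eta > 0$. Since $B^\sigma$ has almost surely continuous paths on $[0,1]$, $w(B^\sigma,\delta) \to 0$ a.s.\ as $\delta \downarrow 0$, and hence by bounded convergence there is $\delta_0 > 0$ (depending on $\epsilon, \eta$ and, through $\sigma_p^2$, on $p$ and $H^{RW}$) with $\mathbb{P}(w(B^\sigma, \delta_0) \ge \epsilon/4) \le \eta/2$. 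I would then set $\delta := \min(\delta_0,\, \epsilon/(4M))$, so that $M\delta \le \epsilon/4$, and obtain
\[ \mathbb{P}\big(w(f^\ell,\delta) \ge \epsilon\big) \leq \mathbb{P}\big(w(B^\sigma,\delta) \ge \epsilon/4\big) + \mathbb{P}\big(2T^{-1/2}\Delta(T,y) \ge \epsilon/2\big) \leq \frac{\eta}{2} + \mathbb{P}\Big(\Delta(T,y) \ge \frac{\epsilon}{4}T^{1/2}\Big). \]

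Finally, for the remaining term I would use Markov's inequality together with \eqref{KMTeq} and $|y-pT|^2/T \le M^2$:
\[ \mathbb{P}\Big(\Delta(T,y) \ge \frac{\epsilon}{4}T^{1/2}\Big) \leq e^{-a\epsilon T^{1/2}/4}\, \mathbb{E}\big[e^{a\Delta(T,y)}\big] \leq C\, e^{M^2}\, \exp\!\Big(\tilde{\alpha}(\log T)^2 - \frac{a\epsilon}{4} T^{1/2}\Big). \]
Because $T^{1/2}$ dominates $(\log T)^2$, the right-hand side tends to $0$ as $T \to \infty$, so there is $W_4 = W_4(M,p,\epsilon,\eta) \in \mathbb{N}$ for which it is at most $\eta/2$ whenever $T \ge W_4$; combined with the previous display this yields $\mathbb{P}(w(f^\ell,\delta) \ge \epsilon) \le \eta$ for $T \ge W_4$, which is \eqref{MOCeqS4}. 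I do not expect a serious obstacle here: the argument is essentially routine once Proposition~\ref{KMT} is available. The only points needing a little care are that the coupling error enters the modulus of continuity only through its supremum norm, and that both the mesh $\delta$ and the threshold $W_4$ can be chosen uniformly over the range $|y - pT| \le MT^{1/2}$ — which is exactly what the bounds $|c_T| \le M$ and $e^{|y-pT|^2/T} \le e^{M^2}$ provide.
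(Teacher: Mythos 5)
Your proof is correct and follows essentially the same route as the paper: reduce to the KMT-type coupled pair via Proposition~\ref{KMT}, bound $w(f^\ell,\delta)$ by $w(B^\sigma,\delta)$ plus a linear drift term $M\delta$ plus a coupling-error term $2T^{-1/2}\Delta(T,y)$, then kill the three terms by choosing $\delta$ small (using a.s.\ continuity of the Brownian bridge) and $T$ large (using the exponential moment bound on $\Delta$). The only differences from the paper's argument are cosmetic, e.g.\ you split $\epsilon$ as $\epsilon/4+\epsilon/4+\epsilon/2$ rather than into thirds.
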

\begin{remark}
Lemma \ref{MOCLemmaS4} states that if $\ell$ is a random walk bridge that is started from $(0,0)$ and terminates at $(T,y)$, with $y$ close to $pT$ (i.e. with well-behaved endpoints), then the modulus of continuity of $\ell$ is also well-behaved with high probability.
\end{remark}

The above five lemmas are proved in a similar fashion. For the first four lemmas, one observes that the event, whose probability is being estimated, is monotone in $\ell$. This allows, by Lemma \ref{MonCoup}, to replace $x,y$ in the statements of the lemmas with the extreme values of the ranges specified in each. Once the choice of $x$ and $y$ is fixed, one can use our strong coupling result of $\ell$ and a Brownian bridge to reduce each of the lemmas to an analogous one with $\ell$ replaced by a Brownian bridge with some prescribed variance. The latter statements are then easily confirmed as one has exact formulas for all of the probabilities in the above lemmas whenever $\ell$ is replaced by a Brownian bridge.\\

We end this section with the following result for $(H,H^{RW})$-random curves. Its proof will also be provided in Section \ref{Section11.2}.
\begin{lemma}\label{NoExplodeS4}Let $H$ be as in Definition \ref{Pfree} and suppose it is convex,  increasing and  $\lim_{x \rightarrow \infty} H(x) = \infty$. For such a choice of $H$, we let $\ell$ have law $\mathbb{P}_{H,H^{RW}}^{0, 2T, x ,y, \vec{z}}$ as in Section \ref{Section4.2}, where $H^{RW}$ satisfies the assumptions in Definition \ref{AssHR}. Let $M, \epsilon > 0$ and $p \in \mathbb{R}$ be given. Then, we can find a constant $W_5 = W_5(M,p,\epsilon) \in \mathbb{N}$ so that the following holds. If $T \geq W_5$, $\vec{z} \in [-\infty, \infty)^{2T+1}$ with $z_{ T+1} \geq pT + 2MT^{1/2}$ and $x,y \in \mathbb{R}$ with $x \geq -MT^{1/2}$ and $y \geq  - MT^{1/2} + 2pT$, then we have
\begin{equation}\label{NE1v2S4}
\mathbb{P}_{H,H^{RW}}^{0,2T,x,y, \vec{z}} \big( \ell(T) \leq pT+ MT^{1/2} \big) \leq \epsilon.
\end{equation}
\end{lemma}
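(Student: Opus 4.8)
The plan is to combine three ingredients: the monotone coupling of Lemma~\ref{MonCoup} to reduce to the smallest admissible boundary data; the observation that for this special data the measure collapses to a one–dimensional exponential tilt of a random walk bridge; and an estimate that plays the numerator of this tilt against its normalization, using that $H$ grows to infinity together with Lemma~\ref{LemmaTailS4}. The point is that the interaction with the high curve $\vec z$ near the midpoint is what forces $\ell(T)$ up.

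\emph{Step 1 (reduction by monotonicity).} First I would note that the event $\{\ell(T)\le pT+MT^{1/2}\}$ is decreasing in the path $\ell$. Since $H$ and $H^{RW}$ are convex and $H$ is increasing, part~II of Lemma~\ref{MonCoup} applies (after the harmless time–translation identifying $\mathbb{P}_{H,H^{RW}}^{0,2T,x,y,\vec z}$ with $\mathbb{P}_{H,H^{RW}}^{1,2T+1,x,y,\vec z}$) and shows that $\mathbb{P}_{H,H^{RW}}^{0,2T,x,y,\vec z}\big(\ell(T)\le pT+MT^{1/2}\big)$ is monotone decreasing in $x$, in $y$, and in every coordinate $z_i$. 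Hence it suffices to prove the bound for the pointwise smallest admissible data $x^{*}=-MT^{1/2}$, $y^{*}=2pT-MT^{1/2}$, $z^{*}_{T+1}=pT+2MT^{1/2}$, and $z^{*}_i=-\infty$ for $i\neq T+1$. \emph{Step 2 (the tilt is one–dimensional).} Because $H(-\infty)=0$, every factor of the Boltzmann weight $W_H(0,2T,\ell,\vec z^{*})$ that involves a coordinate equal to $-\infty$ is $1$, so $W_H(0,2T,\ell,\vec z^{*})=e^{-H(z^{*}_{T+1}-\ell(T))}$, which depends on $\ell$ only through $\ell(T)$. Writing $Z=Z_{H,H^{RW}}(0,2T,x^{*},y^{*},\vec z^{*})=\mathbb{E}^{0,2T,x^{*},y^{*}}_{H^{RW}}\big[e^{-H(z^{*}_{T+1}-\ell(T))}\big]$, the quantity to bound equals $Z^{-1}\,\mathbb{E}^{0,2T,x^{*},y^{*}}_{H^{RW}}\big[e^{-H(z^{*}_{T+1}-\ell(T))}\,\mathbf{1}\{\ell(T)\le pT+MT^{1/2}\}\big]$.

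\emph{Step 3 (numerator versus denominator).} On $\{\ell(T)\le pT+MT^{1/2}\}$ one has $z^{*}_{T+1}-\ell(T)\ge MT^{1/2}$, so since $H$ is increasing the numerator is at most $e^{-H(MT^{1/2})}$. To lower bound $Z$, I would restrict the expectation to $\{\ell(T)\ge pT+2MT^{1/2}+1\}$, on which $z^{*}_{T+1}-\ell(T)\le -1$ and the integrand is at least $e^{-H(-1)}$, giving $Z\ge e^{-H(-1)}\,\mathbb{P}^{0,2T,x^{*},y^{*}}_{H^{RW}}\big(\ell(T)\ge pT+2MT^{1/2}+1\big)$. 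Then Lemma~\ref{LemmaTailS4}, applied to the bridge $\mathbb{P}^{0,2T,x^{*},y^{*}}_{H^{RW}}$ with $\rho=0$ and, say, $M_1=M$ and $M_2=2\sqrt 2\,M+1$ (so that $x^{*}\ge -M_1(2T)^{1/2}$ and $y^{*}\ge p\cdot 2T-M_1(2T)^{1/2}$, and for $T$ large the threshold in its conclusion exceeds $pT+2MT^{1/2}+1$), yields $\mathbb{P}^{0,2T,x^{*},y^{*}}_{H^{RW}}\big(\ell(T)\ge pT+2MT^{1/2}+1\big)\ge c_0$ for a constant $c_0=c_0(M,p)>0$. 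Combining the two estimates gives $\mathbb{P}_{H,H^{RW}}^{0,2T,x^{*},y^{*},\vec z^{*}}\big(\ell(T)\le pT+MT^{1/2}\big)\le c_0^{-1}e^{H(-1)}e^{-H(MT^{1/2})}$, which tends to $0$ as $T\to\infty$ because $\lim_{x\to\infty}H(x)=\infty$; taking $W_5$ large enough that this is $\le\epsilon$ (and large enough to invoke Lemma~\ref{LemmaTailS4}) finishes the argument.

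The main obstacle I anticipate is conceptual rather than computational: under the underlying random walk bridge alone the midpoint $\ell(T)$ is centered near $pT-MT^{1/2}$, so $\{\ell(T)\le pT+MT^{1/2}\}$ has bridge–probability close to $1$ and no naive tail bound works — the estimate lives entirely off the tilt $e^{-H(z^{*}_{T+1}-\ell(T))}$. The delicate part is to keep the normalization $Z$ bounded below by a constant independent of $T$, which is exactly what Lemma~\ref{LemmaTailS4} provides (it puts a fixed amount of tilted mass above level $pT+2MT^{1/2}+1$), after which the super–polynomial decay of $e^{-H(MT^{1/2})}$ overwhelms the fixed factor $c_0^{-1}e^{H(-1)}$. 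A minor bookkeeping point is to match the index convention so that the coordinate $z_{T+1}$ of the boundary data is precisely the one paired with $\ell(T)$ in the Boltzmann weight, together with the time–translation used to bring the ensemble into the form required by Lemma~\ref{MonCoup}.
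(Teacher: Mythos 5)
Your argument is correct and follows essentially the same route as the paper's proof: monotone coupling to reduce to the extremal boundary data, the observation that with $z^*_i=-\infty$ for $i\neq T+1$ the Boltzmann weight collapses to $e^{-H(z^*_{T+1}-\ell(T))}$, and then a numerator-vs-denominator comparison in which $e^{-H(MT^{1/2})}$ overwhelms a $T$-independent lower bound on the normalizing constant $Z$. The one place where you diverge slightly is the lower bound on $Z$: the paper applies Proposition~\ref{KMT} directly (coupling to a Brownian bridge and evaluating at $t=T$, yielding $Z\ge e^{-H(0)}\cdot\tfrac12(1-\Phi^{\sigma_p^2/2}(3M+1))$ up to a vanishing correction), whereas you invoke Lemma~\ref{LemmaTailS4}, which is the same KMT estimate pre-packaged as a midpoint tail bound; your constants ($M_1=M$, $M_2=2\sqrt 2 M+1$, threshold $pT+2MT^{1/2}+1$, factor $e^{-H(-1)}$) check out, and the reuse of the existing lemma arguably makes the step cleaner, so both implementations are valid.
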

\begin{remark}What Lemma \ref{NoExplodeS4} states is that if $(T,z_{T+1})$ is well-above the straight segment of slope $p$, then $(T,\ell(T))$ is also well-above the straight segment of slope $p$ with very high probability.
\end{remark}

It is not too surprising that if $\vec{z}$ is high then $\ell$ is also forced to be high, because the definition of $\mathbb{P}_{H,H^{RW}}^{0, 2T, x ,y, \vec{z}}$ exponentially penalizes $\ell$'s that go below $\vec{z}$. What is somewhat surprising is that it is enough for only $z_{T+1}$ (a single entry of $\vec{z}$) to be high to force $\ell(T)$ to become high with it. The reason it works out this way is that we are performing a type of diffusive scaling to the curve $\ell$ and $z_{T+1}$ is high in the order $T^{1/2}$ of this scaling. While the curves are living on order $T^{1/2}$ the interaction Hamiltonian $H$ is not scaled with $T$ at all. In particular, as $T$ becomes large the $H$ interaction on a $T^{1/2}$ scale starts to look like the indicator function that $\ell$ lies above $\vec{z}$. This, in particular, makes the proof of Lemma \ref{NoExplodeS4} very easy compared to its analogue \cite[Proposiition 7.6]{CorHamK} in the context of the KPZ line ensemble, where the interaction Hamiltonian is also influenced by the diffuse scaling.

%
\section{Tightness of simple $(H,H^{RW})$-Gibbsian line ensembles}\label{Section5}
We call a $\{1,2\} \times \llbracket T_0, T_1 \rrbracket$-indexed line ensemble {\em simple} (i.e., a simple line ensemble has only two curves, indexed by $1$ and $2$). In this section, we describe a general framework, that can be used to prove tightness for the top curve of a sequence of simple line ensembles, that satisfy the $(H,H^{RW})$-Gibbs property. We start by summarizing our assumptions on $H$ in the following definition.
\begin{definition}\label{AssH} We let $H: [-\infty, \infty) \rightarrow [0,\infty)$ be continuous, increasing, and convex. We assume that $\lim_{x \rightarrow \infty} H(x) = \infty$, and $\lim_{x \rightarrow \infty} x^2 H(-x) = 0$.
\end{definition}

In this section, we also require from Definition \ref{def:intro} the notion of an $(\alpha, p, T)$-good sequence. The main technical result of this section is as follows.
\begin{theorem}\label{PropTightGood}
Fix $\alpha, r > 0$, and $p \in \mathbb{R}$, and let $\big\{\mathfrak{L}^N = (L^N_1, L^N_2)\big\}_{N=1}^{\infty}$  be a sequence of (random) simple $\llbracket 1, 2 \rrbracket \times \llbracket -T_N, T_N \rrbracket$-indexed line ensembles that is $(\alpha, p, r+3)$--good.  For $N \geq N_0(\alpha, p, r + 3)$ (where $N_0(\alpha, p, r + 3)$  is afforded by Definition \ref{def:intro}, owing to our assumption of being $(\alpha, p, r+3)$--good), let $f_N(s)$ be given by
$$f_N(s) = N^{-\alpha/2}\big(L_1^N( sN^{\alpha}  ) - p s N^{\alpha}\big),$$
whenever $sN^{\alpha}$ is an integer. For all other values in $s \in [-r, r]$, we define $f_N$ by linear interpolation. Let $\mathbb{P}_N$ denote the law of $f_N$ as a random variable in $(C[-r,r], \mathcal{C})$. Then, the sequence of distributions $\mathbb{P}_N$ is tight.
\end{theorem}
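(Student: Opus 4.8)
The plan is to verify the standard criterion for tightness in $(C[-r,r],\mathcal{C})$ (see Theorem 7.3 in \cite{Bill}): (i) control of the initial value, i.e. that $\{f_N(0)\}$ is tight in $\mathbb{R}$, and (ii) control of the modulus of continuity, i.e. that for each $\epsilon,\eta>0$ there exists $\delta>0$ such that $\limsup_N \mathbb{P}_N\big(w(f_N,\delta)\ge\epsilon\big)\le\eta$. Part (i) is immediate from the one-point tightness built into the definition of an $(\alpha,p,r+3)$-good sequence (applied at $s=0$). The entire content is part (ii), and this is where the Gibbs property and the monotone coupling of Lemma \ref{MonCoup} enter.

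The key idea is a two-sided comparison. First I would use one-point tightness at finitely many points to deduce, with probability close to $1$, a crude a priori bound: there is a large constant $R$ so that, say, $|f_N(s_j)|\le R$ simultaneously at a mesh of points $s_j$ spanning $[-r-2,r+2]$, and in particular $L_1^N$ evaluated at the corresponding lattice sites stays within an $RN^{\alpha/2}$-window of the line of slope $p$. The upper bound on the modulus of continuity then has two directions. For the \emph{lower envelope} of $L_1^N$: conditionally on the value of $L_1^N$ at the endpoints $a=\lfloor s_{j}N^\alpha\rfloor$, $b=\lfloor s_{j+1}N^\alpha\rfloor$ of a mesh interval, resample $L_1^N$ on $\llbracket a,b\rrbracket$ using the $(H,H^{RW})$-Gibbs property; by monotonicity (Lemma \ref{MonCoup}, using that $H,H^{RW}$ are convex and $H$ increasing per Definitions \ref{AssH}, \ref{AssHR}) lowering $L_2^N$ to $-\infty$ only pushes $L_1^N$ down, so the conditional law is stochastically dominated by an $H^{RW}$ random walk bridge with the same endpoints. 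On that bridge, the modulus-of-continuity estimate of Lemma \ref{MOCLemmaS4} gives the needed control, after the diffusive rescaling, since the endpoints lie within the $RN^{\alpha/2}$-window. For the \emph{upper envelope}: here $L_2^N$ works in our favor in the sense that $L_1^N\ge L_2^N$ is not assumed, but we cannot simply drop $L_2^N$; instead, the standard strategy (as in \cite{CorHamK,Wu19}) is to control the upward deviations of $L_1^N$ by again resampling on a slightly enlarged interval, dominating the conditional law from \emph{above} by an $H^{RW}$ bridge with raised endpoints and raised boundary data — monotonicity gives this — and then invoke Lemmas \ref{LemmaHalfS4}, \ref{LemmaAwayS4}, \ref{NoExplodeS4} to show that with high probability the bridge (and hence $L_1^N$) cannot rise much above the a priori window, combined with Lemma \ref{MOCLemmaS4} for oscillation control. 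This is packaged in the main text via the auxiliary Lemmas \ref{PropSup}, \ref{PropSup2}, \ref{LemmaAP1}, which I would state and use as black boxes: they provide, respectively, lower and upper bounds on $L_1^N$ over the relevant interval with probability at least $1-\eta$, and the modulus-of-continuity propagation.

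The delicate point — and the expected main obstacle — is the upper bound on $L_1^N$ (the analogue of controlling the ``top'' of the curve). Without a curve above $L_1^N$, the only mechanism preventing $L_1^N$ from being large is the interaction with $L_2^N$ through the Boltzmann weight $\exp(-\sum H(L_2(m+1)-L_1(m)))$, which penalizes $L_1$ for being too far \emph{above} $L_2$ only weakly (since $H(-\infty)=0$), and the reweighting only penalizes $L_1$ being \emph{below} $L_2$. Thus one cannot get an upper bound on $L_1^N$ purely from the Gibbs resampling of the pair; instead one exploits that the \emph{one-point} tightness hypothesis already pins $L_1^N$ near the window at the mesh points $s_j$, and then one must argue that, conditionally, $L_1^N$ cannot wander far above that window in between. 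This is a bootstrap: resample $L_1^N$ alone (the $k_1=k_2=1$ case of the Gibbs property) on each mesh interval; the conditional law is that of an $H^{RW}$ bridge reweighted by an interaction with the fixed curve $L_2^N$, which by Lemma \ref{MonCoup} is dominated above by the free $H^{RW}$ bridge with the same (window-controlled) endpoints, and for that bridge Lemma \ref{MOCLemmaS4} finishes the job. The remaining care is purely bookkeeping: choosing the mesh fine enough relative to $\delta$, handling the linear interpolation between lattice sites, and ensuring all the ``for $N$ large'' thresholds $W_i$ from the lemmas in Section \ref{Section4.3} are eventually satisfied — this is why the hypothesis is stated as $(\alpha,p,r+3)$-good, the extra margin of $3$ giving room for the enlarged resampling intervals near $\pm r$.
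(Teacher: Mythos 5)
Your reduction to initial-value tightness plus modulus-of-continuity control is correct and matches the paper's Step 1, but the central step --- transferring oscillation control from an $H^{RW}$ bridge to the conditional law of $L_1^N$ given its boundary data --- is handled by a mechanism that cannot work. The event $\{w(f,\delta)\ge\epsilon\}$ is not a monotone event, so the stochastic domination of Lemma \ref{MonCoup} gives no relationship between $w(\ell_{\mathrm{free}},\delta)$ and $w(L_1^N,\delta)$, even under a pointwise coupling. Moreover, your claim that for the upper envelope the conditional law is ``dominated above by the free $H^{RW}$ bridge with the same (window-controlled) endpoints'' is backwards: taking $\vec z$ identically $-\infty$ gives the free bridge as a \emph{lower} bound on the interacting $L_1^N$ (raising $\vec z$ raises the curve in Lemma \ref{MonCoup}), so monotonicity never provides a free-bridge upper bound.

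What your proposal is missing is the paper's key mechanism, the quantitative lower bound on the normalizing constant $Z_{H,H^{RW}}\big(t_1^-,t_1^+,L_1^N(t_1^-),L_1^N(t_1^+),L_2^N\llbracket t_1^-,t_1^+\rrbracket\big)$, stated as Proposition \ref{PropMain}. Since the Boltzmann weight satisfies $W_H\le 1$, on the event $\{Z_{H,H^{RW}}\ge\delta_1\}$ the Radon--Nikodym derivative of the conditional law of $L_1^N$ on $\llbracket t_1^-,t_1^+\rrbracket$ with respect to the free $H^{RW}$ bridge is at most $1/\delta_1$; it is this \emph{density} bound, not a stochastic ordering, that lets one dominate the conditional probability of a large modulus of continuity by $\delta_1^{-1}$ times the same probability for the free bridge, to which Lemma \ref{MOCLemmaS4} applies. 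Your reading of Lemma \ref{LemmaAP1} as ``modulus-of-continuity propagation'' is also off: it is the size-biasing step inside Proposition \ref{PropMain}, and Lemmas \ref{PropSup}, \ref{PropSup2} supply the a priori $\sup/\inf$ control of $L_1^N$ and $L_2^N$ that feeds into it, not a direct oscillation estimate. So the lemmas you name are the right inputs, but you have assigned them the wrong roles, and the implication chain Proposition \ref{PropMain} $\Rightarrow$ Radon--Nikodym bound $\Rightarrow$ oscillation transfer, which is the actual mechanism, is absent from your proposal.
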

\begin{remark}
Roughly, Theorem \ref{PropTightGood} states that if a process can be viewed as the top curve of a $(H, H^{RW})$-Gibbsian discrete line ensemble and under some shift and diffusive scaling the process's one-point marginals are tight, then under the same shift and scaling the trajectory of the process is tight in the space of continuous curves.
\end{remark}

The goal of this section is to prove Theorem \ref{PropTightGood} and for the remainder we assume that
\begin{equation}\label{eqalphagood}
\big\{\mathfrak{L}^N = (L^N_1,L^N_2)\big\}_{N=1}^{\infty} \textrm{ is an } (\alpha,p,r + 3)\textrm{--good sequence of simple line ensembles},
\end{equation}
defined on a probability space with measure $\mathbb{P}$. The main technical result we will require is contained in Proposition \ref{PropMain} below, and its proof is the content of Section \ref{Section5.1}. The proof of Theorem \ref{PropTightGood} is given in Section \ref{Section5.2} and relies on Proposition \ref{PropMain}, and Lemma \ref{MOCLemmaS4}.
%
\subsection{Bounds on $Z_{H,H^{RW}}$}\label{Section5.1}
 The main result in this section is presented as Proposition \ref{PropMain} below. In it, and the lemmas after it, we assume that \eqref{eqalphagood} holds. In other words, for fixed $\alpha, r>0$ and $p\in \mathbb{R}$, we have that $\big\{(L^N_1,L^N_2)\big\}_{N=1}^{\infty}$ is an $N$-indexed $(\alpha,p,r + 3)$--good sequence of line ensembles. We will also adopt the notation
\begin{equation}\label{eqsts}
t^{\pm}_1 =\lfloor \pm (r+1) N^{\alpha} \rfloor,\quad t^{\pm}_2 = \lfloor \pm (r+2)N^{\alpha} \rfloor,\quad \textrm{and } t_3^{\pm} = \lfloor \pm (r+3)N^{\alpha} \rfloor.
\end{equation}
The assumption that $(L^N_1,L^N_2)$ is $(\alpha,p,r + 3)$-good implies that there exists a function $R_0: (0,\infty) \rightarrow (0, \infty)$, such that for any $\epsilon > 0$ we have
\begin{equation}\label{TailFun}
\sup_{N \geq N_0(\alpha, p,r+3)} \mathbb{P} \left( \max_{i \in \{1,2,3\}, j \in \{+, -\}} N^{-\alpha/2} \left|L_1^N(t_i^j) - pt_i^j \right| \geq R_0(\epsilon) \right) < \epsilon.
\end{equation}
\begin{proposition}\label{PropMain} For any $\epsilon > 0$ and any $(\alpha,p,r + 3)$-good sequence of simple line ensembles $\big\{ (L^N_1,L^N_2)\big\}_{N=1}^{\infty}$
there exist $\delta > 0$ and $N_1$ (depending on $\epsilon$ as well as $H, H^{RW}, \alpha, p, r, N_0$, and the function $R_0$ in (\ref{TailFun})), such that for all $N \geq N_1$
we have
$$\mathbb{P}\Big(Z_{H,H^{RW}}\big( t_1^-,t_1^+,L_1^N(t_1^-),L_1^N(t_1^+), L_2\llbracket t_1^-, t_1^+\rrbracket\big) < \delta\Big) < \epsilon,$$
where $Z_{H,H^{RW}}$ is the normalizing constant in (\ref{AccProb}) (we recall that this alternative notation for $ Z_{H,H^{RW}}$ was introduced in Section \ref{Section4.2}).
\end{proposition}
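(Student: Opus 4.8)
The plan is to use the $(H,H^{RW})$-Gibbs property to replace the random quantity $Z_{H,H^{RW}}$ by a deterministic analytic estimate valid on a high-probability event, and then to lower bound that deterministic normalizing constant by restricting the expectation that defines it to an event on which the underlying $H^{RW}$ random walk bridge stays comfortably above the lower curve $L_2^N$. Concretely, applying the $(H,H^{RW})$-Gibbs property to $\mathfrak{L}^N=(L_1^N,L_2^N)$ with $\topc=2$, $k_1=k_2=1$, $a=t_1^-$, $b=t_1^+$, the conditional law of $L_1^N\llbracket t_1^-,t_1^+\rrbracket$ given $\mathcal{F}_{ext}(\{1\}\times\llbracket t_1^-+1,t_1^+-1\rrbracket)$ is $\mathbb{P}_{H,H^{RW}}^{t_1^-,t_1^+,X,Y,\vec{Z}}$ with $X=L_1^N(t_1^-)$, $Y=L_1^N(t_1^+)$, $\vec{Z}=L_2^N\llbracket t_1^-,t_1^+\rrbracket$, all $\mathcal{F}_{ext}$-measurable, and (by Remark \ref{CondGibbs}) so is $Z_{H,H^{RW}}(t_1^-,t_1^+,X,Y,\vec{Z})$. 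It thus suffices to produce a deterministic $\delta>0$ and an event $G_N\in\mathcal{F}_{ext}$ with $\mathbb{P}(G_N)\geq 1-\epsilon$ for all large $N$, on which $Z_{H,H^{RW}}(t_1^-,t_1^+,X,Y,\vec{Z})\geq\delta$.

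\textbf{Step 2 (the good event).} Take $G_N$ to be the intersection of the event $\{N^{-\alpha/2}|L_1^N(t_i^j)-pt_i^j|\leq R_0$ for all $i\in\{1,2,3\},\, j\in\{+,-\}\}$ — which has probability $\geq 1-\epsilon/2$ for $N$ large by \eqref{TailFun}, with $R_0=R_0(\epsilon/2)$ — and the event $\{L_2^N(m)\leq pm-c_0N^{\alpha/2}$ for all $m\in\llbracket t_1^-,t_1^+\rrbracket\}$, where $c_0$ is a large constant depending only on $\epsilon,p,r,R_0$ to be fixed in Step 3. That the second event has probability $\geq 1-\epsilon/2$ for $N$ large is obtained by resampling $L_1^N$ on the larger interval $\llbracket t_2^-,t_2^+\rrbracket$ (permissible since $1\leq\topc-1$): the resampled curve has the law of $L_1^N$ and a $\mathbb{P}_{H,H^{RW}}$-conditional law whose endpoints are controlled by the first event, and Lemma \ref{NoExplodeS4} then shows that a lower curve which is too high at an interior point forces the resampled top curve to be too high there, contradicting the one-point tightness of $L_1^N$ from \eqref{TailFun}; promoting this pointwise output to the stated uniform-in-$m$ bound of the required strength is carried out with the help of the auxiliary lemmas of this section.

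\textbf{Step 3 (deterministic lower bound).} On $G_N$ we have $X\geq x_0:=pt_1^--R_0N^{\alpha/2}$, $Y\geq y_0:=pt_1^+-R_0N^{\alpha/2}$ and $z_{m+1}=L_2^N(m+1)\leq p(m+1)-c_0N^{\alpha/2}$. Since $W_H\in(0,1]$ and, for a fixed lower curve, $W_H$ is non-decreasing in the sampled curve ($H$ being increasing), the monotone coupling of Lemma \ref{MonCoup} for the plain $H^{RW}$ random walk bridge gives
\[
Z_{H,H^{RW}}(t_1^-,t_1^+,X,Y,\vec{Z})\;\geq\;Z_{H,H^{RW}}(t_1^-,t_1^+,x_0,y_0,\vec{Z})\;=\;\mathbb{E}^{t_1^-,t_1^+,x_0,y_0}_{H^{RW}}\Big[\exp\big(-\textstyle\sum_{m=t_1^-}^{t_1^+-1}H(z_{m+1}-\ell(m))\big)\Big].
\]
Restrict this expectation to the event $\{\ell(m)\geq pm-A_0N^{\alpha/2}$ for all $m\}$, which has $\mathbb{P}^{t_1^-,t_1^+,x_0,y_0}_{H^{RW}}$-probability at least $1-\epsilon$ by (a time/space shift of) Lemma \ref{LemmaMinFreeS4} for a suitable $A_0=A_0(\epsilon,p,r,R_0)$. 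Choosing $c_0:=A_0+2$, on this event and on $G_N$ one has $z_{m+1}-\ell(m)\leq p+(A_0-c_0)N^{\alpha/2}\leq -N^{\alpha/2}$ for $N$ large, so that, since $H$ is increasing,
\[
\sum_{m=t_1^-}^{t_1^+-1}H\big(z_{m+1}-\ell(m)\big)\;\leq\;(t_1^+-t_1^-)\,H\big(-N^{\alpha/2}\big)\;\leq\;\big(2(r+1)N^{\alpha}+2\big)\,H\big(-N^{\alpha/2}\big),
\]
and the right-hand side tends to $0$ as $N\to\infty$ since $\lim_{x\to\infty}x^2H(-x)=0$ (Definition \ref{AssH}). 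Hence the Boltzmann weight is $\geq 1/2$ on that event once $N$ is large, so $Z_{H,H^{RW}}(t_1^-,t_1^+,X,Y,\vec{Z})\geq\tfrac12(1-\epsilon)=:\delta>0$ on $G_N$; a union bound over the two events comprising $G_N$ completes the proof.

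\textbf{Main obstacle.} The delicate point is the second half of Step 2: extracting, from one-point tightness of the \emph{top} curve alone, the fact that the lower curve $L_2^N$ lies a genuine diffusive margin \emph{below} the line of slope $p$ uniformly over $\llbracket t_1^-,t_1^+\rrbracket$ with probability near one. This is not immediate — it must be wrung out of the Gibbs property via Lemma \ref{NoExplodeS4} — and making the bound simultaneously uniform in $m$ and strong enough that the free bridge (which concentrates about the line of slope $p$) remains above $L_2^N$ throughout is where the real work lies; the rest of the argument is a routine use of monotonicity together with the growth hypothesis $x^2H(-x)\to0$.
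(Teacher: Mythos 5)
You correctly singled out the delicate point, but your proposed resolution of it does not work. In Step 2 you assert that $L_2^N(m)\leq pm - c_0N^{\alpha/2}$ for all $m\in\llbracket t_1^-,t_1^+\rrbracket$ with probability at least $1-\epsilon/2$ for $N$ large, and that this follows from Lemma~\ref{NoExplodeS4} via resampling. This is false and cannot be extracted from the tools available. Lemma~\ref{NoExplodeS4} gives a \emph{one-sided} control: if $L_2^N$ were \emph{too high} above the line of slope $p$, the top curve would be pushed up and contradict one-point tightness. That argument yields precisely Lemma~\ref{PropSup2}, which is an \emph{upper} bound
\[
\sup_{s \in [ t_2^-, t_2^+ ]}\big(L^N_2(s) - p s \big) \leq  R_2 N^{\alpha/2}
\]
with high probability — i.e.\ $L_2^N$ does not rise far \emph{above} $ps$. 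It gives no lower bound of the type you want. Indeed there is no reason to expect $L_2^N$ to stay a diffusive margin \emph{below} the line: the curves $L_1^N$ and $L_2^N$ can hug each other near the line of slope $p$, and nothing in the $(H,H^{RW})$-Gibbs property or in one-point tightness of $L_1^N$ rules this out.

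\textbf{Why Step 3 then collapses.} Your lower bound on $Z_{H,H^{RW}}$ hinges on the inequality $z_{m+1}-\ell(m)\leq -N^{\alpha/2}$ on an event of probability $1-\epsilon$ under the \emph{free} bridge measure $\mathbb{P}^{t_1^-,t_1^+,x_0,y_0}_{H^{RW}}$. The free bridge with endpoints $x_0,y_0 = p t_1^{\pm} - R_0 N^{\alpha/2}$ fluctuates about the line of slope $p$ at scale $N^{\alpha/2}$; Lemma~\ref{LemmaMinFreeS4} only shows it doesn't dip far \emph{below} that line. So the event you condition on puts $\ell(m)$ near $pm - O(N^{\alpha/2})$, and without the false separation $L_2^N \leq pm - c_0 N^{\alpha/2}$ there is no reason $z_{m+1}-\ell(m)$ should be very negative. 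In fact, a free bridge started at $x_0,y_0$ near the line is \emph{not} highly likely to stay above a lower curve that can be at roughly the same height; the probability of avoidance is bounded away from $1$, not near $1$. Consequently the claimed $Z \geq \tfrac12(1-\epsilon)$ with probability $1-\epsilon$ fails.

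\textbf{How the paper avoids this.} The paper does not try to show $Z$ is bounded below on a high-probability event by the one-point tightness alone. Instead, Lemma~\ref{LemmaAP1} uses a size-biasing identity (equation~\eqref{eqn61}): it observes that the law of $\big(\ell(t_1^-),\ell(t_1^+)\big)$ under the measure with the $\ell_{bot}$-interaction on the inner interval is a reweighting of the corresponding law without that interaction, with Radon--Nikodym derivative proportional to $Z_{H,H^{RW}}$ itself. This converts the statement "$Z$ is small with small probability" into the statement "$\mathbb{E}[Z]$ is bounded below", which in turn only requires producing a single (fixed, $N$-independent) \emph{positive} probability event $F$ on which $Z \geq g$. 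That event is the event that the bridge at $t_1^\pm$ is raised to at least $(M_2+2)(t_1^+-t_1^-)^{1/2}$ above $p\,t_1^\pm$, which has probability $\geq h$ (a small but fixed constant depending on $M_1,M_2,r$) by Lemmas~\ref{LemmaTailS4} and~\ref{LemmaHalfS4}; from there Lemma~\ref{LemmaAwayS4} bounds the free avoidance probability from below by $g$. The resulting $\delta \sim \epsilon g h$ is necessarily $\epsilon$-dependent; your claimed $\delta = \tfrac12(1-\epsilon)$, which stays bounded away from zero as $\epsilon \downarrow 0$, is an additional symptom of the error, since it would say $Z$ is essentially never small, which is too strong.

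\textbf{Summary.} The Gibbs reduction in Step 1 is correct, and the use of $x^2 H(-x)\to 0$ in Step 3 mirrors the paper. But the heart of the argument — replacing the paper's size-biasing with a claimed high-probability lower-bound event on $L_2^N$ — is unjustified and in fact false. Repairing the proof along your line would require an entirely new mechanism; as it stands, the proposal has a genuine gap.
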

\begin{remark}\label{ZMeas} In Lemma \ref{ContinuousGibbsCond}, we show that $Z_{H,H^{RW}}\big( T_0, T_1,x,y,\vec{z}\big)$ is a continuous function of $(x,y, \vec{z}) \in \mathbb{R} \times \mathbb{R} \times Y(\llbracket T_0, T_1 \rrbracket)$  bounded above by $1$ and below by $0$. In particular, the event in Proposition \ref{PropMain} is measurable and its probability well-defined.
\end{remark}
\begin{remark} The inequality in Proposition \ref{PropMain} implies that the Radon-Nikodym derivative of $L^N_1$ with respect to a suitable $H^{RW}$ random walk bridge is lower bounded. This will ultimately allow us to show that the sequence $f_N $ is tight by comparing it with a sequence of random walk bridges, for which tightness is easier to establish.
\end{remark}

The general strategy we use to prove Proposition \ref{PropMain} is inspired by the proof of Proposition 6.5 in \cite{CorHamK}. We begin by stating three key lemmas that will be required. Their proofs are postponed to Section \ref{Section6}. All constants in the statements below will depend implicitly on $\alpha$, $r$, $p$, $N_0$, $H$, $H^{RW}$, and the function $R_0$ from (\ref{TailFun}), which are fixed throughout. We will not list this dependence explicitly.

Lemma \ref{PropSup} controls the deviation of the curve $L^N_1(s)$ from the line $ps$ in the scale $N^{\alpha/2}$.
\begin{lemma}\label{PropSup} For each $\epsilon > 0$ there exist $R_1=R_1(\epsilon) > 0$ and $N_2= N_2(\epsilon)$, such that for $N \geq N_2$
$$\mathbb{P}\Big( \sup_{s \in [ -t_3^{-}, t_3^{+}] }\big| L^N_1(s) - p s \big| \geq  R_1N^{\alpha/2} \Big) < \epsilon.$$
\end{lemma}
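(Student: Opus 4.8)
\textbf{Proof plan for Lemma \ref{PropSup}.}

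The plan is to reduce the two-sided sup bound on $[-t_3^-,t_3^+]$ to a collection of one-point tail bounds (available from the $(\alpha,p,r+3)$-goodness assumption via \eqref{TailFun}) together with a modulus-of-continuity control coming from the Gibbs property. The key conceptual input is that the curve $L_1^N$, restricted to the interval $\llbracket t_3^-,t_3^+\rrbracket$, is — conditionally on its endpoint values and on the lower curve $L_2^N$ — an $(H,H^{RW})$-curve in the sense of Section \ref{Section4.2}. So I would first condition on the $\sigma$-algebra generated by $L_1^N(t_3^-)$, $L_1^N(t_3^+)$, and $L_2^N\llbracket t_3^-,t_3^+\rrbracket$. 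On the event where the endpoints are well-controlled, i.e. $N^{-\alpha/2}|L_1^N(t_3^\pm) - p t_3^\pm| \le R_0(\epsilon/3)$ (which by \eqref{TailFun} has probability at least $1-\epsilon/3$ uniformly in $N$), I want to say that a diffusively rescaled version of $L_1^N$ on this interval has a uniformly controlled sup.

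The mechanism for that control is stochastic monotonicity (Lemma \ref{MonCoup}). Since the event $\{\sup_s (L_1^N(s) - ps) \ge R N^{\alpha/2}\}$ is increasing in the curve, and the event $\{\inf_s (L_1^N(s)-ps) \le -RN^{\alpha/2}\}$ is decreasing in the curve, I can bound the conditional probabilities of these events by replacing the lower curve $L_2^N$ by $-\infty$ (this only raises the measure for the decreasing event and, for the increasing event, I instead need an upper bound — here I use that lowering $\vec z$ lowers the measure, so for the \emph{increasing} event I cannot simply drop $L_2$; instead I bound $L_1^N$ above by noting that $H$ is increasing so the interaction can only push $L_1$ down relative to the free bridge, hence the increasing event has conditional probability at most that of the corresponding free $H^{RW}$ bridge event). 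Concretely: the $H$-interaction reweights the free $H^{RW}$ bridge by a Boltzmann factor in $(0,1]$ that is decreasing in $L_1$, so for any increasing event $A$, $\mathbb{P}_{H,H^{RW}}(A) \le \mathbb{P}_{H^{RW}}(A)$; and for the decreasing event I use the monotone coupling to compare with the free bridge from below by sending $\vec z \to -\infty$. Either way I reduce to a free $H^{RW}$ random walk bridge on $\llbracket t_3^-,t_3^+\rrbracket$ with endpoints at height $O(N^{\alpha/2})$ above/below the line $ps$. Then Lemma \ref{MOCLemmaS4} (the modulus of continuity estimate for $H^{RW}$-bridges), combined with the endpoint control, gives that the diffusively rescaled bridge stays within $R_1 N^{\alpha/2}$ of the line $ps$ with probability at least $1 - \epsilon/3$, provided $R_1$ is chosen large (depending on $R_0(\epsilon/3)$, $M$, $p$) and $N \ge N_2(\epsilon)$. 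Integrating this conditional bound over the good-endpoint event and adding the $\epsilon/3$ failure probabilities yields the claim with total probability $< \epsilon$.

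The main obstacle I anticipate is the direction of monotonicity for the \emph{increasing} event $\{\sup_s(L_1^N(s)-ps)\ge R N^{\alpha/2}\}$: stochastic monotonicity in the lower curve $L_2^N$ pushes the wrong way there (a higher $L_2$ gives a higher $L_1$, so one cannot get a clean upper bound by manipulating $L_2$). The resolution is the observation above — that because $H$ is nonnegative, the Boltzmann weight is at most $1$, so \emph{any} increasing event under $\mathbb{P}_{H,H^{RW}}^{t_3^-,t_3^+,x,y,\vec z}$ has probability at most its probability under the free bridge $\mathbb{P}_{H^{RW}}^{t_3^-,t_3^+,x,y}$; this is immediate from the Radon-Nikodym derivative \eqref{RND}–\eqref{WH} together with the fact that an increasing function of $L_1$ and a decreasing function of $L_1$ are negatively correlated (FKG-type), or more simply by a direct coupling argument. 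The remaining bookkeeping — translating the diffusive scaling on $[-r-3,r+3]$ into the hypotheses $x \ge M_1 T^{1/2}$, $y \ge pT + M_2 T^{1/2}$ etc. of Lemmas \ref{LemmaHalfS4}–\ref{MOCLemmaS4}, and absorbing the slope-$p$ tilt — is routine. I would also need to be slightly careful that $t_3^\pm$ differ from $\pm(r+3)N^\alpha$ by $O(1)$, which is negligible after rescaling by $N^{-\alpha/2}$.
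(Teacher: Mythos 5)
You correctly identify the main obstacle — the increasing event $\{\sup_s (L_1^N(s)-ps)\ge RN^{\alpha/2}\}$ cannot be handled by simply removing $L_2^N$ — but your resolution of it is based on a sign error that reverses the actual direction of the interaction, and this breaks the argument.

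For a single curve $\ell$ with bottom boundary $\vec z$, the Boltzmann weight is $W_H = \exp\bigl(-\sum_m H(z_{m+1}-\ell(m))\bigr)$. Since $H$ is \emph{increasing}, the quantity $H(z_{m+1}-\ell(m))$ is \emph{decreasing} in $\ell(m)$, so $W_H$ is an \emph{increasing} function of $\ell$. The interaction therefore pushes $L_1^N$ \emph{up} relative to the free bridge (the lower curve repels it upward), not down as you claim. Consequently, the Gibbs measure $\mathbb{P}_{H,H^{RW}}^{\cdots,\vec z}$ stochastically dominates the free bridge $\mathbb{P}_{H^{RW}}$, which is exactly what Lemma~\ref{MonCoup} says when one compares $\vec z$ to $(-\infty)^T$. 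For an increasing event $A$ this gives $\mathbb{P}_{H,H^{RW}}(A)\ge \mathbb{P}_{H^{RW}}(A)$ — the opposite inequality from the one your plan needs, and your FKG-style reasoning would likewise give $\ge$, not $\le$. So the step ``hence the increasing event has conditional probability at most that of the corresponding free $H^{RW}$ bridge event'' is false, and there is no way to obtain an upper bound on the upward sup by discarding or weakening the interaction, since the interaction is always \emph{helping} the curve go up.

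The paper's proof of the upward part, \eqref{EqK5}, routes around this by a last-exceedance-time decomposition: it writes $G(M)=\bigsqcup_n G_n(M)$ over the rightmost time $n$ at which $L_1^N(n)-pn$ exceeds the threshold, then applies the Gibbs property on $\llbracket t_3^-,n\rrbracket$. On $G_n(M)\cap E^c(M)$ the conditional bridge has a well-behaved left endpoint and a very high right endpoint, and it uses Lemma~\ref{MonCoup} in the \emph{favorable} direction (the increasing one-point event $\{\ell(t_1^-)\ \text{high}\}$ has probability at least as large under the Gibbs measure as under the free bridge) together with Lemma~\ref{LemmaHalfS4} to conclude that $L_1^N(t_1^-)$ is high with probability at least $1/3$. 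This converts the sup event into a one-point event at the deterministic time $t_1^-$, whose probability is controlled by the $(\alpha,p,r+3)$-goodness via \eqref{TailFun}. Your approach has no analogue of this pigeonholing and cannot close. Your treatment of the decreasing (inf) event — drop $L_2^N$ by sending $\vec z\to(-\infty)$ and compare with the free bridge — is essentially the paper's, though the paper uses Lemma~\ref{LemmaMinFreeS4} directly rather than the modulus-of-continuity bound, which is the more natural tool here.
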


Lemma \ref{PropSup2} controls the upper deviation of the curve $L^N_2(s)$ from the line $ps$ in the scale $N^{\alpha/2}$.
\begin{lemma}\label{PropSup2} For each $\epsilon > 0$ there exist $R_2=R_2( \epsilon) > 0$ and $N_3=N_3(\epsilon)$, such that for $N \geq N_3$
$$\mathbb{P}\Big( \sup_{s \in [ t_2^-, t_2^+ ]}\big(L^N_2(s) - p s \big) \geq  R_2N^{\alpha/2} \Big) < \epsilon.$$
\end{lemma}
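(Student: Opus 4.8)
\textbf{Proof proposal for Lemma \ref{PropSup2}.}

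The plan is to control the upper deviation of $L_2^N$ by leveraging the $(H,H^{RW})$-Gibbs property on a slightly larger interval than $[t_2^-,t_2^+]$, together with the one-point tightness of $L_1^N$ (which sits \emph{above} $L_2^N$ by the monotonicity built into the Gibbs resampling) and an elementary a priori bound. First I would fix $\epsilon>0$ and use \eqref{TailFun} to choose $R_0 = R_0(\epsilon/4)$ so that, with probability at least $1-\epsilon/4$ for all large $N$, the values $N^{-\alpha/2}|L_1^N(t_i^j)-pt_i^j|$ are all at most $R_0$; in particular $L_1^N(t_3^{\pm})\le pt_3^\pm + R_0 N^{\alpha/2}$. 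Since $L_2^N(s)\le L_1^N(s)$ pointwise (this ordering is forced because, resampling $L_1^N$ on $\llbracket t_3^-,t_3^+\rrbracket$ via the Gibbs property, the $H$-interaction term $e^{-H(L_2^N(m+1)-L_1^N(m))}$ with $H$ increasing to $\infty$ penalizes configurations where $L_1^N$ dips below $L_2^N$; more carefully one invokes the monotone coupling Lemma \ref{MonCoup} with the lower curve as boundary data), it suffices to bound $L_2^N(s)-ps$ on $[t_2^-,t_2^+]$ given that $L_1^N$ is bounded at the endpoints $t_3^{\pm}$.

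The key step is the following. Condition on $\mathcal{F}_{ext}$ for the box $\llbracket 1,1\rrbracket\times\llbracket t_3^-+1,t_3^+-1\rrbracket$ — so we resample only $L_1^N$ on $\llbracket t_3^-,t_3^+\rrbracket$ with boundary data $x=L_1^N(t_3^-)$, $y=L_1^N(t_3^+)$ and lower curve $\vec{z}=L_2^N\llbracket t_3^-,t_3^+\rrbracket$. On the good event, $x\le pt_3^- + R_0N^{\alpha/2}$ and $y\le pt_3^+ + R_0N^{\alpha/2}$. Now if $L_2^N$ were to exceed $ps + R_2 N^{\alpha/2}$ at some point of $[t_2^-,t_2^+]$ with $R_2$ large, then the $H$-interaction Boltzmann weight \eqref{WH} would force $L_1^N$ to be pushed very high near that point (since $H$ is increasing and $H(x)\to\infty$, the weight $e^{-H(L_2^N(m+1)-L_1^N(m))}$ is negligible unless $L_1^N(m)\gtrsim L_2^N(m+1)$), which, by a barrier/ballot-type estimate for the $H^{RW}$-random walk bridge for $L_1^N$ pinned at the modest endpoint values $x,y$, is exponentially unlikely. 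Quantitatively, I would bound $\mathbb{P}(L_2^N(s_0)-ps_0\ge R_2N^{\alpha/2}\text{ for some }s_0)$ from above by: (i) the probability under the free $H^{RW}$ bridge law $\mathbb{P}^{t_3^-,t_3^+,x,y}_{H^{RW}}$ that $L_1^N$ rises to height $\gtrsim ps_0 + R_2 N^{\alpha/2}$ somewhere in $[t_2^-,t_2^+]$ — controlled via Lemma \ref{LemmaMinFreeS4} (applied to the time-reversed bridge) or a direct reflection estimate, giving a bound like $\epsilon/4$ once $R_2$ is large depending only on $R_0$ and $H^{RW}$ — divided by the normalizing constant $Z_{H,H^{RW}}$. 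The latter is bounded below on the good event: because $x,y$ are only $O(N^{\alpha/2})$ above the line $ps$ and, separately, $\sup_s(L_2^N(s)-ps)$ restricted to a preliminary crude bound is $O(N^{\alpha/2})$ (obtainable by first running this argument on $\llbracket t_3^-,t_3^+\rrbracket$ with the trivial lower bound $L_2^N\le L_1^N$), one finds $Z_{H,H^{RW}}\ge \delta_0>0$ with probability $1-\epsilon/4$ by an argument parallel to Proposition \ref{PropMain} — indeed this is essentially a simpler instance of that proposition since here the lower curve is the one being controlled.

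To keep the logic non-circular I would organize it as: Step 1, a crude bound showing $N^{-\alpha/2}\sup_{[t_3^-,t_3^+]}(L_2^N(s)-ps)$ is tight (using only $L_2^N\le L_1^N$ pointwise and Lemma \ref{PropSup}, which does not depend on Lemma \ref{PropSup2}); this already gives \emph{some} $R_2$ but with an inferior interval; Step 2, upgrade: with the crude bound in hand, the normalizing constant $Z_{H,H^{RW}}$ on the resampling box $\llbracket t_3^-,t_3^+\rrbracket$ is bounded below, and then the conditional estimate from the previous paragraph gives the desired uniform control on the smaller interval $[t_2^-,t_2^+]$. The main obstacle I anticipate is making Step 1 genuinely independent of Lemma \ref{PropSup2} while still being strong enough — i.e.\ getting the pointwise ordering $L_2^N\le L_1^N$ cleanly from the monotone coupling Lemma \ref{MonCoup} (one couples $L_1^N$ given boundary data $(x,y,L_2^N)$ against $L_1^N$ given $(x,y,-\infty)$, but one actually needs the reverse comparison, so instead one argues directly from \eqref{WH}: under $\mathbb{P}_{H,H^{RW}}$ the density is supported where the argument of $H$ stays in $[-\infty,\infty)$, and since $H\to\infty$ only rarely is $L_1^N(m)$ much below $L_2^N(m+1)$ — this needs the quantitative growth of $H$, i.e.\ one cannot literally get $L_2^N\le L_1^N$ but only $L_2^N\le L_1^N + o(N^{\alpha/2})$ with high probability, which suffices). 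Handling this "soft ordering" carefully, and tracking how the $N$-independence of $H$ interacts with the $N^{\alpha/2}$ scaling (as in the discussion following Lemma \ref{NoExplodeS4}), is where the real work lies; everything else reduces to the random-walk-bridge lemmas of Section \ref{Section4.3} and an application of Proposition \ref{PropMain}-style reasoning.
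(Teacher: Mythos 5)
Your proposal has two substantive gaps, and as a result it does not deliver the lemma.

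\textbf{The pointwise ordering $L_2^N\le L_1^N$ is false.} The $(H,H^{RW})$-Gibbsian ensembles here are \emph{not} non-crossing: the Boltzmann weight $e^{-H(L_2(m+1)-L_1(m))}$ in \eqref{WH} penalizes configurations in which $L_1$ dips below $L_2$, but since $H$ is finite everywhere on $\mathbb{R}$ it does not forbid them. You acknowledge this at the end, proposing instead a ``soft ordering'' $L_2^N\le L_1^N+o(N^{\alpha/2})$, but you do not substantiate it, and obtaining it uniformly over $s\in[t_3^-,t_3^+]$ is essentially the content of Lemma~\ref{PropSup2} itself. Moreover, Lemma~\ref{MonCoup} could not be used the way you suggest: it provides a monotone coupling of the \emph{single} resampled curve $L_1$ as a function of its boundary data $(x,y,\vec z)$; it does not yield any relation between $L_1$ and $L_2$ within one realization of the ensemble. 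Your Step~1, which you explicitly build on this ordering, therefore does not get off the ground.

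\textbf{The appeal to a $Z_{H,H^{RW}}$ lower bound is circular.} You propose to bound the normalizing constant $Z_{H,H^{RW}}$ from below ``by an argument parallel to Proposition~\ref{PropMain}''. But the proof of Proposition~\ref{PropMain} uses Lemma~\ref{PropSup2} as one of its two inputs (the event $E_N$ there caps $\sup_{s\in[t_2^-,t_2^+]}(L_2^N(s)-ps)$), precisely to control the lower boundary of the resampling box. Without an a priori upper bound on $L_2^N$ on the whole interval you cannot lower-bound $Z_{H,H^{RW}}$, and you are back to needing the lemma you are trying to prove. Your proposed Step~1 was meant to break this circle, but, as above, it relies on the false pointwise ordering.

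The paper's proof avoids both problems by never going through $Z_{H,H^{RW}}$ and by working at the correct local scale. Set $n=\lfloor N^\alpha\rfloor-2$; from Lemma~\ref{PropSup} one controls the \emph{full trajectory} of $L_1^N$ on $[t_3^-,t_3^+]$ with high probability (this gives the event $E^c$). For each $m\in\llbracket t_2^--1,t_2^+-1\rrbracket$, one resamples $L_1^N$ on the window $\llbracket m-n,m+n\rrbracket$ of width $2n\sim 2N^\alpha$ and height $\sim N^{\alpha/2}$, i.e.\ exactly diffusive, and applies Lemma~\ref{NoExplodeS4} directly: if $L_2^N(m+1)$ (which plays the role of $z_{T+1}$) is at least $p(m+1)+4R_1 n^{1/2}$ and the endpoints $L_1^N(m\pm n)$ are not too low, then $L_1^N(m)$ is forced above $pm+2R_1 n^{1/2}$ with conditional probability at least $1-\epsilon/2$. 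But on $E^c$ the value $L_1^N(m)$ is bounded; contradiction. Decomposing the bad event $G$ over the \emph{last} excursion time $m+1$ of $L_2^N$ above $4R_1 n^{1/2}$ and summing gives $\mathbb{P}(E^c\cap G)\le(\epsilon/2)\mathbb{P}(G)$, hence $\mathbb{P}(G)<\epsilon$. Note the direction of the implication is the reverse of yours: rather than arguing that high $L_2^N$ is unlikely under the Gibbs measure (which would require controlling a normalization), one argues that high $L_2^N$ together with bounded $L_1^N$ is impossible with high conditional probability, and the bound on $L_1^N$ is already available from Lemma~\ref{PropSup}. The only interaction input needed is Lemma~\ref{NoExplodeS4}, which you mention only as a side remark; it should be the centerpiece.
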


Lemma \ref{LemmaAP1} states that if one is given a bottom bounding curve $\ell_{bot} \in Y(\llbracket t_2^-,t_2^+ \rrbracket)$ which is not too high and if $x,y \in \mathbb{R}$ are not too low, then under $\mathbb{P}^{t^-_2, t^+_2, x,y, \ell_{bot} }_{H,H^{RW}}$ the random variable $Z_{H,H^{RW}}\big(  t_1^-, t_1^+, \ell(t_1^-) ,\ell(t^+_1), \ell_{bot}\llbracket t_1^-, t_1^+\rrbracket\big)$ is tiny with very small probability.
\begin{lemma}\label{LemmaAP1} Fix $M_1, M_2 > 0$, $\ell_{bot} \in Y(\llbracket t_2^-,t_2^+ \rrbracket)$, and $x,y \in \mathbb{R}$, such that
\begin{enumerate}
\item $\sup_{s \in [ t_2^-,t_2^+]}\big(\ell_{bot}(s)  - ps \big)  \leq M_2 (t_2^+ - t_2^-)^{1/2}$,
\item  $ x \geq  pt^-_2- M_1 (t_2^+ - t_2^-)^{1/2},$
\item $ y \geq  p t^+_2- M_1(t_2^+ - t_2^-)^{1/2}.$
\end{enumerate}
Define the constants $g$ and $h$ (depending on $ M_1, M_2$) via
$$g =  \frac{1}{4} \left( 1 - \exp \left( \frac{-2}{\sigma_p^2} \right) \right) \mbox{ and } h = (1/18) \cdot \Big(1 - \Phi^{v}\big(10(2 +r)^2 (M_1 + M_2 + 10) \big)\Big),$$
where $\sigma_p$ is specified in terms of $H^{RW}$ as in Definition \ref{S2S1E1}, and $\Phi^v$ is the cumulative distribution function of a Gaussian random variable with mean zero and variance $v = \sigma_p^2/4$.

Then, there exists $N_4 = N_4(M_1,M_2) \in \mathbb{N}$, such that for any $\tilde{\epsilon}  > 0$ and $N \geq N_4$ we have
\begin{equation}\label{eqn60}
\mathbb{P}^{t^-_2, t^+_2, x,y, \ell_{bot} }_{H,H^{RW}} \Big( {Z_{H,H^{RW}}}\big(  t_1^-, t_1^+, \ell(t_1^-) ,\ell(t^+_1), \ell_{bot}\llbracket t_1^-, t_1^+\rrbracket\big) \leq  gh \tilde{\epsilon}   \Big) \leq \tilde{\epsilon},
\end{equation}
where $\ell_{bot}\llbracket t_1^-, t_1^+\rrbracket$ is the vector in $Y(\llbracket t_1^-, t_1^+\rrbracket)$, whose coordinates match those of $\ell_{bot}$ on $\llbracket t_1^-, t_1^+\rrbracket$.
\end{lemma}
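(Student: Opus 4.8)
Throughout write $T_i=t_i^+-t_i^-$ (so $T_1,T_2\asymp N^{\alpha}$), abbreviate $Z(a,b)=Z_{H,H^{RW}}\big(t_1^-,t_1^+,a,b,\ell_{bot}\llbracket t_1^-,t_1^+\rrbracket\big)$, and recall from Lemma \ref{MonCoup} (bottom curve all $-\infty$, the Efron-type case) that $Z(a,b)$ is strictly positive, nondecreasing in $(a,b)$ and nonincreasing in the bottom curve. The linchpin is an exact identity obtained by marginalizing the density of $\mathbb{P}^{t_2^-, t_2^+, x,y, \ell_{bot}}_{H,H^{RW}}$ down to $\big(\ell(t_1^-),\ell(t_1^+)\big)$: integrating $\ell$ out over $(t_1^-,t_1^+)$, the ``middle'' factor equals $Z(\ell(t_1^-),\ell(t_1^+))$ times the free $H^{RW}$-transition density $\g^{\,a}_{T_1}(b)$ — this is precisely the normalizer of the Gibbs property on the window $\llbracket t_1^-,t_1^+\rrbracket$ — so that
\[ \mathbb{E}^{t_2^-, t_2^+, x,y, \ell_{bot}}_{H,H^{RW}}\!\left[\frac{1}{Z(\ell(t_1^-),\ell(t_1^+))}\right]=\frac{\mathbb{E}^{t_2^-,t_2^+,x,y}_{H^{RW}}\big[W_{\mathrm{side}}\big]}{\mathbb{E}^{t_2^-,t_2^+,x,y}_{H^{RW}}\big[W_{\mathrm{side}}W_{\mathrm{mid}}\big]}=\frac{1}{\mathbb{E}_{\hat\mu}\big[Z(\ell(t_1^-),\ell(t_1^+))\big]}, \]
where $W_{\mathrm{side}}$ and $W_{\mathrm{mid}}$ are the Boltzmann weight of \eqref{WH} restricted to $\llbracket t_2^-,t_1^-\rrbracket\cup\llbracket t_1^+,t_2^+\rrbracket$ and to $\llbracket t_1^-,t_1^+\rrbracket$ respectively (so $W_{\mathrm{side}}W_{\mathrm{mid}}$ is the full weight $W_H$ on $\llbracket t_2^-,t_2^+\rrbracket$), and $\hat\mu$ is the free bridge $\mathbb{P}^{t_2^-,t_2^+,x,y}_{H^{RW}}$ reweighted by $W_{\mathrm{side}}$; the second equality is the Markov property of the free bridge, $\mathbb{E}^{t_2^-,t_2^+,x,y}_{H^{RW}}[W_{\mathrm{mid}}\mid\mathcal{F}_{ext}]=Z(\ell(t_1^-),\ell(t_1^+))$. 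By Markov's inequality $\mathbb{P}^{\cdots}_{H,H^{RW}}(Z\le gh\tilde\epsilon)\le gh\tilde\epsilon\cdot\mathbb{E}^{\cdots}_{H,H^{RW}}[1/Z]$, so the whole lemma reduces to the single, $\tilde\epsilon$-free estimate $\mathbb{E}_{\hat\mu}\big[Z(\ell(t_1^-),\ell(t_1^+))\big]\ge gh$.

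To prove $\mathbb{E}_{\hat\mu}[Z]\ge gh$, first discard the reweighting and lower the endpoints: the free-bridge density is log-concave with a nearest-neighbor interaction, hence FKG (using convexity of $H^{RW}$), and $W_{\mathrm{side}}$ is nondecreasing in $\ell$, so $\hat\mu$ stochastically dominates $\mathbb{P}^{t_2^-,t_2^+,x,y}_{H^{RW}}$, which by Lemma \ref{MonCoup} dominates $\mathbb{P}^{t_2^-,t_2^+,x_\ast,y_\ast}_{H^{RW}}$ with $x_\ast=pt_2^--M_1T_2^{1/2}$, $y_\ast=pt_2^+-M_1T_2^{1/2}$ (hypotheses (2),(3)); since $Z(a,b)$ is nondecreasing this gives $\mathbb{E}_{\hat\mu}[Z]\ge\mathbb{E}^{t_2^-,t_2^+,x_\ast,y_\ast}_{H^{RW}}\big[Z(\ell(t_1^-),\ell(t_1^+))\big]\ge g\cdot\mathbb{P}^{t_2^-,t_2^+,x_\ast,y_\ast}_{H^{RW}}(\mathcal{G})$, where $\mathcal{G}=\{\ell(t_1^-)\ge pt_1^-+CN^{\alpha/2},\ \ell(t_1^+)\ge pt_1^++CN^{\alpha/2}\}$ for a constant $C=C(M_1,M_2,r)$ chosen below, provided we establish (i) $Z\ge g$ on $\mathcal{G}$ and (ii) $\mathbb{P}^{t_2^-,t_2^+,x_\ast,y_\ast}_{H^{RW}}(\mathcal{G})\ge h$.

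For (i): by hypothesis (1), $\ell_{bot}(s)\le\bar z(s):=ps+M_2T_2^{1/2}$, so $Z(a,b)\ge Z_{H,H^{RW}}\big(t_1^-,t_1^+,a,b,\bar z\llbracket\cdot\rrbracket\big)$; restrict the expectation defining the right-hand side to the event $F$ that a fresh $H^{RW}$-bridge $\ell'$ from $(t_1^-,a)$ to $(t_1^+,b)$ obeys $\inf_s\big(\ell'(s)-\bar z(s)\big)\ge N^{\alpha/2}$. Choosing $C$ large enough that on $\mathcal{G}$ the endpoints lie at least $T_1^{1/2}$ above $\bar z+N^{\alpha/2}$, a vertical shift places us in the hypotheses of Lemma \ref{LemmaAwayS4}, giving $\mathbb{P}_{H^{RW}}(F)\ge\tfrac12\big(1-e^{-2/\sigma_p^2}\big)$; and on $F$ one has $\sum_{m=t_1^-}^{t_1^+-1}H\big(\bar z(m+1)-\ell'(m)\big)\le T_1\cdot H(-N^{\alpha/2})\to0$, since $T_1\asymp N^{\alpha}$ while $x^2H(-x)\to0$ by Definition \ref{AssH}, so for $N\ge N_4$ the Boltzmann weight on $F$ is $\ge\tfrac12$ and hence $Z\ge\tfrac14\big(1-e^{-2/\sigma_p^2}\big)=g$. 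For (ii): after the horizontal translation $\tau\mapsto\tau-t_2^-$ and the vertical shift by $-pt_2^-$ the bridge becomes $\mathbb{P}^{0,T_2,\,x_\ast-pt_2^-,\,y_\ast-pt_2^-}_{H^{RW}}$ with both endpoints within $M_1T_2^{1/2}$ of the line $ps$; condition on its midpoint value and apply Lemma \ref{LemmaTailS4} (with its free parameter tuned so the guaranteed midpoint height clears what the chord estimates below demand), obtaining that the midpoint is suitably high with probability $\ge\tfrac12(1-\Phi^v(\cdot))$, then apply Lemma \ref{LemmaHalfS4} to the two sub-bridges on $\llbracket t_2^-,0\rrbracket$ and $\llbracket 0,t_2^+\rrbracket$ (which contain $t_1^-$, resp.\ $t_1^+$, in their interiors) to force $\ell(t_1^\mp)\ge pt_1^\mp+CN^{\alpha/2}$ with probability $\ge\tfrac13$ each; multiplying gives $\mathbb{P}^{t_2^-,t_2^+,x_\ast,y_\ast}_{H^{RW}}(\mathcal{G})\ge\tfrac1{18}(1-\Phi^v(\cdot))$, and chasing how $C$ must scale in $M_1,M_2,r$ through these chord inequalities produces precisely the argument $10(2+r)^2(M_1+M_2+10)$ of $\Phi^v$ in the definition of $h$.

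The main obstacle is the cancellation identity of the first paragraph: one must write the density of $\mathbb{P}^{t_2^-, t_2^+, x,y, \ell_{bot}}_{H,H^{RW}}$ as a product over the three windows, integrate out the middle coordinates, recognize the resulting middle factor as $Z(\ell(t_1^-),\ell(t_1^+))$ times the transition density $\g^{\,a}_{T_1}(b)$, and reassemble the side factors into $\hat\mu$ — this is where the genuinely $\tilde\epsilon$-dependent statement is converted into a parameter-free one. Everything after that is a fairly mechanical combination of Lemma \ref{MonCoup} (plus a short FKG step for the $W_{\mathrm{side}}$-reweighting) with the random-walk-bridge estimates of Section \ref{Section4.3}; the one genuinely delicate piece of bookkeeping is fixing $C$ and verifying, after the above time-translation and vertical shifts, that the endpoints satisfy the hypotheses of Lemmas \ref{LemmaHalfS4}, \ref{LemmaTailS4} and \ref{LemmaAwayS4}, so that the output constants are exactly $g$ and $h$.
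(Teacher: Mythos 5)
Your proposal is correct and follows essentially the same route as the paper's proof: the "linchpin identity" (that $\mathbb{E}_{\mathbb{P}^{t_2^-,t_2^+,x,y,\ell_{bot}}_{H,H^{RW}}}[1/Z]=1/\mathbb{E}_{\hat\mu}[Z]$, with $\hat\mu$ the $W_{\mathrm{side}}$-reweighted bridge) is exactly the paper's size-biasing computation \eqref{eqn61}--\eqref{eqn61H2}, packaged through Markov's inequality on $1/Z$ rather than through a Radon--Nikodym bound; and your (i) and (ii) are the paper's \eqref{LAPeqS} and \eqref{LAPeq4}, established the same way via Lemmas \ref{LemmaAwayS4}, \ref{LemmaTailS4} and \ref{LemmaHalfS4}. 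One small simplification you missed: $\hat\mu$ is itself the Gibbs measure $\mathbb{P}^{t_2^-,t_2^+,x,y,\tilde\ell_{bot}}_{H,H^{RW}}$ with $\tilde\ell_{bot}$ equal to $\ell_{bot}$ outside and $-\infty$ inside $\llbracket t_1^-,t_1^+\rrbracket$, so the stochastic domination $\hat\mu\succeq\mathbb{P}^{t_2^-,t_2^+,x_\ast,y_\ast}_{H^{RW}}$ follows directly from Lemma \ref{MonCoup}(II) without invoking FKG separately.
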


In the remainder, we prove Proposition \ref{PropMain}, assuming the validity of Lemmas \ref{PropSup}, \ref{PropSup2} and \ref{LemmaAP1}. The arguments we present are similar to those used in the proof of Proposition 6.5 in \cite{CorHamK}.
\begin{proof}[Proof of Proposition \ref{PropMain}] Let $\epsilon > 0$ be given. Define the event
\begin{equation*}
\begin{split}
&E_N = \bigcap_{\varsigma\in \{\pm \}} \Big\{    L_1^N(  t^{\varsigma}_2)-  pt^{\varsigma}_2 \geq  - M_1 (t_2^+ - t_2^-)^{1/2}\Big\} \cap \Big\{ \sup_{s \in [ t^{-}_2, t^+_2]} \big( {L}^N_2(s) - p s \big)\leq M_2  (t_2^+ - t_2^-)^{1/2} \Big\},
\end{split}
\end{equation*}
where $M_1$ and $M_2$ are sufficiently large so that for all large $N$ we have $\mathbb{P}(E_N^c) <  \epsilon / 2$. The existence of such $M_1$ and $M_2$ is assured from Lemmas \ref{PropSup} and \ref{PropSup2}.

Let $\delta = (\epsilon/2) \cdot g h$, where $g,h$ are as in Lemma \ref{LemmaAP1} for the values $M_1, M_2$ as above and $r$ as in the statement of the proposition.
We denote
$$V = \Big\{Z_{H,H^{RW}}\big( t_1^-, t_1^+,{L}_1^N(t_1^-),{L}_1^N(t_1^+),{L}^N_2\llbracket t_1^-, t_1^+\rrbracket \big) < \delta\Big\}$$
and make the following deduction
\begin{equation*}
\begin{split}
&\mathbb{P}\big( V \cap E_N \big) =\mathbb{E} \bigg[    \mathbb{E}\Big[{\bf 1}_{E_N} \cdot {\bf 1}_{V} \Big{|} \mathcal{F}_{ext} \big( \{1\} \times \llbracket t_2^- + 1,t_2^+ - 1\rrbracket \big)\Big] \bigg] = \\
&\mathbb{E} \bigg[ {\bf 1}_{E_N} \cdot   \mathbb{E}\Big[ {\bf 1} \{ Z_{H,H^{RW}}\big( t_1^-, t_1^+,{L}_1^N(t_1^-),{L}_1^N(t_1^+),{L}^N_2\llbracket t_1^-, t_1^+\rrbracket \big) < \delta\}   \Big{|} \mathcal{F}_{ext} \big( \{1\} \times \llbracket t_2^- + 1,t_2^+ - 1\rrbracket \big)\Big] \bigg]  = \\
&\mathbb{E} \left[ {\bf 1}_{E_N} \cdot  \mathbb{E}^{t_2^-, t_2^+, L_1^N(t_2^-), L_1^N(t_2^+), L^2_N\llbracket t^-_2, t_2^+\rrbracket }_{H,H^{RW}}\left[ {\bf 1} \{ Z_{H,H^{RW}}\big( t_1^-, t_1^+, \ell(t_1^-),\ell(t_1^+),{L}^N_2\llbracket t_1^-, t_1^+\rrbracket \big) < \delta\} \right] \right] \leq \\
&  \mathbb{E} \left[ {\bf 1}_{E_N} \cdot  \epsilon/2 \right] \leq \epsilon/2.
\end{split}
\end{equation*}
The first equality follows from the tower property for conditional expectations. The second equality uses the fact that ${\bf 1}_{E_N} $ is $\mathcal{F}_{ext} \big( \{1\} \times \llbracket t_2^- + 1,t_2^+ - 1\rrbracket$-measurable and can thus be taken outside of the conditional expectation as well as the definition of $V$. The third equality uses the $(H,H^{RW})$-Gibbs property (\ref{GibbsEq}), applied to $F(\ell) = {\bf 1} \{ Z_{H,H^{RW}}\big( t_1^-, t_1^+, \ell(t_1^-),\ell(t_1^+),{L}^N_2\llbracket t_1^-, t_1^+\rrbracket \big) < \delta\} $. The inequality on the third line uses Lemma \ref{LemmaAP1} with $\tilde{\epsilon} = \epsilon/2$ as well as the fact that on the event $E_N^c$ the random variables $L^N_1(t_2^-), L^N_1(t_2^+)$ and $L^N_2 \llbracket t^{-}_2, t^+_2 \rrbracket$ (that play the roles of $x, y$ and $\ell_{bot}$) satisfy the inequalities
$$L^N_1(t_2^-) \geq  pt^-_2- M_1 (t_2^+ - t_2^-)^{1/2},  L^N_1(t_2^+) \geq  p t^+_2- M_1(t_2^+ - t_2^-)^{1/2}, \sup_{s \in [ t_2^-,t_2^+]} \hspace{-2mm}\big(L^N_2(s)  - ps \big)  \leq M_2(t_2^+ - t_2^-)^{1/2}.$$
The last inequality is trivial.

Combining the above inequality with $\mathbb{P}(E_N^c) <  \epsilon/2$, we see that for all large $N$ we have
$$\mathbb{P}\left( V  \right) = \mathbb{P}(V \cap E_N) + \mathbb{P}(V \cap E_N^c) \leq \epsilon/2 + \mathbb{P}(E_N^c) < \epsilon,$$
which completes the proof.
\end{proof}

%
\subsection{Proof of Theorem \ref{PropTightGood} }\label{Section5.2}
For clarity, we split the proof of Theorem \ref{PropTightGood} into three steps. In Step 1, we reduce the statement of the theorem to establishing a certain estimate on the modulus of continuity of the curves $L_1^N$. In Step 2, we show that it is enough to establish these estimates under the additional assumption that $(L^N_1, L^N_2)$ are well-behaved (in particular, well-behaved implies that  $Z_{H,H^{RW}}\big(t_1^-,t^+_1,L_1^N(t^-_1),L_1^N(t^+_1), L_2^N\llbracket t_1^-,t_1^+\rrbracket\big)$ is lower bounded and it is here that we use Proposition \ref{PropMain}). The fact that the $Z_{H,H^{RW}}$ is lower bounded is exploited in Step 3 to effectively reduce the estimates on the modulus of continuity of $L_1^N$ to those of a $H^{RW}$ random walk bridge. The latter estimates are then derived by appealing to Lemma \ref{MOCLemmaS4}.
\vspace{2mm}

{\raggedleft \bf Step 1.} Recall from (\ref{MOCS4}) that the modulus of continuity of $f \in C([-r,r])$ is defined by
$$ w(f,\delta) = \sup_{\substack{x,y \in [-r,r]\\ |x-y| \leq \delta}} |f(x) - f(y)|.$$
As an immediate generalization of  \cite[Theorem 7.3]{Bill}, in order to prove the theorem, it suffices for us to show that the sequence of random variables $f_N(0)$ is tight and that for each positive $\epsilon$ and $\eta$ there exist $\delta' > 0 $ and $N' \in \mathbb{N}$, such that for $N \geq N'$ we have
\begin{equation}\label{PTG2}
\mathbb{P} \big( w(f_N, \delta') \geq \epsilon \big) \leq \eta.
\end{equation}
The tightness of $f_N(0)$ is immediate from our assumption that $\big\{(L^N_1,L^N_2)\big\}_{N=1}^{\infty}$ is an $(\alpha, p, r+3)$--good sequence (it is true by the second condition in Definition \ref{def:intro} for $s =0$). Consequently, we are left with verifying (\ref{PTG2}).\\

Suppose $\epsilon,\eta > 0$ are given and also recall $t^{\pm}_1$ from \eqref{eqsts}. We claim that we can find $  \delta > 0$, such that for all $N$ sufficiently large we have
\begin{equation}\label{PTG2.5}
\mathbb{P} \bigg(  \sup_{\substack{ x,y \in [ t^-_1, t_1^+] \\  |x - y| \leq   \delta (t_1^+ - t_1^-) }}  \big| L_1^N(x) - L_1^N(y) - p(x - y) \big| \geq \frac{\epsilon (t_1^+ -t_1^-)^{1/2}}{2 (2r+2)^{1/2}}  \bigg) \leq \eta.
\end{equation}
Here, as usual, we are treating $L_1^N$ as the continuous curve, which linearly interpolates between its values on integers. Let us assume the validity of (\ref{PTG2.5}) and deduce (\ref{PTG2}).

If we set $\delta' = \delta$, and observe that for all large enough $N$ we have $(t_1^+ - t_1^-) N^{-\alpha} \geq 1$, we get
\begin{equation}\label{eqwfn}
\mathbb{P} \big( w(f_N, \delta') \geq \epsilon \big) \leq \mathbb{P} \bigg(  \sup_{\substack{ x,y \in [ t_1^-, t_1^+]  \\  |x - y| \leq \delta(t_1^+ - t_1^-) }}  \big| L_1^N(x) - L_1^N(y) - p(x - y) \big| \geq \epsilon N^{\alpha/2} \bigg).
\end{equation}
Since $t^{\pm}_1 = \lfloor \pm (r+1)N^{\alpha} \rfloor$, we see that $\frac{\epsilon (t_1^+ - t_1^-)^{1/2}}{2 (2r+2)^{1/2}} \sim (\epsilon /2) N^{\alpha/2}$ as $N$ becomes large and so we conclude that for all sufficiently large $N$ we have $\frac{\epsilon (t_1^+ -t_1^-)^{1/2}}{2 (2r+2)^{1/2}} < \epsilon N^{\alpha/2}$. This, together with (\ref{PTG2.5}), implies that the right-hand side of \eqref{eqwfn} bounded from above by $\eta$, which is what we wanted. \\

{\raggedleft \bf Step 2.} In Step 1, we reduced the proof of the theorem to establishing (\ref{PTG2.5}). This step sets up notation needed in the subsequent step in order to prove (\ref{PTG2.5}).

From Lemma \ref{PropSup} we can find $M_1 > 0$ sufficiently large so that for all large $N$ we have
$$\mathbb{P} (E_1) \geq 1 - \eta/4,\quad \mbox{where}\quad  E_1 = \bigg\{ \max \Big(\big| L^N_1(t^-_1) - pt^-_1 \big| , \big| L^N_1(t^+_1) - pt^+_1 \big| \Big) \leq M_1N^{\alpha/2} \bigg\}.$$
In addition, by Proposition \ref{PropMain} we can find $\delta_1 > 0$, such that for all sufficiently large $N$ we have
$$\mathbb{P} (E_2) \geq 1 - \eta/4,\quad \mbox{where}\quad   E_2 = \Big\{ Z_{H,H^{RW}} ( t_1^-,t_1^+,L_1^N(t^-_1),L_1^N(t^+_1), L_2^N\llbracket t^-_1, t_1^+\rrbracket ) > \delta_1  \Big\}.$$
For $\delta > 0$ and any continuous curve $\ell$ on $[t_1^-, t_1^+]$, we define
$$V(\delta, \ell) = \sup_{\substack{ x,y \in [t^-_1, t^+_1] \\  |x - y| \leq  \delta (t_1^+ -t_1^-)}}  \left| \ell(x) - \ell(y) - p(x - y) \right|.$$
We assert that we can find $\delta > 0$, such that for all large $N$ we have
\begin{equation}\label{PTG7.5}
\mathbb{P} \Big{(} V(\delta, L_1^N[t_1^-, t_1^+])\geq A \big\} \cap E_1\cap E_2 \Big{)} \leq \eta/2,\quad \mbox{where}\quad  A = \frac{\epsilon (t_1^+ -t_1^-)^{1/2}}{2 (2r+2)^{1/2}}.
\end{equation}
In the above, $L_1^N[t_1^-, t_1^+]$ denotes the restriction of $L_1^N$ to the interval $[t_1^-, t_1^+].$

Let us assume the validity of (\ref{PTG7.5}) and deduce (\ref{PTG2.5}). We have
$$\mathbb{P}\Big{(} V(\delta, L_1^N[t_1^-, t_1^+] )\geq A \Big{)} \leq \mathbb{P} \Big{(}\big\{V(\delta, L_1^N[t_1^-, t_1^+] )\geq A \big\} \cap E_1\cap E_2\Big{)} + \eta/2 < \eta$$
where we used that $\mathbb{P}( E^c_1 ) \leq \eta/4$ and $\mathbb{P}( E^c_2) \leq \eta/4$. Identifying  $\mathbb{P}\big(V(\delta, L_1^N[t_1^-, t_1^+])\geq A \big)$ with  the left-hand side of \eqref{PTG2.5}, we see that the last inequality implies (\ref{PTG2.5}).\\

{\raggedleft \bf Step 3.} In this step, we establish (\ref{PTG7.5}). Let us write $F_{\delta} = \{V(\delta, L_1^N[t_1^-, t_1^+] )\geq A \big\}$.
Using the $(H,H^{RW})$-Gibbs property (see (\ref{GibbsEq})), we know that
\begin{equation}\label{PTG10}
\begin{split}
&\mathbb{P} \Big{(}\big\{V(\delta, L_1^N[t_1^-, t_1^+] )\geq A \big\} \cap E_1\cap E_2\Big{)} = \mathbb{E} \left[ \mathbb{E} \left[ {\bf 1}_{F_\delta}  {\bf 1}_{E_1}  {\bf 1}_{E_2}  \big{\vert} \mathcal{F}_{ext} ( \{1\} \times \llbracket t_1^- \hspace{-1mm}+ 1, t_1^+ \hspace{-1mm} - 1 \rrbracket) \right] \right] = \\
&\mathbb{E} \left[  {\bf 1}_{E_1}  {\bf 1}_{E_2} \mathbb{E} \left[ {\bf 1}_{F_\delta}   \big{\vert} \mathcal{F}_{ext} ( \{1\} \times \llbracket t_1^- \hspace{-1mm}+ 1, t_1^+ \hspace{-1mm} - 1 \rrbracket) \right] \right] =\mathbb{E} \left[  {\bf 1}_{E_1} \cdot {\bf 1}_{E_2} \cdot \mathbb{E}_{H, H^{RW}}\left[ {\bf 1}\{ V(\delta, \ell) \geq A\} \right]  \right],
\end{split}
\end{equation}
where we have written $\mathbb{E}_{H, H^{RW}}$ to stand for $\mathbb{E}_{H, H^{RW}}^{t_1^-, t_1^+,L_1^N(t_1^-), L_1^N(t_1^+), L_2^N \llbracket t_1^-, t_1^+ \rrbracket}$ to ease the notation (recall that this notation for $\mathbb{E}_{H, H^{RW}}$ was introduced in Section \ref{Section4.2}) and the random variable with respect to which we are taking the expectation in $\mathbb{E}_{H, H^{RW}}$ is denoted by $\ell$. In addition, from (\ref{RND}) we have
\begin{equation}\label{PTG11}
\begin{split}
 \mathbb{E}_{H, H^{RW}}\left[ {\bf 1}\{ V(\delta, \ell) \geq A\} \right]  = \frac{\mathbb{E}_{H^{RW}}^{t_1^-, t_1^+,L_1^N(t_1^-), L_1^N(t_1^+)} \left[ W_H(\ell) \cdot {\bf 1}\{ V(\delta, \ell) \geq A\}  \right]}{Z_{H,H^{RW}} ( t_1^-,t_1^+,L_1^N(t^-_1),L_1^N(t^+_1), L_2^N\llbracket t^-_1, t_1^+\rrbracket )},
\end{split}
\end{equation}
where we have written $W_H(\ell)$ in place of $W_H(t_1^-, t_1^+, \ell , L_2^N \llbracket t_1^-, t_1^+ \rrbracket)$ to ease the notation.  We next use the fact that $W_H \in [0,1]$ and $Z_{H,H^{RW}} ( t_1^-,t_1^+,L_1^N(t^-_1),L_1^N(t^+_1), L_2^N\llbracket t^-_1, t_1^+\rrbracket ) > \delta_1$ on $E_2$ by definition to conclude that
\begin{equation}\label{PTG12}
\begin{split}
 {\bf 1}_{E_1} {\bf 1}_{E_2} \cdot  \mathbb{E}_{H, H^{RW}}\left[ W_H(\ell)  \cdot{\bf 1}\{ V(\delta, \ell) \geq A\} \right]  \leq  {\bf 1}_{E_1} {\bf 1}_{E_2} \cdot  \frac{\mathbb{P}_{H^{RW}}^{t_1^-, t_1^+,L_1^N(t_1^-), L_1^N(t_1^+)} \left( V(\delta, \ell) \geq A  \right)}{\delta_1}.
\end{split}
\end{equation}
We now observe that
\begin{equation}\label{PTG13}
\begin{split}
\mathbb{P}_{H^{RW}}^{t_1^-, t_1^+,L_1^N(t_1^-), L_1^N(t_1^+)} \left( V(\delta, \ell) \geq A  \right) = \mathbb{P}_{H^{RW}}^{0, t_1^+ - t_1^-,0, L_1^N(t_1^+) - L_1^N(t_1^-)} \left(w(f^{\ell},\delta ) \geq \frac{\epsilon}{2(2r+2)^{1/2}}  \right),
\end{split}
\end{equation}
where on the right side $\ell$ is $\mathbb{P}_{H^{RW}}^{0, t_1^+ - t_1^-,0, L_1^N(t_1^+) - L_1^N(t_1^-)} $-distributed, and we used the notation $f^\ell$ from Lemma \ref{MOCLemmaS4}. In deriving the above equation, we used the definition of $A$, as well as the fact that if two random cruves $\ell_1$ and $\ell_2$ are distributed according to $\mathbb{P}_{H^{RW}}^{t_1, t_2,x,y}$ and $\mathbb{P}_{H^{RW}}^{0, t_2 - t_1,0,y - x}$, then they have the same distribution except for a re-indexing and a vertical shift by $x$ -- hence their modulus of continuity has the same distribution. Notice that on the event $E_1$ we have that
$$
| L_1^N(t_1^+) - L_1^N(t_1^-)  - p(t_1^+ - t_1^-)| \leq 2M_1 N^{\alpha/2} \leq 2M_1 (t_1^+- t_1^-)^{1/2}.
$$
The latter and Lemma \ref{MOCLemmaS4} (applied to $\eta = \delta_1 (\eta/2)$, $\epsilon =  \frac{\epsilon}{2(2r+2)^{1/2}}$, $M = 2M_1$, $p$ as in the statement of the theorem, and $T = t_1^+ - t_1^-$) together imply that we can find $\delta > 0$ sufficiently small, such that for all large enough $N$ we have
\begin{equation}\label{PTG14}
\begin{split}
{\bf 1}_{E_1} \cdot \mathbb{P}_{H^{RW}}^{0, t_1^+ - t_1^-,0, L_1^N(t_1^+) - L_1^N(t_1^-)} \left(w(f^{\ell},\delta ) \geq \frac{\epsilon}{2(2r+2)^{1/2}}  \right) \leq{\bf 1}_{E_1} \cdot \delta_1 \eta/2.
\end{split}
\end{equation}
Combining (\ref{PTG12}), (\ref{PTG13}), (\ref{PTG14}), we see that
\begin{equation*}
\begin{split}
 {\bf 1}_{E_1} \cdot {\bf 1}_{E_2} \cdot  \mathbb{E}_{H, H^{RW}}\left[ {\bf 1}\{ V(\delta, \ell) \geq A\} \right]  \leq  {\bf 1}_{E_1} \cdot {\bf 1}_{E_2} \cdot  \eta/2,
\end{split}
\end{equation*}
which together with (\ref{PTG10}) implies (\ref{PTG7.5}). This suffices for the proof.

%
\section{Proof of three key lemmas}\label{Section6}
Here, we prove the three key lemmas from Section \ref{Section5.1}. The arguments we use below heavily depend on the results from Section \ref{Section4}, and use the key notation and definitions from Section \ref{Section5} (e.g. Definition \ref{def:intro} of an $(\alpha,p,r)$--good line ensemble, assumption \eqref{eqalphagood} on $\big\{(L^N_1,L^N_2)\big\}_{N=1}^{\infty}$, and the notation $t_1^{\pm}, t_2^{\pm}$ and $t_3^{\pm}$ in equation \eqref{eqsts}). In the proofs below there are various constants that depend on
\begin{equation}\label{blah}
\alpha, p ,r, N_0, H, H^{RW} \mbox{ and } R_0 \mbox{ as in (\ref{TailFun}),}
\end{equation}
which are as in Section \ref{Section5}.

%
%
\subsection{Proof of Lemma \ref{PropSup}}\label{Section6.1}

We split the proof into two parts. In the first, we show that we can find $R_1'> 0$, such that for all large $N$
\begin{equation}\label{EqK5}
 \mathbb{P}\Big( \sup_{s \in [ t_3^-, t_3^+]} \big( L_1^N(s) -p s \big) \geq  R_1'N^{\alpha/2}\Big)<\epsilon/2,
\end{equation}
and in the second, we show we can find $R_1'' > 0$, such that for all large $N$
\begin{equation}\label{EqK6}
\mathbb{P}\Big( \inf_{s \in [ t_3^-, t_3^+]} \big( L_1^N(s) -p s \big) \leq - R_1''N^{\alpha/2}\Big)<\epsilon/2.
\end{equation}
Clearly the statement of the lemma follows from (\ref{EqK5}) and (\ref{EqK6}) with $R_1 = \max(R_1', R_1'')$.

\medskip
{\bf \raggedleft Proof of \eqref{EqK5}.}
We will prove that we can find $R_1'$ sufficiently large, so that for all large $N$
\begin{equation}\label{EKG1}
\mathbb{P}\Big( \sup_{s \in [ 0, t_3^+]} \big( L_1^N(s) -p s \big) \geq  R_1'N^{\alpha/2}\Big)<\epsilon/4 \mbox{ and } \mathbb{P}\Big( \sup_{s \in [ t_3^-, 0]} \big( L_1^N(s) -p s \big) \geq  R_1'N^{\alpha/2}\Big)<\epsilon/4,
\end{equation}
which clearly implies (\ref{EqK5}). Since the proofs of the above two statements are completely analogous, we only focus on proving the first inequality in (\ref{EKG1}). 

Define the $N$-indexed events (as indicated below, we will generally drop the superscript  $N$ on these events to ease the notation)
\begin{align*}
E(M)=E^N(M) &:= \Big\{ \big| {L}^N_1(t_3^-) - p t_3^-\big| > MN^{\alpha/2} \Big\},\\
F(M)=F^N(M) &:= \Big\{ L_1^N(t_1^-) > pt_1^- + MN^{\alpha/2} \Big\},\\
G(M)=G^N(M) &:=  \Big\{  \sup_{s \in [ 0,t_3^+]} \big( L_1^N(s) -p s \big) >  (4r + 17)(M+1)N^{\alpha/2}\Big\}.
\end{align*}

We claim that we can find $M$ sufficiently large, such that for all large $N$
\begin{equation}\label{eqnGMeps}
\mathbb{P}\big(G(M)\big) <\epsilon/4,
\end{equation}
which, if true, would imply \eqref{EKG1} with $R_1' =  (4r + 17)(M+1)$. In what remains, we prove \eqref{eqnGMeps}. For the sake of clarity, we split the proof into two steps. In the first step, we specify the choice of $M, N_2$, such that if $N \geq N_2$ then (\ref{eqnGMeps}) holds. The way we deduce (\ref{eqnGMeps}) is by writing $G(M)$ as a finite disjoint union of events, namely we split the event over the first index $n$ such that $L_1^N(n) -p n  >  (4r + 17)(M+1)N^{\alpha/2}$ and then we look at the line ensemble on the interval $[t_3^-, n]$. For such an ensemble, we know that its starting point is well-behaved and its terminal point is high. This allows us to conclude that an intermediate point, namely $L_1^N(t_1^-) -p t^-_1$, is also quite at least $1/3$ of the time whenever $L_1^N(n) -p n$ is high -- for this we use (\ref{EqK3}), which is proved in Step 2 by essentially appealing to Lemma \ref{LemmaHalfS4}. Now, (\ref{EqK3}) is formulated in terms of a free $H^{RW}$ random walk bridge, while the curve $L_1^N$ interacts with the second curve of the ensemble, which pushes it even higher. Thus (\ref{EqK3}) through stochastic monotonicity, Lemma \ref{MonCoup}, allows us to show that the estimate in (\ref{EqK3}) for a free $H^{RW}$ random walk bridge extends to $L_1^N$, and this is carefully explained in (\ref{eqnGFEM}). Overall, equation (\ref{eqnGFEM}) shows that on the event $G(M) \cap E^c(M)$ it is quite likely that $F(M)$ occurs. But since $F(M)$ and $E(M)$ are both unlikely, once we pick $M$ large enough, this would imply that $G(M)$ is quite unlikely. \\

{\bf \raggedleft Step 1.} In this step, we specify the choice of $M$ and $N_2$ and prove (\ref{eqnGMeps}) for this $M$ and $N \geq N_2$, modulo a certain statement given in (\ref{EqK3}), whose proof is postponed until the next step.

We pick $M > 0$ sufficiently large, so that for every $N \geq N_0$ (as in (\ref{blah})) we have
\begin{equation}\label{S3eqN1}
\mathbb{P}\big{(} E(M) \big{)}  <  \epsilon/8\quad\textrm{and}\quad\mathbb{P}\big{(} F(M) \big{)}  < \epsilon/24.
\end{equation}
Observe that such a choice is possible by (\ref{TailFun}). This fixes our choice of $M$. Next we pick $N_2 \in \mathbb{N}$ sufficiently large, depending on $M$ and the constants in (\ref{blah}), so that $N_2 \geq N_0$ and for $N \geq N_2$ the following inequalities all hold
\begin{equation}\label{IneqCoeffN}
 t_1^- - t_3^- \geq N^{\alpha}, \hspace{2mm} t_3^+ - t_3^- \leq (2r+8) N^{\alpha}, \hspace{2mm} (2r +8)^{1/4} N^{\alpha/4} \leq N^{\alpha/2}, N \geq W_0(p, 2\sqrt{2r+8} (M+1)),
\end{equation}
where $W_0$ is as in Lemma \ref{LemmaHalfS4}. With this, our choice of $M$ and $N_2$ is fixed.

Now, suppose that $x,y\in \mathbb{R}$ and $s \in \llbracket 0, t_3^+\rrbracket$ are chosen so as to satisfy the inequalities
\begin{equation}\label{eqnabM}
|x - pt^-_3| \leq MN^{\alpha/2}\quad \textrm{and}\quad y - ps > (4r + 17)(M+1)N^{\alpha/2}.
\end{equation}
Then, recalling the measure $\mathbb{P}^{T_0,T_1,x,y}_{H^{RW}}$ on a curve $\ell$ (introduced at the beginning of Section \ref{Section4.2}), we claim the following inequality for all $N \geq N_2$
\begin{equation}\label{EqK3}
\mathbb{P}^{t_3^-, s,x,y}_{H^{RW}} \left(\ell(t_1^{-}) \geq  pt_1^-  +MN^{\alpha/2}  \right)  \geq \frac{1}{3}.
\end{equation}
We prove (\ref{EqK3}) below in Step 2. For now we assume its validity and conclude the proof of (\ref{eqnGMeps}).\\

For $n \in \llbracket 0, t_3^+\rrbracket$ define the events
$$W_n(M) = \big\{ L_1^N(n) -p n > (4r + 17)(M+1)N^{\alpha/2} \big\}\quad \textrm{and}\quad G_n(M) = W_n(M) \cap \big(\cap_{m = n+1}^{t_3^+} W^c_m(M)\big).$$
Notice that $G(M) = \bigcup_{n = 0}^{t_3^+}G_n(M)$ is the disjoint union of the events $G_n(M)$, and on the event $G(M)$, the $n$ for which $G_n(M)$ occurs, is precisely the maximal value of $s$ under which the inequality $ L_1^N(s) -p s >  (4r + 17)(M+1)N^{\alpha/2}$ holds.

Since $\big\{(L^N_1,L^N_2)\big\}_{N=1}^{\infty}$ is a sequence of $(\alpha, p,  r+3)$--good line ensembles, we may make use of the $(H,H^{RW})$-Gibbs property. Let
$\mathcal{F}_n = \mathcal{F}_{ext} \big(\{1\} \times \llbracket t_3^-+1, n - 1 \rrbracket)$ be the external $\sigma$-algebra (generated by the second curve $L^N_2$ and the first curve $L^N_1(s)$ for $s\notin \llbracket t_3^-+1,n-1\rrbracket $), as defined in \eqref{GibbsCond}. Introduce the shorthand
\begin{equation}\label{eqnEnshort}
\mathbb{E}_{H,H^{RW},n}=\mathbb{E}_{H,H^{RW}}^{ t_3^-, n, L_1^N(t_3^-), L_1^N(n), L^N_2\llbracket t_3^-, n\rrbracket} \mbox{ and }  \mathbb{E}_{H^{RW},n} = \mathbb{E}_{H^{RW}}^{ t_3^-, n, L_1^N(t_3^-), L_1^N(n)}.
\end{equation}
With this, we may make the key deduction that
\begin{equation}\label{eqnGFEM}
\begin{split}
& \mathbb{P}\big( G(M) \cap F(M) \cap E^c(M)\big) = \sum_{n = 0}^{t_3^+} \mathbb{E} \Big[ { \bf 1}_{ G_n(M)} \cdot {\bf 1}_{E^c(M)} \cdot {\bf 1}_{ F(M)} \Big] =  \\
&\sum_{n = 0}^{t_3^+} \mathbb{E} \Big[ \mathbb{E}\big[ { \bf 1}_{ G_n(M)} \cdot {\bf 1}_{E^c(M)}  {\bf 1}\{ L^N_1 (t_1^-)> pt_1^- + MN^{\alpha/2} \} \vert \mathcal{F}_n \big] \Big] =\\
& \sum_{n = 0}^{t_3^+} \mathbb{E} \Big[ { \bf 1}_{ G_n(M)} \cdot {\bf 1}_{E^c(M)} \cdot \mathbb{E}_{H,H^{RW},n}\big[ {\bf 1}\{ \ell (t_1^-)> pt_1^- + MN^{\alpha/2} \}\big] \Big] \geq\\
&  \sum_{n = 0}^{t_3^+} \mathbb{E} \Big[ { \bf 1}_{ G_n(M)} \cdot {\bf 1}_{E^c(M)} \cdot \mathbb{E}_{H^{RW},n}\big[ {\bf 1}\{ \ell (t_1^-)> pt_1^- + MN^{\alpha/2} \}\big] \Big] \geq\\
& \sum_{n = 0}^{t_3^+} \mathbb{E} \Big[ { \bf 1}_{ G_n(M)} \cdot {\bf 1}_{E^c(M)} \cdot \frac{1}{3} \Big] = \frac{1}{3}  \cdot \mathbb{P}\big( G(M) \cap  E^c(M)\big),
\end{split}
\end{equation}
In the above equation we have that the first and last equality follow from the fact that $G(M)$ is a disjoint union of the events $G_n(M)$. The second equality follows from the tower property for conditional expectation and the definition of $F(M)$. In the third equality, we use that ${\bf 1}_{G_n(M)}$ and ${\bf 1}_{E^c(M)}$ are $\mathcal{F}_n$ measurable and so can be taken out of the conditional expectation, and then we apply the $(H,H^{RW})$-Gibbs property (\ref{GibbsEq}) to the function $F(\ell) = {\bf 1}\{ \ell (t_1^-)> pt_1^- + MN^{\alpha/2} \}$. The inequality on the third line uses Lemma \ref{MonCoup} with $x = x' = L_1^N(t_3^-)$, $y = y'  = L_1^N(n)$, $\vec{z} = (-\infty)^{n - t_3^- + 1}$ and $\vec{z}' = L_2^N\llbracket t_3^-, n \rrbracket$. The inequality on the fourth line uses (\ref{EqK3}) and the fact that on the event $G_n(M) \cap E^c(M)$ we have that $L_1^N(n)$ and $L_1^N(t_3^-)$ (which play the roles of $y$ and $x$ in  (\ref{EqK3}) ) satisfy the inequalities $L_1^N(n)\geq (4r + 17)(M+1)N^{\alpha/2}$ and $|L_1^N(t_3^-) - p t_3^-| \leq MN^{\alpha/2}.$

From (\ref{eqnGFEM}) we see that
$$
\mathbb{P}\big( G(M) \cap  E^c(M)\big) \leq 3 \mathbb{P}\big( G(M) \cap F(M) \cap E^c(M)\big)  \leq 3\cdot  \mathbb{P}\big( F(M)\big)
$$
Using this, we finally conclude that
$$ \mathbb{P}(G(M)) = \mathbb{P}\big(G(M) \cap E(M)\big) + \mathbb{P}\big(G(M) \cap E^c(M)\big) \leq \mathbb{P}\big(E(M)\big) + 3 \cdot \mathbb{P} \big(F(M)\big) < \epsilon /4,$$
where in the last inequality we used (\ref{S3eqN1}). The last equation implies \eqref{eqnGMeps}. \\

{\bf \raggedleft Step 2.} In this step, we prove (\ref{EqK3}). Using \eqref{eqnabM}, we see that
$$y -x \geq p( s- t_3^- ) + (4r + 16)(M+1)N^{\alpha/2} \geq p(s - t_3^{-}) + 2\sqrt{2r+8} (M+1)(s-t_3^-)^{1/2},$$
where the last inequality used that $(s- t_3^-) \leq (t_3^+ - t_3^-) \leq N^{\alpha} (2r + 8) $ -see (\ref{IneqCoeffN}). Using this and applying Lemma \ref{LemmaHalfS4} for
$$M_1 = 0,\hspace{2mm} M_2 = 2\sqrt{2r+8}(M+1), \hspace{2mm} T = s - t_3^-, \hspace{2mm} s = t_1^- - t_3^-$$
and $p$ as in (\ref{blah}), together with the fact that $N^{\alpha} \geq W_0(p,M_2)$ by assumption in (\ref{IneqCoeffN}), we see that
\begin{equation*}
\mathbb{P}^{0, s - t_3^-, 0,y - x}_{H^{RW}} \Big(\ell(t_1^- - t_3^-) \geq  \frac{t_1^- - t_3^-}{s - t_3^-} \big(p( s -t_3^-)  + 2\sqrt{2r+8}(M+1)(s-t_3^-)^{1/2} \big) - (s - t_3^-)^{1/4}  \Big) \geq \frac{1}{3},
\end{equation*}
which upon simplification and the shift-invariance of the measure implies
\begin{equation*}
\mathbb{P}^{t_3^-, s , x,y }_{H^{RW}} \Big(\ell(t_1^- ) - x \geq  p( t_1^- -t_3^-) + 2\sqrt{2r+8}(M+1)\frac{t_1^- - t_3^-}{(s - t_3^-)^{1/2}} - (s - t_3^-)^{1/4}  \Big) \geq \frac{1}{3}.
\end{equation*}
Since by assumption (\ref{eqnabM}) we have $x \geq pt_3^-  - MN^{\alpha/2}$, the last inequality implies
\begin{equation}\label{EqK2}
\mathbb{P}^{t_3^-, s , x,y }_{H^{RW}} \Big(\ell(t_1^- )  \geq  p t_1^- -  MN^{\alpha/2}+ 2\sqrt{2r+8}(M+1)\frac{t_1^- - t_3^-}{(s - t_3^-)^{1/2}} - (s - t_3^-)^{1/4}  \Big) \geq \frac{1}{3}.
\end{equation}
We now observe that the inequalities in (\ref{IneqCoeffN}) imply
$$2\sqrt{2r+8}(M+1)\frac{t_1^- - t_3^-}{(s - t_3^-)^{1/2}} - (s - t_3^-)^{1/4} \geq  \frac{2\sqrt{2r+8}(M+1)N^{\alpha}}{(2r+ 8)^{1/2} N^{\alpha/2} } - (2r+8)^{1/4} N^{\alpha/4} \geq 2MN^{\alpha/2}.$$
The latter and (\ref{EqK2}) imply (\ref{EqK3}), which concludes the proof of the second step. 

\medskip

{\bf \raggedleft Proof of \eqref{EqK6}} This proof follows a similar scheme as that of \eqref{EqK5}. Define the $N$-indexed events (and then drop the $N$ superscript below)
\begin{align*}
E_-(M)&=E^N_-(M)  = \Big\{ \big| {L}^N_1(t_3^-)  - p t_3^-\big| > MN^{\alpha/2} \Big\},\\
E_+(M)&=E^N_+(M) = \Big\{ \big| {L}^N_1(t_3^+) - p t_3^+\big| > MN^{\alpha/2} \Big\},\\
G(C) &=G^N(C) =  \Big\{  \inf_{s \in [ t_3^-,t_3^+ ] } \big( L_1^N(s) -p s \big) < -  C N^{\alpha/2}\Big\}.
\end{align*}
To prove \eqref{EqK6} it suffices to show that there exists $C$ sufficiently large so that for all large $N$
\begin{equation}\label{eqnPGAes}
\mathbb{P}\big(G(C)\big) <\epsilon/2.
\end{equation}
Clearly, \eqref{EqK6} follows immediately from this by setting $R_1''=C$. In what remains, we prove \eqref{eqnPGAes}. As before, we split the proof into two steps for clarity. In Step 1, we specify the choice of $C, N_2$, such that (\ref{eqnPGAes}) holds for $N \geq N_2$. The way we deduce (\ref{eqnPGAes}) is by noting that the events $E_{\pm}(M)$ are unlikely if we pick $M$ sufficiently large. This means that it is very likely that the endpoints $L_1^N(t_3^{\pm})$ are well-behaved. We know that a free $H^{RW}$ random walk bridge is unlikely to dip too low if its endpoints are well-behaved -- this appears as (\ref{eqntsAB}) below and it is proved in Step 2, by appealing to Lemma \ref{LemmaMinFreeS4}. Since $ L_1^N$ interacts with the second curve of the ensemble, which pushes it even higher, we see that (\ref{eqntsAB}) through stochastic monotonicity, Lemma \ref{MonCoup}, allows us to show that on $E^c_+(M) \cap E^c_-(M)$, it is unlikely that $L_1^N$ dips low (i.e. $G(C)$ occurs), and this is carefully explained in (\ref{eqnGFEM2}). Overall, equation (\ref{eqnGFEM2}) shows that on the event $E^c_+(M) \cap E^c_-(M)$ it is quite likely that $G(C)$ occurs. Since $E_{\pm}(M)$ are unlikely, we conclude that $G(C)$ is unlikely. \\

{\bf \raggedleft Step 1.} In this step we specify $C$ and $N_2$, and prove (\ref{eqnPGAes}) for this choice of $C,M$ and $N \geq N_2$, modulo a certain statement given in (\ref{eqntsAB}), whose proof is postponed until the next step.

We pick $M$ sufficiently large, so that for every $N \geq N_0$ (as in (\ref{blah}) we have
\begin{equation}\label{S3S2eqN1}
\mathbb{P}\big( E_{+}(M) \cup E_{-}(M)  \big)  <  \epsilon/4.
\end{equation}
Observe that such a choice is possible by (\ref{TailFun}). This fixes our choice of $M$. We next pick $C$ sufficiently large, so that
\begin{equation}\label{Cchoice}
 C - M \geq  A(M, p ,\epsilon/4) (2r+8)^{1/2},
\end{equation}
where $A$ is as in Lemma \ref{LemmaMinFreeS4} and $p$ is as in (\ref{blah}). This fixes our choice of $C$. Next we pick $N_2 \in \mathbb{N}$ sufficiently large, depending on $M$ and the constants in (\ref{blah}), so that $N_2 \geq N_0$ and for $N \geq N_2$ the following inequalities all hold
\begin{equation}\label{IneqCoeffN2}
 t_3^+ - t_3^- \geq 4N^{\alpha}, \hspace{2mm} t_3^+ - t_3^- \leq (2r+8) N^{\alpha}, N^{\alpha} \geq W_1(p, M, \epsilon/4),
\end{equation}
where $W_1$ is as in Lemma \ref{LemmaMinFreeS4} and $p$ is as in (\ref{blah}). With this, our choice of $C,M$ and $N_2$ is fixed.

Now, suppose that $x,y\in \mathbb{R}$ satisfy the inequalities
\begin{equation}\label{eqnabineq2}
|x - pt_3^-| \leq MN^{\alpha/2}\quad \textrm{and}\quad |y - pt_3^+| \leq MN^{\alpha/2}.
\end{equation}
We claim that
\begin{equation}\label{eqntsAB}
\mathbb{P}^{t_3^-, t_3^+, x,y}_{H^{RW}} \Big( \inf_{s \in [ t_3^-, t_3^+ ]} \big( \ell(s) - ps \big) < - C N^{\alpha/2}  \Big)\leq \epsilon/4.
\end{equation}
We prove (\ref{eqntsAB}) below in Step 2. For now we assume its validity and conclude the proof of (\ref{eqnPGAes}).\\

We proceed much in the same way as in \eqref{eqnGFEM}, using the $(H,H^{RW})$-Gibbs property. Using the notation $\mathcal{F}_n$, $\mathbb{E}_{H, H^{RW},n}$ and $\mathbb{E}_{H^{RW},n}$, defined in \eqref{eqnEnshort}, and the paragraph before it with $n=t_3^+$, we find that
\begin{equation}\label{eqnGFEM2}
\begin{split}
&\mathbb{P}\big( G(C) \cap E^c_+(M) \cap E^c_-(M)\big) = \mathbb{E} \left[ \mathbb{E} \left[  {\bf 1}_{E^c_-(M)} \cdot {\bf 1}_{E^c_+(M)} \cdot {\bf 1}_{ G(C)} \vert \mathcal{F}_{t_3^+} \right] \right] = \\
&\mathbb{E}\Big[ {\bf 1}_{E^c_-(M)} \cdot {\bf 1}_{E^c_+(M)} \cdot  \mathbb{E} \Big[  {\bf 1}\big\{  \inf_{s \in [ t_3^-,t_3^+ ] } \big( L_1^N(s) -p s \big) < -  CN^{\alpha/2}\big\} \vert \mathcal{F}_{t_3^+}\Big] \Big] = \\
&\mathbb{E} \Big[ {\bf 1}_{E^c_-(M)} \cdot {\bf 1}_{E^c_+(M)} \cdot \mathbb{E}_{H, H^{RW}, t_3^+} \big[  {\bf 1}\big\{  \inf_{s \in [ t_3^-,t_3^+ ] } \big( \ell(s) -p s \big) < -  CN^{\alpha/2}\big\} \big] \Big] \leq \\
&\mathbb{E} \Big[ {\bf 1}_{E^c_-(M)} \cdot {\bf 1}_{E^c_+(M)} \cdot \mathbb{E}_{ H^{RW}, t_3^+} \big[  {\bf 1}\big\{  \inf_{s \in [ t_3^-,t_3^+ ] } \big( \ell(s) -p s \big) < -  C N^{\alpha/2}\big\} \big] \Big] \leq \\
& \mathbb{E} \Big[ {\bf 1}_{E^c_-(M)} \cdot {\bf 1}_{E^c_+(M)} \cdot \frac{\epsilon}{4}   \Big] \leq  \frac{\epsilon}{4}.
\end{split}
\end{equation}
The first equality follows from the tower property for conditional expectations. The second equality uses the fact that ${\bf 1}_{E^c_{\pm}(M)}$  are $\mathcal{F}_{t_3^+}$-measurable and so can be taken out of the conditional expectation, as well as the definition of $G(C)$. The third equality uses the $(H,H^{RW})$-Gibbs property (\ref{GibbsEq}), applied to the function $F(\ell) = {\bf 1}\big\{  \inf_{s \in [ t_3^-,t_3^+ ] } \big( \ell(s) -p s \big) < -  C N^{\alpha/2}\big\}$. The inequality on the fourth line uses Lemma \ref{MonCoup} with $x =x' =  L_1^N(t_3^-)$, $y  = y' = L_1^N(t_3^+)$, $\vec{z} = (-\infty)^{t_3^+ - t_3^- + 1}$, and $\vec{z}' = L_2^N\llbracket t_3^-, t_3^+ \rrbracket$. The inequality on the fourth line uses (\ref{eqntsAB}), and the fact that on the event $E^c_-(M) \cap E^c_+(M)$ we have that $L_1^N(t_3^-)$, and $L_1^N(t_3^+)$ (which play the roles of $x$ and $y$ in (\ref{eqntsAB})) satisfy the inequalities $\big| {L}^N_1(t_3^\pm)  - p t_3^\pm\big| \leq MN^{\alpha/2})$. The last inequality is trivial.

From (\ref{eqnGFEM2}) and \eqref{S3S2eqN1} we see that
\begin{align*}
\mathbb{P} \big( G(C) \big) &=  \mathbb{P}\big( G(C) \cap E^c_+(M) \cap E^c_-(M)\big) + \mathbb{P}\big(  G(C) \cap \big(E_{+}(M) \cup E_{-}(M)\big) \big)\\
& \leq  \mathbb{P}\big( G(C) \cap E^c_+(M) \cap E^c_-(M)\big) + \mathbb{P}\big( E_{+}(M) \cup E_{-}(M) \big)< \epsilon/2,
\end{align*}
which completes the proof of \eqref{eqnPGAes}.\\

{\bf \raggedleft Step 2.} To show \eqref{eqntsAB}, first note that by the shift invariance of the measure we have
\begin{equation}\label{eqnptasf}
\begin{split}
&\mathbb{P}^{t_3^-, t_3^+, x,y}_{H^{RW}} \Big( \inf_{s \in [ t_3^-, t_3^+]} \big( \ell(s) - ps \big) < - C N^{\alpha/2}  \Big) = \\
& \mathbb{P}^{0, t_3^+ - t_3^- , 0,y - x}_{H^{RW}} \Big( \inf_{s \in[ 0,t_3^+ - t_3^-]} \big( x + \ell(s) - p(s + t_3^-) \big) < - C N^{\alpha/2} \Big) \leq \\
& \mathbb{P}^{0, t_3^+ - t_3^-, 0,y - x}_{H^{RW}} \Big(\inf_{s \in [ 0,t_3^+ - t_3^- ]} \big( \ell(s) - ps \big)  \leq -(C-M) N^{\alpha/2}  \Big),
\end{split}
\end{equation}
where in the last inequality we used that $x \geq pt_3^- - MN^{\alpha/2}$.

The inequalities \eqref{eqnabineq2} imply that $y -x \geq 2 p (t_3^+ - t_3^-)  - 2MN^{\alpha/2} \geq 2p(t_3^+ - t_3^-) - M (t_3^+ - t_3^-)^{1/2}$, where the last inequality used (\ref{IneqCoeffN2}). Using this, and applying  Lemma \ref{LemmaMinFreeS4} for
$T = t_3^+ - t_3^-$, $\epsilon = \epsilon/4$, $M$ as our choice in Step 1 above and $p$ as in (\ref{blah}), together with the fact that $N^{\alpha} \geq W_1(M,p, \epsilon/4)$ by assumption in (\ref{IneqCoeffN2}), we see that
\begin{equation*}
 \mathbb{P}^{0, t_3^+ - t_3^-, 0,y - x}_{H^{RW}} \Big(\inf_{s \in [ 0,t_3^+ - t_3^- ]} \big( \ell(s) - ps \big)  \leq - A(M,p, \epsilon/4) (t_3^+ - t_3^-)^{1/2}  \Big) \leq \epsilon/4.
\end{equation*}
Notice that by our choice of $C$ in (\ref{Cchoice}) we have that $(C- M)N^{\alpha/2} \geq A(M, p, \epsilon/4) [2r+8]^{1/2} N^{\alpha/2} \geq A(M,p, \epsilon/4) (t_3^+ - t_3^-)^{1/2}$, where we also used (\ref{IneqCoeffN2}). This shows that the last inequality implies
$$\mathbb{P}^{0, t_3^+ - t_3^-, 0,y - x}_{H^{RW}} \Big(\inf_{s \in [ 0,t_3^+ - t_3^- ]} \big( \ell(s) - ps \big)  \leq -(C-M) N^{\alpha/2}  \Big) \leq \epsilon/4,$$
which together with \eqref{eqnptasf} yields \eqref{eqntsAB}, as claimed.


\subsection{Proof of Lemma \ref{PropSup2}}\label{Section6.2}
Let $\epsilon > 0$ be given, and put $n = \lfloor N^{\alpha} \rfloor  - 2$. We first specify our choice of $N_3$ as in the statement of the lemma. We assume that $N_3'$ is sufficiently large, so that $2n > N^{\alpha}$ for $N \geq N_3'$.  We then let $N_2(\epsilon/2)$ and $R_1(\epsilon/2)$ be as in the statement of Lemma \ref{PropSup}. With this choice, we know that if $N \geq \max( N_2, N_3')$, then
$$\mathbb{P}\left( \sup_{s \in [t_3^- , t_3^+]}\left|L^N_1(s) - p s \right| >  2R_1 n^{1/2} \right) < \epsilon/2.$$
We let $N_3$ be sufficiently large, so that all of the following inequalities hold for $N \geq N_3$
\begin{equation}\label{IneqNL3}
N\geq N_0 \mbox{ as in (\ref{blah})}, \hspace{2mm} N \geq N_3', \hspace{2mm} N \geq N_2, \hspace{2mm} n \geq W_5(2R_1, p, \epsilon/2),
\end{equation}
where $ W_5(2R_1, p, \epsilon/2)$ is as in Lemma \ref{NoExplodeS4}. This fixes our choice of $N_3$.

Define the events
$$E = \left\{ \sup_{s \in [t_3^- , t_3^+]}\left|L^N_1(s) - p s \right| >  2R_1 n^{1/2} \right\}, G= \left\{ \sup_{s \in [t_2^-, t_2^+]}\left[L^N_2(s) - p s \right] \geq  4R_1n^{1/2} \right\}$$
$$W_m = \{ L^N_2(m) - p m \geq 4R_1  n^{1/2} \} \mbox{ and } G_v = W_v \cap \bigcap_{m = v+1}^{t_2^+} W^c_m  \mbox{ for $v \in \llbracket t_2^- , t_2^+ \rrbracket $}.$$
We claim that for all $N\geq N_3$ we have
\begin{equation}\label{GNTP}
\mathbb{P}(G) < \epsilon,
\end{equation}
which implies the lemma with $R_2 = 4R_1$. In the remainder we establish (\ref{GNTP}). \\

By Lemma \ref{NoExplodeS4}, applied to $M = 2R_1, \epsilon = \epsilon/2$ and $p$ as in (\ref{blah}), we know that for any $N \geq N_3$, $\vec{z} \in [-\infty, \infty)^{2n+1}$ with $z_{n+1} \geq pn + 4R_1n^{1/2}$ and $x,y \in \mathbb{R}$ with $x \geq -2R_1n^{1/2}$ and $y \geq  - 2R_1n^{1/2} + 2pn$
\begin{equation*}
\mathbb{P}_{H,H^{RW}}^{0,2n,x,y, \vec{z}} \left( \ell(n) \leq pn+ 2R_1n^{1/2} \right) \leq \epsilon/2,
\end{equation*}
which by the shift-invariance of the measure implies that for each $m \in \mathbb{N}$, $z_{m+1}  \geq pn + 4R_1n^{1/2}$, $x, y \in \mathbb{R}$ with $x - p(m-n) \geq -2R_1n^{1/2}$, and $y - p(m+n) \geq  - 2R_1n^{1/2}$ we have
\begin{equation}\label{NE1v3}
\mathbb{P}_{H,H^{RW}}^{m-n,m+n,x ,y , \vec{z}} \left( \ell(m) \leq pm+ 2R_1n^{1/2} \right) \leq \epsilon/2.
\end{equation}
Here we used (\ref{IneqNL3}), which ensures that $n \geq W_5(2R_1, p, \epsilon/2)$ as in Lemma \ref{NoExplodeS4}.

For every $m \in \llbracket t_2^- -1, t_2^+- 1\rrbracket$, we define
$$F_{m} =\{ \left|L^N_1(m - n) - p(m-n) \right| \leq 2R_1 n^{1/2} \} \cap \{ \left|L^N_1(m + n) - p(m+ n) \right| \leq 2R_1 n^{1/2} \} \mbox{ and } $$
$$H_m = \{ \left|L^N_1(m) - pm\right| \leq 2R_1 n^{1/2} \} ,$$
and observe that $E^c \subset  F_m \cap H_m  $ . We also let $\mathcal{F}_{ext}^m = \mathcal{F}_{ext}(\{1\} \times \llbracket m - n + 1, m + n - 1\rrbracket)$ as in (\ref{GibbsCond}). We now make the following deduction
\begin{equation*}
\begin{split}
&\mathbb{P} \left( F_m \cap G_{m+1} \cap H_m \right) = \mathbb{E} \left[ \mathbb{E} \left[ {\bf 1}_{G_{m+1}} \cdot {\bf 1}_{F_m} \cdot{\bf 1}_{H_m} \vert \mathcal{F}_{ext}^m \right]  \right] = \\
&\mathbb{E} \left[  {\bf 1}_{G_{m+1}} \cdot {\bf 1}_{F_m} \cdot  \mathbb{E} \left[{\bf 1} \{ \left|L^N_1(m) - pm\right| \leq 2R_1 n^{1/2} \} \vert \mathcal{F}_{ext}^m \right]  \right]  = \\
&\mathbb{E} \left[  {\bf 1}_{G_{m+1}} \cdot {\bf 1}_{F_m} \cdot  \mathbb{E}_{H,H^{RW}} \left[{\bf 1} \{ \left|\ell(m) - pm\right| \leq 2R_1 n^{1/2} \} \right]  \right] \leq \\
&\mathbb{E} \left[  {\bf 1}_{G_{m+1}} \cdot {\bf 1}_{F_m} \cdot  (\epsilon/2)  \right] = (\epsilon/2) \cdot \mathbb{P}( F_m \cap G_{m+1}),
\end{split}
\end{equation*}
where $\mathbb{E}_{H,H^{RW}}$ stands for $\mathbb{E}_{H,H^{RW}}^{m-n, m+n, L_1^N(m-n), L_1^N(m+n), L_N^2\llbracket m-n,m+n \rrbracket}$. The first equality follows from the tower property for conditional expectations.
The second equality uses the fact that ${\bf 1}_{G_{m+1}}$ and ${\bf 1}_{F_m}$ are $\mathcal{F}_{ext}^m$- measurable and can thus be taken out of the conditional expectation, as well as the definition of $H_m$. The third equality uses the $(H,H^{RW})$-Gibbs property (\ref{GibbsEq}) applied to $F(\ell) = {\bf 1} \{ \left|\ell(m) - pm\right| \leq 2R_1 n^{1/2} \}$. The inequality on the third line uses (\ref{NE1v3}) and the fact that on the event $G_{m+1} \cap F_m$ we have that the random variables $L_2^N(m+1)$, $L_1^N(m-n)$, $L_1^N(m+n)$ (which play the roles of $z_{m+1}, x, y$ in (\ref{NE1v3}) ) satisfy the inequalities $L_2^N(m+1) \geq pn + 4R_1n^{1/2}$, $L_1^N(m-n) - p(m-n) \geq -2R_1n^{1/2}$ and $L_1^N(m+n) - p(m+n) \geq  - 2R_1n^{1/2}$.  The last equality is trivial. The above inequality and the fact that $E^c \subset  F_m \cap H_m  $ imply
$$ \mathbb{P} \left( E^c \cap G_{m+1}\right) \leq  \mathbb{P} \left( F_m \cap G_{m+1} \cap H_m \right)  \leq  (\epsilon/2) \cdot \mathbb{P} \left( G_{m+1} \cap F_m \right) \leq  (\epsilon/2) \cdot \mathbb{P} \left( G_{m+1}\right) .$$
Taking the sum over $m$ in $\llbracket t_2^- - 1, t_2^+-1 \rrbracket $, and using that $G = \cup_{m = t_2^-}^{t_2^+} G_m$ is a disjoint union, we get
$$ \mathbb{P} \left( E^c \cap G\right) \leq  (\epsilon/2) \cdot \mathbb{P} \left( G \right) \leq \epsilon/2.$$
On the other hand,
$$ \mathbb{P} \left( E \cap G\right) \leq  \mathbb{P} \left( E \right) < \epsilon/2,$$
by our choice of $R_1$. The above two inequalities imply (\ref{GNTP}).

\subsection{Proof of Lemma \ref{LemmaAP1}}\label{Section6.3}

For clarity, we split the proof into four steps. In the first step we use the idea of size-biasing and reduce the proof of the lemma to establishing a certain lower bound, see (\ref{eqn57S6}) -- this is the easy part of the proof. Establishing the lower bound in (\ref{eqn57S6})  is done in Steps 2, 3 and 4 and we describe our approach within those steps.\\

{\bf \raggedleft Step 1.} We claim that we can find $N_4 \in \mathbb{N}$, such that if $N \geq N_4$ and $\vec{z} \in [-\infty, \infty)^{t_2^+ - t_2^- + 1}$ we have
\begin{equation}\label{eqn57S6}
\mathbb{P}^{t^-_2, t^+_2, x,y, \vec{z}}_{H,H^{RW}} \Big( {Z_{H,H^{RW}}}\big(  t_1^-, t_1^+, \ell(t_1^-) ,\ell(t^+_1), \ell_{bot}\llbracket t_1^-, t_1^+\rrbracket\big) \geq g    \Big) \geq h,
\end{equation}
where in the above equation the random variable over which we are taking the expectation is denoted by $\ell$ and $g,h$ are as in the statement of the lemma. We prove (\ref{eqn57S6}) in the steps below. Here we assume its validity and conclude the proof of the lemma.\\

Let $\ell_{bot}$ be as in the statement of the lemma and $\tilde{\ell}_{bot} \in [-\infty, \infty)^{t_2^+ - t_2^- + 1}$ be such that $\tilde{\ell}_{bot}(i) = \ell_{bot}(i)$ for $i \not \in \llbracket t_1^- + 1, t_1^+\rrbracket$ and $\tilde{\ell}_{bot}(i) = -\infty$ if $i \in \llbracket t^-_1 + 1, t^+_1\rrbracket$. Let $L$ be a $Y(\llbracket t_2^-, t_2^+ \rrbracket)$-valued random variable, whose law is given by $\mathbb{P}_L : = \mathbb{P}^{t_2^-, t_2^+, x,y, \ell_{bot}}_{H,H^{RW}}$ and $\tilde{L}$ be a $Y(\llbracket t_2^-, t_2^+ \rrbracket)$-valued random variable whose law is given by $\mathbb{P}_{\tilde{L}} : = \mathbb{P}^{t_2^-, t_2^+, x,y, \tilde{\ell}_{bot}}_{H,H^{RW}}$.

Define further $\mathbb{P}_{{L}'}$ and $\mathbb{P}_{\tilde{L}'}$ as the projection of $\mathbb{P}_L$ and $\mathbb{P}_{\tilde{L}}$, respectively, to the coordinates $\llbracket t_2^-, t_1^- \rrbracket \cup \llbracket t_1^+, t_2^+\rrbracket$. It follows from (\ref{RND}) that the Radon-Nikodym derivative between these two restricted measures is given on $Y( \llbracket t_2^-, t_1^- \rrbracket \cup \llbracket t_1^+, t_2^+\rrbracket)$-valued random variables $\fB$ by
\begin{equation}\label{eqn61}
\frac{d\mathbb{P}_{{L}'}}{d\mathbb{P}_{\tilde{{L}}'}} (\fB) = (Z')^{-1}Z_{H,H^{RW}}( t_1^-, t^+_1,\fB(t_1^-),\fB(t_1^+),\ell_{bot} \llbracket t_1^-, t_1^+ \rrbracket ) ,
\end{equation}
where $Z' = \mathbb{E}_{\tilde{L}'} \left[ Z_{H,H^{RW}}( t_1^-, t^+_1,\fB(t_1^-),\fB(t_1^+),\ell_{bot} \llbracket t_1^-, t_1^+ \rrbracket ) \right]$. Let us briefly explain why (\ref{eqn61}) holds.

Let us set $S =\llbracket t_2^-, t_1^- \rrbracket \cup \llbracket t_1^+, t_2^+\rrbracket$, and fix a Borel set $F_0 \subseteq Y( S)$. To prove (\ref{eqn61}) we only need
\begin{equation}\label{eqn61H1}
\mathbb{P}_{L'}( \fB \in F_0)  = \frac{\mathbb{E}_{\tilde{L}'} \left[Z_{H,H^{RW}}( t_1^-, t^+_1,\fB(t_1^-),\fB(t_1^+),\ell_{bot} \llbracket t_1^-, t_1^+ \rrbracket )  \cdot {\bf 1}\{ \fB \in F_0\}  \right] }{\mathbb{E}_{\tilde{L}'} \left[Z_{H,H^{RW}}( t_1^-, t^+_1,\fB(t_1^-),\fB(t_1^+),\ell_{bot} \llbracket t_1^-, t_1^+ \rrbracket )  \right]}.
\end{equation}
Using that $L'$ is the projection of $L$ to the set $S$ and (\ref{RND}), we have
\begin{equation*}
\begin{split}
&\mathbb{P}_{L'}( \fB \in F_0) = \mathbb{P}_L( \ell |_S \in F_0) = \frac{\mathbb{E}^{t_2^-, t_2^+,x,y}_{H^{RW}}\left[ W_H^{1,1,t_2^-, t_2^+, \infty, \ell_{bot}}(\ell) \cdot {\bf 1}\{ \ell |_S \in F_0\} \right]}{\mathbb{E}^{t_2^-, t_2^+,x,y}_{H^{RW}}\left[ W_H^{1,1,t_2^-, t_2^+, \infty, \ell_{bot}}(\ell)  \right]}.
\end{split}
\end{equation*}
Also, using that $\tilde{L}'$ is the projection of $\tilde{L}$ to the set $S$, $Z_{H,H^{RW}}(t_1^-, t^+_1,\fB(t^-_1),\fB(t^+_1),\ell_{bot} \llbracket t_1^-, t_1^+ \rrbracket )$ is a (deterministic) bounded measurable function of $\left( \fB(t_1^-),\fB(t_1^+)\right)$ (see Remark \ref{ZMeas}) and (\ref{RND}), we get
\begin{equation*}
\begin{split}
& \frac{\mathbb{E}_{\tilde{L}'} \left[Z_{H,H^{RW}}( t_1^-, t^+_1,\fB(t_1^-),\fB(t_1^+),\ell_{bot} \llbracket t_1^-, t_1^+ \rrbracket )  \cdot {\bf 1}\{ \fB \in F_0\}  \right] }{\mathbb{E}_{\tilde{L}'} \left[Z_{H,H^{RW}}( t_1^-, t^+_1,\fB(t_1^-),\fB(t_1^+),\ell_{bot} \llbracket t_1^-, t_1^+ \rrbracket )  \right]} = \\
& \frac{\mathbb{E}^{t_2^-, t_2^+,x,y}_{H^{RW}}\left[ W_H^{1,1,t_2^-, t_2^+, \infty, \tilde{\ell}_{bot}}(\ell) \cdot Z_{H,H^{RW}}( t_1^-, t^+_1,\ell(t_1^-),\ell(t_1^+),\ell_{bot} \llbracket t_1^-, t_1^+ \rrbracket ) \cdot {\bf 1}\{ \ell |_S \in F_0\} \right]}{\mathbb{E}^{t_2^-, t_2^+,x,y}_{H^{RW}}\left[ W_H^{1,1,t_2^-, t_2^+, \infty, \tilde{\ell}_{bot}}(\ell) \cdot Z_{H,H^{RW}}( t_1^-, t^+_1,\ell(t_1^-),\ell(t_1^+),\ell_{bot} \llbracket t_1^-, t_1^+ \rrbracket ) \right]}.
\end{split}
\end{equation*}
The last two equations show that to prove (\ref{eqn61H1}) it suffices to show that for any Borel set $F_1 \subseteq Y( S)$
\begin{equation}\label{eqn61H2}
\begin{split}
&\mathbb{E}^{t_2^-, t_2^+,x,y}_{H^{RW}}\left[ W_H^{1,1,t_2^-, t_2^+, \infty, \ell_{bot}}(\ell) \cdot {\bf 1}\{ \ell |_S \in F_1\} \right] = \\
&\mathbb{E}^{t_2^-, t_2^+,x,y}_{H^{RW}}\left[ W_H^{1,1,t_2^-, t_2^+, \infty, \tilde{\ell}_{bot}}(\ell) \cdot Z_{H,H^{RW}}( t_1^-, t^+_1,\ell(t_1^-),\ell(t_1^+),\ell_{bot} \llbracket t_1^-, t_1^+ \rrbracket ) \cdot {\bf 1}\{ \ell |_S \in F_1\} \right]
\end{split}
\end{equation}
Using (\ref{WH}), the definition of $\tilde{\ell}_{bot}$ and that $\mathbb{P}^{t_2^-, t_2^+,x,y}_{H^{RW}}$ satisfies the $(H,H^{RW})$-Gibbs property with $H \equiv 0$ (as follows from Lemma \ref{S4AltGibbs}), we have
\begin{equation*}
\begin{split}
&\mathbb{E}^{t_2^-, t_2^+,x,y}_{H^{RW}}\left[ W_H^{1,1,t_2^-, t_2^+, \infty, \ell_{bot}}(\ell) \cdot {\bf 1}\{ \ell |_S \in F_1\} \right] = \\
&\mathbb{E}^{t_2^-, t_2^+,x,y}_{H^{RW}}\left[ \mathbb{E}^{t_2^-, t_2^+,x,y}_{H^{RW}}\left[  W_H^{1,1,t_2^-, t_2^+, \infty, \ell_{bot}}(\ell) \cdot {\bf 1}\{ \ell |_S \in F_1\}\Big{\vert} \mathcal{F}_{ext}(\{1\} \times \llbracket t_1^-+1, t_1^+-1 \rrbracket) \right] \right] = \\
&\mathbb{E}^{t_2^-, t_2^+,x,y}_{H^{RW}}\Bigg[ {\bf 1}\{ \ell |_S \in F_1\} \cdot W_H^{1,1,t_2^-, t_2^+, \infty, \tilde{\ell}_{bot}}(\ell) \cdot \\
& \mathbb{E}^{t_2^-, t_2^+,x,y}_{H^{RW}}\left[  W_H^{1,1,t_1^-, t_1^+, \infty, \ell_{bot} \llbracket t_1^-, t_1^+ \rrbracket}(\ell \llbracket t_1^-, t_1^+\rrbracket)  \Big{\vert} \mathcal{F}_{ext}(\{1\} \times \llbracket t_1^-+1, t_1^+-1 \rrbracket) \right] \Bigg] = \\
&\mathbb{E}^{t_2^-, t_2^+,x,y}_{H^{RW}}\left[ {\bf 1}\{ \ell |_S \in F_1\} \cdot W_H^{1,1,t_2^-, t_2^+, \infty, \tilde{\ell}_{bot}}(\ell) \cdot \mathbb{E}^{t_1^-, t_1^+,\ell(t_1^-),\ell(t_1^+)}_{H^{RW}}\left[  W_H^{1,1,t_1^-, t_1^+, \infty, \ell_{bot} \llbracket t_1^-, t_1^+ \rrbracket}(\tilde{\ell}) \right] \right],
\end{split}
\end{equation*}
where $\ell$ is $\mathbb{P}^{t_2^-, t_2^+,x,y}_{H^{RW}}$-distributed and $\tilde{\ell}$ is $\mathbb{P}^{t_1^-, t_1^+,\ell(t_1^-),\ell(t_1^+)}_{H^{RW}}$-distributed. The last equality proves (\ref{eqn61H2}), once we use that from Definition \ref{DefLGGP} we have
\begin{equation}\label{away2}
Z_{H,H^{RW}}\left( t_1^-,t^+_1,\ell(t^-_1),\ell(t^+_1),\ell_{bot}\llbracket t_1^- , t_1^+ \rrbracket \right)  = \mathbb{E}_{H^{RW}}^{t_1^-, t_1^+, \ell(t_1^-), \ell(t^+_1)} \left[W_H(t^-_1,t^+_1, \tilde{\ell},\ell_{bot}\llbracket t_1^- , t_1^+ \rrbracket ) \right],
\end{equation}
where $\tilde{\ell}$ is $\mathbb{P}_{H^{RW}}^{t_1^-, t_1^+, \ell(t_1^-), \ell(t^+_1)}$-distributed. Overall, we conclude that (\ref{eqn61}) holds.

Now that we have (\ref{eqn61}) we turn back to the proof of the lemma. Observe that the law of $\left( \fB(t_1^-),\fB(t^+_1)\right)$ under $\mathbb{P}_{\tilde L'}$ is the same as the law of $(\tilde{L}(t^-_1), \tilde{L}(t^+_1))$ under $\mathbb{P}_{\tilde L}$ (this is because $\mathbb{P}_{\tilde L'}$ is the projection of $\mathbb{P}_{\tilde L}$ to a set containing $t^{\pm }_1$).  The latter and (\ref{eqn57S6}) together imply
$$Z' = \mathbb{E}_{\tilde{{L}}'} \left[ Z_{H,H^{RW}}( t_1^-,t_1^+,\fB(t^-_1),\fB(t^+_1),\ell_{bot} \llbracket t_1^-, t_1^+ \rrbracket) \right] = $$
$$ \mathbb{E}_{\tilde{{L}}} \left[ Z_{H,H^{RW}}(t_1^-, t_1^+,\tilde{L}(t_1^-),\tilde{L}(t^+_1),\ell_{bot} \llbracket t_1^-, t_1^+ \rrbracket ) \right] \geq g h.$$

Let us denote the set $E = \left\{ Z_{H,H^{RW}}( t_1^-, t_1^+,\fB(t^-_1),\fB(t^+_1),\ell_{bot}\llbracket t_1^-, t_1^+ \rrbracket ) \leq gh \tilde \epsilon) \right\}$. Then, we have
$$ \mathbb{P}_{L'} (E)= \mathbb{E}_{L'} \left[ {\bf 1}_E \right] = \mathbb{E}_{\tilde{L}'} \left[\frac{{\bf 1}_E \cdot  Z_{H,H^{RW}}( t^-_1, t^+_1,\fB(t^-_1),\fB(t^+_1),\ell_{bot}\llbracket t_1^-, t_1^+ \rrbracket)  }{Z'}\right] \leq \frac{gh \tilde \epsilon}{gh} =\tilde \epsilon,$$
where in the second equality we used (\ref{eqn61}). Finally, since $ Z_{H,H^{RW}}( t^-_1, t^+_1,\fB(t^-_1),\fB(t_1^+),\ell_{bot}\llbracket t_1^-, t_1^+ \rrbracket )$ is a bounded measurable function of $\left( \fB(t^-_1),\fB(t^+_1)\right)$ whose law under $\mathbb{P}_{L'}$ is the same as that of $(L(t^-_1), L(t^+_1))$ under $\mathbb{P}_{ L}$, we see that the above implies (\ref{eqn60}), concluding the proof of the lemma.\\

{\bf \raggedleft Step 2.} Define $F = \left\{\min\left( \ell(t_1^-) - p t^-_1, \ell(t^+_1)  -pt^+_1 \right)\geq (M_2 + 2)(t_1^+ - t_1^-)^{1/2}  \right\}$. We claim that for all $N$ sufficiently large and $\vec{z} \in [-\infty, \infty)^{t_2^+ - t_2^- + 1}$ we have
\begin{equation}\label{LAPeq4}
\mathbb{P}^{t_2^-, t_2^+, x,y, \vec{z}}_{H,H^{RW}}\left (F \right) \geq (1/18) \cdot \left(1 - \Phi^{v}\left(10(2 +r)^2(M_1 + M_2 + 10) \right) \right).
\end{equation}
Establishing the validity of (\ref{LAPeq4}) will be done in the third and fourth steps below, and in what follows we assume it is true and finish the proof of (\ref{eqn57S6}).\\

We assert that if $N_4$ is sufficiently large and $N \geq N_4$, we have
\begin{equation}\label{LAPeqS}
F \subset  \left \{  Z_{H,H^{RW}}\left( t_1^-,t^+_1,\ell(t^-_1), \ell(t^+_1),{\ell}_{bot}\llbracket t_1^-, t_1^+ \rrbracket \right) > \frac{1}{4} \left( 1 - \exp \left( \frac{-2}{\sigma_p^2}\right) \right)  \right\}.
\end{equation}
Observe that (\ref{LAPeqS}) and (\ref{LAPeq4}) prove (\ref{eqn57S6}) and so it suffices to verify (\ref{LAPeqS}). The details are presented below (see also Figure \ref{S6S6_1}).
\begin{figure}[h]
\centering
\scalebox{0.66}{\includegraphics{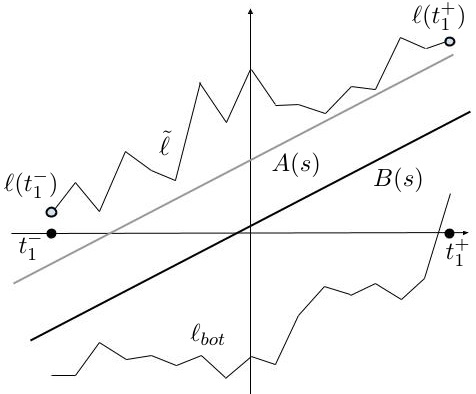}}
\caption{Overview of the arguments in Step 2: We want to prove that on the event $F$ we have a lower bound on $Z_{H,H^{RW}}:= Z_{H,H^{RW}}\left( t^-_1,t^+_1,\ell(t^-_1), \ell(t^-_1),{\ell}_{bot}\llbracket t_1^-, t_1^+ \rrbracket  \right) $. As explained in (\ref{away2}), the random variable $Z_{H,H^{RW}}$ is just the average of the weights $W_H(\tilde{\ell}) :=W_H(t_1^-,t_1^+, \tilde{\ell}, {\ell}_{bot}\llbracket t_1^-, t_1^+ \rrbracket)$ over a free $H^{RW}$ bridge $\tilde{\ell}$. Consequently, to show that $Z_{H,H^{RW}}$ is lower-bounded it suffices to find a big subset $\Omega' $, such that the weights $W_H(\tilde{\ell})$ on $ \Omega'$ are lower-bounded.
 Let $A(s)$ and $B(s)$ denote the lines $ps + (M_2 + 1)(t_1^+ - t_1^-)^{1/2}- (t_1^+- t_1^-)^{1/4}$ and $ps + M_2(t_1^+ - t_1^-)^{1/2}$, drawn in grey and black respectively above. Then $\Omega'$ denotes the event that curve $\tilde{\ell}$ lies above $A(s)$ on $[t_1^-, t_1^+]$. On the event $F$ we have that $\ell(t_1^{\pm})$ are at least a distance $(t_1^+ - t_1^-)^{1/2} + (t_1^+ - t_1^-)^{1/4}$ above the points $A( t^{\pm}_1)$, respectively. Since the endpoints of the bridges are well above those of $A(s)$, this means that some positive fraction of these bridges will stay above $A(s)$ on the entire interval $[t_1^-, t^+_1]$; i.e. $\mathbb{P}_{H^{RW}}^{t_1^-, t^+_1, \ell(t^-_1), \ell(t^+_1)} \left(\Omega' \right)  $ is lower bounded. This is what we mean by $\Omega'$ being big and the exact relation is given in (\ref{away3}).
To see that $W_H(\tilde{\ell})$ on $ \Omega'$ is lower bounded, we notice that on $ \Omega'$ the bridges $\tilde{\ell}$ are well-above $B(s)$, which dominates $\ell_{bot}$ by assumption. This means that $\tilde{\ell}$ is well above $\ell_{bot}$ and for such paths $W_H(\tilde{\ell})$  is lower bounded. The exact relation is given in (\ref{away4}).
}\label{S6S6_1}
\end{figure}

Denote
$$\Omega'  = \left \{  \tilde{\ell}(s) - ps \geq (M_2 + 1) (t_1^+ - t_1^-)^{1/2} - (t_1^+ - t_1^-)^{1/4} \mbox{ for } s \in \llbracket t_1^-, t_1^+\rrbracket \right\}.$$
 It follows from Lemma \ref{LemmaAwayS4} applied to $T = (t_1^+ - t_1^-)$, $x= \ell(t_1^-) - (M_2 + 1) (t_1^+ - t_1^-)^{1/2}$ and $y = \ell(t_1^+) - (M_2 + 1) (t_1^+ - t_1^-)^{1/2}$ that if $N_4$ is sufficiently large, so that for $N \geq N_4$, we have $t_1^+ - t_1^- \geq W_3(p)$ as in Lemma \ref{LemmaAwayS4}, then
\begin{equation}\label{away3}
{\bf 1}_F \cdot  \mathbb{P}_{H^{RW}}^{t^-_1, t^+_1, \ell(t^-_1), \ell(t^+_1)} \left(\Omega' \right)  \geq {\bf 1}_F \cdot  \frac{1}{2} \left( 1 - \exp \left( \frac{-2}{\sigma_p^2}\right)\right).
\end{equation}
In deriving the above equation we also used the shift-invariance of $\mathbb{P}_{H^{RW}}^{t^-_1, t^+_1, \ell(t^-_1), \ell(t^+_1)}$, as well as the definitions of $\Omega'$ and $F$.

Since $(t_1^+ - t_1^-) \geq  N^{\alpha} $, we know that for $N_4$ sufficiently large (depending on $r, \alpha$) and $N \geq N_4$ we have on $ \Omega'$ for all $s \in \llbracket t_1^-, t_1^+\rrbracket$ that
$$\tilde{\ell}(s) - ps \geq (M_2 + 1/2)(t_1^+ - t_1^-)^{1/2}  \geq \ell_{bot}(s) - ps + (1/2)(t_1^+ - t_1^-)^{1/2},$$
 where the last inequality holds true by our assumption on $\ell_{bot}$. The conclusion is that on $ \Omega'$, we have for $s \in \llbracket t_1^-, t_1^+ \rrbracket$ that $\tilde{\ell}(s) - \ell_{bot}(s) \geq m$, where $m =  (1/2) N^{\alpha/2}$. Using that $H$ is increasing and $\lim_{x \rightarrow \infty} x^2H(-x) = 0$, we have that on the event $\Omega'$  the following holds
\begin{equation}\label{away4}
W_H(t_1^-,t_1^+, \tilde{\ell}, \ell_{bot}\llbracket t_1^-, t_1^+ \rrbracket ) \geq e^{- (t_1^+ - t_1^-) H(-m)} \geq e^{-H(-N^{\alpha/2}/2) \cdot (2r + 4)N^{\alpha}} \geq \frac{1}{2},
\end{equation}
where the last inequality holds for all large enough $N_4$ (depending on $r, \alpha$ and $H$) and $N \geq N_4$. Combining (\ref{away2}), (\ref{away3}) and (\ref{away4}), we conclude that  if $N_4$ is sufficiently large and $N \geq N_4$, then on the event $F$ we have
$$Z_{H,H^{RW}}\left( t^-_1,t^+_1,\ell(t^-_1),\ell(t^+_1),\ell_{bot} \llbracket t_1^-, t_1^+ \rrbracket \right) \geq  \mathbb{E}_{H^{RW}}^{t^-_1, t^+_1, \ell(t^-_1), \ell(t^+_1)} \left[{\bf 1}_{\Omega'} \cdot W_H(t^-_1,t^+_1, \tilde{\ell}, \ell_{bot}\llbracket t_1^-, t_1^+ \rrbracket) \right] $$
$$ \geq \frac{1}{2} \cdot  \mathbb{P}_{H^{RW}}^{t^-_1, t^+_1, \ell(t^-_1), \ell(t^+_1)} \left(\Omega' \right) \geq \frac{1}{4} \left( 1 - \exp \left( \frac{-2}{\sigma_p^2}\right) \right),$$
which establishes (\ref{LAPeqS}).\\

{\bf \raggedleft Step 3.} In this step, we prove (\ref{LAPeq4}). We refer the reader to Figure \ref{S6_2} for an overview of the main ideas in this and the next step and a graphical representation of the notation we use.

\begin{figure}[h]
\centering
\scalebox{0.7}{\includegraphics{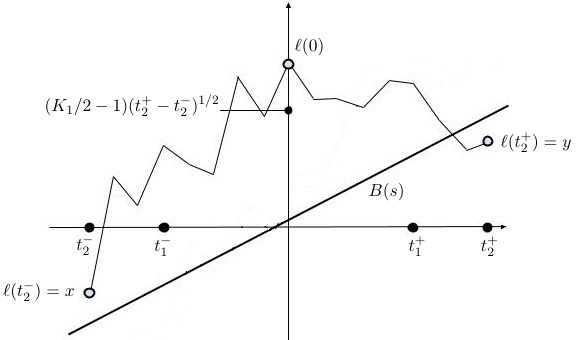}}
\caption{Overview of the arguments in Steps 3 and 4:
 Let $B(s)$ denote the line $ps + M_2(t_1^+ - t_1^-)^{1/2}$, drawn in black above. We have that $F$ denotes the event that $\ell$ is at least a distance $ 2 (t_1^+ - t_1^-)^{1/2}$ above the line $B(s)$ at the points $t^{\pm}_1$ and we want to find a lower bound on $\mathbb{P}^{t_2^-, t_2^+, x,y, \vec{z}}_{H,H^{RW}}(F)$. Using Lemma \ref{MonCoup}, it suffices to lower bound $\mathbb{P}^{t_2^-, t_2^+, x,y}_{H^{RW}}(F \cap E)$ where $E = \{ \ell(0) \geq B(0) + (K_1/2 - 1)(t_2^+ - t_2^-)^{1/2}\}$ for some suitably large $K_1$, depending on $M_1,M_2, r$ -- this reduction is made in (\ref{BIGS53}). One utilizes the fact that conditional on $\ell(0)$ the events that $\{ \ell(t_1^{\pm}) \geq B(t^{\pm}_1) + 2(t_1^+ - t_1^-)^{1/2}\}$ become independent and themselves lower bounded. The lower bound for the latter events is a consequence of the fact that $x,y$ are not too low while on $E$ the variable $\ell(0)$ is very high, which makes $\ell(t_1^{\pm})$ also high with at least probability $1/3$ as follows from an application of Lemma \ref{LemmaHalfS4}. The conditioning described in this paragraph is rephrased in terms of the $(H,H^{RW})$-Gibbs property in (\ref{SeqStatements}). The necessary lower bound of the event that $\ell(0)$ is much higher than $B(0)$ is the first line in (\ref{Redeq1}). The necessary statement required to establish that $\ell(t_1^{\pm})$ are high at least with probability $1/3$ if $\ell(0)$ is very high is the second line in (\ref{Redeq1}). The proof of (\ref{Redeq1}) is the content of Step 4 and essentially follows from Lemmas \ref{LemmaTailS4} and \ref{LemmaHalfS4}.
}\label{S6_2}
\end{figure}

Let $K_1 = 8(2+r)^2(M_1 + M_2 + 10)$ and define the events
$$E =  \left\{ \ell(0) \geq  (K_1/2-1)  (t_2^+ - t_2^-)^{1/2}  \right\},$$
$$E_1  =\left\{ \ell(t_1^-) - pt^-_1  \geq (M_2 + 2) (t_1^+ - t_1^-)^{1/2} \right\} \mbox{, } E_2 = \left\{ \ell(t^+_1) - pt^+_1  \geq(M_2 + 2)  (t_1^+ - t_1^-)^{1/2} \right\}.$$
We assert that if $x \geq  p t_2^- - M_1(t_2^+ -  t_2^-)^{1/2}$, $y \geq pt_2^+ - M_1(t_2^+ - t_2^-)^{1/2}$, $z \geq (K_1/2 - 1) (t_2^+ - t_2^-)^{1/2}$
\begin{equation}\label{Redeq1}
\begin{split}
&\mathbb{P}^{t_2^-, t_2^+, x,y}_{H^{RW}} \left( E \right) \geq (1/2) \cdot [1 - \Phi^v(M_1 + K_1)],\\
& \mathbb{P}^{t_2^-,0,x,z}_{H^{RW}}\left( E_1 \right) \geq 1/3 , \hspace{2mm} \mathbb{P}_{H^{RW}}^{0, t_2^+, z, y} \left (E_2\right) \geq 1/3.
\end{split}
\end{equation}
We will prove (\ref{Redeq1}) in Step 4 below. Here we assume its validity and conclude the proof of (\ref{LAPeq4}). \\

In view of Lemma \ref{MonCoup}, we see that to prove (\ref{LAPeq4}) it suffices to show that for all large $N$
\begin{equation}\label{BIGS53}
\mathbb{P}^{t_2^-, t_2^+, x,y}_{H^{RW}}  (E_1 \cap E_2) \geq  (1/18)  [1 - \Phi^v(M_1 + K_1)].
\end{equation}
From Lemma \ref{S4AltGibbs} we know that $\mathbb{P}^{t_2^-, t_2^+, x,y}_{H^{RW}}$ satisfies the $(H,H^{RW})$-Gibbs property (here we use the second part of the lemma with $\vec{z} = (-\infty)^{t_2^+ - t_2^-}$).
Let $\mathcal{F}^+_{ext} = \mathcal{F}_{ext} \left( \{1 \} \times \llbracket 1, t_2^+ - 1 \rrbracket \right)$ and $\mathcal{F}^-_{ext} = \mathcal{F}_{ext} \left( \{1 \} \times \llbracket t_2^- + 1,  - 1 \rrbracket \right)$ be as in (\ref{GibbsCond}). Then, we have the following sequence of statements
\begin{equation}\label{SeqStatements}
\begin{split}
&\mathbb{P}^{t_2^-, t_2^+, x,y}_{H^{RW}}  (E_1 \cap E_2 \cap E) = \mathbb{E} \left[\mathbb{E}\left[ \mathbb{E}\left[ {\bf 1}_E \cdot {\bf 1}_{E_1} \cdot {\bf 1}_{E_2} \vert \mathcal{F}_{ext}^-\right]  \vert \mathcal{F}_{ext}^+\right] \right] = \\
&\mathbb{E} \left[{\bf 1}_E \cdot \mathbb{E}\left[ {\bf 1}_{E_2} \cdot  \mathbb{E}\left[   {\bf 1}_{E_1} \vert \mathcal{F}_{ext}^-\right]  \vert \mathcal{F}_{ext}^+\right] \right] = \mathbb{E} \left[{\bf 1}_E \cdot \mathbb{E}\left[ {\bf 1}_{E_2}\cdot  \mathbb{E}^{- } \left[ {\bf 1}_{E_1}  \right]  \vert \mathcal{F}_{ext}^+ \right] \right] = \\
& \mathbb{E} \left[{\bf 1}_E \cdot \mathbb{E}^{- } \left[ {\bf 1}_{E_1}  \right]   \cdot  \mathbb{E}\left[ {\bf 1}_{E_2}  \vert \mathcal{F}_{ext}^+ \right] \right] =  \mathbb{E} \left[{\bf 1}_E \cdot \mathbb{E}^{- } \left[ {\bf 1}_{E_1}  \right]   \cdot  \mathbb{E}^+\left[ {\bf 1}_{E_2} \right] \right] \geq \\
& \mathbb{E} \left[{\bf 1}_E \cdot (1/3)  \cdot (1/3)\right]  \geq (1/18) \cdot [1 - \Phi^v(M_1 + K_1)],
\end{split}
\end{equation}
where we have written $\mathbb{E}$ in place of $\mathbb{E}^{t_2^-, t_2^+, x,y}_{H^{RW}}$, $\mathbb{E}^-$ in place of $\mathbb{E}_{H^{RW}}^{x, \ell(0), t_2^-, 0}$ and $\mathbb{E}^+$ in place of $\mathbb{E}_{H^{RW}}^{\ell(0), y, 0, t_2^+}$ to ease the notation.

Let us explain (\ref{SeqStatements}) briefly. The first equality follows from the tower property for conditional expectations. The second equality follows from the fact that ${\bf 1}_E$ is measurable with respect to both $\mathcal{F}_{ext}^{+}$ and $\mathcal{F}^-_{ext}$, while ${\bf 1}_{E_2}$ is measurable with respect to $\mathcal{F}^-_{ext}$, and so these functions can be taken outside of the conditional expectations. The first equality in the second line follows from an application of the $(H,H^{RW})$-Gibbs property (\ref{GibbsEq}) to the function $F(\ell) = {\bf 1} \left\{ \ell(t_1^-) - pt^-_1  \geq (M_2 + 2) (t_1^+ - t_1^-)^{1/2} \right\}.$ We next observe that $\mathbb{E}^{- } \left[ {\bf 1}_{E_1}  \right]$ is a deterministic measurable function of $\ell(0)$ (see also Lemma \ref{ContinuousGibbsCond}) and so in particular it is $\mathcal{F}^+_{ext}$ measurable. This allows us to move $\mathbb{E}^{- } \left[ {\bf 1}_{E_1}  \right]$ outside of the conditional expectation, which explains the second equality on the second line of (\ref{SeqStatements}). The first equality on the third line follows from an application of the $(H,H^{RW})$-Gibbs property (\ref{GibbsEq}) to the function $F(\ell) = {\bf 1} \left\{ \ell(t_1^+) - pt^+_1  \geq (M_2 + 2) (t_1^+ - t_1^-)^{1/2} \right\}.$ Next, we lower bound $\mathbb{E}^{- } \left[ {\bf 1}_{E_1}  \right]   \cdot  \mathbb{E}^+\left[ {\bf 1}_{E_2} \right]$  by $(1/3)\cdot (1/3)$ in view of the second line of (\ref{Redeq1}), since on $E$ we know that $\ell(0)$ (which plays the role of $z$ in (\ref{Redeq1})) is lower bounded by $(K_1/2-1)  (t_2^+ - t_2^-)^{1/2}$. This explains the inequality on the third line of (\ref{SeqStatements}). Finally, the inequality on the fourth line of (\ref{SeqStatements}) follows from the first line in  (\ref{Redeq1}). This justifies equation (\ref{SeqStatements}). Also, it is clear that (\ref{SeqStatements}) implies (\ref{BIGS53}). \\

{\bf \raggedleft Step 4.} In this final step, we establish (\ref{Redeq1}). By Lemma \ref{LemmaTailS4}, applied to $T=  t_2^+ - t_2^-$, $M_1$ as above, $M_2 = K_1$ as above, $p$ as in (\ref{blah}), we have for all sufficiently large $N$, so that $t_2^+ - t_2^- \geq W_2(M_1, K_1,p)$ as in  Lemma \ref{LemmaTailS4}, $\tilde{x} \geq -M_1 (t_2^+ - t_2^-)^{1/2}$, $\tilde{y} \geq p (t_2^+ - t_2^-) - M_1 (t_2^+ - t_2^-)^{1/2}$ and $\rho \in \{-1, 0, 1\}$ that
\begin{equation*}
\mathbb{P}^{0,T,\tilde{x},\tilde{y}}_{H^{RW}}\bigg( \ell(\lfloor T/2\rfloor  + \rho)  \geq \frac{K_1T^{1/2} + p T}{2} - T^{1/4} \bigg) \geq (1/2) (1 - \Phi^{v}(M_1 + K_1) ).
\end{equation*}
By assumption we have that $x \geq p t_2^- - M_1 (t_2^+ - t_2^-)^{1/2}$ and $y \geq pt_2^+ - M_1 (t_2^+ - t_2^-)^{1/2}$. This means that $\tilde{x} = x - pt_2^- \geq - M_1 (t_2^+ - t_2^-)^{1/2}$ and $\tilde{y} = y - pt_2^- \geq p (t_2^+ - t_2^-) - M_1 (t_2^+ - t_2^-)^{1/2}$, which imply from the translation invariance of $\mathbb{P}^{0,T,\tilde{x},\tilde{y}}_{H^{RW}}$ and the above inequality that
$$
\mathbb{P}^{t_2^-, t_2^+, x,y}_{H^{RW}} \left(\ell(0) \geq (K_1/2 - 1) (t_2^+ - t_2^-)^{1/2} \right) \geq (1/2) \cdot [1 - \Phi^v(M_1 + K_1)].$$
This proves the first line in (\ref{Redeq1}).\\

From Lemma \ref{LemmaHalfS4}, applied to $T = -t_2^-$, $M_1 = \tilde{M}_1 := (-2-2M_1)$ and $M_2 = \tilde{M}_2 := (K_1/2 - 1)$, we know that for $\tilde{x} \geq pt^-_2 + \tilde{M}_1 |t_2^-|^{1/2}$ , $\tilde{z} \geq  \tilde{M}_2 |t_2^-|^{1/2}$ and $N$ sufficiently large so that $-t_2^- \geq W_2(\tilde{M}_2  - \tilde{M}_1 ,p)$ we have for any $s \in \llbracket 0, T \rrbracket$ that
\begin{equation*}
\mathbb{P}^{0,T,\tilde{x},\tilde{z}}_{H^{RW}}\Big( \ell(s)  \geq \frac{T-s}{T} \cdot \tilde{M}_1 T^{1/2} + \frac{s}{T} \cdot \big(p T + \tilde{M}_2  T^{1/2}\big) - T^{1/4} \Big) \geq \frac{1}{3}.
\end{equation*}
The latter and the translation invariance of $\mathbb{P}^{0,T,\tilde{x},\tilde{y}}_{H^{RW}}$ shows that if $x \geq pt_2^- - M_1(t_2^+ - t_2^-)^{1/2}$ and $z \geq \tilde{M}_2  (t_2^+ - t_2^-)^{1/2}$ then for any $s \in \llbracket  0, -t_2^- \rrbracket$ we have
$$\mathbb{P}^{t_2^-,0,x,z}_{H^{RW}}\left( \ell(s + t_2^-) - pt_2^-  \geq \frac{-t_2^- - s}{-t_2^-} \cdot \tilde{M}_1 |t_2^-|^{1/2} + \frac{ s}{-t_2^-} \cdot [ p(-t_2^-) + \tilde{M}_2  |t_2^-|^{1/2}] - |t_2^-|^{1/4} \right) \geq 1/3.$$
Setting $s = -t_2^- + t_1^-$ above, we conclude that if $x \geq pt_2^- - M_1(t_2^+ - t_2^-)^{1/2}$ and $z \geq \tilde{M}_2 (t_2^+ - t_2^-)^{1/2}$
\begin{equation*}
\mathbb{P}^{t_2^-,0,x,z}_{H^{RW}}\left( \ell(t_1^-) - pt_1^-  \geq \frac{- t_1^-}{-t_2^-} \cdot \tilde{M}_1 |t_2^-|^{1/2} + \frac{-t_2^- + t_1^-}{-t_2^-} \cdot \tilde{M}_2  |t_2^-|^{1/2} - |t_2^-|^{1/4} \right) \geq 1/3.
\end{equation*}
In particular, from our definition of $\tilde{M}_1$ and $\tilde{M}_2$ we conclude that for $x \geq  p t_2^- - M_1(t_2^+ -  t_2^-)^{1/2}$ and $z \geq (K_1/2 - 1) (t_2^+ - t_2^-)^{1/2}$ we have
\begin{equation*}
\mathbb{P}^{t_2^-,0,x,z}_{H^{RW}}\left( \ell(t_1^-) - pt_1^-  \geq (M_2  + 2)(t_1^+ - t_1^-)^{1/2} \right) \geq 1/3.
\end{equation*}
Similar arguments show that if $y \geq pt_2^+ - M_1(t_2^+ - t_2^-)^{1/2}$ and $z \geq (K_1/2 - 1) (t_2^+ - t_2^-)^{1/2}$, we have
\begin{equation*}
 \mathbb{P}_{H^{RW}}^{0, t_2^+, z, y} \left (\ell(s_1) - ps_1  \geq(M_2 + 2) (t_1^+ - t_1^-)^{1/2} \right) \geq 1/3.
\end{equation*}
The last two equations now imply the second line in (\ref{Redeq1}), which concludes its proof.

%

\section{Absolute continuity with respect to Brownian bridges }\label{Section7}

In Theorem \ref{PropTightGood} we showed that under suitable shifts and scalings $(\alpha, p, r+3)$--good sequences give rise to tight sequences of continuous random curves. In this section, we aim to obtain some qualitative information about their subsequential limits and we show that any subsequential limit is absolutely continuous with respect to a Brownian bridge with appropriate variance. In particular, this demonstrates that we have non-trivial limits and do not kill fluctuations with our rescaling. In Section \ref{Section7.1}, we introduce some useful notation and present the main result of the section -- Theorem \ref{ACBB}. The proof of Theorem \ref{ACBB} is given in Section \ref{Section7.2}, and relies on Proposition \ref{PropMain}, and the strong coupling afforded by Proposition \ref{KMT}.

%
\subsection{Formulation of result and applications}\label{Section7.1} We introduce some relevant notation and define what it means to be absolutely continuous with respect to a Brownian bridge.

\begin{definition} Let $X = C([0,1])$ and $Y = C([-r,r])$ be the spaces of continuous functions on $[0,1]$ and $[-r,r]$ respectively with the uniform topology. Denote by $d_X$ and $d_Y$ the metrics on the two spaces, and by $\mathcal{B}(X)$, and $\mathcal{B}(Y)$ their Borel $\sigma$-algebras. Given $z_1,z_2 \in \mathbb{R}$, we define
$F_{z_1, z_2} : X \rightarrow Y$ and $G_{z_1,z_2}: Y \rightarrow X$ by
\begin{equation}\label{G1}
[F_{z_1,z_2} (g)] (x) =  z_1 + g \left( \frac{x + r}{2r} \right) + \frac{x + r}{2r} (z_2 - z_1) \hspace{3mm} [G_{z_1,z_2} (h) ](\xi) =  h \left( 2r\xi - r \right) -z_1 - (z_2 - z_1)  \xi,
\end{equation}
for $x \in [-r,r]$ and $\xi \in [0,1]$.
\end{definition}

One observes that $F_{z_1,z_2}$ and $G_{z_1,z_2}$ are bijective homomorphisms between $X$ and $Y$ that are mutual inverses. In words, $F_{z_1,z_2}$ gives a general affine way to stretch a continuous function on $[-r,r]$ to one on $[0,1]$, and $G_{z_1,z_2}$ is its inverse. 

Let $X_0 = \{ f \in X : f(0) = f(1) = 0\}$, with the subspace topology, and define $G: Y \rightarrow X$ through $G(h) = G_{h(-r), h(r)} ( h)$. Let us make some observations.
\begin{enumerate}
\item $G$ is a continuous function. Indeed, from the triangle inequality we have \\$d_X \left(G_{h_1(-r), h_1(r) }(h_1),  G_{h_2(-r), h_2(r) }(h_2) \right) \leq 2d_Y(h_1, h_2).$
\item If $L$ is a random variable in $(Y, \mathcal{B}(Y))$ then $G(L)$ is a random variable in $(X, \mathcal{B}(X))$, which belongs to $X_0$ with probability $1$. The measurability of $G(L)$ follows from the continuity of $G$, everything else is clearly true.
\end{enumerate}
The function $G$ above gives a way to affinely map a general curve on $[-r,r]$ (where our usual variables lie) to $X_0$ (where a Brownian bridge resides) so that an honest comparison between the two can be made. Recall from Section \ref{Section4.3} that $B^\sigma$ stands for the Brownian bridge on $[0,1]$, with variance $\sigma^2$ -- this is a random variable in $(X, \mathcal{B}(X))$, which belongs to $X_0$ with probability $1$.

With the above notation we make the following definition.
\begin{definition}\label{DACB}
Let $L$ be a random variable in $(Y, \mathcal{B}(Y))$ with law $\mathbb{P}_L$. We say that $L$ is {\em absolutely continuous } with respect to a Brownian bridge with variance $\sigma^2$ if for any $K \in \mathcal{B}(X)$ we have
$$\mathbb{P}(B^\sigma \in K) = 0 \implies \mathbb{P}_L(G(L) \in K) = 0.$$
\end{definition}

The main result of this section is as follows.
\begin{theorem}\label{ACBB}
Under the same assumptions and notation as in Theorem \ref{PropTightGood} let $\mathbb{P}_\infty$ be any subsequential limit of $\mathbb{P}_N$. If $f_\infty$ has law $\mathbb{P}_\infty$, then it is absolutely continuous with respect to a Brownian bridge with variance $2r\sigma_p^2$ in the sense of Definition \ref{DACB}, where $\sigma_p^2$ is as in Definition \ref{AssHR}.
\end{theorem}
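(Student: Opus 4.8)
\textbf{Proof plan for Theorem \ref{ACBB}.}
The strategy is to compare $f_\infty$ against a Brownian bridge through the strong coupling of Proposition \ref{KMT}, exploiting the fact, established in Proposition \ref{PropMain}, that the Radon--Nikodym derivative of $L_1^N$ against a suitable $H^{RW}$-random walk bridge is bounded below with high probability. First I would fix a subsequence along which $\mathbb{P}_N \Rightarrow \mathbb{P}_\infty$ and fix a Borel set $K \in \mathcal{B}(X)$ with $\mathbb{P}(B^{\sigma} \in K) = 0$, where $\sigma^2 = 2r\sigma_p^2$. Since $G$ is continuous, it suffices to show $\mathbb{P}_\infty(G(f_\infty) \in K) = 0$. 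By inner/outer regularity of measures on the Polish space $X$ and the fact that $\mathbb{P}(B^\sigma \in \cdot)$ has no atom at $K$, I would approximate: for any $\varepsilon > 0$ there is an open set $U \supseteq K$ with $\mathbb{P}(B^\sigma \in U) < \varepsilon$. Because $G^{-1}(U)$ is open in $Y$ and $\mathbb{P}_N \Rightarrow \mathbb{P}_\infty$, the Portmanteau theorem gives $\mathbb{P}_\infty(G(f_\infty) \in U) \leq \liminf_N \mathbb{P}_N(G(f_N) \in U)$, so it is enough to bound $\mathbb{P}_N(G(f_N) \in U)$ uniformly in $N$ (up to the approximation error) by something going to $0$ with $\varepsilon$.

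The core estimate is then a finite-$N$ statement: for the pre-limit curves $f_N$, the probability that $G(f_N)$ lands in an open set of small Brownian measure is itself small. To prove this I would condition on $\mathcal{F}_{ext}(\{1\} \times \llbracket t_1^- + 1, t_1^+ - 1\rrbracket)$ and invoke the $(H,H^{RW})$-Gibbs property as in the proof of Theorem \ref{PropTightGood}: on the good event $E_2 = \{Z_{H,H^{RW}}(t_1^-, t_1^+, L_1^N(t_1^-), L_1^N(t_1^+), L_2^N\llbracket t_1^-, t_1^+\rrbracket) > \delta_1\}$, whose complement has small probability by Proposition \ref{PropMain}, and on the event $E_1$ that the endpoints $L_1^N(t_1^\pm)$ are within $M_1 N^{\alpha/2}$ of $p t_1^\pm$ (small-probability complement by Lemma \ref{PropSup}), we have the pointwise bound
\begin{equation*}
\mathbb{E}_{H,H^{RW}}^{t_1^-, t_1^+, L_1^N(t_1^-), L_1^N(t_1^+), L_2^N\llbracket t_1^-, t_1^+\rrbracket}\big[{\bf 1}\{\ell \in \mathcal{A}\}\big] \leq \delta_1^{-1} \mathbb{P}_{H^{RW}}^{t_1^-, t_1^+, L_1^N(t_1^-), L_1^N(t_1^+)}\big(\ell \in \mathcal{A}\big),
\end{equation*}
valid for any event $\mathcal{A}$ on curves, since $W_H \in (0,1]$. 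Taking $\mathcal{A}$ to be the event (depending on the endpoints) that the appropriately rescaled and affinely corrected version of $\ell$ lands in $U$, this reduces the problem to showing that a single $H^{RW}$-random walk bridge, rescaled diffusively and bridge-corrected, has small probability of landing in $U$.

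For the random-walk-bridge estimate I would use Proposition \ref{KMT}: couple $\ell$ with law $\mathbb{P}_{H^{RW}}^{0, t_1^+ - t_1^-, 0, z}$ (where $z = L_1^N(t_1^+) - L_1^N(t_1^-)$ is within $2M_1(t_1^+-t_1^-)^{1/2}$ of $p(t_1^+-t_1^-)$ on $E_1$) to a Brownian bridge $B^\sigma$ with $\sigma^2 = \sigma_p^2$, so that the sup-norm discrepancy $\Delta$ has exponential moments of order $O((\log T)^2)$. Under the scaling in the definition of $f_N$, the rescaled bridge $T^{-1/2}(\ell(uT) - puT)$ is within $T^{-1/2}\Delta$ in sup norm of a Brownian bridge of variance $\sigma_p^2$ on $[0,1]$, and after the affine map $G$ and the time-rescaling by $2r$ this becomes a Brownian bridge of variance $2r\sigma_p^2$ on $[0,1]$ up to an $O(T^{-1/2}\Delta)$ error (the Gaussian bridge correction to pin down the endpoints contributes another, likewise controllable, error). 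Hence $\mathbb{P}(G(f_N) \in U) \leq \mathbb{P}(B^{\sigma'} \in U^{\kappa}) + \mathbb{P}(\text{discrepancy} > \kappa)$ for any $\kappa > 0$, where $U^\kappa$ is the $\kappa$-fattening and $\sigma'^2 = 2r\sigma_p^2$; the second term is $O(\kappa^{-1} \cdot \text{something} \to 0)$ by the exponential moment bound, and the first can be made small because $\mathbb{P}(B^{\sigma'} \in \bar{U}) \le \mathbb{P}(B^{\sigma'} \in U) + (\text{boundary correction})$ — this is where I would need $U$ not merely to have small measure but to be chosen so its small closed fattenings also have small measure, which follows since $\mathbb{P}(B^{\sigma'} \in \cdot)$ is a finite measure and $\bigcap_\kappa \overline{U^\kappa}$ can be taken close to $K$. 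Assembling the pieces: $\mathbb{P}_\infty(G(f_\infty) \in K) \le \mathbb{P}_\infty(G(f_\infty) \in U) \le \liminf_N \mathbb{P}_N(G(f_N) \in U) \le C\varepsilon + (\text{errors} \to 0)$, and letting $\varepsilon \downarrow 0$ gives the result.

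\textbf{Main obstacle.} The delicate point is the bookkeeping around the set $U$ and its fattenings: one must simultaneously (i) keep $\mathbb{P}(B^{\sigma'} \in U)$ small, (ii) absorb the coupling error by fattening $U$ to $U^\kappa$ without letting $\mathbb{P}(B^{\sigma'} \in \overline{U^\kappa})$ grow, and (iii) make the Portmanteau step go through, which wants $U$ open. Reconciling these requires choosing, for each $\varepsilon$, an open $U \supseteq K$ with $\mathbb{P}(B^{\sigma'} \in U) < \varepsilon$ and then further noting $\mathbb{P}(B^{\sigma'} \in \overline{U^\kappa}) \to \mathbb{P}(B^{\sigma'} \in \overline{U}) $ as $\kappa \downarrow 0$, which need not be close to $\varepsilon$ unless $\overline U$ is also small; this is handled by first shrinking toward $K$ using that $K$ is Brownian-null, i.e. by a two-stage approximation (an open $U$ trapped between $K$ and a slightly larger open set of still-small measure). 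A secondary technical nuisance is that the endpoints $L_1^N(t_1^\pm)$ are random, so all the coupling and fattening estimates must be made uniformly over endpoints in the $E_1$-range and then integrated; this is routine given the exponential moment in Proposition \ref{KMT} depends on the endpoints only through $e^{|z - pT|^2/T}$, which is bounded on $E_1$.
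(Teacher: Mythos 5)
Your strategy is the right one — outer regularity to replace the null set $K$ by a small open $U$, the Gibbs property plus Proposition \ref{PropMain} to reduce to a single $H^{RW}$-bridge on a good event, and the KMT coupling to compare that bridge with a Brownian bridge of variance $2r\sigma_p^2$ — and your use of Portmanteau in place of a Skorohod embedding is a perfectly viable way to pass from weak convergence to a finite-$N$ estimate. But there is a genuine gap in the fattening step, and your own diagnosis of the obstacle, while accurate, is not repaired by the proposed fix.

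The issue is the direction of the inclusion when you absorb the coupling error. You write the bound in the form $\mathbb{P}(G(f_N)\in U)\leq \mathbb{P}(B^{\sigma'}\in U^\kappa)+\mathbb{P}(\text{discrepancy}>\kappa)$, so you need $\mathbb{P}(B^{\sigma'}\in \overline{U^\kappa})$ to be small. As you correctly observe, $\mathbb{P}(B^{\sigma'}\in \overline{U^\kappa})\downarrow\mathbb{P}(B^{\sigma'}\in\overline{U})$ as $\kappa\downarrow 0$, and $\overline{U}$ can have much larger measure than $U$. Your proposed "two-stage approximation" — an open $U$ trapped between $K$ and another small open set $V$ with $\overline{U}\subset V$ — does not exist for a general Borel $K$: you would need $K$ to be at positive distance from $V^c$, which fails unless $K$ is (contained in something) compact. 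The fix is available but missing from your writeup: first reduce to compact $K$ by inner regularity of the measure $\mathbb{P}_\infty\circ G(\cdot)^{-1}$ on the Polish space $X$ (if the implication holds for every compact $K$ then it holds for every Borel $K$), and then the two-stage approximation does work because a compact set inside an open set is at positive distance from the complement. Without that reduction, the argument as stated breaks.

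The paper sidesteps the fattening issue entirely with a cleaner trick, which you may prefer: rather than fattening the target $U$, it restricts the \emph{pre-limit} event to $\{f_N\in G^{-1}(O)\}\cap\{d_Y(f_N,G^{-1}(O)^c)>N^{-\alpha/8}\}$, with the distance threshold shrinking in $N$. That restriction is harmless in the limit (the paper gets it for free from the a.s.\ Skorohod coupling; in your Portmanteau framework you could use a fixed $\rho>0$ and send $\rho\downarrow 0$ afterwards, since $\{f:d_Y(f,G^{-1}(O)^c)>\rho\}$ is open). The payoff is the containment $\{h_N(\ell)\in G^{-1}(O),\ d_Y(h_N(\ell),G^{-1}(O)^c)>\rho\}\subset\{h_N(\tilde B)\in G^{-1}(O)\}\cup\{d_Y(h_N(\ell),h_N(\tilde B))>\rho/2\}$: the Brownian bridge either lands in the same open set $G^{-1}(O)$ (probability controlled by the choice of $O$, no fattening needed), or the coupling discrepancy exceeds $\rho/2$, which is exponentially unlikely by the moment bound of Proposition \ref{KMT}. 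This avoids ever having to control $\mathbb{P}(B^{\sigma'}\in \overline{U^\kappa})$, and hence avoids the need to reduce to compact $K$. The remaining ingredients in your proposal — the size-biased bound $\mathbb{E}_{H,H^{RW}}[\mathbf 1_{\mathcal A}]\le\delta^{-1}\mathbb{P}_{H^{RW}}(\mathcal A)$ on $E_2$, the uniformity over $E_1$-endpoints via the $e^{|z-pT|^2/T}$ factor, and the identification of the limiting variance $2r\sigma_p^2$ through the affine map $G$ — all match the paper's Steps 3 and 4.
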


%
\subsection{Proof of Theorem \ref{ACBB}}\label{Section7.2} In this section, we give the proof of Theorem \ref{ACBB}, which for clarity is split into four steps. Before we go into the main argument we introduce some useful notation and give an outline of our main ideas.\\

Throughout we assume we have the same notation as in the statement of Theorem \ref{PropTightGood}, as well as the notation from Section \ref{Section7.1} above.  Since $\mathbb{P}_\infty$ is a subseqential limit of $\mathbb{P}_N$, we know that we can find an increasing sequence $N_j$, such that $\mathbb{P}_{N_j}$ weakly converge to $\mathbb{P}_\infty$. By Skorohod's embedding theorem (see e.g. \cite[Theorem 3.30]{Ka}) we can find a probability space $(\Omega^1, \mathcal{F}^1, \mathbb{P}^1)$, on which are defined random variables $\tilde{f}_{N_j}$ and $\tilde{f}_\infty$ that take values in $(Y, \mathcal{B}(Y))$, such that the laws of $\tilde{f}_{N_j}$ and $\tilde{f}_{\infty}$ are $\mathbb{P}_{N_j}$ and $\mathbb{P}_\infty$, respectively, and such that $d_Y\left(\tilde{f}_{N_j}(\omega^1),\tilde{f}_{\infty}(\omega^1) \right) \rightarrow 0 $ as $j \rightarrow \infty$ for each $\omega^1 \in \Omega^1$.

We consider a probability space $(\Omega^2, \mathcal{F}^2, \mathbb{P}^2)$, on which we have defined the original $(\alpha, p, r+3)$--good sequence $\big\{\mathfrak{L}^N = (L^N_1,L^N_2)\big\}_{N=1}^{\infty}$ and so
$$f_N(s) = N^{-\alpha/2}(L_1^N(sN^{\alpha}) - p s N^{\alpha}), \mbox{ for $s\in [-r,r]$}$$
has law $\mathbb{P}_{N}$ for each $N \geq N_0$ as in Definition \ref{def:intro}. Let us briefly explain the difference between $\mathbb{P}^1$ and $\mathbb{P}^2$ and why we need both. The space  $(\Omega^1, \mathcal{F}^1, \mathbb{P}^1)$ carries the random variables $\tilde{f}_{N_j}$ of law $\mathbb{P}_{N_j}$ and what is crucial is that the latter converge {\em almost surely} to $\tilde{f}_\infty$, whose law is $\mathbb{P}_\infty$. The space $(\Omega^2, \mathcal{F}^2, \mathbb{P}^2)$ carries the {\em entire} discrete line ensembles  $\mathfrak{L}^N = (L_1^N, L_2^N)$ (and not just the top curve), which is needed to apply the $(H,H^{RW})$-Gibbs property. \\

At this time we give a brief outline of the steps in our proof. In the first step, we fix $K \in \mathcal{B}(X)$ such that $\mathbb{P}(B^{\sqrt{2r}\sigma_p} \in K) = 0$ and find an open set $O$, which contains $K$, and such that $B^{\sqrt{2r}\sigma_p} $ is {\em extremely} unlikely to belong to $O$. Our goal is then to show that $G(\tilde{f}_\infty)$ is also unlikely to belong to $O$, the exact statement is given in (\ref{Z3}) below. Using that $O$ is open and that $\tilde{f}_{N_j}$ converge to $\tilde{f}_\infty$ almost surely, we can reduce our goal to showing that it is unlikely that $G(\tilde{f}_{N_j})$ belongs to $O$ {\em and } $\tilde{f}_{N_j}$ is at least a small distance away from the complement of $G^{-1}(O)$ for large $j$. Our gain from the almost sure convergence is that  we have bounded ourselves away from $G^{-1}(O)^c$, and by performing small perturbations we do not leave $G^{-1}(O)$. As the laws of $\tilde{f}_{N_j}$ and $f_{N_j}$ are the same we can switch from $(\Omega^1, \mathcal{F}^1, \mathbb{P}^1)$ to $(\Omega^2, \mathcal{F}^2, \mathbb{P}^2)$, reducing the goal to showing that it is unlikely that $G(f_{N})$ belongs to $O$ and $f_{N}$ is at least a small distance away from $G^{-1}(O)^c$ for large $N$. The exact statement is given in (\ref{Z4}) and the reduction happens in Step 2. The benefit of this switch is that we can use the $(H,H^{RW})$-Gibbs property from Section \ref{Section4.1} in $(\Omega^2, \mathcal{F}^2, \mathbb{P}^2)$ as the latter carries an entire line ensemble.

In Step 3 we apply the $(H,H^{RW})$-Gibbs property and reduce the proof to showing that it is unlikely that a certain rescaled $H^{RW}$-random walk bridge with well-behaved end-points is in $G^{-1}(O)$, and is at least a small distance away from $G^{-1}(O)^c$ for large $N$. The exact statement is given in (\ref{ZV1}). In Step 4, we prove (\ref{ZV1}) by approximating the rescaled $H^{RW}$-random walk bridge by a Brownian bridge using Proposition \ref{KMT}. Since we are bounded a small distance from $G^{-1}(O)^c$, the error in the approximation asymptotically does not matter and we are left with showing that a Brownian bridge is unlikely to be in $G^{-1}(O)$, which is true by the way $O$ is defined.

We now turn to the proof of the theorem.\\

{\raggedleft {\bf Step 1.}} Suppose that $K \in \mathcal{B}(X)$ is given, such that $\mathbb{P}(B^{\sqrt{2r} \sigma_p} \in K) = 0$. We wish to show that
\begin{equation}\label{Z1}
\mathbb{P}^1\left( G(\tilde{f}_\infty) \in K \right) = 0.
\end{equation}
Let $\epsilon \in (0,1)$ be given, and note that by Proposition \ref{PropMain} and Lemma \ref{PropSup}, we can find $\delta \in (0,1)$, $ M > 0$ and $N_1 \geq N_0$ (here $N_0$ is as in Definition \ref{def:intro}), such that for all $N \geq N_1$ we have
\begin{equation}\label{Z2}
\begin{split}
\mathbb{P}^2\left( E(\delta, M, N) \right) < \epsilon, \mbox{ where } &E(\delta, M, N) = \Big\{ \max_{j \in \{+, -\}} \left| L^N_1(t_1^j) - p t_1^j \right| \geq   MN^{\alpha/2} \Big\}\cup \\
&    \Big\{  Z_{H,H^{RW}}( t_1^-,t_1^+,L_1^N(t_1^-),L_1^N(t_1^+), L_2^N \llbracket t_1^- , t_1^+ \rrbracket) < \delta   \Big\} ,
\end{split}
\end{equation}
where we recall from (\ref{eqsts}) that $t^{\pm}_1 =\lfloor \pm (r+1) N^{\alpha} \rfloor$.
We observe that since $C([-r,r])$ is a metric space we have by \cite[Theorem II.2.1]{Parth}  that the measure of $B^{\sqrt{2r}\sigma_p}$ is outer-regular. In particular, we can find an open set $O$ such that $K \subset O$ and
\begin{equation}\label{ZO}
\mathbb{P}(B^{\sqrt{2r}\sigma_p} \in O) < \epsilon \delta/2.
\end{equation}
The set $O$ will not be constructed explicitly and we will not require other properties from it other than it is open and contains $K$. We will show that
\begin{equation}\label{Z3}
 \mathbb{P}^1\left( G(\tilde{f}_\infty) \in O \right) \leq 2\epsilon.
\end{equation}
Notice that the above implies that $\mathbb{P}^1\left( G(\tilde{f}_\infty) \in K \right) \leq 2\epsilon$ and, hence, we have reduced the proof of the theorem to establishing (\ref{Z3}).\\

{\raggedleft {\bf Step 2.}} Our goal in this step is to reduce (\ref{Z3}) to a statement involving finite indexed curves.

We first observe that $G^{-1}(O)$ is open since $G$ is continuous (see Section \ref{Section7.1}) and so
\begin{equation*}
\begin{split}
 &\mathbb{P}^1\left( G(\tilde{f}_\infty) \in O \right) = \mathbb{P}^1\left( \tilde{f}_\infty \in G^{-1}(O) \right)  \leq \limsup_{j \rightarrow \infty} \mathbb{P}^1 \hspace{-0.5mm}\left( \hspace{-0.5mm} \left\{\hspace{-0.5mm}\tilde{f}_{N_j} \in G^{-1}(O) \hspace{-0.5mm} \right\} \hspace{-0.5mm} \cap \hspace{-0.5mm} \left\{ \hspace{-0.5mm} d_Y(\tilde{f}_{N_j}, G^{-1}(O)^c) > \rho_j\hspace{-0.5mm} \right\} \hspace{-0.5mm}\right)\hspace{-1mm},
\end{split}
\end{equation*}
where $\rho_j$ is any sequence that converges to $0$  as $j \rightarrow \infty$. The first equality is by definition. The second one follows from the fact that $\tilde{f}_{N_j}$ converge to $\tilde{f}_\infty$  in the uniform topology $\mathbb{P}^1$-almost surely and that $G^{-1}(O)$ is open. To be more specific, we take $\rho_j = N_j^{-\alpha/8}$ for the sequel.

Since $f_N$ has law $\mathbb{P}_{N}$ for each $N \geq N_1$, we observe that to get (\ref{Z3}) it suffices to show that
\begin{equation}\label{Z4}
\limsup_{N \rightarrow \infty}  \mathbb{P}^2\left(  \left\{ f_{N} \in G^{-1}(O) \right\} \cap \left \{ d_Y(f_{N}, G^{-1}(O)^c) > N^{-\alpha/8} \right\} \right) \leq 2\epsilon .
\end{equation}

{\raggedleft {\bf Step 3.}} For $N \geq N_1$ as in Step 1 and $\ell \in C([t_1^-, t_1^+])$ we define $h_N(\cdot ;\ell) \in C([-r, r])$  through
$$h_N(s;\ell) = N^{-\alpha/2} (\ell(sN^{\alpha}) - ps N^{\alpha}).$$
 In this notation we have that $h_N(L_1^N [t_1^-, t_1^+ ])$ has the same distribution as $f_N$. Recall that $ L_1^N [t_1^-, t_1^+ ]$ is the restriction of the continuous interpretation of $L^N_1$ to the interval $[t_1^- ,t_1^+]$ as in Section \ref{Section4.1}.

We now claim that we can find $N_2 \in \mathbb{N}$ sufficiently large, so that $N_2 \geq N_1$, and if $N \geq N_2$ and $x,y \in \mathbb{R}$ satisfy $\max( |x- p t_1^-|, |y - pt_1^+|) \leq M N^{\alpha/2}$, then
\begin{equation}\label{ZV1}
\mathbb{E}^{t_1^-, t_1^+,x,y }_{H^{RW}}\left[ g_N(h_N(\ell))  \right] \leq \delta \epsilon, \mbox{ where }g_N(h) = {\bf 1} \left\{ h \in G^{-1}(O) \right\} \cdot {\bf 1} \left \{ d_Y(h, G^{-1}(O)^c) > N^{-\alpha/8}\right\}
\end{equation}
and on the left $\ell$ is a $\mathbb{P}_{H^{RW}}^{t_1^-, t_1^+,x,y }$-distrubuted random curve. We will prove (\ref{ZV1}) in the next step. Here we assume its validity and conclude the proof of (\ref{Z4}).\\

By the $(H,H^{RW})$-Gibbs property we can deduce the following statements for $N \geq N_2$
\begin{equation}\label{ZV2}
\begin{split}
&\mathbb{P}^2\left( E(\delta, M, N)^c\cap  \left\{ f_{N} \in G^{-1}(O) \right\} \cap \left \{ d_Y(f_{N}, G^{-1}(O)^c) > N^{-\alpha/8} \right\} \right) = \\
& \mathbb{E}_{\mathbb{P}^2}\left[ {\bf 1}_{E(\delta, M, N)^c} \cdot g_N(h_N(L^N_1[ t_1^- , t_1^+ ] ) )  \right] = \mathbb{E}_{\mathbb{P}^2}\left[  \mathbb{E}_{\mathbb{P}^2} \left[{\bf 1}_{E(\delta, M, N)^c} \cdot g_N(h_N(L^N_1[ t_1^- , t_1^+ ] ) )\vert \mathcal{F}_{ext} \right]   \right] = \\
&\mathbb{E}_{\mathbb{P}^2}\left[{\bf 1}_{E(\delta, M, N)^c} \cdot \mathbb{E}_{H,H^{RW}}^{t_1^- , t_1^+, L^N_1(t_1^-), L^N_1(t_1^+), L^N_2\llbracket t_1^-, t_1^+ \rrbracket} \left[ g_N(h_N(\ell) ) \right]   \right] = \\
&\mathbb{E}_{\mathbb{P}^2}\left[{\bf 1}_{E(\delta, M, N)^c} \cdot \frac{\mathbb{E}_{H^{RW}}^{t_1^- , t_1^+, L^N_1(t_1^-), L^N_1(t_1^+)} \left[ g_N(h_N(\ell )) \cdot W_{H}( t_1^-,t_1^+, \ell , L_2^N \llbracket t_1^- , t_1^+ \rrbracket) \right] }{Z_{H,H^{RW}}( t_1^-,t_1^+,L_1^N(t_1^-),L_1^N(t_1^+), L_2^N \llbracket t_1^- , t_1^+ \rrbracket)}  \right] \leq \\
&\mathbb{E}_{\mathbb{P}^2}\left[{\bf 1}_{E(\delta, M, N)^c} \cdot \frac{\mathbb{E}_{H^{RW}}^{t_1^- , t_1^+, L^N_1(t_1^-), L^N_1(t_1^+)} \left[ g_N(h_N(\ell ) )\right] }{\delta}  \right] \leq \mathbb{E}_{\mathbb{P}^2}\left[{\bf 1}_{E(\delta, M, N)^c} \cdot \frac{\delta \epsilon}{\delta}  \right] \leq \epsilon,
\end{split}
\end{equation}
where we have written $\mathcal{F}_{ext}$ in place of $\mathcal{F}_{ext}(\{1\} \times \llbracket t_1^- + 1, t_1^+ - 1\rrbracket)$ as in (\ref{GibbsCond}) to ease the notation. The first equality in (\ref{ZV2}) follows from the definition of $h_N$, $g_N$ and the distributional equality of $f_N$ and $h_N( L_1^N [t_1^-, t_1^+ ])$. The second equality is a consequence of the tower property for conditional expectations. In the third equality, we use that ${\bf 1}_{E^c(\delta, M, N)}$ is $\mathcal{F}_{ext}$-measurable and can thus be taken outside of the conditional expectation, in addition we applied the $(H,H^{RW})$-Gibbs property (\ref{GibbsEq}) to the function $F(\ell) = g_N(h_N(\ell))$. In the fourth equality, we used (\ref{RND}). The inequality on the fourth line follows from the fact that $0 \leq W_H \leq 1$ and  $Z_{H,H^{RW}} \geq \delta$ on $E(\delta, M, N)^c$. In the first inequality on the fifth line, we used (\ref{ZV1}) and the fact that on $E(\delta, M, N)^c$ the random variables $L^N_1(t_1^-), L_1^N(t_1^+)$ (which play the role of $x,y$ in (\ref{ZV1})) satisfy the inequalities
$$ |L^N_1(t_1^-)- p t_1^-| \leq  M N^{\alpha/2} \mbox{ and }  |L^N_1(t_1^+)- p t_1^+| \leq  M N^{\alpha/2}.$$
The last inequality is trivial.

Combining (\ref{ZV2}) with the fact that for $N \geq N_2$ we have from (\ref{Z2}) that $\mathbb{P}^2\left( E(\delta, M, N) \right) < \epsilon$, we conclude that for $N \geq N_2$ we have
$$ \mathbb{P}^2\left(  \left\{ f_{N} \in G^{-1}(O) \right\} \cap \left \{ d_Y(f_{N}, G^{-1}(O)^c) > N^{-\alpha/8} \right\} \right)  \leq $$
$$ \mathbb{P}^2\left( E(\delta, M, N)^c\cap  \left\{ f_{N} \in G^{-1}(O) \right\} \cap \left \{ d_Y(f_{N}, G^{-1}(O)^c) > N^{-\alpha/8} \right\} \right)  + \mathbb{P}^2\left( E(\delta, M, N) \right) < 2 \epsilon,$$
which certainly implies (\ref{Z4}). \\

{\bf \raggedleft Step 4.} In this step we establish (\ref{ZV1}). From Proposition \ref{KMT} we know that we can find constants $0 < C, a, \tilde{\alpha} < \infty$ (depending on $p$ and $H^{RW}$) and a probability space with measure $\mathbb{P}$ on which are defined a Brownian bridge $B^{\sigma_p}$ with variance $\sigma_p^2$ and a family of random curves $\ell^{z}$ on $[0, T]$, which is parameterized by $z \in \mathbb{R}$, such that $\ell^{z}$  has law $\mathbb{P}^{0,T,0,z}_{H^{RW}}$ and
\begin{equation*}
\mathbb{E}_{\mathbb{P}}\big[ e^{a \Delta(T,z)} \big] \leq C e^{\tilde{\alpha} (\log T)^2}e^{|z- p T|^2/T}, \mbox{ where $\Delta(T,z)=  \sup_{0 \leq t \leq T} \big| \sqrt{T} B^{\sigma_p}_{t/T} + \frac{t}{T}z - \ell^{z}(t ) \big|.$}
\end{equation*}

 If $x,y \in \mathbb{R}$ and $T = t_1^+ - t_1^-$, we observe that if $\ell^{y-x}$ has law $\mathbb{P}^{0,T,0,y-x}_{H^{RW}}$, then the random curve $\ell^{x,y}$ in $C([t_1^-, t_1^+])$ defined by
$$\ell^{x,y}(t) = x+ \ell^{y-x}(t - t_1^-)$$
has law $\mathbb{P}^{t_1^-,t_1^+,x,y}_{H^{RW}}$. The latter implies that
\begin{equation}\label{KMTS7}
\begin{split}
&\mathbb{E}_{\mathbb{P}}\big[ e^{a \Delta(T,x,y)} \big] \leq C e^{\tilde{\alpha} (\log T)^2}e^{|y - x- p T|^2/T}, \mbox{ where } \\
&\Delta(T,x,y)=  \sup_{t_1^- \leq t \leq t_1^+} \left| \sqrt{T} B^{\sigma_p}_{t -t_1^-/T} \hspace{-1mm} + (t - t_1^-/T)(y-x) - \ell^{x,y}(t) + x \right|, \mbox{ and } T = t_1^+ - t_1^-.
\end{split}
\end{equation}
Let us denote for $t \in [t_1^-, t_1^+]$
$$\tilde{B}^{x,y}(t) = \sqrt{T} B^{\sigma_p}_{t -t_1^-/T}  + (t - t_1^-/T)(y-x) + x,$$
and observe that the latter is a random variable in $C([t_1^-, t_1^+])$. We further observe that
\begin{equation}\label{ZV3}
d_Y(h_N(\tilde{B}^{x,y}), h_N(\ell^{x,y})) \leq N^{-\alpha/2} \cdot \Delta(T,x,y).
\end{equation}
From the above equations, we conclude that
\begin{equation}\label{ZV4}
\begin{split}
&\mathbb{E}^{t_1^-, t_1^+,x,y }_{H^{RW}}\left[ g_N(h_N(\ell))  \right] = \mathbb{E}_{\mathbb{P}} \left[g_N(h_N(\ell^{x,y}))  \right] = \\
&\mathbb{P} \left(   \left\{ h_N(\ell^{x,y}) \in G^{-1}(O) \right\} \cap  \left \{ d_Y(h_N(\ell^{x,y}), G^{-1}(O)^c) > N^{-\alpha/8}\right\}\right) \leq \\
& \mathbb{P} \left(   \left\{ h_N(\tilde{B}^{x,y}) \in G^{-1}(O) \right\} \right) +  \mathbb{P} \left( d_Y(h_N(\tilde{B}^{x,y}), h_N(\ell^{x,y})) > (1/2) N^{-\alpha/8}\right) \leq \\
&\mathbb{P} \left(    h_N(\tilde{B}^{x,y}) \in G^{-1}(O) \right) + \mathbb{P} \left( \Delta(T,x,y) > (1/2) N^{3\alpha/8}\right).
\end{split}
\end{equation}

We now notice by (\ref{KMTS7}) and Chebyshev's inequality that if $\max( |x- p t_1^-|, |y - pt_1^+|) \leq M N^{\alpha/2}$, then
\begin{equation}\label{ZV5}
\begin{split}
\mathbb{P} \left( \Delta(T,x,y) > (1/2) N^{3\alpha/8}\right) \leq C e^{-(1/2)N^{3\alpha/8}} \cdot e^{\tilde{\alpha} (\log T)^2}e^{4M^2 N^{\alpha} /T}  \leq \delta \epsilon/2,
\end{split}
\end{equation}
where the latter inequality holds provided that $N_2$ is sufficiently large and $N \geq N_2$. Here we also used that $T \geq N^{\alpha}$ by definition.

Next, observe that
\begin{equation}\label{ZV6}
\mbox{ if $\tilde{h}(x) = ah(x) + bx + c$ for $a,b,c \in \mathbb{R}$ then $G(\tilde{h}) = a \cdot G(h)$.}
\end{equation}
The latter observation shows that
\begin{equation}\label{ZV7}
\mathbb{P} \left(    h_N(\tilde{B}^{x,y}) \in G^{-1}(O) \right) = \mathbb{P} \left(  \frac{\sqrt{T}}{N^{\alpha/2} } \cdot B_N  \in G^{-1}(O) \right),
\end{equation}
where for $t \in [-r,r]$ we have $B_N(t) = B^{\sigma_p}_{tN^{\alpha} -t_1^-/T}.$ From basic properties of Brownian bridges we know that if $B$ is a standard Brownian bridge on $[0,1]$ that is independent of $B_N$, then the process
$$\tilde{B}_N(t) =  B_N( -r) \cdot \left( \frac{r - t}{2r} \right) + B_N( r) \cdot \left( \frac{t +r}{2r} \right) + \frac{ \sqrt{2r} \sigma_p N^{\alpha/2}}{\sqrt{T}} B\left( \frac{t +r}{2r} \right),$$
defined on $[-r,r]$ has the same distribution as $B_N(t)$. Combining the latter with (\ref{ZV6}) and (\ref{ZV7}), we conclude that
\begin{equation}\label{ZV8}
\mathbb{P} \left(    h_N(\tilde{B}^{x,y}) \in G^{-1}(O) \right) = \mathbb{P} \left(  \sqrt{2r}\sigma_p \cdot B  \in O \right) \leq \epsilon \delta/2,
\end{equation}
where in the last inequality we used that $ \sqrt{2r}\sigma_p \cdot B$ has the same law as $ B^{\sqrt{2r}\sigma_p }$ and (\ref{ZO}). Combining (\ref{ZV8}) and (\ref{ZV5}) with (\ref{ZV4}), we conclude (\ref{ZV1}) and thus the proof of the theorem.

%

\section{The log-gamma polymer as a line ensemble}\label{Section8}
 In Section \ref{Section8.1} we present a certain Markov chain formulation of the log-gamma polymer, which is a consequence of the geometric RSK correspondence, following \cite{COSZ}. In Section \ref{Section8.2}, we use the Markov chain formulation to prove that the log-gamma polymer has a Gibbsian line ensemble structure of the type discussed in Section \ref{Section4}. In Section \ref{Section8.3}, we prove Theorem \ref{ThmTight} by appealing to Theorems \ref{PropTightGood} and \ref{ACBB}.

%
\subsection{Markovian dynamics}\label{Section8.1}

 Recall that a continuous random variable $X$ is said have the inverse-gamma distribution with parameter $\theta > 0 $ if its density is given by
\begin{equation}\label{invGammaDens}
f_\theta(x) = {\bf 1} \{ x > 0 \} \cdot \Gamma(\theta)^{-1} \cdot x^{-\theta - 1} \cdot \exp( - x^{-1}).
\end{equation}

Let us fix $N \in \mathbb{N}$ and $\theta > 0$. We let $d = \left( d_{i,j} : i \geq 1, 1\leq j \leq N \right)$ denote the semi-infinite random matrix such that $d_{i,j}$ are i.i.d. random variables with density $f_\theta$ as in (\ref{invGammaDens}). In addition, for $n \geq 1$ we denote by $d^{[1,n]}$ the $n \times N$ matrix $\left( d_{i,j} : 1\leq i \leq n, 1\leq j \leq N \right)$. A {\em directed lattice path} is a sequence of vertices $(x_1, y_1), \dots, (x_k, y_k) \in \mathbb{Z}^2$, such that $x_1 \leq x_2 \leq \cdots \leq x_k$, $y_1 \leq y_2 \leq \cdots \leq y_k$ and $\big(x_i - x_{i-1}\big) + \big(y_i - y_{i-1}\big) = 1$ for $i = 2, \dots, k$. In words, a directed lattice path is an up-right path on $\mathbb{Z}^2$, which makes unit steps in the coordinate directions. A collection of paths $\pi = (\pi_1, \dots, \pi_{\ell})$ is said to be {\em  non-intersecting} if the paths $\pi_1, \dots, \pi_\ell$ are pairwise vertex-disjoint. For $1 \leq \ell \leq k \leq N$, we let $\Pi_{n,k}^\ell$ denote the set of $\ell$-tuples $\pi = (\pi_1, \dots, \pi_{\ell})$ of non-intersecting directed lattice paths in $\mathbb{Z}^2$, such that for $1 \leq r \leq \ell$, $\pi_r$ is a lattice path from $(1,r)$ to $(n, k+ r - \ell)$.

Given an $\ell$-tuple $\pi = (\pi_1, \dots, \pi_{\ell})$, we define its {\em weight} to be
\begin{equation}\label{PathWeight}
w(\pi) = \prod_{r = 1}^\ell \prod_{(i,j) \in \pi_r} d_{i,j}.
\end{equation}
For $1 \leq \ell \leq k \leq N$, we define
\begin{equation}\label{PartitionFunct}
\tau_{k, \ell}(n) = \sum_{ \pi \in \Pi_{n,k}^\ell} w(\pi).
\end{equation}
Note that if $0 \leq n < \ell \leq k \leq N$, then $ \Pi_{n,k}^\ell = \varnothing$ and so, as by convention, we set $\tau_{k, \ell}(n)  = 0$. If $\ell = k$ then $\Pi_{n,k}^\ell $ consists of a unique element, and, in fact, we have
$$\tau_{k,\ell}(n) = \delta_{k,\ell} \cdot \tau_{k, n} (n) \mbox{ for } 0 \leq n < \ell \leq k \leq N,$$
where $\delta_{k,\ell}$ is the Kronecker delta.

Given $\tau_{k,\ell}(n)$, we define the array $z(n) = \{z_{k, \ell}(n): 1 \leq k \leq N \mbox{ and } 1 \leq \ell \leq \min( k, n) \}$ through the equations
\begin{equation}\label{ZfromTau}
z_{k,1}(n) z_{k,2}(n) \cdots z_{k, \ell}(n) = \tau_{k, \ell }(n).
\end{equation}
We next proceed to define a certain Markovian dynamics on triangular arrays of positive reals, which will be ultimately related to the random variables $z_{k, \ell}(n)$ in (\ref{ZfromTau}).\\

Let us introduce some notation. For each $k \in \mathbb{N}$, we let $\mathbb{Y}_k = (0, \infty)^k$ and
$$\mathbb{T}_k = \mathbb{Y}_1 \times \cdots \times \mathbb{Y}_k = \{(z^{[1]}, \dots, z^{[k]}): z^{[r]} \in \mathbb{Y}_r \mbox{ for } 1 \leq r \leq k \}.$$
For each $r \in \{1, \dots, k\}$, we have that $z^{[r]} = (z_{r,1}, \dots, z_{r,r}) \in \mathbb{Y}_r$, and also we can naturally identify $\mathbb{T}_k$ with $(0, \infty)^{k (k+1)/2}$, where the coordinates are labeled by $z_{i,j}$ for $1 \leq i \leq j \leq k$.
In particular, we can view $(\mathbb{T}_k, \mathcal{B}) $ as a measurable space, where $\mathcal{B}$ is the usual  Borel $\sigma$-algebra of $(0, \infty)^{k (k+1)/2}$. If $1 \leq k \leq N$ and $z \in \mathbb{T}_N$, we define
$$z^{[1, k]} = (z_{t,s} )_{1 \leq s \leq t \leq k} \in\mathbb{T}_k.$$

Let us fix $N \in \mathbb{N}$. The measurable space $(\mathbb{T}_N, \mathcal{B})$ is the state space of our Markov chain, whose transition kernel we define next. Define the kernel $P_\theta^1: \mathbb{Y}_1 \rightarrow \mathbb{Y}_1$ through
\begin{equation}\label{P1}
P^1_\theta(y, d\tilde{y}) = \frac{1}{\Gamma(\theta)} \left( \frac{y}{\tilde{y}}\right)^{\theta} \cdot \exp \left( - \frac{y}{\tilde{y}} \right) \frac{d \tilde{y}}{\tilde{y}}.
\end{equation}
In words, the above kernel encodes the transition from $y$ to $\tilde{y} = d \cdot y$, where $d$ is an independent random variable with density $f_\theta$. In particular, $P_\theta^1$ is indeed a stochastic transition kernel. For $k \geq 2$ we let $L_\theta^k : \mathbb{Y}_{k-1} \times \mathbb{Y}_k  \times \mathbb{Y}_{k-1} \rightarrow \mathbb{Y}_k$ be defined through
\begin{equation}\label{Lk}
\begin{split}
&\int_{(0,\infty)^k} h(\tilde{y}) L^k_\theta ((x,y, \tilde{x}), d\tilde{y}) = \int_{(0,\infty)} \frac{d\tilde{y}_1}{\tilde{y}_1} \frac{1}{\Gamma(\theta)} \cdot \left( \frac{y_1 + \tilde{x}_1}{\tilde{y}_1} \right)^\theta \exp \left( - \frac{y_1 + \tilde{x}_1}{\tilde{y}_1}\right) \times \\
& h\left(\tilde{y}_1 , \left \{ \frac{y_{\ell-1} \tilde{x}_{\ell - 1}}{x_{\ell-1}} \cdot \frac{y_\ell + \tilde{x}_\ell}{y_{\ell - 1} + \tilde{x}_{\ell - 1}} \right\}_{2 \leq \ell \leq k-1}, \frac{y_k y_{k-1} \tilde{x}_{k-1}}{x_{k-1}(y_{k-1} + \tilde{x}_{k-1})} \right),
\end{split}
\end{equation}
where $h(\cdot)$ is a bounded continuous function. In other words, the kernel $L^k_\theta$ encodes the transition from the vector $(x,y,\tilde{x}) \in \mathbb{Y}_{k-1} \times \mathbb{Y}_k  \times \mathbb{Y}_{k-1}$ to the random vector $\tilde{y} \in \mathbb{Y}_k$, given by
\begin{equation}\label{LkProb}
\begin{split}
&\tilde{y}_1 = d \cdot (y_1 + \tilde{x}_1), \\
&\tilde{y}_\ell = \frac{y_{\ell -1} \cdot \tilde{x}_{\ell - 1}}{x_{\ell - 1}} \cdot \frac{y_{\ell} + \tilde{x}_{\ell}}{y_{\ell - 1} + \tilde{x}_{\ell - 1}} \mbox{ for } 2 \leq \ell \leq k-1,\\
&\tilde{y}_{k} = \frac{y_k \cdot y_{k-1} }{x_{k-1}(y_{k-1} + \tilde{x}_{k-1})},
\end{split}
\end{equation}
where $d$ is an independent random variable with density $f_\theta$. In particular, $L_\theta^k$ is indeed a stochastic transition kernel. We define the following transition kernels $\Pi^N_\theta: \mathbb{T}_N \rightarrow \mathbb{T}_N$ by induction on $N \geq 1$. For $N = 1$ we let $\Pi^1_\theta = P^1_\theta$ as in (\ref{P1}), and for $N \geq 2$ we let
\begin{equation}\label{PiN}
\Pi^N_\theta (z, d\tilde{z}) = \Pi^{N-1}_\theta \left(z^{[1, N-1]}, d \tilde{z}^{[1, N-1]} \right) L_\theta^N \left( \left(z^{[N-1]}, z^{[N]}, \tilde{z}^{[N-1]} \right), d\tilde{z}^{[N]} \right).
\end{equation}
We write $\{ z(n) \}_{n \geq 0}$ to denote the Markov chain on $(\mathbb{T}_N, \mathcal{B})$, whose transition kernel is $\Pi^N_\theta$.\\

As it turns out, if the chain $\{ z(n) \}_{n \geq 0}$ is started from certain initial conditions $z(0)$, then the process $y(n) = \phi(z(n))$, with $\phi(z) = z^{[N]}$, will be Markovian in its own filtration. Let us elaborate this point further. We define a positive kernel $P^N_\theta$ on $\mathbb{Y}_N$ through
\begin{equation}\label{PN}
P^N_\theta(y, d\tilde{y}) = \prod_{i = 1}^{N-1} \exp \left( - \frac{\tilde{y}_{i +1}}{y_i} \right) \prod_{j = 1}^N \left( \frac{1}{\Gamma(\theta)} \left( \frac{y_j}{\tilde{y}_j} \right)^{\theta} \cdot \exp \left( - \frac{y_j}{\tilde{y}_j}\right)\frac{d\tilde{y}_j}{\tilde{y}_j} \right).
\end{equation}
In addition, we define a positive (intertwining) kernel from $\mathbb{Y}_N$ to $\mathbb{T}_N$ by
\begin{equation}\label{KN}
K_\theta^N(y, dz) = \prod_{1 \leq \ell \leq k < N} \exp \left( - \frac{z_{k, \ell}}{z_{k+1, \ell}} - \frac{z_{k+1, \ell+1}}{z_{k, \ell}} \right) \frac{dz_{k, \ell}}{z_{k, \ell}} \prod_{ \ell = 1}^N \delta_{y_\ell}(dz_{N, \ell}),
\end{equation}
where $\delta_y(dz_{i,j})$ denotes the Dirac delta measure at $y$. Furthermore, we define
\begin{equation}\label{NormalizedKernels}
\bar{P}_\theta^N (y, d\tilde{y}) = \frac{w_\theta^N(\tilde{y})}{w_\theta^N(y)} P_\theta^N(y, d\tilde{y}),\quad \bar{K}^N_\theta(y, dz) = \frac{K_\theta^N(y, dz)}{w_\theta^N(y)}, \mbox{ with }
w_\theta^N(y) = \int_{\mathbb{T}_N} K_\theta^N(y, dz).
\end{equation}
As explained in \cite[Section 3.1]{COSZ}, the renormalization in (\ref{NormalizedKernels}) is made so that the kernels $\bar{P}_\theta^N$ and $\bar{K}_\theta^N$ become stochastic. The following is the main algebraic result of \cite{COSZ} (see Proposition 3.4 and Corollary 3.6 in that paper with $\theta_i = \theta$ and $\hat{\theta}_i = 0$ for $i \in \mathbb{N}$).
\begin{proposition}\label{intertwineProp}
For any $\theta > 0$ and $N \in \mathbb{N}$, we have the following intertwining relation
\begin{equation}\label{intertwineE}
\bar{P}_\theta^N K_\theta^N = \bar{K}^N_\theta \Pi_\theta^N.
\end{equation}
\end{proposition}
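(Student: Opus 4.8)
The intertwining relation \eqref{intertwineE} is an equality of positive kernels from $\mathbb{Y}_N$ to $\mathbb{T}_N$, so it suffices to test it against a dense enough family of bounded test functions $h$ on $\mathbb{T}_N$; by monotone class arguments it is enough to treat continuous compactly supported $h$. The plan is to prove \eqref{intertwineE} by induction on $N$, exactly paralleling the recursive structure in the definitions \eqref{PiN} of $\Pi_\theta^N$ and the layered structure of $K_\theta^N$ in \eqref{KN}. The base case $N=1$ is a direct check: $\bar P_\theta^1 = P_\theta^1$, $\bar K_\theta^1$ and $K_\theta^1$ are (normalized) Dirac masses in the top layer, $w_\theta^1\equiv 1$, and \eqref{intertwineE} collapses to the tautology $P_\theta^1 = P_\theta^1$ once one unwinds \eqref{P1}.

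For the inductive step, assume \eqref{intertwineE} holds at level $N-1$. First I would decompose $K_\theta^N(y,dz)$ as a product of a kernel producing the first $N-1$ layers $z^{[1,N-1]}$ from the $(N-1)$st layer $z^{[N-1]}$ — which is, up to the weight normalization, $K_\theta^{N-1}$ applied to the variable $z^{[N-1]}$ — times the ``local'' conditional law of the $(N-1)$st layer given the top layer $y=z^{[N]}$, built from the factors $\exp(-z_{N-1,\ell}/z_{N,\ell} - z_{N,\ell+1}/z_{N-1,\ell})\,dz_{N-1,\ell}/z_{N-1,\ell}$. Symbolically, $K_\theta^N = Q_\theta^N \cdot (\,\cdot\,\mapsto K_\theta^{N-1})$, where $Q_\theta^N$ is the one-step ``down'' kernel from layer $N$ to layer $N-1$. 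Similarly $\Pi_\theta^N$ in \eqref{PiN} is $\Pi_\theta^{N-1}$ on the first $N-1$ layers composed with $L_\theta^N$ on the top layer. The heart of the argument is then a purely local identity between $\bar P_\theta^N$, $Q_\theta^N$, $L_\theta^N$, and the weight ratios $w_\theta^N/w_\theta^{N-1}$: one must show that conditioning the pair (old top layer $y$, new top layer $\tilde y$) through $\bar P_\theta^N$ and then sampling the new $(N-1)$st layer via $Q_\theta^N$ matches sampling the old $(N-1)$st layer via $Q_\theta^N$ and then pushing forward through $L_\theta^N$. This is precisely where the explicit birational formulas \eqref{LkProb} and the kernel \eqref{PN}, \eqref{KN} must be substituted and the Jacobian of the geometric-RSK row-insertion map accounted for.

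Concretely, after substituting \eqref{NormalizedKernels}, \eqref{PN}, \eqref{KN} and peeling off the inductively handled layers, the claim reduces to an identity of the form
\begin{equation*}
\int w_\theta^N(\tilde y)\, P_\theta^N(y,d\tilde y)\, Q_\theta^N(\tilde y, d\tilde z^{[N-1]}) \;=\; w_\theta^N(y)\int Q_\theta^N(y\text{-layer}, dz^{[N-1]})\, L_\theta^N\big((z^{[N-1]},\tilde y, \tilde z^{[N-1]}),\dots\big),
\end{equation*}
which one verifies by a change of variables implementing the map \eqref{LkProb} and matching exponential factors term by term; the factor $\prod_i \exp(-\tilde y_{i+1}/y_i)$ in \eqref{PN} is exactly what is needed to reconcile the two sides. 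I expect the main obstacle to be bookkeeping: correctly tracking which $\exp(-a/b)$ factors and which $dz/z$ Jacobian factors cancel between $K_\theta^N$, $P_\theta^N$, and the normalizations $w_\theta^N$, and verifying that the birational substitution \eqref{LkProb} has the Jacobian that makes the measures agree. Since the paper cites this as ``the main algebraic result of \cite{COSZ}'' (their Proposition 3.4 and Corollary 3.6), the cleanest route is to invoke that reference directly rather than reproduce the computation; in a self-contained treatment one would reduce, via the induction above, to the single-row-insertion identity and then cite or re-derive the local intertwining lemma underlying geometric RSK.
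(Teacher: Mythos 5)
The paper does not prove Proposition~\ref{intertwineProp} at all: it is stated as a direct citation to Proposition~3.4 and Corollary~3.6 of \cite{COSZ} (specialized to homogeneous parameters $\theta_i=\theta$, $\hat\theta_i=0$). Your proposal ends by recognizing exactly this and recommending the citation, so in its ultimate form it coincides with what the paper does.

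The inductive sketch that precedes your recommendation is a reasonable roadmap for a self-contained argument — the layer-peeling decomposition of $K_\theta^N$ and $\Pi_\theta^N$, reduction to a local one-row identity, and matching of exponential and Jacobian factors under the birational map \eqref{LkProb} is indeed the structure of the COSZ proof. But as written it is not a proof: the displayed "local identity" is left schematic (the arguments of $L_\theta^N$ are even elided with ``$\dots$''), the Jacobian of \eqref{LkProb} is not computed, and you flag the bookkeeping as the main obstacle without resolving it. If you want this to stand as more than a pointer to \cite{COSZ}, the crucial missing step is the verification that the change of variables in \eqref{LkProb} carries the reference measure $\prod dz_{k,\ell}/z_{k,\ell}$ to itself (the map is ``measure-preserving'' for the multiplicative Haar measure, which is what makes the $dz/z$ factors cancel), together with a term-by-term match of the $\exp(-a/b)$ factors in \eqref{PN} and \eqref{KN} against those produced by \eqref{LkProb}. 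Absent that, citing \cite{COSZ}, as both you and the paper do, is the correct and complete justification.
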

The intertwining relation (\ref{intertwineE}), through a certain general formalism for Markov chains, is essentially sufficient to prove the following statement.
\begin{theorem}\label{markovProj}\cite[Theorem 3.7]{COSZ}
Let $\theta > 0$ and $N \in \mathbb{N}$. Let $y(0)$ be a random or deterministic initial state in $\mathbb{Y}_N$ and let $\{ z(n) \}_{n \geq 0}$ be the Markov chain, whose initial state $z(0)$ has the distribution $\bar{K}_{\theta}^N(y(0), \cdot)$ and whose transition kernel is $\Pi^N_\theta$. Then the sequence of random variables $y(n) = \phi(z(n)), n \geq 0$ is a Markov chain with respect to its own filtration with state space $\mathbb{Y}_N$, initial state $y(0)$, and transition kernel $\bar{P}_\theta^N$.
\end{theorem}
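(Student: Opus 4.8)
The plan is to deduce Theorem \ref{markovProj} from the intertwining of Proposition \ref{intertwineProp} by the standard ``Markov functions'' mechanism (the Rogers--Pitman / Dynkin criterion for a function of a Markov chain to be Markov). Two structural inputs are needed. First, after incorporating the normalizations of \eqref{NormalizedKernels} --- which make $P_\theta^N\mapsto\bar P_\theta^N$ a Doob $h$-transform by the harmonic function $w_\theta^N$ and make $\bar K_\theta^N$ stochastic, using $w_\theta^N(y)=\int_{\mathbb T_N}K_\theta^N(y,dz)$ --- Proposition \ref{intertwineProp} yields the \emph{stochastic} intertwining $\bar P_\theta^N\bar K_\theta^N=\bar K_\theta^N\Pi_\theta^N$. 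Second, and this is the only special feature of $\bar K_\theta^N$ we use, the factors $\prod_{\ell=1}^N\delta_{y_\ell}(dz_{N,\ell})$ in \eqref{KN}--\eqref{NormalizedKernels} show that $\bar K_\theta^N(y,\cdot)$ is a probability measure carried by the fiber $\phi^{-1}(y)=\{z\in\mathbb T_N:z^{[N]}=y\}$; equivalently, for bounded measurable $h$ on $\mathbb Y_N$ and $g$ on $\mathbb T_N$ one has $\bar K_\theta^N(h\circ\phi)=h$ and $\bar K_\theta^N\big(g\cdot(h\circ\phi)\big)=h\cdot(\bar K_\theta^N g)$. Since $N$ is fixed and $\mathbb Y_N=(0,\infty)^N$, $\mathbb T_N\cong(0,\infty)^{N(N+1)/2}$ are Polish, regular conditional distributions exist and the conditional-expectation manipulations below are legitimate.

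\noindent I would then induct on $n$ on the statement $H(n)$: \emph{$(y(0),\dots,y(n))$ is distributed as the first $n+1$ coordinates of the Markov chain on $\mathbb Y_N$ with transition kernel $\bar P_\theta^N$ started from $y(0)$, and $\operatorname{Law}\!\big(z(n)\mid\mathcal F^y_n\big)=\bar K_\theta^N(y(n),\cdot)$, where $\mathcal F^y_n=\sigma(y(0),\dots,y(n))$.} The base case $n=0$ is immediate: $z(0)$ is chosen with conditional law $\bar K_\theta^N(y(0),\cdot)$, and since this measure is carried by $\phi^{-1}(y(0))$ we indeed have $y(0)=\phi(z(0))$ consistently. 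For the inductive step, fix bounded measurable $g$ on $\mathbb T_N$. Using $\mathcal F^y_n\subseteq\mathcal F^z_n:=\sigma(z(0),\dots,z(n))$, the Markov property of $\{z(m)\}$ with kernel $\Pi_\theta^N$ (see \eqref{PiN}), the tower property, and the conditional-law part of $H(n)$, one gets
$$\mathbb E\big[g(z(n{+}1))\mid\mathcal F^y_n\big]=\mathbb E\big[(\Pi_\theta^N g)(z(n))\mid\mathcal F^y_n\big]=\big(\bar K_\theta^N\Pi_\theta^N g\big)(y(n))=\big(\bar P_\theta^N\bar K_\theta^N g\big)(y(n)).$$
Taking $g=h\circ\phi$ and using $\bar K_\theta^N(h\circ\phi)=h$ collapses the right side to $(\bar P_\theta^N h)(y(n))$, so $\mathbb E[h(y(n{+}1))\mid\mathcal F^y_n]=(\bar P_\theta^N h)(y(n))$; this is precisely the Markov property and the $\bar P_\theta^N$ transition step for the $y$-process. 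Taking instead $g\cdot(h\circ\phi)$ and using $\bar K_\theta^N(g\cdot(h\circ\phi))=h\cdot(\bar K_\theta^N g)$ gives $\mathbb E\big[g(z(n{+}1))\,h(y(n{+}1))\mid\mathcal F^y_n\big]=\big(\bar P_\theta^N\big(h\cdot(\bar K_\theta^N g)\big)\big)(y(n))$, which by the Markov step just proved equals $\mathbb E\big[(\bar K_\theta^N g)(y(n{+}1))\,h(y(n{+}1))\mid\mathcal F^y_n\big]$. Running this identity against generators $G_0\cdot h(y(n{+}1))$, $G_0\in\mathcal F^y_n$, of $\mathcal F^y_{n+1}=\mathcal F^y_n\vee\sigma(y(n{+}1))$ (a monotone-class / $\pi$--$\lambda$ argument) identifies $\operatorname{Law}\!\big(z(n{+}1)\mid\mathcal F^y_{n+1}\big)=\bar K_\theta^N(y(n{+}1),\cdot)$. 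This is $H(n{+}1)$, closing the induction; $H(n)$ for all $n$ is exactly the claim of the theorem.

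\noindent The genuinely delicate part is the normalization bookkeeping: $P_\theta^N$ in \eqref{PN} is only sub-Markov, so the passage from Proposition \ref{intertwineProp} to the stochastic intertwining $\bar P_\theta^N\bar K_\theta^N=\bar K_\theta^N\Pi_\theta^N$ rests on the harmonicity $P_\theta^N w_\theta^N=w_\theta^N$ encoded in \eqref{NormalizedKernels} (this is the content of \cite{COSZ}, Proposition 3.4 / Corollary 3.6, and one could quote their general Markov-function lemma directly). Beyond that, the only nontrivial point is the monotone-class upgrade from test functions of $y(n{+}1)$ alone to all of $\mathcal F^y_{n+1}$; the measurability of $y\mapsto(\bar K_\theta^N g)(y)$ and of the kernels is routine. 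Everything else is the conditional-expectation algebra sketched above.
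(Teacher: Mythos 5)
Your proof is correct and follows exactly the route the paper gestures at: the paper itself does not prove Theorem \ref{markovProj} but only cites \cite[Theorem 3.7]{COSZ}, remarking that the intertwining relation ``through a certain general formalism for Markov chains'' suffices; your argument is a faithful reconstruction of that formalism (the Rogers--Pitman / Dynkin criterion for Markov functions), relying precisely on the two ingredients that make the criterion fire — the stochastic intertwining and the fact that $\bar K_\theta^N(y,\cdot)$ lives on the fiber $\phi^{-1}(y)$ — together with an induction propagating the conditional law of $z(n)$ given $\mathcal F^y_n$. One small notational wrinkle worth flagging: Proposition \ref{intertwineProp} as printed reads $\bar P_\theta^N K_\theta^N=\bar K_\theta^N\Pi_\theta^N$ with an unnormalized $K_\theta^N$ on the left, whereas what your argument (correctly) uses is the fully normalized form $\bar P_\theta^N\bar K_\theta^N=\bar K_\theta^N\Pi_\theta^N$; these differ by a factor $w_\theta^N(\tilde y)$ inside the integral, and the normalized form is the one that follows from the unnormalized intertwining $P_\theta^N K_\theta^N=K_\theta^N\Pi_\theta^N$ in COSZ after dividing by $w_\theta^N(y)$. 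So \eqref{intertwineE} as stated appears to be a typo, and you correctly worked with the version that is actually needed. Everything else — the tower-property reduction through $\mathcal F^z_n$, the computation $\bar K_\theta^N\big(g\cdot(h\circ\phi)\big)=h\cdot(\bar K_\theta^N g)$, and the monotone-class upgrade to $\mathcal F^y_{n+1}$ — is sound.
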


We conclude this section by relating the Markov chain $\{ z(n) \}_{n \geq 0}$, we just defined, to the variables $z_{k, \ell}$ in (\ref{ZfromTau}) in the following statement, which is \cite[Proposition 5.3]{COSZ} for $\theta_i = \theta$ and $\hat{\theta}_i = 0$ for $i \in \mathbb{N}$.
\begin{proposition}\label{MCConv}Let $\theta > 0$ and $N \in \mathbb{N}$.  For each $M \in \mathbb{N}$, we define
$$y^{0, M} = \left(\exp \left( -M \cdot \frac{N- \ell}{2}\right)\right)_{1 \leq \ell \leq N}.$$
 We also let $\mathbb{P}_M$ denote the probability distribution of the Markov chain $\{ z(n) \}_{n \geq 0}$, whose initial state $z(0)$ has the distribution $\bar{K}_{\theta}^N(y^{0, M}, \cdot)$, and whose transition kernel is $\Pi^N_\theta$. Let $n \in \mathbb{N}$ be given and denote by $\mathbb{P}_{\varnothing}$ the probability distribution of $z(s) = \{z_{k, \ell}(s): 1 \leq k \leq N \mbox{ and } 1 \leq \ell \leq \min( k, s) \}$ for $s =1, \dots, n$ as in (\ref{ZfromTau}). If $f$ is a bounded continuous function of the $(0,\infty)$-valued coordinates $\{ z_{k, \ell}(s): 1\leq s \leq n, 1 \leq k \leq N, 1\leq \ell \leq \min (k, s)\}$, then we have
\begin{equation}\label{LimitObs}
\lim_{M \rightarrow \infty} \mathbb{E}_{\mathbb{P}_M} \left[ f(z(1), \dots, z(n)) \right] = \mathbb{E}_\varnothing \left[ f(z(1), \dots, z(n)) \right].
\end{equation}
\end{proposition}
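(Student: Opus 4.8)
The plan is to deduce Proposition \ref{MCConv} from the geometric RSK (gRSK) correspondence, following the strategy of \cite[Section 5]{COSZ}. The starting point is the identity --- a geometric lifting of Greene's theorem --- that the array $z(n)$ built by \eqref{ZfromTau} out of the path partition functions $\tau_{k,\ell}(n)$ from \eqref{PartitionFunct} is precisely the shape array of the gRSK image of the matrix $d^{[1,n]}$. Moreover, gRSK can be computed by inserting the rows $(d_{m,1},\dots,d_{m,N})$ for $m=1,\dots,n$ one at a time, and the sequence of shape arrays obtained after inserting the first $m$ rows is a Markov chain; since those rows are i.i.d. with the inverse-gamma$(\theta)$ density, the one-step transition of this chain is exactly the kernel $\Pi^N_\theta$ of \eqref{PiN} --- indeed the kernels $P^1_\theta$ and $L^k_\theta$ in \eqref{P1}--\eqref{Lk} are nothing but the effect of a single inverse-gamma row insertion on the shape. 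Thus $\mathbb{P}_\varnothing$ is the law of the first $n$ steps of a $\Pi^N_\theta$-chain ``initialized from the empty array''. First I would make this precise, that is, recall the gRSK local moves and check that they reproduce \eqref{P1}--\eqref{Lk}.

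The remaining --- and main --- point is that the ``empty array'' is not a state in $\mathbb{T}_N=(0,\infty)^{N(N+1)/2}$, so $\mathbb{P}_\varnothing$ must be realized as a genuine limit of chains started from honest states; this is the role of $y^{0,M}$ and $\mathbb{P}_M$. The idea is to prepend to $d^{[1,n]}$ an auxiliary block of rows, inserted first, whose gRSK shape has law $\bar K^N_\theta(y^{0,M},\cdot)$ (so its $\phi$-image is the staircase $y^{0,M}=(e^{-M(N-\ell)/2})_{1\le\ell\le N}$); by Theorem \ref{markovProj} together with the intertwining \eqref{intertwineE} of Proposition \ref{intertwineProp}, running $\Pi^N_\theta$ from the resulting shape and then reading off $z(1),\dots,z(n)$ produces precisely $\mathbb{P}_M$. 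On the other hand, in the enlarged matrix every non-intersecting path family contributing to $\tau_{k,\ell}(s)$ that descends into the auxiliary rows acquires a weight that tends to $0$ as $M\to\infty$, while the families confined to the original rows are untouched; the staircase scaling of $y^{0,M}$ is chosen precisely so that this happens. Hence, for each $s\le n$ and each index $(k,\ell)$, the gRSK entry $z_{k,\ell}(s)$ computed from the enlarged matrix converges, as $M\to\infty$, to the one computed from $d^{[1,n]}$ alone --- pointwise on the probability space, hence in distribution. Since $f$ is bounded and continuous, bounded convergence gives $\mathbb{E}_{\mathbb{P}_M}[f(z(1),\dots,z(n))]\to\mathbb{E}_\varnothing[f(z(1),\dots,z(n))]$, as claimed.

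I expect the delicate step to be this last limit transition: quantifying the sense in which $\bar K^N_\theta(y^{0,M},\cdot)$ degenerates and showing the degeneration is strong enough to commute with integration against bounded continuous $f$. Two routes look viable. One is to build an explicit coupling in which the gRSK processes with and without the auxiliary rows live on a single probability space, and to verify almost-sure convergence of all the entries $z_{k,\ell}(s)$, $1\le s\le n$, using the path description \eqref{PathWeight}--\eqref{PartitionFunct} and the monotonicity of gRSK in its input. The other is to work directly with the explicit densities --- products of the kernels \eqref{P1}--\eqref{KN} normalized as in \eqref{NormalizedKernels} --- and to pass to the limit inside the integral, the only $M$-dependence being in factors of the form $\exp(-z_{k,\ell}/z_{k+1,\ell}-z_{k+1,\ell+1}/z_{k,\ell})$ evaluated along the staircase $y^{0,M}$, which one dominates by a fixed integrable function. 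Either way the combinatorial inputs of the first paragraph are bookkeeping and Theorem \ref{markovProj} is quoted, so the real content is this analytic limit; this is exactly what \cite[Proposition 5.3]{COSZ} provides in the general inhomogeneous setting, and specializing its proof to $\theta_i=\theta$, $\hat\theta_i=0$ gives the statement.
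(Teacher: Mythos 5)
The paper gives no proof of Proposition \ref{MCConv}; it is cited verbatim from \cite[Proposition 5.3]{COSZ} (with $\theta_i=\theta$, $\hat\theta_i=0$), which you also note at the end of your sketch. Your outline of the gRSK mechanism and the degenerate-initial-condition limit is consistent in spirit with the argument in \cite{COSZ}, so the approaches agree in substance --- though I would flag that the ``prepend an auxiliary block of rows whose gRSK shape has law $\bar{K}^N_\theta(y^{0,M},\cdot)$'' step is not obviously realizable as stated (that kernel is a conditional law of the full array given its bottom row, not visibly the shape of any random input block), and the explicit density computation you offer as the alternative route is what actually carries the limit transition in \cite{COSZ}.
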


%
\subsection{The log-gamma polymer as a line ensemble}\label{Section8.2} In this section, we prove that the log-gamma polymer can be understood as a discrete line ensemble that satisfies the $(H,H^{RW})$-Gibbs property, where
\begin{equation}\label{Hamiltonian}
\g_\theta(x) = e^{-H^{RW}(x)}, \hspace{3mm}H^{RW}(x) = \theta x + e^{-x} + \log \Gamma (\theta) \mbox{ and } H(x) = e^x,
\end{equation}
with $\theta > 0$ as in the definition of the polymer model in Section \ref{Section8.1}.

 In this section, we prove the following result.
\begin{proposition}\label{PropLOG}
Let $H,H^{RW}$ be as in (\ref{Hamiltonian}). Fix $\topc, N \in \mathbb{N}$ with $N \geq \topc \geq 2$. Let $T_0, T_1 \in \mathbb{N}$ be such that $T_0 < T_1$ and $T_0 \geq \topc$. Then, we can construct a probability space $\mathbb{P}$ that supports a $\llbracket 1, \topc \rrbracket \times \llbracket T_0, T_1 \rrbracket$-indexed line ensemble $\mathfrak{L} = (L_{1}, \dots, L_\topc)$, such that:
\begin{enumerate}
\item the $\mathbb{P}$-distribution of $(L_i(j): (i,j) \in \llbracket 1, \topc \rrbracket \times \llbracket T_0, T_1 \rrbracket )$ is the same as that of $ ( \log(z_{N,i}(j)): (i,j) \in \llbracket 1, \topc \rrbracket \times \llbracket T_0, T_1 \rrbracket )$ as in (\ref{ZfromTau});
\item $\mathfrak{L}$ satisfies the $(H,H^{RW})$-Gibbs property .
\end{enumerate}
\end{proposition}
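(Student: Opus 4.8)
\textbf{Proof proposal for Proposition \ref{PropLOG}.}

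The plan is to construct the line ensemble $\mathfrak{L}$ by running the Markov chain $\{z(n)\}_{n\geq 0}$ of Section \ref{Section8.1} started from the special initial condition $\bar K_\theta^N(y^{0,M},\cdot)$, reading off the top curves $z_{N,i}(\cdot)$, taking logarithms, and then passing to the limit $M\to\infty$; the $(H,H^{RW})$-Gibbs property for the limit will follow from Lemma \ref{S4WeakGibbs} (Gibbs survives weak limits) once it is established for each finite $M$. Concretely, for each $M$ let $\mathbb{P}_M$ be as in Proposition \ref{MCConv}, set $L_i^{M}(j) = \log z_{N,i}(j)$ for $(i,j)\in\llbracket 1,\topc\rrbracket\times\llbracket T_0,T_1\rrbracket$, and let $\mathbb{P}^M$ denote the law of $\mathfrak{L}^M=(L_1^M,\dots,L_\topc^M)$. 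By Proposition \ref{MCConv}, $\mathbb{E}_{\mathbb{P}_M}[f(z(1),\dots,z(n))]\to\mathbb{E}_{\varnothing}[\cdots]$ for bounded continuous $f$ of the finitely many coordinates in question; since $x\mapsto\log x$ is a homeomorphism $(0,\infty)\to\mathbb{R}$, this says precisely that $\mathbb{P}^M$ converges weakly to the law of $(\log z_{N,i}(j))$, which is the desired limiting object. So part (1) is essentially immediate from \cite{COSZ}, and the content of the proposition is part (2).

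The core of the argument is therefore to show that each $\mathbb{P}^M$ satisfies the $(H,H^{RW})$-Gibbs property with $H^{RW}(x)=\theta x + e^{-x}+\log\Gamma(\theta)$ and $H(x)=e^{x}$. By Lemma \ref{S4AltGibbs} it suffices to verify the Gibbs resampling identity for the single largest box, i.e. to understand the conditional law of $(z_{N,1},\dots,z_{N,\topc-1})$ on $\llbracket T_0,T_1\rrbracket$ given everything else, and to check that it is a reweighted independent random-walk-bridge measure with the stated Hamiltonians. The key computation is to extract, from the explicit transition kernels $P^1_\theta$ and $L^k_\theta$ in \eqref{P1}--\eqref{Lk} (or equivalently from the joint density implicit in $\bar P_\theta^N$, $K_\theta^N$ in \eqref{PN}--\eqref{KN}), the joint density of the array $z(\cdot)$ restricted to the relevant sites. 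First I would write out the density of $(z_{N,1}(j),\dots,z_{N,\topc}(j))_{j}$ — or more cleanly of its logarithm — as a product of one-step transition factors. Each step of the chain in a fixed row contributes a factor of the form $\frac{1}{\Gamma(\theta)}(y/\tilde y)^\theta e^{-y/\tilde y}\,d\tilde y/\tilde y$, which after the substitution $w=\log\tilde y - \log y$ becomes exactly $e^{-H^{RW}(w)}dw = \g_\theta(w)\,dw$; this identifies the underlying random-walk jump density. The coupling between rows, coming from the factors $\exp(-z_{k,\ell}/z_{k+1,\ell})$ type terms in $K_\theta^N$ and the cross terms in $L^k_\theta$, contributes factors $\exp(-\tilde y_{i+1}/y_i)$; writing $y_i=e^{L_i(m)}$, $\tilde y_{i+1}=e^{L_{i+1}(m+1)}$ turns this into $\exp(-e^{L_{i+1}(m+1)-L_i(m)}) = \exp(-H(L_{i+1}(m+1)-L_i(m)))$, which is precisely the Boltzmann weight \eqref{WH} with $H(x)=e^x$. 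Assembling these factors and comparing with \eqref{RND} gives the Gibbs property for $\mathbb{P}^M$; one also has to check the boundary bookkeeping — that $L_{\topc}$ plays the role of the fixed bottom curve $g$, that the convention $L_0=\infty$ is consistent with the absence of a term above row $1$, and that the normalization matches $Z_{H,H^{RW}}$.

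The main obstacle is exactly this density bookkeeping: the kernels in \eqref{Lk}–\eqref{LkProb} are messy ratios, and one must be careful that after changing variables to logarithms and collecting factors across all $\topc$ rows and all columns $\llbracket T_0,T_1\rrbracket$, the cross terms organize themselves into the nearest-neighbor product $\prod_{i}\prod_m e^{-H(L_{i+1}(m+1)-L_i(m))}$ and nothing else — in particular that the genuinely $N$-row structure of the chain does not leak extra interactions into the $\topc$-row marginal, which is why we need $T_0\geq\topc$ (so that the relevant corner of the array is already ``filled in'' and the restriction to rows $1,\dots,\topc$ is clean) and why we only get the \emph{partial} Gibbs property (row $\topc$ is not resampled). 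A convenient way to handle this cleanly is to first prove the Gibbs property for the full $N$-row array (where the algebra is symmetric and matches \cite{COSZ} directly) and then invoke Remark \ref{restrict} to pass to the top $\topc$ rows; alternatively one verifies it directly using Lemma \ref{S4AltGibbs}. Once $\mathbb{P}^M$ is shown to be $(H,H^{RW})$-Gibbsian for every $M$, Lemma \ref{S4WeakGibbs} applied to the weak convergence $\mathbb{P}^M\Rightarrow\mathbb{P}$ established above completes the proof of part (2), and hence of the proposition.
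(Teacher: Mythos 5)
Your proposal is correct and follows essentially the same route as the paper: both use Theorem~\ref{markovProj} and Proposition~\ref{MCConv} to produce the prelimiting ensembles $\mathfrak{L}^M$, identify $H^{RW}$ and $H$ from the explicit kernel $\bar P_\theta^N$ (the substitution $w=\log\tilde y-\log y$ giving $\g_\theta(w)\,dw$ is exactly right, and reveals a sign typo in the paper's \eqref{TransLogChain}, which should read $\g_\theta(\tilde z_j-z_j)$), verify the Gibbs property for the full $N$-curve ensemble via Lemma~\ref{S4AltGibbs} — noting that the telescoping $w_\theta^N(e^{\tilde z})/w_\theta^N(e^z)$ factor depends only on boundary data — restrict to the top $\topc$ curves via Remark~\ref{restrict}, and pass to the $M\to\infty$ limit via Lemma~\ref{S4WeakGibbs}. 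The only thing to be careful about, which you correct yourself in the final paragraph, is that one cannot directly write a product-form density for the marginal on rows $1,\dots,\topc$; the argument must go through the full $N$-component Markov chain $y(n)$ first.
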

\begin{remark}
We mention that the embedding of $\log(z_{N,i})$ as the lowest-indexed curve of an $(H,H^{RW})$-Gibbsian line ensemble is a special feature of the log-gamma polymer model. A similar property is known to hold for other integrable polymer models such as the strict-weak polymer model, mentioned in Remark \ref{S1RemWS}. For general directed polymer models such an embedding is not known.
\end{remark}
\begin{remark}
We mention that analogues of Proposition \ref{PropLOG} can be found in \cite[Section 3.4]{Wu19} and \cite{JO20}. As the statements and notations from those papers are a bit different than here, we provide the fairly short proof of this result for the sake of completeness.
\end{remark}

\begin{proof} We split the proof into two steps, for clarity.

{\bf \raggedleft Step 1.}  We assume the same notation as in Theorem \ref{markovProj} and Proposition \ref{MCConv}. From these results we know that for each $M \in \mathbb{N}$ there exists a probability space with measure $\mathbb{P}_M$ that supports a Markov chain $z(n), n \geq 0$, whose initial state $z(0)$ has the distribution $\overline{K}_{\theta}^N(y^{0,M}, \cdot)$, and whose transition kernel is $\Pi_\theta^N$. Moreover, the process $y(n) = \phi(z(n)), n\geq 0$ is a Markov chain with respect to its own filtration with state space $\mathbb{Y}_N$, initial state $y(0) = y^{0,M}$, and transition kernel $\overline{P}_\theta^N$. We write $y(n) = (y_1(n), \dots, y_N(n))$.

Let us define $(L^M_i(j): i = 1, \dots, N, j \geq 0)$ through $L^M_i(j) = \log \big(y_i(j)\big)$. By a simple change of variables, using (\ref{PN}), (\ref{KN}) and (\ref{NormalizedKernels}), we see that the sequence $L^M(j)$, $j \geq 0$ of $\mathbb{R}^N$-valued random variables (the $i$-th coordinate of $L^M(j)$ is $L^M_i(j)$) is also Markov in its own filtration and its transition kernel is
\begin{equation}\label{TransLogChain}
\hat{P}^N_\theta(z, d\tilde{z}) = \frac{w_\theta^N(e^{\tilde{z}}) }{w_\theta^N(e^z)} \cdot \prod_{i = 1}^{N-1} \exp \left( -H (\tilde{z}_{i+1} - z_i) \right) \cdot \prod_{j = 1}^N \g_{\theta}(z_j - \tilde{z}_j) d\tilde{z}_j,
\end{equation}
where for $z \in \mathbb{R}^N$ we write $e^z = (e^{z_1}, \dots, e^{z_N})$.

We claim that for each $M \geq 1$ and $T_1 \in \mathbb{N}$ as in the statement of the proposition, we have that the line ensemble $(L^M_i(j): (i,j) \in \llbracket 1, N \rrbracket \times \llbracket 0, T_1 \rrbracket)$ satisfies the $(H,H^{RW})$-Gibbs property. We prove this claim the next step. For now we assume its validity and conclude the proof of the proposition. \\

Since $(L^M_i(j): (i,j) \in \llbracket 1, N \rrbracket \times \llbracket 0, T_1 \rrbracket)$ satisfies the $(H,H^{RW})$-Gibbs property we know that $(L^M_i(j): (i,j) \in \llbracket 1, \topc \rrbracket \times \llbracket T_0, T_1 \rrbracket)$ satisfies the $(H,H^{RW})$-Gibbs property as a $\llbracket 1, \topc \rrbracket \times \llbracket T_0, T_1 \rrbracket$-indexed line ensemble (cf. Remark \ref{restrict}). Furthermore, by Proposition \ref{MCConv} we know that $\big(L^M_i(j): (i,j) \in \llbracket 1, \topc \rrbracket \times \llbracket T_0, T_1 \rrbracket\big)$ weakly converge to $ \big( \log(z_{N,i}(j)): (i,j) \in \llbracket 1, \topc \rrbracket \times \llbracket T_0, T_1 \rrbracket \big)$  (here we used that $T_0 \geq \topc$).  Since $\big(L^M_i(j): (i,j) \in \llbracket 1, \topc \rrbracket \times \llbracket T_0, T_1 \rrbracket\big)$ each satisfy the $(H,H^{RW})$-Gibbs property, we conclude the same is true for $\log(z_{N,i}(j)): (i,j) \in \llbracket 1, \topc \rrbracket \times \llbracket T_0, T_1 \rrbracket \big)$ by Lemma \ref{S4WeakGibbs}. This concludes the proof, modulo verifying the claimed $(H,H^{RW})$-Gibbs property, which is done in the next step.\\

{\bf \raggedleft Step 2.} To prove that $\big(L^M_i(j): (i,j) \in \llbracket 1, N \rrbracket \times \llbracket 0, T_1 \rrbracket\big)$ satisfies the $(H,H^{RW})$-Gibbs property, we appeal to Lemma \ref{S4AltGibbs} and we use the same notation as in that lemma. To simplify the expressions below we drop $M$, which is fixed in this step, from the notation. Let $f_{i,j}$ for $(i,j) \in \llbracket 1, N \rrbracket \times \llbracket 0, T_1 \rrbracket$  be bounded continuous functions on $\mathbb{R}$. In view of Lemma \ref{S4AltGibbs} it suffices to show
\begin{equation}\label{S8towerFunV2}
\begin{split}
& \mathbb{E}\Big[ \prod_{i = 1}^N \prod_{j = 0}^{T_1} f_{i,j}(  L_i(j)  ) \Big] = \mathbb{E} \Big[ \prod_{(i,j) \in B}f_{i,j}( L_i(j) ) \cdot  \mathbb{E}_{H,H^{RW}}^{1, N-1, 0, T_1, \vec{x}, \vec{y},\infty,L_N} \Big[\prod_{(i,j) \in A} f_{i,j}( L_i(j)  ) \Big]  \Big],
\end{split}
\end{equation}
Using the transition probability in (\ref{TransLogChain}), we obtain that
\begin{equation}\label{LeftTower}
\begin{split}
& \mathbb{E}_{\mathbb{P}}\Big[\prod_{i = 1}^N \prod_{j = 0}^{T_1} f_{i,j}(  L_i(j)  )\Big]  =\prod_{i = 1}^N f_{i,0}( z_i^0  ) \cdot \int_{\mathbb{R}^{NT_1}}  \prod_{i = 1}^N \prod_{j = 1}^{T_1} f_{i,j}( z_i^j  ) \prod_{i = 1}^{N-1} \prod_{j = 1}^{T_1} e^{- H( z_{i+1}^j - z_i^{j-1})} \cdot \\
&\prod_{i = 1}^N \prod_{j = 1}^{T_1}  \g_\theta(z^j_i -z^{j-1}_i) \cdot \frac{w_\theta^N(e^{z^{T_1}})}{w^N_\theta(e^{z^0})} \prod_{i = 1}^N \prod_{j = 1}^{T_1} d z_i^j,
\end{split}
\end{equation}
where $z_i^0 = \log [y^{0,M}_i] = -M \cdot \frac{N-i}{2}$.

On the other hand, we have by definition that the right-hand side of (\ref{S8towerFunV2}) equals
\begin{equation}\label{RightTower}
\begin{split}
&\prod_{i = 1}^N f_{i,0}( z_i^0  ) \cdot \int_{\mathbb{R}^{NT_1}}  \int_{\mathbb{R}^{(N-1)(T_1-1)}}   \prod_{i = 1}^N f_{i,T_1}( z_i^{T_1}  )  \cdot  \prod_{j = 1}^{T_1-1} f_{N,j}( z_N^j  )\prod_{i = 1}^{N-1} \prod_{j = 1}^{T_1} e^{ - H( z_{i+1}^j - z_i^{j-1})} \cdot \\
&\prod_{i = 1}^N \prod_{j = 1}^{T_1}\g_\theta(z^j_i -z^{j-1}_i) \cdot \frac{w_\theta^N(e^{z^{T_1}})}{w^N_\theta(e^{z^0})}  \prod_{i = 1}^{N-1} \prod_{j = 1}^{T_1-1} f_{i,j}( \tilde{z}_i^j  )  \cdot  \prod_{i = 1}^{N-1} \prod_{j = 1}^{T_1} e^{ -H( \tilde{z}_{i+1}^j - \tilde{z}_i^{j-1}) }\cdot  \\
&\prod_{i = 1}^{N-1} \prod_{j = 1}^{T_1} \g_\theta(\tilde{z}^j_i -\tilde{z}^{j-1}_i)  \cdot \prod_{i = 1}^{N-1} \frac{1}{\g^{z^0_N}_{T_1}(z^{T_1}_i)} \cdot \frac{1}{Z_{H}^{1, N-1, z^0, z^{T_1},\infty,z_N}}  \cdot  \prod_{i = 1}^{N-1} \prod_{j = 1}^{T_1-1} d\tilde{z}_i^j  \prod_{i = 1}^N \prod_{j = 1}^{T_1} d z_i^j,
\end{split}
\end{equation}
where $z_N$ stands for the vector $(z_N^0, \dots, z_N^{T_1})$, $\tilde{z}_i^0 = z_i^0$ for $i = 1, \dots, N$, $\tilde{z}_i^{T_1} = z_i^{T_1}$ for $i = 1, \dots, N$, $\tilde{z}_N^i = z_N^i$ for $i = 1, \dots, T_1$, and
\begin{equation*}
\begin{split}
&Z_{H}^{1, N-1, z^0, z^{T_1},\infty,z_N} =  \int_{\mathbb{R}^{(N-1)(T_1-1)}}   \prod_{i = 1}^{N-1} \prod_{j = 1}^{T_1} e^{ - H( z_{i+1}^j - z_i^{j-1}) } \cdot \\
&   \prod_{i = 1}^{N-1} \prod_{j = 1}^{T_1} \g_\theta(\tilde{z}^j_i -\tilde{z}^{j-1}_i) \cdot \prod_{i = 1}^{N-1} \frac{1}{\g^{z^0_N}_{T_1}(z^{T_1}_i)} \prod_{i = 1}^{N-1} \prod_{j = 1}^{T_1-1} d z_i^j.
\end{split}
\end{equation*}
In the above, we have also used that $\g_n^x(y)$ is as in (\ref{RWN}) for $\g = \g_\theta$. We remark that the integration over $\tilde{z}$ corresponds to the expectation $\mathbb{E}_{H,H^{RW}}^{1, N-1, 0, T_1, \vec{x}, \vec{y},\infty,L_N}$ on the right side of (\ref{S8towerFunV2}), while the integration over $z$ corresponds to the outer expectation on the right side of (\ref{S8towerFunV2}).

We may now integrate in (\ref{RightTower}) over the variables $z_i^j$ with $(i,j) \in \llbracket 1, N-1 \rrbracket \times \llbracket 1, T_1 - 1 \rrbracket$ and cancel the resulting factor with
$$\prod_{i = 1}^{N-1} \frac{1}{\g^{z^0_N}_{T_1}(z^{T_1}_i)} \cdot \frac{1}{Z_{H}^{1, N-1, z^0, z^{T_1},\infty,z_N}}.$$
 The resulting expression will then equal (\ref{LeftTower}) upon relabeling $z_i^j$ to $\tilde{z}_i^j$ for $(i,j) \in \llbracket 1, N-1 \rrbracket \times \llbracket 1, T_1 - 1 \rrbracket$. This proves (\ref{S8towerFunV2}) and, hence, the proposition.
\end{proof}

%
\subsection{Spatial tightness of the log-gamma polymer}\label{Section8.3} In this section, we prove Theorem \ref{ThmTight} by appealing to Theorem \ref{informalintrotightness}. In what follows, we fix $\theta > 0$ and let $H^{RW}, H$ be as in (\ref{Hamiltonian}). We will use much of the notation of Section \ref{sec:introlog} (e.g. $d_\theta, h_\theta, \mathcal{F}(n,N),$). For convenience, we denote
$$ \LB = \lfloor rN + (T+3)N^{2/3} + 2 \rfloor.$$

Fix some $\topc\geq 2$. For each $N\geq \topc$, Corollary \ref{cor:loggammalineensemble} provides us with a $\llbracket 1, \topc \rrbracket \times \llbracket K, \LB \rrbracket$-indexed line ensemble, which we will denote $\mathfrak{\tilde{L}}^N$, whose lowest labeled curve $\big(\tilde{L}^N_1(n):n\in \llbracket \topc,\LB \rrbracket \big)$ has the same law as $\big(\log z_{N,1}(n):n\in \llbracket\topc,\LB \rrbracket \big)$ in the notation from Section \ref{Section8.1} or, equivalently, $\big(\log Z^{n,N}:n\in \llbracket\topc, \LB \rrbracket \big)$ in the notation of Section \ref{sec:introlog}. Moreover, this line ensemble enjoys the  $(H,H^{RW})$-Gibbs property with $H$ and $H^{RW}$ given in \eqref{Hamiltonianintro}.

Let $T_N = \lfloor (T+3)N^{2/3} + 2 \rfloor$ and assume that $N_0 \geq 2$ is sufficiently large, so that $M - 2T_N -2 \geq K$ for all $N \geq N_0$. Such a choice of $N_0$ is possible by our assumption that $r > 0$, and depends only on $T$ and $r$. Provided $N \geq N_0$ as above, we define the $\llbracket 1, 2 \rrbracket \times \llbracket -T_N, T_N \rrbracket$-indexed line ensemble $\mathfrak{L}^N$ by setting
$$  L_i^N(x) = \tilde{L}_i^N(x+ \lfloor rN\rfloor) + N h_\theta(r) \mbox{ for $i = 1,2$ and $x \in \llbracket -T_N, T_N\rrbracket$},$$
where $\mathfrak{\tilde{L}}^N$ is as above. The condition that $N \geq N_0$ ensures that the argument in $\tilde{L}_i^N$ stays in $\llbracket K, M \rrbracket$, so that $\mathfrak{L}^N$ is well-defined. 

We claim that the sequence of line ensembles $\mathfrak{L}^N$ defined just now is $(\alpha, p, T+3)$-good in the sense of Definition \ref{def:intro} with $\alpha = 2/3$ and $p = -h'_\theta(r)$. Assuming this for the moment, we see that if $f_N(x)$ is as in Theorem \ref{informalintrotightness} then $f_N(x) = f_N^{LG}(x)$ with $f_N^{LG}$ as in the statement of Theorem \ref{ThmTight}. Consequently, by Theorem \ref{informalintrotightness} we see that the sequence of random functions $f_N^{LG}$ is a tight sequence of $(C[-T,T], \mathcal{C})$-valued random variables, and also that any subsequential limit $\mathbb{P}_\infty$ of the laws of $f_N^{LG}$ is absolutely continuous with respect to a Brownian bridge with variance $2T \sigma_p^2$ in the sense of Definition \ref{DACB}, where $\sigma_p^2$ is as in Definition \ref{AssHR}. Consequently, to conclude the proof of Theorem \ref{ThmTight} it remains to show
\begin{enumerate}
\item the sequence of line ensembles $\mathfrak{L}^N$  is $(2/3, -h'_\theta(r), T+3)$-good in the sense of Definition \ref{def:intro};
\item $\sigma_p^2 = \Psi'(g_{\theta}^{-1}(r))$.
\end{enumerate}

Note that by the definition of $N_0$ and $T_N$ above we know that $T_N > TN^{\alpha} + 1$ for $N \geq N_0$. Furthermore, since $\mathfrak{\tilde{L}}^N$ satisfies the $(H,H^{RW})$-Gibbs property, the same can be deduced for $\mathfrak{L}^N$, as the latter was obtained from the former by a horizontal shift (by $\lfloor rN \rfloor$) and a vertical shift (by $h_\theta(r)N $) followed by a projection to the coordinates $\llbracket 1, 2 \rrbracket \times \llbracket -T_N, T_N \rrbracket$ and all of these operations preserve the $(H,H^{RW})$-Gibbs property. This establishes the first condition of Definition \ref{def:intro}. To see why the second condition holds, let us fix $s \in [-T,T]$ and note that for $n =\lfloor rN \rfloor + \lfloor sN^{2/3} \rfloor$, and $\mathcal{F}(n,N)$ as in (\ref{ResFE}) we have
\begin{equation*}
\begin{split}
&f_N^{LG} (\lfloor sN^{2/3} \rfloor) - d_\theta(n/N) \mathcal{F}( n ,N) = N^{2/3}( h_{\theta}(n/N) - h_\theta(r) )+ h'_\theta(r) N^{-1/3} \lfloor sN^{2/3} \rfloor = O(1)\\
\end{split}
\end{equation*}
where the last equality follows from basic Taylor expansion and the constant in the big $O$ notation depends on $\theta, T$ and $r$. From Proposition \ref{thm:BCDA} we know that $ \mathcal{F}( n ,N) $ is tight (in fact it converges to the Tracy-Widom distribution) and since $d_\theta(n/N)$ converges to $d_\theta(r)$ we conclude that $f_N^{LG} (\lfloor sN^{2/3} \rfloor)$ is also tight. Thus, the second condition of Definition \ref{def:intro} is also satisfied.

Since $H(x) = e^x$ we have that $H$ is convex, increasing and $\lim_{x \rightarrow \infty} x^2 H(-x) = 0$, which shows that $H$ satisfies the conditions in Definition \ref{AssH}. What remains is to show that $H^{RW} = \theta x + e^{-x} + \log \Gamma (\theta)$ satisfies the five assumptions in Definition \ref{AssHR}. For Assumption 1, $H^{RW}(x)$ is immediately seen to be continuous and convex, and $G(x) = e^{-H^{RW}(x)}$ is bounded and integrates to $1$. For Assumption 2, the moment generating function is evaluated to be $M(t) = \Gamma(\theta-t)/\Gamma(\theta)$ provided that $t<\theta$. Thus, the cumulant generating function $\Lambda(t) = \log M(t) = \log \Gamma(\theta-t) - \log\Gamma(\theta)$ is defined on a domain $\mathcal{D}_{\Lambda}= (-\infty, \theta)$ and Assumption 2 is verified. From the exact formula, Assumption 3 follows immediately. Assumption 4 follows from the fact that for any $-\infty<a<b < \theta$, there exists constants $c,C>0$ such that for all $z$  with $\Re(z)\in (a,b)$, $\big\vert \Gamma(z)\big\vert \leq C e^{-c|z|}$. Assumption 5 -- namely, the second bound in \eqref{S2S1E2} -- follows from the double exponential decay of $\g(x)$ for negative $x$. The above two paragraphs verify all the conditions of Definition \ref{def:intro}.

To see why $\sigma_p^2 =\Psi'(g_{\theta}^{-1}(r))$, note that from Definition \ref{AssHR} we have $\sigma_p^2 := \Lambda''( (\Lambda')^{-1}(p))$. From the explicit formula for $\Lambda$ we may compute $\Lambda'(t) = -\Psi(\theta-t)$ and $\Lambda''(t) = \Psi'(\theta-t)$. Using that $h'_\theta(r) = \Psi(g^{-1}(r)) = - p$, we see that 
$$(\Lambda')^{-1}(p) = (\Lambda')^{-1}(-h_\theta'(r) ) = \theta - g^{-1}_\theta(r),$$
and so $\sigma_p^2 = \Lambda''( (\Lambda')^{-1}(p))= \Psi'(g^{-1}(r))$ as desired. This concludes the proof of the theorem.

%
\section{Proof of results from Section \ref{Section4}}\label{Section11} In this section, we give the proof of various results from Section \ref{Section4}, and we will use much of the same notation as in that section. 

%
\subsection{Proof of Lemmas \ref{S4AltGibbs} and \ref{S4WeakGibbs}}\label{Section11.1}
We begin by giving an analogue of Definition \ref{YVec}, and  proving a useful auxiliary result. 
\begin{definition}\label{YVecBar} For a finite set $J \subset \mathbb{Z}^2$, we let $Y^+(J)$ denote the space of functions $f: J \rightarrow (-\infty, \infty]$ with the Borel $\sigma$-algebra $\mathcal{D}^+$, coming from the natural identification of ${Y}(J)$ with $(-\infty, \infty]^{|J|}$. Similarly, we let $Y^-(J)$ denote the space of functions $f: J \rightarrow [-\infty, \infty)$ with the Borel $\sigma$-algebra $\mathcal{D}^-$ coming from the natural identification of ${Y}^-(J)$ with $[-\infty, \infty)^{|J|}$. We think of an element of $Y^{\pm}(J)$ as a $|J|$-dimensional vector whose coordinates are indexed by $J$. 
\end{definition}

\begin{lemma}\label{ContinuousGibbsCond} Let $H$ and $H^{RW}$ be as in Definition \ref{Pfree}. Suppose that $a,b,k_1, k_2 \in \mathbb{Z}$ with $a < b $  and $ k_1 \leq k_2 $. In addition, suppose that $h: Y( \llbracket k_1 ,k_2 \rrbracket \times \llbracket a,b \rrbracket) \rightarrow \mathbb{R}$ is a bounded Borel-measurable function (recall that $Y(J)$ was defined in Definition \ref{YVec}). Let $V_L =  \llbracket k_1, k_2 \rrbracket  \times \{a \}$, $V_R = \llbracket k_1, k_2 \rrbracket  \times \{b \}$, $V_T =  \{k_1 - 1\} \times \llbracket a,b \rrbracket$ and $V_B = \{k_2 + 1 \} \times \llbracket a, b\rrbracket$, and define the set
\begin{equation*}
\begin{split}
 S = \left\{ (\vec{x}, \vec{y}, \vec{u},\vec{v}) \in Y(V_L) \times Y(V_R) \times Y^+(V_T) \times Y^-(V_B) \right \},
\end{split}
\end{equation*}
where we endow $S$ with the product topology and corresponding Borel $\sigma$-algebra. Then, the function $G_h: S\rightarrow \mathbb{R}$, given by
\begin{equation}
G_h(\vec{x}, \vec{y}, \vec{u},\vec{v}) = \mathbb{E}_{H, H^{RW}}^{a,b,\vec{x}, \vec{y}, \vec{u} ,\vec{v}} \left[ h(\mathfrak{L})\right],
\end{equation}
is bounded and measurable. Moreover, if $h$ is also continuous, then so is $G_h$. In the above equations the random variable over which we are taking the expectation is denoted by $\mathfrak{L}$.
\end{lemma}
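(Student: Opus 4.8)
The plan is to turn $G_h$ into an explicit ratio of two finite–dimensional Lebesgue integrals over the ``interior'' coordinates of the line ensemble, and then to transfer boundedness, measurability and continuity from the integrand to the integral. First I would unwind the definitions. Write $\Sigma=\llbracket k_1,k_2\rrbracket$ and let $\vec w=(w_{i,m})_{i\in\Sigma,\,a<m<b}$ be the interior values; given $\vec w$ and boundary data $\vec x,\vec y$, build $\mathfrak L=(\ell_i)_{i\in\Sigma}$ with $\ell_i(a)=x_i$, $\ell_i(b)=y_i$, $\ell_i(m)=w_{i,m}$. By \eqref{RWB}–\eqref{RWN} the free $H^{RW}$ bridge measure $\mathbb P^{k_1,k_2,a,b,\vec x,\vec y}_{H^{RW}}$ has density $\prod_{i\in\Sigma}\big[\g^{x_i}_{b-a}(y_i)\big]^{-1}\prod_{m=a+1}^b\g(\ell_i(m)-\ell_i(m-1))$ in the variables $\vec w$, and combining with \eqref{RND}, \eqref{WH}, \eqref{AccProb} and observing that the common factor $\prod_i\g^{x_i}_{b-a}(y_i)$ cancels between the Boltzmann average and the partition function $Z_{H,H^{RW}}$, one obtains
\[
G_h(\vec x,\vec y,\vec u,\vec v)=\frac{\displaystyle\int_{\mathbb R^{K}}h(\mathfrak L)\,W_H(\mathfrak L)\prod_{i\in\Sigma}\prod_{m=a+1}^{b}\g\big(\ell_i(m)-\ell_i(m-1)\big)\,d\vec w}{\displaystyle\int_{\mathbb R^{K}}W_H(\mathfrak L)\prod_{i\in\Sigma}\prod_{m=a+1}^{b}\g\big(\ell_i(m)-\ell_i(m-1)\big)\,d\vec w},
\]
where $K=(k_2-k_1+1)(b-a-1)$, and $W_H(\mathfrak L)=\exp\!\big(-\sum_{i=k_1-1}^{k_2}\sum_{m=a}^{b-1}H(\ell_{i+1}(m+1)-\ell_i(m))\big)$ with $\ell_{k_1-1}=\vec u$, $\ell_{k_2+1}=\vec v$. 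The degenerate case $b=a+1$ (no interior coordinates) I would dispose of at once: there $W_H$ cancels and $G_h$ equals $h$ composed with the continuous map $(\vec x,\vec y)\mapsto\mathfrak L$, so it is Borel and, when $h$ is continuous, continuous. Assuming $b\ge a+2$, boundedness of $G_h$ is immediate: $\g=e^{-H^{RW}}$ is everywhere positive and $W_H\in(0,1]$ (because $H$ is finite on $[-\infty,\infty)$ with $H(-\infty)=0$), so the denominator is a positive real bounded above by $\prod_i\g^{x_i}_{b-a}(y_i)<\infty$ and $|G_h|\le\|h\|_\infty$.

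For measurability I would note that the map $(\vec w,\vec x,\vec y,\vec u,\vec v)\mapsto\mathfrak L$ is continuous, that $\g$ and $W_H$ are continuous (in particular $W_H$ extends continuously to the boundary values $+\infty$ in $\vec u$ and $-\infty$ in $\vec v$, again using $H(-\infty)=0$), and that $h$ is Borel; hence the numerator and denominator integrands above are jointly Borel in $(\vec w,\vec x,\vec y,\vec u,\vec v)$ and are dominated in absolute value by the $\vec w$‑integrable envelope $\|h\|_\infty\prod_{i\in\Sigma}\prod_{m}\g(\ell_i(m)-\ell_i(m-1))$. By the standard measurability of parametrized integrals (Tonelli applied to the positive and negative parts) the numerator $N_h$ and denominator $N_1$ are Borel functions of $(\vec x,\vec y,\vec u,\vec v)\in S$, and since $N_1>0$ everywhere, $G_h=N_h/N_1$ is Borel.

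For continuity, assuming $h$ continuous, I would take a convergent sequence $(\vec x^N,\vec y^N,\vec u^N,\vec v^N)\to(\vec x^\infty,\vec y^\infty,\vec u^\infty,\vec v^\infty)$ in $S$ and argue exactly as in Step 4 of the proof of Lemma \ref{MonCoup}. For each fixed $\vec w$ the two integrands converge pointwise by continuity of $h$, $W_H$ and $\g$, and they are dominated by $g_N(\vec w)=\|h\|_\infty\prod_{i\in\Sigma}\prod_{m=a+1}^b\g(\ell_i(m)-\ell_i(m-1))$ formed with boundary data $\vec x^N,\vec y^N$, which converges pointwise to $g_\infty$ and satisfies $\int_{\mathbb R^K}g_N\,d\vec w=\|h\|_\infty\prod_{i\in\Sigma}\g^{\,0}_{b-a}(y_i^N-x_i^N)\to\|h\|_\infty\prod_{i\in\Sigma}\g^{\,0}_{b-a}(y_i^\infty-x_i^\infty)<\infty$, using that $\g^{\,0}_{b-a}$ is continuous (an iterated convolution of the bounded, continuous, integrable function $\g$, continuity following from \cite[Theorem 4.16]{Royden}). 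The Generalized Dominated Convergence Theorem \cite[Theorem 4.17]{Royden} then gives $N_h(\vec x^N,\dots)\to N_h(\vec x^\infty,\dots)$ and $N_1(\vec x^N,\dots)\to N_1(\vec x^\infty,\dots)>0$, so $G_h=N_h/N_1$ is continuous.

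I expect the continuity step to be the main obstacle: the boundary data $\vec x,\vec y$ enter not only $h$ and $W_H$ but also the dominating densities $g_N$, so a plain dominated convergence theorem does not apply and one must run the generalized version with moving envelopes, reducing the non-triviality to the continuity of the iterated convolution $\g^{\,0}_{b-a}$. By contrast, the data $\vec u,\vec v$ (including their $\pm\infty$ values) are harmless, since they affect only $W_H$, which is bounded by $1$ and jointly continuous on the extended domain.
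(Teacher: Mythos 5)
Your proof is correct and follows essentially the same route as the paper: express $G_h$ as a ratio of two finite-dimensional integrals over the interior coordinates, get boundedness from $|G_h|\le\|h\|_\infty$, measurability from Fubini--Tonelli, and continuity from the generalized dominated convergence theorem with the product of $\g$'s as the moving envelope, reducing matters to continuity of the iterated convolution $\g^{0}_{b-a}$. The only cosmetic difference is that you leave the bridge normalization $\prod_i\g^{x_i}_{b-a}(y_i)$ inside the numerator and denominator rather than dividing it out first, which slightly streamlines the GDCT step but does not change the substance of the argument.
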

\begin{proof} Let us briefly explain the main ides behind the proof. It is clear that $|G_h|$ is bounded by $\|h\|_{\infty}$, which implies its boundedness. In equation (\ref{S11GRat}) below, we express $G_h$ as a ratio of two functions, with a strictly positive denominator. These functions themselves are integrals over a suitable Euclidean space of functions that are jointly measurable in the variables $ (\vec{x}, \vec{y}, \vec{u},\vec{v}) $ and the variables, over which we are integrating. We then deduce the measurability of $G_h$ from the measurability of the numerator and denominator in (\ref{S11GRat}), which in turn is a direct consequence of Fubini's theorem. The continuity of $G_h$, when $h$ is continuous, is obtained by showing that the integrals in the numerator and denominator in (\ref{S11GRat}) are continuous in $ (\vec{x}, \vec{y}, \vec{u},\vec{v}) $. As these functions are integrals of functions that are already continuous in $ (\vec{x}, \vec{y}, \vec{u},\vec{v}) $, their continuity follows from being able to exchange the order of integration and a limit in the arguments $(\vec{x}, \vec{y}, \vec{u},\vec{v})$. The latter is then a consequence of the Generalized dominated convergence theorem (see \cite[Theorem 4.17]{Royden}). We now turn to the details.\\

We denote $B  = \llbracket k_1, k_2 \rrbracket \times \llbracket a ,b \rrbracket$, and $A = \llbracket k_1, k_2 \rrbracket \times \llbracket a + 1, b-1 \rrbracket$. By definition of $\mathbb{P}_{H, H^{RW}}^{a,b,\vec{x}, \vec{y}, \vec{u} , \vec{v}}$ (see (\ref{RWN}), (\ref{RWB}) and (\ref{RND}) ), we know that
\begin{equation}\label{S11GRat}
G_h(\vec{x}, \vec{y}, \vec{u},\vec{v}) = \frac{F_h(\vec{x}, \vec{y}, \vec{u}, \vec{v})}{F_1(\vec{x}, \vec{y}, \vec{u}, \vec{v})}.
\end{equation}
In the above equation, $1$ stands for the constant function that is equal to $1$, and
\begin{equation}\label{S11E1}
\begin{split}
F_h(\vec{x}, \vec{y}, \vec{u}, \vec{v})  = \int_{Y(A)} &h( x_{i,j} : (i,j) \in B )   \cdot \frac{ P\big(\vec{x}, \vec{y};  x_{i,j}: (i,j) \in A \big)}{\prod_{i = k_1}^{k_2} \g^{x_{i,a}}_{b - a }(y_{i,b}) } \times \\
&Q\big(\vec{x}, \vec{y},\vec{u}, \vec{v} ;  x_{i,j}: (i,j) \in A \big)  \hspace{-3mm}\prod_{(i,j) \in A}\hspace{-3mm} dx_{i,j} ,
\end{split}
\end{equation}
where
\begin{equation}\label{S11GibbsPQ}
\begin{split}
&Q\big(\vec{x}, \vec{y}, \vec{u}, \vec{v};  x_{i,j}: (i,j) \in A \big) =  \exp \left( - \sum_{i = k_1 - 1}^{k_2}  \sum_{ m = a}^{ b - 1} H(x_{i + 1, m+1} - x_{i,m}) \right) ,\\
& P\big(\vec{x}, \vec{y};  x_{i,j}: (i,j) \in A \big) = \prod_{i = k_1}^{k_2} \prod_{m = a + 1}^{b} \g(x_{i,m} - x_{i, m-1}),
\end{split}
\end{equation}
and also $x_{k_1-1, j} = u_{k_1 - 1, j}$, $x_{k_2 + 1,j} = v_{k_2 + 1, j}$ for $j \in \llbracket a, b \rrbracket$, and $x_{i,b} = y_{i,b}$ for $i \in \llbracket k_1, k_2 \rrbracket$.
If $b = a+1$ the function $F_h$ takes the form
\begin{equation}\label{S11E2}
F_h(\vec{x}, \vec{y}, \vec{u}, \vec{v})= h( x_{i,j} : (i,j) \in B ) \cdot \exp \left( - \sum_{i = k_1}^{k_2}  H (x_{i+1,b} - x_{i,a}) \right).
\end{equation}
We mention here that our assumption that $\vec{u} \in Y^+(V_T)$ and $\vec{v} \in Y^-(V_B)$ ensures that the arguments in $H$ are all well-defined (i.e. we do not have $\infty -\infty$), and moreover they lie in $[-\infty, \infty)$. In particular, all the functions above are well-defined and finite.

Since $H \geq 0$, we see that
$$Q\big(\vec{x}, \vec{y}, \vec{u}, \vec{v};  x_{i,j}: (i,j) \in A \big)   \leq 1 \hspace{2mm} \mbox{ and }\hspace{2mm}  \int_{Y(A)}   \frac{P\big(\vec{x}, \vec{y};  x_{i,j}: (i,j) \in A \big)  }{\prod_{i = k_1}^{k_2} \g^{x_{i}}_{b - a }(y_{i}) } \hspace{-3mm} \prod_{(i,j) \in A} \hspace{-3mm} dx_{i,j} = 1,$$
and so $|F_h| \leq \|h\|_{\infty}$. By Fubini's theorem we conclude $F_h$ is measurable. This implies the measurability of $G_h$, which is the ratio of two measurable functions with a strictly positive denominator.\\

What remains is to show $G_h$ is continuous if $h$ is continuous and bounded, which we assume in the sequel. Since $G_h$ is the ratio of two functions, we see that it suffices to prove that $F_h(\vec{x}, \vec{y}, \vec{u}, \vec{v}) $ is continuous if $h$ is bounded and continuous.

Fix some point $(\vec{x}_{\infty},\vec{y}_{\infty}, \vec{u}_{\infty}, \vec{v}_{\infty}) \in Y(V_L) \times Y(V_R) \times Y^+(V_T) \times Y^-(V_B) $, and suppose that we are given any sequence $(\vec{x}_n, \vec{y}_n, \vec{u}_n, \vec{v}_n) \in Y(V_L) \times Y(V_R) \times Y^+(V_T) \times Y^-(V_B)$, which converges to $(\vec{x}_{\infty},\vec{y}_{\infty}, \vec{u}_{\infty}, \vec{v}_{\infty}) $. Then, we wish to establish that
\begin{equation}\label{S11ContGLim}
\begin{split}
& \lim_{n \rightarrow \infty} F_h(\vec{x}_n, \vec{y}_n,\vec{u}_n, \vec{v}_n) =  F_h(\vec{x}_{\infty}, \vec{y}_{\infty},\vec{u}_{\infty}, \vec{v}_{\infty}).
\end{split}
\end{equation}
We know that for $d \geq 1$, $f \in L^1(\mathbb{R}^d)$ and $g \in L^\infty(\mathbb{R}^d)$ we have that
$$U(x) := \int_{\mathbb{R}^d} f(y) g(x - y) dy$$
is bounded and continuous, see e.g. \cite[Proposition 2.39]{Folland}. The latter implies, in view of (\ref{RWN}), that $\g^{x_{i,a}}_{b- a}(y_{i,b})$ are positive and continuous functions of $\vec{x}$ and $\vec{y}$. In particular, we see that to prove (\ref{S11ContGLim}) it suffices to show that
\begin{equation}\label{S11GDCT}
\begin{split}
&\lim_{n \rightarrow \infty} \hspace{-1mm}\int_{Y(A)} Q\big(\vec{x}_n, \vec{y}_n, \vec{u}_n, \vec{v}_n;  A \big)   P\big(\vec{x}_n, \vec{y}_n;  A \big) h\big( x_{i,j} : (i,j) \in A  \big)  \prod_{(i,j) \in A}\hspace{-3mm} dx_{i,j} = \\
& \int_{Y(A)}  Q\big(\vec{x}_{\infty}, \vec{y}_{\infty}, \vec{u}_\infty, \vec{v}_\infty; A \big)   P\big(\vec{x}_{\infty}, \vec{y}_{\infty}; A \big)  h\big(  x_{i,j} : (i,j) \in A  \big)\hspace{-2mm} \prod_{(i,j) \in A}  dx_{i,j},
\end{split}
\end{equation}
where $P,Q$ are as in (\ref{S11GibbsPQ}) (we have replaced $ x_{i,j}: (i,j) \in A$ with $A$ above to ease the notation).

By continuity of $H$ and $\g$, we know that the integrand in the top line of (\ref{S11GDCT}) converges pointwise to the integrand in the second line. The fact that the integrals also converge then follows from the Generalized dominated convergence theorem (see \cite[Theorem 4.17]{Royden}) with dominating functions
$$ \g_n \big(  x_{i,j}: (i,j) \in A\big) = \| h\|_\infty \cdot M^{\topc - 1} \cdot \prod_{i = 1}^{\topc-1} \prod_{m = T_0 + 1}^{T_1} \g\big(x^n_{i,m} - x^n_{i, m-1}\big), \mbox{ where $M =  \| \g\|_{\infty}$,}$$
where $x_{i,j}^n = x_{i,j}$ if $(i,j) \in A$, $x_{i,j}^n$ equals the $(i,j)$-th coordinate of $\vec{x}_n$ if $(i,j) \in V_L$ and $x_{i,j}^n$ equals the $(i,j)$-th coordinate of $\vec{y}_n$ if $(i,j) \in V_R$.

Let us elaborate the last argument briefly. Since $H \geq 0$ by assumption we know that $|Q| \leq 1$ and then it is clear that $\g_n$ as above dominate the integrands in the top line of (\ref{S11GDCT}). Furthermore, by the continuity of the integrands we conclude that $\g_n$ converge pointwise to $\g_\infty$, which has the same form as $\g_n$ with $\vec{x}_n$ and $\vec{y}_n$ replaced with $\vec{x}_\infty, \vec{y}_\infty$. To conclude the application of the Generalized dominated convergence theorem we need to show
$$\lim_{n \rightarrow \infty} \int_{Y(A)}\prod_{i = k_1}^{k_2} \prod_{m = a + 1}^{b} \g\big(x^n_{i,m} - x^n_{i, m-1}\big) \prod_{(i,j) \in A} dx^n_{i,j}= \int_{Y(A)}\prod_{i = k_1}^{k_2} \prod_{m = a+ 1}^{b} \g\big(x^{\infty}_{i,m} - x^{\infty}_{i, m-1}\big)\prod_{(i,j) \in A} dx^{\infty}_{i,j},$$
which after the change of variables $\tilde{x}_{i,j} = x^n_{i,j} - x^n_{i,j-1}$ for $(i,j) \in A$ is equivalent to
\begin{equation}\label{S11Grewritten}
\begin{split}
& \lim_{n \rightarrow \infty}  \int_{Y(A)}\prod_{i = k_1}^{k_2} \prod_{m = a + 1}^{b-1} \g(\tilde{x}_{i,m} ) \cdot \prod_{i = k_1}^{k_2} \g \Big(y^n_{i,b} - x^n_{i, a} - \sum_{j = a+1}^{b - 1} x_{i, j}\Big) \prod_{(i,j) \in A} d\tilde{x}_{i,j} = \\
&\int_{Y(A)}\prod_{i = k_1}^{k_2} \prod_{m = a + 1}^{b-1} \g(\tilde{x}_{i,m} ) \cdot \prod_{i = k_1}^{k_2} \g \Big(y^\infty_{i,b} - x^\infty_{i, a} - \sum_{j = a+1}^{b- 1} x_{i, j}\Big) \prod_{(i,j) \in A} d\tilde{x}_{i,j}.
\end{split}
\end{equation}
Equation (\ref{S11Grewritten}) is now a consequence of the dominated convergence theorem (see \cite[Theorem 4.16]{Royden}) with dominating function $\| \g\|_{\infty}  \cdot \prod_{i = k_1}^{k_2} \prod_{m = a + 1}^{b-1} \g(\tilde{x}_{i,m} )$. Thus the Generalized dominated convergence theorem is applicable and implies (\ref{S11GDCT}).
\end{proof}

We next prove Lemma \ref{S4AltGibbs}, whose statement is recalled here for the reader's convenience.
\begin{lemma}\label{AltGibbs} Let $H^{RW}$ and $H$ be as in Definition \ref{Pfree}. Fix $\topc \geq 2$, two integers $T_0 < T_1$, and set $\Sigma = \{1,\dots,\topc\}$. Define sets $A =  \llbracket 1, \topc-1 \rrbracket \times \llbracket T_0 + 1, T_1 - 1\rrbracket$, and $B =\Sigma\times \llbracket T_0, T_1 \rrbracket \setminus A$.  Suppose that $\mathbb{P}$ is a probability distribution on  a $\Sigma\times \llbracket T_0, T_1 \rrbracket$-indexed discrete line ensemble $\mathfrak{L} = (L_1, \dots, L_\topc)$. Then, the following two statements are equivalent:
\begin{enumerate}
\item $\mathbb{P}$ satisfies the $(H, H^{RW})$-Gibbs property;
\item For any bounded continuous functions $f_{i,j}$ on $\mathbb{R}$ with $(i,j) \in \llbracket 1, \topc \rrbracket \times \llbracket T_0 , T_1 \rrbracket$, we have
\begin{equation}\label{towerFun}
\begin{split}
& \mathbb{E}\Bigg[\prod_{i = 1}^\topc \prod_{j = T_0}^{T_1} f_{i,j}(  L_i(j)  )\Bigg]  = \\
& \mathbb{E} \Bigg[ \prod_{(i,j) \in B}f_{i,j}( L_i(j) ) \cdot  \mathbb{E}_{H, H^{RW}}^{1, \topc-1, T_0, T_1, \vec{x}, \vec{y},\infty,L_\topc\llbracket T_0,T_1\rrbracket} \bigg[\prod_{(i,j) \in A} f_{i,j}( \tilde{L}_i(j)  ) \bigg]  \Bigg],
\end{split}
\end{equation}
where $\vec{x} =  (L_{1}(T_0), \dots L_{\topc-1}(T_0))$, $\vec{y} = (L_{1}(T_1), \dots L_{\topc-1}(T_1))$ and $\tilde{\mathfrak{L}} = (\tilde{L}_1, \dots, \tilde{L}_{\topc-1})$ is distributed according to
 $ \mathbb{P}_{H, H^{RW}}^{1, \topc-1, T_0, T_1, \vec{x}, \vec{y},\infty,L_\topc\llbracket T_0,T_1\rrbracket} $.
\end{enumerate}
Moreover, if $\vec{z} \in [-\infty, \infty)^{T_1 - T_0 +1}$ and $\vec{x}, \vec{y} \in \mathbb{R}^{\topc-1}$ then $\mathbb{P}_{H, H^{RW}}^{1, \topc-1, T_0, T_1, \vec{x}, \vec{y},\infty,\vec{z}}$ from Definition \ref{Pfree} satisfies the $(H,H^{RW})$-Gibbs property in the sense that (\ref{GibbsEq}) holds for all $1 \leq k_1 \leq k_2 \leq \topc-1$, $T_0 \leq a < b \leq T_1$ and bounded Borel-measurable $F$ on $Y( \llbracket k_1, k_2 \rrbracket \times \llbracket a, b \rrbracket)$.
\end{lemma}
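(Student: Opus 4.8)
The plan is to prove Lemma \ref{AltGibbs} in two parts: first the equivalence of (1) and (2), and then the final sentence asserting that the explicit measure $\mathbb{P}_{H,H^{RW}}^{1,\topc-1,T_0,T_1,\vec{x},\vec{y},\infty,\vec{z}}$ itself enjoys the Gibbs property. Since the question asks specifically about the \emph{final statement}, I focus on that, while noting that it builds on the equivalence just established.

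First I would reduce to the case where all entries of $\vec z$ are finite. Indeed, suppose $\vec z^{(k)} \in \mathbb{R}^{T_1-T_0+1}$ is a sequence with $z^{(k)}_m \nearrow z_m \in [-\infty,\infty)$ for each $m$. By Lemma \ref{ContinuousGibbsCond}, the density of $\mathbb{P}_{H,H^{RW}}^{1,\topc-1,T_0,T_1,\vec{x},\vec{y},\infty,\vec{z}}$ with respect to the free bridge measure $\mathbb{P}_{H^{RW}}^{1,\topc-1,T_0,T_1,\vec{x},\vec{y}}$ is $W_H^{1,\topc-1,T_0,T_1,\infty,\vec z}(\cdot)/Z_{H,H^{RW}}^{1,\topc-1,T_0,T_1,\vec x,\vec y,\infty,\vec z}$, and the continuity statement there (applied with $\vec u = \infty$ fixed, which is allowed since $Y^+(V_T)$ contains the constant $\infty$) shows that these Radon--Nikodym derivatives converge pointwise, hence (by dominated convergence, using $W_H \in (0,1]$ and $Z_{H,H^{RW}} \in (0,1]$ together with the lower bound on $Z$ for the limiting $\vec z$) the measures $\mathbb{P}_{H,H^{RW}}^{1,\topc-1,T_0,T_1,\vec{x},\vec{y},\infty,\vec z^{(k)}}$ converge weakly to $\mathbb{P}_{H,H^{RW}}^{1,\topc-1,T_0,T_1,\vec{x},\vec{y},\infty,\vec z}$. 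Then, once I know the Gibbs property holds for finite $\vec z$, Lemma \ref{S4WeakGibbs} (applied with $\topc$ replaced by $\topc$, viewing the $(\topc-1)$-curve ensemble padded by the deterministic curve $\vec z$ as a $\topc$-curve ensemble, or more directly by a version of \ref{S4WeakGibbs} for the $(\topc-1)$-curve ensemble with frozen bottom data) transfers the property to the limit. Actually it is cleaner to append the deterministic curve $L_\topc \equiv \vec z^{(k)}$ to get a genuine $\Sigma \times \llbracket T_0,T_1\rrbracket$-indexed ensemble, note it satisfies the $(H,H^{RW})$-Gibbs property by the finite case, observe these converge weakly to the ensemble with $L_\topc \equiv \vec z$, and invoke Lemma \ref{S4WeakGibbs} directly; the resulting limit's restriction to the top $\topc-1$ curves is exactly $\mathbb{P}_{H,H^{RW}}^{1,\topc-1,T_0,T_1,\vec{x},\vec{y},\infty,\vec z}$ and still satisfies the Gibbs property by Remark \ref{restrict}.

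It therefore remains to handle finite $\vec z = (z_{T_0},\dots,z_{T_1}) \in \mathbb{R}^{T_1-T_0+1}$. Here I would form the $\Sigma \times \llbracket T_0,T_1\rrbracket$-indexed ensemble $\mathfrak{L} = (L_1,\dots,L_\topc)$ whose top $\topc-1$ curves are distributed according to $\mathbb{P}_{H,H^{RW}}^{1,\topc-1,T_0,T_1,\vec{x},\vec{y},\infty,\vec z}$ and whose bottom curve $L_\topc$ is the deterministic vector $\vec z$ (independent of the rest, which is automatic since it is deterministic). One checks directly from Definition \ref{Pfree} --- writing out the Boltzmann weight \eqref{WH} and the free-bridge density \eqref{RWB} --- that this $\mathbb{P}$ satisfies identity \eqref{towerFun}/\eqref{S4towerFun}: the left side is an integral against the product of $\g$'s reweighted by $e^{-H(\cdots)}$ over the box, and splitting the box into $B$ and the interior $A$, the interior integration factors out precisely as $Z_{H,H^{RW}}$ times the conditional expectation $\mathbb{E}_{H,H^{RW}}^{1,\topc-1,T_0,T_1,\vec x,\vec y,\infty,\vec z}[\cdots]$ --- this is essentially the same bookkeeping as in the proof of Proposition \ref{PropLOG}, Step 2, equations \eqref{LeftTower}--\eqref{RightTower}. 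Having verified \eqref{S4towerFun} for this $\mathbb{P}$, the already-established equivalence (1)$\Leftrightarrow$(2) (more precisely its proof, which shows (2) on the full box implies (1), i.e.\ the conditional law equality \eqref{GibbsEq} on every sub-box) gives that $\mathbb{P}$, and hence its restriction $\mathbb{P}_{H,H^{RW}}^{1,\topc-1,T_0,T_1,\vec{x},\vec{y},\infty,\vec z}$ to the top $\topc-1$ curves, satisfies the $(H,H^{RW})$-Gibbs property for all $1 \le k_1 \le k_2 \le \topc-1$ and $T_0 \le a < b \le T_1$.

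The main obstacle is the interchange in the reduction step: one has to be careful that the convergence $z^{(k)}_m \to z_m$ when some $z_m = -\infty$ is genuinely handled by Lemma \ref{ContinuousGibbsCond} (the continuity there is stated for $\vec v \in Y^-(V_B)$ which does include $-\infty$ coordinates, so this is fine) and that weak convergence of the appended $\topc$-curve ensembles holds, which requires the normalizing constants $Z_{H,H^{RW}}$ to stay bounded away from $0$ along the sequence --- this follows because $Z$ for the limiting $\vec z$ is strictly positive and $Z$ is continuous in its arguments (Remark \ref{ZMeas}/Lemma \ref{ContinuousGibbsCond}), so $Z$ along the sequence is bounded below by a positive constant for large $k$. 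Modulo this care, everything else is the routine density computation already rehearsed elsewhere in the paper, so I would keep that part brief and refer to the structure of \eqref{LeftTower}--\eqref{RightTower}.
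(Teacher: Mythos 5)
Your route to the second part of the lemma is genuinely different from the paper's. The paper's Step 7 re-runs the integral-representation bookkeeping of Steps 3--6 directly for $\vec z \in [-\infty,\infty)^{T_1-T_0+1}$, observing that every formula persists verbatim with $\mu_B$ replaced by a delta mass at $(\vec x,\vec y,\vec z)$ and with $H(-\infty)=0$ making the $-\infty$ entries harmless; no limiting argument appears. You instead reduce to finite $\vec z$ by approximation and then, for finite $\vec z$, append the deterministic bottom curve and observe that \eqref{S4towerFun} holds tautologically (since all of $B$ is deterministic and the interior law on $A$ is by construction $\mathbb{P}_{H,H^{RW}}^{1,\topc-1,T_0,T_1,\vec x,\vec y,\infty,\vec z}$), after which the hard direction $(2)\Rightarrow(1)$ of the first part delivers the conclusion. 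The finite-$\vec z$ step is clean and in fact simpler than the reference to \eqref{LeftTower}--\eqref{RightTower} you make --- no computation is needed.

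The gap is in the reduction. Your preferred route is to append $L_\topc\equiv\vec z^{(k)}$, note weak convergence of the resulting $\topc$-curve ensembles, and ``invoke Lemma \ref{S4WeakGibbs} directly.'' This does not work when $\vec z$ has $-\infty$ entries: the would-be limit is not a $\Sigma\times\llbracket T_0,T_1\rrbracket$-indexed discrete line ensemble in the sense of Definitions \ref{YVec}--\ref{DefDLE}, because $Y(J)$ consists of $\mathbb{R}$-valued functions, so the hypothesis of Lemma \ref{S4WeakGibbs} --- that the weak limit $\mathbb{P}$ is itself such an ensemble --- is violated. Your fallback, ``a version of \ref{S4WeakGibbs} for the $(\topc-1)$-curve ensemble with frozen bottom data,'' is the right fix and can be proved by the same argument as Lemma \ref{S4WeakGibbs} (pass to the limit in \eqref{S4towerFun} using the continuity of $G_f$ in $\vec v\in Y^-(V_B)$ from Lemma \ref{ContinuousGibbsCond}), but it is an additional lemma that must actually be stated and proved, not a corollary of the existing one. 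Two smaller points: the approximating sequence should satisfy $z^{(k)}_m \searrow z_m$ (or just $z^{(k)}_m = z_m$ when $z_m$ is finite and $z^{(k)}_m\to -\infty$ decreasing otherwise), since one cannot increase to $-\infty$; and the final appeal to Remark \ref{restrict} is a red herring --- restricting to the top $\topc-1$ curves only preserves the Gibbs property with $k_2\le\topc-2$, whereas the lemma requires $k_2\le\topc-1$. You should instead observe directly that the $\topc$-curve Gibbs property for $\mathfrak{L}$ with $k_2 = \topc-1$ and bottom boundary $L_\topc=\vec z$ is, by definition, precisely \eqref{GibbsEq} for the $(\topc-1)$-curve measure with frozen bottom data $\vec z$, so nothing is lost.

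Overall your approach is correct in outline and arguably more conceptual, trading the paper's direct-but-notation-heavy recomputation for an approximation argument, at the cost of having to state and prove a frozen-bottom variant of Lemma \ref{S4WeakGibbs}.
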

\begin{proof} Throughout the proof we write $\mathbb{E}_{H, H^{RW}}$ in place of $\mathbb{E}_{H, H^{RW}}^{1, \topc-1, T_0, T_1, \vec{x}, \vec{y},\infty,L_\topc \llbracket T_0, T_1 \rrbracket}$ to ease the notation.  For clarity we split the proof into several steps. In the first step, we show that (1) $\implies$ (2), which is the easy part of the lemma. In Step 2, we reduce the proof of the lemma to establishing a certain equality of expectations of products of indicator functions -- this is (\ref{GibbsRed2}). In Step 3, we show that one can replace in (\ref{towerFun}) the functions $f_{i,j}$ by indicators of half-infinite lines and still have the equality. In Step 4, we use our result from Step 3 to find a suitable integral representation for a tower of conditional expectations as in the second line of (\ref{towerFun}). In Steps 5 and 6, we use our result from Step 4 to find integral representations of the right and left sides of (\ref{GibbsRed2}) and show they are equal. In Step 7, we prove the second part of the lemma, which essentially follows the same approach from Steps 2, 4, 5 and 6 with a few slight modifications. \\

{\bf \raggedleft Step 1.} In this step, we show that (1) $\implies$ (2). Let us fix any bounded continuous function $f$ on $Y(A)$  (here $Y$ is as in Definition \ref{YVec})and let $\mathcal{H}$ denote the set of bounded Borel functions $h$ on $Y(B)$, which satisfy
\begin{equation}\label{towerFunV2}
\begin{split}
& \mathbb{E}\Big[ h\big( \mathfrak{L}\vert_{B} \big)\cdot f\big(  \tilde{\mathfrak{L}}\vert_{A}\big) \Big]  = \mathbb{E} \bigg[ h\big(  \mathfrak{L}\vert_{B} \big) \cdot  \mathbb{E}_{H, H^{RW}}\Big[f\big(  \tilde{\mathfrak{L}}\vert_{A}\big) \Big]  \bigg].
\end{split}
\end{equation}
We recall that $\mathfrak{L}\vert_{B}$ was introduced in Section \ref{Section4.1}, and denoted the restriction of the vector to the coordinates indexed by the set $B$.

Using (\ref{GibbsEq}) with $k_1 = 1, k_2 = \topc-1, a= T_0, b = T_1, F = f$ and the defining properties of conditional expectations we know that
$$1 \in \mathcal{H} \mbox{ and for any numbers $a_{i,j} \in \mathbb{R}$ the function }  h(x_{i,j} :  (i,j) \in B) = \prod_{(i,j) \in B} {\bf 1} \{ x_{i,j} \leq a_{i,j}\} \in \mathcal{H}.$$
 Furthermore, by linearity of expectations we have that if $h_1, h_2 \in \mathcal{H}$ then $h_1 + h_2 \in \mathcal{H}$ and $c h_1 \in \mathcal{H}$ for any $c \in \mathbb{R}$. Finally, suppose that $h_n \in \mathcal{H}$ and $h_n$ is an increasing sequence of non-negative functions that converges to a bounded function $h$. This means that $h_n( \mathfrak{L}\vert_{B}) $ increases almost surely to $h(\mathfrak{L}\vert_{B}) $ and so by the bounded convergence theorem we conclude that $h \in \mathcal{H}$. An application of the Monotone class theorem (see e.g. \cite[Theorem 5.2.2]{Durrett}) shows that $\mathcal{H}$ contains all bounded Borel functions, which in particular proves (\ref{towerFun}). \\

{\bf \raggedleft Step 2.} In the next steps we show that (2) $\implies$  (1), which is the hard part of the proof.  Let us fix $1 \leq k_1 < k_2 \leq \topc-1$ and $T_0 \leq a < b \leq T_1$. We set $D =\llbracket k_1, k_2 \rrbracket \times \llbracket a, b \rrbracket$ and $C = \llbracket k_1, k_2 \rrbracket \times \llbracket a + 1, b - 1 \rrbracket$. Also, for an arbitrary set $J \subset \Sigma \times \llbracket T_0, T_1 \rrbracket$, we put $\mathcal{F}_J = \sigma( L_i(s): (i,s) \in J)$. Then, we want to show that if $F: Y(\llbracket k_1, k_2 \rrbracket \times \llbracket a, b \rrbracket) \rightarrow \mathbb{R}$ is a bounded Borel-measurable function, then $\mathbb{P}$-almost surely
$$\mathbb{E} \Big[ F \big( \mathfrak{L}\vert_{D} \big)  \big{\vert} \mathcal{F}_{A\cup B \setminus C} \Big]  = \mathbb{E}_{H,H^{RW}}^{k_1, k_2, a ,b, \vec{u}, \vec{v},L_{k_1 - 1}\llbracket a ,b \rrbracket,L_{k_2+  1}\llbracket a ,b \rrbracket}  \big[F( \mathfrak{L}') \big],$$
with the convention that $L_0 = \infty$ if $k_1 = 1$. In the above, the $D$-indexed discrete line ensemble $\mathfrak{L}' = (L'_{k_1}, \dots, L'_{k_2})$ is distributed according to $\mathbb{P}_{H,H^{RW}}^{k_1, k_2, a ,b, \vec{u}, \vec{v},L_{k_1 - 1}\llbracket a ,b \rrbracket,L_{k_2+  1}\llbracket a ,b \rrbracket}$, where $\vec{u} = (L_{k_1}(a), \dots, L_{k_2}(a))$, $\vec{v} =  (L_{k_1}(b), \dots, L_{k_2}(b))$. We will write $\mathbb{P}_{H, H^{RW}}'$ for this measure and $\mathbb{E}_{H, H^{RW}}'$ for the corresponding expectation. From the defining properties of conditional expectation, we see that it suffices to prove that for $R \in \mathcal{F}_{A\cup B \setminus C}$ we have
\begin{equation}\label{GibbsRed1}
\mathbb{E} \Big[ {\bf 1}_R \cdot F\big( \mathfrak{L}\vert_{D} \big)  \Big]  = \mathbb{E} \left[{\bf 1}_R  \cdot \mathbb{E}_{H,H^{RW}}'  \big[F( \mathfrak{L}') \big] \right].
\end{equation}

We claim that if we fix $a_{i,j} \in \mathbb{R}$ for $(i,j) \in A \cup B \setminus C$ and $b_{i,j} \in \mathbb{R}$ for $(i,j) \in D$, then
\begin{equation}\label{GibbsRed2}
\begin{split}
&\mathbb{E} \left[ \prod_{(i,j) \in A \cup B \setminus C} {\bf 1}\{ L_i(j) \leq a_{i,j} \} \cdot \prod_{(i,j) \in D} {\bf 1}\{ L_i(j) \leq b_{i,j} \} \right]  = \\
& \mathbb{E} \left[ \prod_{(i,j) \in A \cup B \setminus C} {\bf 1}\{ L_i(j) \leq a_{i,j} \}   \cdot \mathbb{E}_{H,H^{RW}}'  \left[\prod_{(i,j) \in D} {\bf 1}\{ L'_i(j) \leq b_{i,j} \} \right] \right].
\end{split}
\end{equation}
We prove (\ref{GibbsRed2}) in the steps below. Here we assume its validity and conclude the proof of (\ref{GibbsRed1}).\\

Fix $a_{i,j} \in \mathbb{R}$ for $(i, j) \in A\cup B \setminus C$. Let $\mathcal{H}$ denote the set of bounded Borel functions $h$ on $Y(D)$ that satisfy
\begin{equation}\label{S2CDFC5}
\begin{split}
&  \mathbb{E} \left[  \prod_{(i,j) \in A \cup B \setminus C}  \hspace{-4mm} {\bf 1} \{L_i(j) \leq a_{i,j} \} \cdot h\big( \mathfrak{L} \vert_D\big)   \right] = \mathbb{E} \left[  \prod_{(i,j) \in A \cup B \setminus C} \hspace{-4mm}{\bf 1} \{L_i(j) \leq a_{i,j} \} \cdot \mathbb{E}'_{H, H^{RW}}  \Big[ h\big( \mathfrak{L}'\big)  \Big] \right].
\end{split}
\end{equation}
 From (\ref{GibbsRed2}), we know that
$$1 \in \mathcal{H} \mbox{ and for any numbers $b_{i,j} \in \mathbb{R}$ the function }  h( x_{i,j} :  (i,j) \in D) = \prod_{(i,j) \in D} {\bf 1} \{ x_{i,j} \leq b_{i,j}\} \in \mathcal{H}.$$
By linearity of expectation, we have that if $h_1, h_2 \in \mathcal{H}$ then $h_1 + h_2 \in \mathcal{H}$ and $c h_1 \in \mathcal{H}$ for any $c \in \mathbb{R}$. Moreover, if $h_n$ is an increasing sequence of non-negative functions that converges to a bounded function $h$, then by the bounded convergence theorem $\mathbb{P}$-almost surely
\begin{equation*}
\begin{split}
&\lim_{n \rightarrow \infty} \mathbb{E}'_{H, H^{RW}} \Big[ h_n\big( \mathfrak{L}' \big) \Big] = \mathbb{E}'_{H, H^{RW}} \Big[ h\big( \mathfrak{L}' \big) \Big],
\end{split}
\end{equation*}
 which after a second application of the bounded convergence theorem implies that $h \in \mathcal{H}$. By the Monotone class theorem, we conclude that $\mathcal{H}$ contains all bounded Borel functions. In particular, it contains the function $F$.

Finally, let us fix a bounded Borel function $F$ on $Y(D)$. Suppose that $\mathcal{R}$ denotes the collection of sets $R  \in \mathcal{F}_{A \cup B \setminus C} $, such that (\ref{GibbsRed1}) holds. Then, it is clear that $\Omega \in \mathcal{R}$, and by the bounded convergence theorem if $R_n \uparrow R$ with $R_n \in \mathcal{R}$, then $R \in \mathcal{R}$. Moreover, from (\ref{S2CDFC5}), applied to $h = F$, we know that $\mathcal{R}$ contains the $\pi$-system of sets $\{ L_i(j) \leq a_{i,j}: (i,j) \in A\cup B \setminus C\}$ with $a_{i,j} \in \mathbb{R}$ for $(i,j) \in  A\cup B \setminus C$. It follows from the $\pi-\lambda$ Theorem (see e.g. \cite[Theorem 2.1.6]{Durrett}) that $\mathcal{R} = \mathcal{F}_{A \cup B \setminus C} $, which proves (\ref{GibbsRed1}). Since $k_1, k_2, a ,b$ and $F$ were arbitrary we conclude that $\mathbb{P}$ satisfies the $(H,H^{RW})$-Gibbs property.\\

{\bf \raggedleft Step 3.} In this and the next steps, we establish (\ref{GibbsRed2}). Fix $c_{i,j} \in \mathbb{R}$ for $(i,j) \in A \cup B$ and define
$$\hat{f}\big( x_{i,j}: (i,j) \in A \big) = \prod_{(i,j) \in A} {\bf 1} \{x_{i,j} \leq c_{i,j} \} ,\qquad \hat{g}\big( x_{i,j}: (i,j) \in B \big) = \prod_{(i,j) \in B} {\bf 1} \{x_{i,j} \leq c_{i,j} \}.$$
Observe that $\hat{f}$ (resp. $\hat{g}$) is a bounded measurable function on $Y(A)$ (resp. $Y(B)$). The purpose of this step is to establish that
\begin{equation}\label{S2BCT2}
\begin{split}
& \mathbb{E}\Big[ \hat{f}\big(  \mathfrak{L}\vert_A\big) \hat{g}\big( \mathfrak{L}\vert_B \big) \Big]  =  \mathbb{E} \bigg[ \hat{g}\big( \mathfrak{L}\vert_B\big) \cdot  \mathbb{E}_{H, H^{RW}}\Big[ \hat{f}\big( \tilde{ \mathfrak{L}}\vert_A \big) \Big]  \bigg].
\end{split}
\end{equation}
If $\hat{f}$ and $\hat{g}$ were products of bounded continuous functions, the above equation would follow from (\ref{towerFun}). However, the indicator functions in the definition of $\hat{f}$ and $\hat{g}$ are not continuous but can be approximated by continuous functions $\hat{f}_n$ and $\hat{g}_n$, and then (\ref{S2BCT2}) would follow from taking the limit of (\ref{towerFun}) applied to $\hat{f}_n$ and $\hat{g}_n$. We supply the details below.\\

For $r \in \mathbb{R}$ and $n \in \mathbb{N}$ we define
$$h_n(x;r) = \begin{cases} 0 &\mbox{ if $x > r + n^{-1}$} \\ 1 - n(x - r) &\mbox{ if $x \in [r, r+ n^{-1}]$} \\ 1 &\mbox{ if $x < r$}.   \end{cases}$$
Let us set
$$\hat{f}_n\big(x_{i,j}: (i,j) \in A \big) = \prod_{(i,j) \in A} h_n(x_{i,j}; c_{i,j}) ,\qquad \hat{g}_n\big( x_{i,j}: (i,j) \in B \big) = \prod_{(i,j) \in B} h_n(x_{i,j}; c_{i,j}),$$
Clearly, $\hat{f}_n$ (resp. $\hat{g}_n$) are bounded continuous functions on $Y(A)$ (resp. $Y(B)$). From (\ref{towerFun}) we get
\begin{equation}\label{S2BCT}
\begin{split}
& \mathbb{E}\Big[  \hat{g}_n\big(\mathfrak{L}\vert_B\big) \hat{f}_n\big(  \mathfrak{L}\vert_A\big) \Big]  = \mathbb{E} \bigg[ \hat{g}_n\big(\mathfrak{L}\vert_B\big) \cdot  \mathbb{E}_{H, H^{RW}}\Big[ \hat{f}_n\big(  \tilde{\mathfrak{L}}\vert_A \big)\Big]  \bigg].
\end{split}
\end{equation}
 We first observe that for any deterministic $ \vec{x} \in Y( \llbracket 1, \topc-1 \rrbracket \times \{T_0\})$, $\vec{y} \in Y( \llbracket 1, \topc-1 \rrbracket \times \{T_1\})$ and $\vec{z} \in Y(\{\topc\} \times \llbracket T_0, T_1 \rrbracket)$  we have that
$$\lim_{n \rightarrow \infty} \mathbb{E}_{H, H^{RW}}^{1, \topc-1, T_0, T_1, \vec{x}, \vec{y},\infty,\vec{z}} \Big[ \hat{f}_n\big( \tilde{\mathfrak{L}}'\vert_A \big) \Big] = \mathbb{E}_{H,H^{RW}}^{1, \topc-1, T_0, T_1, \vec{x}, \vec{y},\infty,\vec{z}} \Big[ \hat{f}\big( \tilde{\mathfrak{L}}'\vert_A \big) \Big],$$
as a consequence of the bounded convergence theorem. In the above $\tilde{\mathfrak{L}}'$ is $ \mathbb{P}_{H, H^{RW}}^{1, \topc-1, T_0, T_1, \vec{x}, \vec{y},\infty, \vec{z}} $-distributed. It follows that $\mathbb{P}$-almost surely
\begin{equation*}
\begin{split}
\lim_{n \rightarrow \infty} &\hat{f}_n\big( \mathfrak{L}\vert_A \big) \hat{g}_n\big( \mathfrak{L}\vert_B\big) = \hat{f}\big(  \mathfrak{L}\vert_A \big) \hat{g}\big( \mathfrak{L}\vert_B\big),\\
\lim_{n \rightarrow \infty} &\hat{g}_n\big( \mathfrak{L}\vert_B\big) \cdot  \mathbb{E}_{H, H^{RW}} \Big[ \hat{f}_n\big(  \tilde{\mathfrak{L}}\vert_A\big) \Big]=  \hat{g}\big( \mathfrak{L}\vert_B\big) \cdot  \mathbb{E}_{H, H^{RW}}\Big[ \hat{f}\big(  \tilde{\mathfrak{L}}\vert_A\big) \Big].
\end{split}
\end{equation*}
Since all of the above functions are uniformly bounded in $[0,1]$ we conclude by (\ref{S2BCT}) and the bounded convergence theorem that (\ref{S2BCT2}) holds.\\

{\bf \raggedleft Step 4.} In this step we find an integral representation of the right side of (\ref{S2BCT2}). Suppose we are given $c_{i,j} \in \mathbb{R}$ for $(i,j) \in A$, and then define the function $G_1: Y(B) \rightarrow \mathbb{R}$ via
\begin{equation}\label{GOMFG}
G_1( x_{i,j} : (i,j) \in B) = \mathbb{E}_{H,H^{RW}}^{1, \topc-1, T_0 ,T_1, \vec{x}, \vec{y}, \infty ,L_{\topc}\llbracket T_0 ,T_1 \rrbracket}  \left[\prod_{(i,j) \in A} {\bf 1}\{ L'_i(j) \leq c_{i,j} \} \right],
\end{equation}
where $\vec{x} = (x_{1,T_0}, \dots, x_{\topc-1,T_0})$, $\vec{y} = (x_{1,T_1}, \dots, x_{\topc-1,T_1})$, and $L_{\topc}\llbracket T_0 ,T_1 \rrbracket = (x_{\topc,T_0}, \dots, x_{\topc,T_1})$. Also $\mathfrak{L}' = (L'_{1}, \dots, L'_{\topc-1})$ is $ \mathbb{P}_{H,H^{RW}}^{1, \topc-1, T_0 ,T_1, \vec{x}, \vec{y}, \infty ,L_{\topc}\llbracket T_0,T_1 \rrbracket}  $-distributed. By definition of $\mathbb{E}_{H,H^{RW}}$, we see that
\begin{equation}\label{GOMFG2}
\begin{split}
&G_1(x_{i,j}: (i,j) \in B) = \int_{Y(A)} \hspace{-5mm} \frac{\exp \left( - \sum_{i =1}^{\topc-1}  \sum_{ m = T_0}^{T_1-1}{ H} (y_{i + 1, m+1} - y_{i,m}) \right) }{Z_{H, H^{RW}}^{1, \topc-1,T_0 ,T_1, \vec{x}, \vec{y},\infty, L_{\topc}\llbracket T_0,T_1\rrbracket }}     \\
&  \prod_{i = 1}^{\topc-1} \frac{ \prod_{m = T_0 + 1}^{T_1} \g(y_{i,m} - y_{i, m-1}) }{\g^{x_{i,T_0}}_{b - a }(x_{i, T_1}) } \prod_{(i,j) \in A}{\bf 1} \{y_{i,j} \leq c_{i,j} \}   \prod_{(i,j) \in A} dy_{i,j},
\end{split}
\end{equation}
where we recall that $\g$ and $\g_n^x$ were defined in Definition \ref{DefBridge} and also that $Y(A)$ is naturally identified with $\mathbb{R}^{|A|}$ and then $\prod_{(i,j) \in A} dy_{i,j}$ stands for the usual Lebesgue measure on this space. Also, in (\ref{GOMFG2}) we use the convention that $y_{i,j} = x_{i,j}$ for $(i,j) \in B$.

From Lemma \ref{ContinuousGibbsCond} we know that $G_1$ is bounded and measurable on $Y(B)$. We thus conclude that the right side of (\ref{S2BCT2}) can be rewritten as
\begin{equation}\label{GOMFG3}
\begin{split}
&\mathbb{E} \bigg[ \hat{g}\big( \mathfrak{L}\vert_B\big) \cdot  \mathbb{E}_{H, H^{RW}}\Big[ \hat{f}\big( \tilde{ \mathfrak{L}}\vert_A \big) \Big]  \bigg] = \mathbb{E} \left[\hat{g}\big( \mathfrak{L}\vert_B\big) \cdot G_1(L_i(j): (i,j) \in B)  \right] = \\
& \int_{Y(B)} \int_{Y(A)} \prod_{i = 1}^{\topc-1} \frac{\prod_{m = T_0 + 1}^{T_1} \g(x_{i,m} - x_{i, m-1}) }{\g^{x_{i,T_0}}_{T_1 - T_0 }(x_{i, T_1}) }  \frac{\exp \left( - \sum_{i = 1}^{\topc-1}  \sum_{ m = T_0}^{T_1-1}{ H} (x_{i + 1, m+1} - x_{i,m}) \right) }{Z_{H, H^{RW}}^{1, \topc-1,T_0 ,T_1, \vec{x}, \vec{y},\infty,  L_\topc\llbracket T_0, T_1\rrbracket} } \\
& \prod_{(i,j) \in A }{\bf 1} \{x_{i,j} \leq c_{i,j} \}   \prod_{(i,j) \in A} dx_{i,j} \times  \prod_{(i,j) \in  B}{\bf 1} \{x_{i,j} \leq c_{i,j} \}  \mu_B(dx_{i,j} : (i,j) \in B),
\end{split}
\end{equation}
where $\mu_B$ the push-forward measure of $\mathbb{P}$ on $Y(B)$ obtained by the projection $ \mathfrak{L} \rightarrow \mathfrak{L}\vert_B$. This is the integral representation we were after.\\

{\bf \raggedleft Step 5.} In this step we find an integral representation of the second line of (\ref{GibbsRed2}) by utilizing (\ref{S2BCT2}) and (\ref{GOMFG3}).
If we take the limit $c_{i,j}\rightarrow \infty$ for $(i,j) \in C$ on the left side of (\ref{S2BCT2}), and in (\ref{GOMFG3}), and apply the monotone convergence theorem we conclude
\begin{equation*}
\begin{split}
&\mathbb{P}\big( \{ L_i(j) \leq c_{i,j}: (i,j) \in A \cup B \setminus C \}\big) =  \int_{Y(B)} \int_{Y(A)} \prod_{i = 1}^{\topc-1} \frac{\prod_{m = T_0 + 1}^{T_1} \g(x_{i,m} - x_{i, m-1}) }{\g^{x_{i,T_0}}_{T_1 - T_0 }(x_{i, T_1}) } \times \\
& \frac{\exp \left( - \sum_{i = 1}^{\topc-1}  \sum_{ m = T_0}^{T_1-1}{ H} (x_{i + 1, m+1} - x_{i,m}) \right) }{Z_{H, H^{RW}}^{1, \topc-1,T_0 ,T_1, \vec{x}, \vec{y},\infty,  L_\topc\llbracket T_0, T_1\rrbracket} }  \prod_{(i,j) \in A \cup B \setminus C} \hspace{-5mm}{\bf 1} \{x_{i,j} \leq c_{i,j} \} \prod_{(i,j) \in A} \hspace{-2mm}dx_{i,j} \cdot \mu_B(dx_{i,j} : (i,j) \in B).
\end{split}
\end{equation*}

The above formula gives us an expression for the joint cumulative distribution of $\mathfrak{L} \vert_{A\cup B \setminus C}$. In particular, say by the Monotone class theorem, we conclude that for any bounded Borel-measurable $G_2$ on $Y(A \cup B \setminus C)$ we have
\begin{equation}\label{S2CDFC}
\begin{split}
&\mathbb{E}\big[G_2 \big( \mathfrak{L} \vert_{A \cup B \setminus C} \big)\big] =  \int_{Y(B)} \int_{Y(A)} \prod_{i = 1}^{\topc-1} \frac{\prod_{m = T_0 + 1}^{T_1} \g(x_{i,m} - x_{i, m-1}) }{\g^{x_{i,T_0}}_{T_1 - T_0 }(x_{i, T_1}) } \times \\
& \frac{\exp \left( - \sum_{i = 1}^{\topc-1}  \sum_{ m = T_0}^{T_1-1}{ H} (x_{i + 1, m+1} - x_{i,m}) \right) }{Z_{H, H^{RW}}^{1, \topc-1,T_0 ,T_1, \vec{x}, \vec{y},\infty,  L_\topc\llbracket T_0, T_1\rrbracket} }  \times   \\
&  G_2(x_{i,j} : (i,j) \in A \cup B \setminus C) \prod_{(i,j) \in A} dx_{i,j} \cdot \mu_B(dx_{i,j} : (i,j) \in B).
\end{split}
\end{equation}

Let us fix $b_{i,j} \in \mathbb{R}$ for $(i,j) \in D$, and define the function $G_3: Y(A \cup B \setminus C) \rightarrow \mathbb{R}$ through
\begin{equation}\label{S2CDFC3}
\begin{split}
&G_3(x_{i,j}: (i,j) \in A \cup B \setminus C) = \mathbb{E}_{H,H^{RW}}^{k_1, k_2, a ,b, \vec{u}, \vec{v},L_{k_1 - 1}\llbracket a ,b \rrbracket,L_{k_2+  1}\llbracket a ,b \rrbracket}  \left[\prod_{(i,j) \in D} {\bf 1}\{ L'_i(j) \leq b_{i,j} \} \right],
\end{split}
\end{equation}
where $\vec{u} = (x_{k_1,a}, \dots, x_{k_2,a})$, $\vec{v} = (x_{k_1,b}, \dots, x_{k_2,b})$, $L_{k_1 - 1}\llbracket a ,b \rrbracket = (x_{k_1-1, a}, \dots, x_{k_1-1,b})$ if $k_1 \geq 2$ and $L_{k_1 - 1} = (\infty)^{b-a+1}$ if $k_1 = 1$, and $L_{k_2+  1}\llbracket a ,b \rrbracket = (x_{k_2 + 1,a}, \dots, x_{k_2 + 1,b})$. Also $\mathfrak{L}' = (L'_{k_1}, \dots, L'_{k_2})$ is $ \mathbb{P}_{H,H^{RW}}^{k_1, k_2, a ,b, \vec{u}, \vec{v},L_{k_1 - 1}\llbracket a ,b \rrbracket,L_{k_2+  1}\llbracket a ,b \rrbracket}  $-distributed. By definition of $\mathbb{E}_{H,H^{RW}}$ we see that
\begin{equation}\label{S2CDFC4}
\begin{split}
&G_3(x_{i,j}: (i,j) \in A \cup B \setminus C) = \int_{Y(C)} \hspace{-5mm} \frac{\exp \left( - \sum_{i = k_1-1}^{k_2}  \sum_{ m = a}^{b-1}{ H} (y_{i + 1, m+1} - y_{i,m}) \right) }{Z_{H, H^{RW}}^{k_1, k_2,a ,b, \vec{u}, \vec{v},L_{k_1 - 1}\llbracket a,b\rrbracket, L_{k_2 +1}\llbracket a,b\rrbracket }}     \\
&  \prod_{i = k_1}^{k_2} \frac{ \prod_{m = a + 1}^{b} \g(y_{i,m} - y_{i, m-1}) }{\g^{x_{i,a}}_{b - a }(x_{i, b}) } \prod_{(i,j) \in D}{\bf 1} \{y_{i,j} \leq b_{i,j} \}   \prod_{(i,j) \in C} dy_{i,j}.
\end{split}
\end{equation}
From Lemma \ref{ContinuousGibbsCond} we know that $G_3$ is bounded and measurable on $Y(A \cup B \setminus C)$. It follows from (\ref{S2CDFC}), applied to the function
$$G_2 (x_{i,j} : (i,j) \in A \cup B \setminus C) = G_3(x_{i,j} : (i,j) \in A \cup B \setminus C) \cdot \prod_{(i,j) \in A \cup B \setminus C} {\bf 1} \{x_{i,j} \leq a_{i,j}\},$$
for $a_{i,j} \in \mathbb{R}$ for $(i,j) \in A \cup B \setminus C$ that the second line of (\ref{GibbsRed2}) is equal to
\begin{equation}\label{S2CDFC5W2}
\begin{split}
& \int_{Y(B)} \int_{Y(A)} \int_{Y(C)} \hspace{-5mm} \frac{\exp \left( - \sum_{i = k_1-1}^{k_2}  \sum_{ m = a}^{b-1}{ H} (y_{i + 1, m+1} - y_{i,m}) \right) }{Z_{H, H^{RW}}^{k_1, k_2,a ,b, \vec{u}, \vec{v},L_{k_1 - 1}\llbracket a,b\rrbracket, L_{k_2 +1}\llbracket a,b\rrbracket }}    \prod_{i = k_1}^{k_2} \frac{ \prod_{m = a + 1}^{b} \g(y_{i,m} - y_{i, m-1}) }{\g^{x_{i,a}}_{b - a }(x_{i, b}) }  \\
&    \frac{\exp \left( - \sum_{i = 1}^{\topc-1}  \sum_{ m = T_0}^{T_1-1}{ H} (x_{i + 1, m + 1} - x_{i,m}) \right) }{Z_{H, H^{RW}}^{1, \topc-1,T_0 ,T_1, \vec{x}, \vec{y},\infty, L_\topc\llbracket T_0, T_1\rrbracket} }  \prod_{i = 1}^{\topc-1} \frac{\prod_{m = T_0 + 1}^{T_1} \g(x_{i,m} - x_{i, m-1}) }{\g^{x_{i,T_0}}_{T_1 - T_0 }(x_{i, T_1}) }    \\
&  \prod_{(i,j) \in D}{\bf 1} \{y_{i,j} \leq b_{i,j} \} \hspace{-3mm}   \prod_{(i,j) \in A \cup B \setminus C}  \hspace{-3mm}   {\bf 1} \{x_{i,j} \leq a_{i,j}\}  \prod_{(i,j) \in C} dy_{i,j} \prod_{(i,j) \in A} dx_{i,j}  \cdot \mu_B( dx_{i,j} : (i,j) \in B),
\end{split}
\end{equation}
where we use the convention that $y_{i,j} = x_{i,j}$ if $(i,j) \not \in C$ and $x_{0,j} = \infty$, $\vec{u} = (x_{k_1, a}, \dots, x_{k_2, a})$, $\vec{v} = (x_{k_1, b}, \dots, x_{k_2, b})$, $L_{k}\llbracket a,b \rrbracket = (x_{k,a}, \dots x_{k, b})$ if $k \geq 1$ and $L_{k}\llbracket a,b \rrbracket = (\infty)^{b-a+1}$ if $k = 0$. Also, $\vec{x} = (x_{1, T_0}, \dots, x_{\topc-1, T_0})$ and $\vec{y} = (x_{1, T_1}, \dots, x_{\topc -1,T_1})$. This is our desired form of the second line of (\ref{GibbsRed2}).\\

{\bf \raggedleft Step 6.} In this step we find a suitable representation of the first line of (\ref{GibbsRed2}), by utilizing (\ref{S2BCT2}) and (\ref{GOMFG3}), and then see that it agrees with (\ref{S2CDFC5W2}). We apply (\ref{S2BCT2})  for $c_{i,j} = a_{i,j}$ if $(i,j) \in A \cup B\setminus D$, $c_{i,j} = b_{i,j}$ for $(i,j) \in C$ and $c_{i,j} = \min (a_{i,j} ,b_{i,j})$ for $(i,j) \in  D \setminus C$. This allows us to rewrite the first line of (\ref{GibbsRed2}) as (see also (\ref{GOMFG3}) )
\begin{equation}\label{S2CDFCV2}
\begin{split}
&  \int_{Y(B)} \int_{Y(A)} \prod_{i = 1}^{\topc-1} \frac{\prod_{m = T_0 + 1}^{T_1} \g(x_{i,m} - x_{i, m-1}) }{\g^{x_{i,T_0}}_{T_1 - T_0 }(x_{i, T_1}) }  \frac{\exp \left( - \sum_{i = 1}^{\topc-1}  \sum_{ m = T_0}^{T_1-1}{ H} (x_{i + 1, m+1} - x_{i,m}) \right) }{Z_{H, H^{RW}}^{1, \topc-1,T_0 ,T_1, \vec{x}, \vec{y},\infty,  L_\topc\llbracket T_0, T_1\rrbracket} } \\
&\prod_{(i,j) \in D}{\bf 1} \{x_{i,j} \leq b_{i,j} \}   \prod_{(i,j) \in A \cup B \setminus C}{\bf 1} \{x_{i,j} \leq a_{i,j} \}  \prod_{(i,j) \in A} dx_{i,j} \cdot \mu_B( dx_{i,j} : (i,j) \in B).
\end{split}
\end{equation}
where $\vec{x} = (x_{1, T_0}, \dots, x_{\topc-1, T_0})$, $\vec{y} = (x_{1, T_1}, \dots, x_{\topc-1 ,T_1})$ and $ L_\topc\llbracket T_0, T_1\rrbracket = (x_{\topc,T_0}, \dots, x_{\topc, T_1})$. In deriving the above, we also used that ${\bf 1} \{ x\leq\min(a,b) \} = {\bf 1} \{ x\leq a \}\cdot {\bf 1} \{ x\leq b \}$.

What remains to prove (\ref{GibbsRed2}) is to show that the expressions in (\ref{S2CDFCV2}) and (\ref{S2CDFC5W2}) are equal. We start by performing the integration in (\ref{S2CDFC5W2})  over $x_{i,j}$ with $(i,j) \in C$. This gives
\begin{equation*}
\begin{split}
& \int_{Y(B)} \int_{Y(A \setminus C)} \int_{Y(C)}   \hspace{-5mm}  \frac{\exp \left( - \sum_{i = k_1-1}^{k_2}  \sum_{ m =a}^{b-1}{ H} (y_{i + 1, m+1} - y_{i,m}) \right) }{Z_{H, H^{RW}}^{k_1, k_2,a ,b, \vec{u}, \vec{v},L_{k_1 - 1}\llbracket a,b \rrbracket, L_{k_2 +1}\llbracket a,b \rrbracket }} \hspace{-1mm}  \prod_{i = k_1}^{k_2} \hspace{-1mm}\frac{ \prod_{m = a + 1}^{b} \g(y_{i,m} - y_{i, m-1}) }{\g^{x_{i,a}}_{b - a }(x_{i, b}) } \\
&    \prod_{i = 1}^{\topc-1} \frac{1}{\g^{x_{i,T_0}}_{T_1 - T_0 }(x_{i, T_1}) } \cdot \prod_{(i,j) \in W_1}  \g(x_{i,j} - x_{i, j-1})   \frac{\prod_{i = k_1}^{k_2} \g^{x_{i,a}}_{b - a }(x_{i, b})\cdot  Z_{H, H^{RW}}^{k_1, k_2,a ,b, \vec{u}, \vec{v},L_{k_1 - 1}\llbracket a,b \rrbracket, L_{k_2 +1}\llbracket a,b \rrbracket } }{Z_{H, H^{RW}}^{1, \topc-1,T_0 ,T_1, \vec{x}, \vec{y},\infty, L_\topc\llbracket T_0, T_1\rrbracket} }  \\
&\exp \Big( - \sum_{ (i,j) \in W_2} { H} (x_{i + 1, j + 1} - x_{i,j})  \Big)  \prod_{(i,j) \in D}{\bf 1} \{y_{i,j} \leq b_{i,j} \}  \prod_{(i,j) \in A \cup B \setminus C} \hspace{-3mm} {\bf 1} \{x_{i,j} \leq a_{i,j} \}  \\
& \prod_{(i,j) \in C} dy_{i,j}  \prod_{(i,j) \in A \setminus C} dx_{i,j} \cdot \mu_B( dx_{i,j} : (i,j) \in B).
\end{split}
\end{equation*}
where $W_1 = \llbracket 1, \topc-1 \rrbracket \times \llbracket T_0 + 1, T_1 \rrbracket \setminus \llbracket k_1, k_2 \rrbracket \times \llbracket a+1, b \rrbracket$, and $W_2 = \llbracket 1, \topc -1 \rrbracket \times \llbracket T_0, T_1 -1 \rrbracket \setminus \llbracket k_1-1 , k_2 \rrbracket \times \llbracket a ,b -1 \rrbracket$. Upon relabeling $y_{i,j}$ with $x_{i,j}$ for $(i,j) \in C$ in the last expression and performing a bit of cancellations, we recognize (\ref{S2CDFCV2}).\\

{\bf \raggedleft Step 7.} In this step we prove the second part of the lemma. We use the same notation as in the previous steps (e.g. the sets $A,B,C,D$). Below we write $\mathbb{E}$ to stand for $\mathbb{E}_{H,H^{RW}}^{1, \topc-1, T_0 ,T_1, \vec{x}, \vec{y}, \infty ,\vec{z}} $. The essential argument below is to find an analogue of (\ref{GibbsRed2}), and then the arguments in Step 2 can be repeated to conclude the second part of the lemma. This analogue is given in (\ref{GibbsRed2v2}) and the way it is obtained is by repeating many of the arguments from Step 3-6 above. The only difference that is happening is that the formulas we derive below will not depend on $B$, and the reason behind this is that the $K$-th curve $L_K$, as well as the time $T_0$ and time $T_1$ distribution of the ensemble are deterministically fixed (the left boundary is $\vec{x}$, the right boundary is $\vec{y}$ and the $K$-th curve equals $\vec{z}$) in this part of the lemma, while before they were random. Even though there are no new ideas in the derivation, we still supply the formulas as they are somewhat complicated and possibly not immediate from our previous work. After a careful comparison of the formulas below with their earlier counterparts, the reader should be able to see that they are the same except that $\mu_B$ is being replaced by a delta function at the deterministic boundary formed by $\vec{x}, \vec{y}, \vec{z}$.

Let $\hat{f}(x_{i,j} :(i,j) \in A) = \prod_{(i,j) \in A} {\bf 1}\{ x_{i,j} \leq c_{i,j} \}$ be as in Step 3. Analogously to (\ref{GOMFG2}), we find
\begin{equation}\label{GOMFGV2}
\begin{split}
& \mathbb{E}  \left[\prod_{(i,j) \in A} {\bf 1}\{ L_i(j) \leq c_{i,j} \} \right] =\int_{Y(A)}\prod_{i = 1}^{\topc-1} \frac{ \prod_{m = T_0 + 1}^{T_1} \g(y_{i,m} - y_{i, m-1}) }{\g^{x_{i,T_0}}_{b - a }(x_{i, T_1}) }    \\
&  \frac{\exp \left( - \sum_{i =1}^{\topc-1}  \sum_{ m = T_0}^{T_1-1}{ H} (y_{i + 1, m+1} - y_{i,m}) \right) }{Z_{H, H^{RW}}^{1, \topc-1,T_0 ,T_1, \vec{x}, \vec{y},\infty, \vec{z}} }   \prod_{(i,j) \in A}{\bf 1} \{y_{i,j} \leq c_{i,j} \}   \prod_{(i,j) \in A} dy_{i,j},
\end{split}
\end{equation}
where $y_{\topc, m+1} = z_{m+1}$ (the coordinates of $\vec{z}$ are indexed by $\llbracket T_0, T_1 \rrbracket$). Repeating the arguments in the beginning of Step 5, we conclude that for any bounded Borel-measurable $G_2$ on $Y(A \setminus C)$
\begin{equation}\label{S2CDFCV2V2}
\begin{split}
&\mathbb{E}\big[G_2 \big( \mathfrak{L} \vert_{A \setminus C} \big)\big] =   \int_{Y(A)} \prod_{i = 1}^{\topc-1} \frac{\prod_{m = T_0 + 1}^{T_1} \g(x_{i,m} - x_{i, m-1}) }{\g^{x_{i,T_0}}_{T_1 - T_0 }(x_{i, T_1}) } \times \\
& \frac{\exp \left( - \sum_{i = 1}^{\topc-1}  \sum_{ m = T_0}^{T_1-1}{ H} (x_{i + 1, m+1} - x_{i,m}) \right) }{Z_{H, H^{RW}}^{1, \topc-1,T_0 ,T_1, \vec{x}, \vec{y},\infty,  \vec{z}} }  G_2(x_{i,j} : (i,j) \in A  \setminus C) \prod_{(i,j) \in A} dx_{i,j} ,
\end{split}
\end{equation}
where again $x_{\topc, m+1} = z_{m+1}$. Repeating the same arguments in the rest of Step 5, we arrive at the following analogue of (\ref{S2CDFC5W2}) for any $a_{i,j}  \in \mathbb{R}$ for $(i,j) \in A\setminus C$ and $b_{i,j} \in \mathbb{R}$ for $(i,j) \in D$

\begin{equation}\label{S2CDFC5V2}
\begin{split}
& \mathbb{E} \left[ \prod_{(i,j) \in A \setminus C} {\bf 1}\{ L_i(j) \leq a_{i,j} \}   \cdot \mathbb{E}_{H,H^{RW}}'  \left[\prod_{(i,j) \in D} {\bf 1}\{ L'_i(j) \leq b_{i,j} \} \right] \right]  = \\
&\int_{Y(A)} \int_{Y(C)} \hspace{-5mm} \frac{\exp \left( - \sum_{i = k_1-1}^{k_2}  \sum_{ m = a}^{b-1}{ H} (y_{i + 1, m+1} - y_{i,m}) \right) }{Z_{H, H^{RW}}^{k_1, k_2,a ,b, \vec{u}, \vec{v},L_{k_1 - 1}\llbracket a,b\rrbracket, L_{k_2 +1}\llbracket a,b\rrbracket }}    \prod_{i = k_1}^{k_2} \frac{ \prod_{m = a + 1}^{b} \g(y_{i,m} - y_{i, m-1}) }{\g^{x_{i,a}}_{b - a }(x_{i, b}) }  \\
&    \frac{\exp \left( - \sum_{i = 1}^{\topc-1}  \sum_{ m = T_0}^{T_1-1}{ H} (x_{i + 1, m + 1} - x_{i,m}) \right) }{Z_{H, H^{RW}}^{1, \topc-1,T_0 ,T_1, \vec{x}, \vec{y},\infty, \vec{z}} }  \prod_{i = 1}^{\topc-1} \frac{\prod_{m = T_0 + 1}^{T_1} \g(x_{i,m} - x_{i, m-1}) }{\g^{x_{i,T_0}}_{T_1 - T_0 }(x_{i, T_1}) }    \\
&  \prod_{(i,j) \in D}{\bf 1} \{y_{i,j} \leq b_{i,j} \} \hspace{-3mm}   \prod_{(i,j) \in A \setminus C}  \hspace{-3mm}   {\bf 1} \{x_{i,j} \leq a_{i,j}\}  \prod_{(i,j) \in C} dy_{i,j} \prod_{(i,j) \in A} dx_{i,j}  ,
\end{split}
\end{equation}
where we use the convention that $y_{i,j} = x_{i,j}$ if $(i,j) \not \in C$ and $x_{0,j} = \infty$, $\vec{u} = (x_{k_1, a}, \dots, x_{k_2, a})$, $\vec{v} = (x_{k_1, b}, \dots, x_{k_2, b})$, $L_{k}\llbracket a,b \rrbracket = (x_{k,a}, \dots x_{k, b})$ if $k \geq 1$ and $L_{k}\llbracket a,b \rrbracket = (\infty)^{b-a+1}$ if $k = 0$. Also, $\vec{x} = (x_{1, T_0}, \dots, x_{\topc-1, T_0})$ and $\vec{y} = (x_{1, T_1}, \dots, x_{\topc-1 ,T_1})$ and $x_{\topc,i} = z_i$ for $i \in \llbracket T_0, T_1 \rrbracket$. On the left side of (\ref{S2CDFC5V2}) we have that $\mathfrak{L}' = (L'_{k_1}, \dots, L'_{k_2})$ is distributed according to $\mathbb{P}_{H,H^{RW}}^{k_1, k_2, a ,b, \vec{u}, \vec{v},L_{k_1 - 1}\llbracket a ,b \rrbracket,L_{k_2+  1}\llbracket a ,b \rrbracket}$, where $\vec{u} = (L_{k_1}(a), \dots, L_{k_2}(a))$, $\vec{v} =  (L_{k_1}(b), \dots, L_{k_2}(b))$ and we write $\mathbb{P}_{H, H^{RW}}'$ for this measure and $\mathbb{E}_{H, H^{RW}}'$ for the corresponding expectation. In the latter, we have adopted the convention that $L_0 \llbracket T_0, T_1 \rrbracket = (\infty)^{T_1 - T_0 + 1}$ and $L_\topc\llbracket T_0, T_1 \rrbracket = \vec{z}$.

Performing the integration in (\ref{S2CDFC5V2}) over $x_{i,j}$ with $(i,j) \in C$ gives
\begin{equation*}
\begin{split}
& \int_{Y(A \setminus C)} \int_{Y(C)}   \hspace{-5mm}  \frac{\exp \left( - \sum_{i = k_1-1}^{k_2}  \sum_{ m =a}^{b-1}{ H} (y_{i + 1, m+1} - y_{i,m}) \right) }{Z_{H, H^{RW}}^{k_1, k_2,a ,b, \vec{u}, \vec{v},L_{k_1 - 1}\llbracket a,b \rrbracket, L_{k_2 +1}\llbracket a,b \rrbracket }} \hspace{-1mm}  \prod_{i = k_1}^{k_2} \hspace{-1mm}\frac{ \prod_{m = a + 1}^{b} \g(y_{i,m} - y_{i, m-1}) }{\g^{x_{i,a}}_{b - a }(x_{i, b}) } \\
&    \prod_{i = 1}^{\topc-1} \frac{1}{\g^{x_{i,T_0}}_{T_1 - T_0 }(x_{i, T_1}) } \cdot \prod_{(i,j) \in W_1}  \g(x_{i,j} - x_{i, j-1})   \frac{\prod_{i = k_1}^{k_2} \g^{x_{i,a}}_{b - a }(x_{i, b})\cdot  Z_{H, H^{RW}}^{k_1, k_2,a ,b, \vec{u}, \vec{v},L_{k_1 - 1}\llbracket a,b \rrbracket, L_{k_2 +1}\llbracket a,b \rrbracket } }{Z_{H, H^{RW}}^{1, \topc-1,T_0 ,T_1, \vec{x}, \vec{y},\infty, \vec{z}} }  \\
&\exp \Big( - \sum_{ (i,j) \in W_2} { H} (x_{i + 1, j + 1} - x_{i,j})  \Big)  \prod_{(i,j) \in D}  \hspace{-3mm} {\bf 1} \{y_{i,j} \leq b_{i,j} \}  \prod_{(i,j) \in A  \setminus C} \hspace{-3mm} {\bf 1} \{x_{i,j} \leq a_{i,j} \}  \prod_{(i,j) \in C} dy_{i,j}  \prod_{(i,j) \in A \setminus C}  \hspace{-3mm} dx_{i,j} .
\end{split}
\end{equation*}
where $W_1 = \llbracket 1, \topc-1 \rrbracket \times \llbracket T_0 + 1, T_1 \rrbracket \setminus \llbracket k_1, k_2 \rrbracket \times \llbracket a+1, b \rrbracket$ and $W_2 = \llbracket 1, \topc -1 \rrbracket \times \llbracket T_0, T_1 -1 \rrbracket \setminus \llbracket k_1-1 , k_2 \rrbracket \times \llbracket a ,b -1 \rrbracket$. Upon relabeling $y_{i,j}$ with $x_{i,j}$ for $(i,j) \in C$ in the last expression and performing a bit of cancellations we obtain
\begin{equation*}
\begin{split}
&\int_{Y(A)}\prod_{i = 1}^{\topc-1} \frac{ \prod_{m = T_0 + 1}^{T_1} \g(x_{i,m} - x_{i, m-1}) }{\g^{x_{i,T_0}}_{b - a }(x_{i, T_1}) }   \frac{\exp \left( - \sum_{i =1}^{\topc-1}  \sum_{ m = T_0}^{T_1-1}{ H} (x_{i + 1, m+1} - x_{i,m}) \right) }{Z_{H, H^{RW}}^{1, \topc-1,T_0 ,T_1, \vec{x}, \vec{y},\infty, \vec{z}} } \hspace{-3mm} \prod_{(i,j) \in A}{\bf 1} \{x_{i,j} \leq a_{i,j} \}  \\
&    \prod_{(i,j) \in D}{\bf 1} \{x_{i,j} \leq b_{i,j} \}  \prod_{(i,j) \in A} dx_{i,j} = \mathbb{E} \left[\prod_{(i,j) \in A}{\bf 1} \{L_i(j) \leq a_{i,j} \}  \prod_{(i,j) \in D}{\bf 1} \{L_i(j)\leq b_{i,j} \} \right],
\end{split}
\end{equation*}
where in the last equality we used (\ref{GOMFGV2}). Combining our work, we see that for any $a_{i,j} \in \mathbb{R}$ for $(i,j) \in A \setminus C$ and $b_{i,j} \in \mathbb{R}$ for $(i,j) \in D$ we have
\begin{equation}\label{GibbsRed2v2}
\begin{split}
&\mathbb{E} \left[ \prod_{(i,j) \in A  \setminus C} {\bf 1}\{ L_i(j) \leq a_{i,j} \} \cdot \prod_{(i,j) \in D} {\bf 1}\{ L_i(j) \leq b_{i,j} \} \right]  = \\
& \mathbb{E} \left[ \prod_{(i,j) \in A  \setminus C} {\bf 1}\{ L_i(j) \leq a_{i,j} \}   \cdot \mathbb{E}_{H,H^{RW}}'  \left[\prod_{(i,j) \in D} {\bf 1}\{ L'_i(j) \leq b_{i,j} \} \right] \right].
\end{split}
\end{equation}
The monotone class argument in Step 2 can now be repeated verbatim to show that for any bounded Borel-measurable $F$ on $Y(D)$ we have $\mathbb{P}$-almost surely that
$$\mathbb{E} \Big[ F \big( \mathfrak{L}\vert_{D} \big)  \big{\vert} \mathcal{F}_{A \setminus C} \Big]  = \mathbb{E}_{H,H^{RW}}^{k_1, k_2, a ,b, \vec{u}, \vec{v},L_{k_1 - 1}\llbracket a ,b \rrbracket,L_{k_2+  1}\llbracket a ,b \rrbracket}  \big[F( \mathfrak{L}') \big],$$
which concludes the proof of the second part of the lemma.
\end{proof}

We end this section by proving Lemma \ref{S4WeakGibbs}.
\begin{lemma}\label{WeakGibbs}[Lemma \ref{S4WeakGibbs}] Let $H$ and $H^{RW}$ be as in Definition \ref{Pfree}. Fix $\topc \geq 2$, two integers $T_0 < T_1$, and set $\Sigma = \llbracket 1,\topc\rrbracket$. Suppose that $\mathbb{P}_n$ is a sequence of probability distributions on  $\Sigma\times \llbracket T_0, T_1 \rrbracket$-indexed discrete line ensembles, such that for each $n$ we have that $\mathbb{P}_n$ satisfies the $(H,H^{RW})$-Gibbs property. If $\mathbb{P}_n$ converges weakly to a measure $\mathbb{P}$, then $\mathbb{P}$ also satisfies the $(H,H^{RW})$-Gibbs property.
\end{lemma}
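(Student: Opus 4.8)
\textbf{Proof plan for Lemma \ref{S4WeakGibbs}.}

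The strategy is to use the characterization of the Gibbs property in part (2) of Lemma \ref{S4AltGibbs}: a $\Sigma\times\llbracket T_0,T_1\rrbracket$-indexed line ensemble satisfies the $(H,H^{RW})$-Gibbs property if and only if for all bounded continuous functions $f_{i,j}$ on $\mathbb{R}$ one has the identity \eqref{S4towerFun}. So it suffices to show that if \eqref{S4towerFun} holds for every $\mathbb{P}_n$ and $\mathbb{P}_n\Rightarrow\mathbb{P}$, then \eqref{S4towerFun} holds for $\mathbb{P}$. Both sides of \eqref{S4towerFun} are expectations (under $\mathbb{P}_n$, then $\mathbb{P}$) of functions of the line ensemble $\mathfrak{L}$; the key point is that these functions are \emph{bounded and continuous} on $Y(\Sigma\times\llbracket T_0,T_1\rrbracket)$, so that weak convergence can be applied to each side.

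First I would treat the left-hand side: $\mathfrak{L}\mapsto \prod_{i=1}^{\topc}\prod_{j=T_0}^{T_1} f_{i,j}(L_i(j))$ is a finite product of bounded continuous functions of the coordinates of $\mathfrak{L}$, hence bounded and continuous, so $\mathbb{E}_{\mathbb{P}_n}[\cdot]\to\mathbb{E}_{\mathbb{P}}[\cdot]$ by the definition of weak convergence. For the right-hand side I would write it as $\mathbb{E}_{\mathbb{P}_n}\!\big[\Phi(\mathfrak{L})\big]$ where
\[
\Phi(\mathfrak{L}) \;=\; \prod_{(i,j)\in B} f_{i,j}(L_i(j))\cdot G\big(\vec{x},\vec{y},\infty,L_\topc\llbracket T_0,T_1\rrbracket\big),
\]
with $\vec{x}=(L_1(T_0),\dots,L_{\topc-1}(T_0))$, $\vec{y}=(L_1(T_1),\dots,L_{\topc-1}(T_1))$, and $G=G_h$ is the function from Lemma \ref{ContinuousGibbsCond} applied to the bounded continuous $h(\tilde{\mathfrak{L}})=\prod_{(i,j)\in A} f_{i,j}(\tilde L_i(j))$. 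Here the box is $A=\llbracket 1,\topc-1\rrbracket\times\llbracket T_0+1,T_1-1\rrbracket$, with top boundary the constant $\infty$ function and bottom boundary the curve $L_\topc$. Lemma \ref{ContinuousGibbsCond} tells us that $G_h$ is bounded and continuous on the relevant space $S=Y(V_L)\times Y(V_R)\times Y^+(V_T)\times Y^-(V_B)$; since the top boundary datum is always the constant $\infty$ (a fixed point of $Y^+(V_T)$), the restriction of $G_h$ to the slice with that fixed first-type argument is a bounded continuous function of $(\vec{x},\vec{y},L_\topc\llbracket T_0,T_1\rrbracket)\in\mathbb{R}^{\topc-1}\times\mathbb{R}^{\topc-1}\times\mathbb{R}^{T_1-T_0+1}$, i.e.\ a bounded continuous function of finitely many coordinates of $\mathfrak{L}$. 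Multiplying by the bounded continuous factor $\prod_{(i,j)\in B} f_{i,j}(L_i(j))$, we conclude that $\Phi$ is bounded and continuous on $Y(\Sigma\times\llbracket T_0,T_1\rrbracket)$. Therefore $\mathbb{E}_{\mathbb{P}_n}[\Phi(\mathfrak{L})]\to\mathbb{E}_{\mathbb{P}}[\Phi(\mathfrak{L})]$. Since \eqref{S4towerFun} holds with $\mathbb{P}_n$ for every $n$, passing to the limit gives \eqref{S4towerFun} with $\mathbb{P}$, and then Lemma \ref{S4AltGibbs} yields that $\mathbb{P}$ satisfies the $(H,H^{RW})$-Gibbs property.

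The main obstacle, and the only nonroutine point, is verifying that the inner expectation $\mathbb{E}_{H,H^{RW}}^{1,\topc-1,T_0,T_1,\vec{x},\vec{y},\infty,L_\topc\llbracket T_0,T_1\rrbracket}\big[h(\tilde{\mathfrak{L}})\big]$ depends \emph{continuously} on the boundary data $(\vec{x},\vec{y},L_\topc\llbracket T_0,T_1\rrbracket)$ — this is exactly what Lemma \ref{ContinuousGibbsCond} provides, so I would invoke it directly rather than reprove it; one should just be careful that the $\infty$-valued top boundary sits inside $Y^+(V_T)$ (allowed there) and is held fixed, so no subtlety about limits toward $\infty$ arises. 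A secondary minor point worth a sentence is that weak convergence on $Y(\Sigma\times\llbracket T_0,T_1\rrbracket)\cong\mathbb{R}^{|\Sigma|(T_1-T_0+1)}$ is ordinary weak convergence of laws on Euclidean space, so the Portmanteau/definition applies verbatim to the bounded continuous test functions constructed above.
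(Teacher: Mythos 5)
Your proposal is correct and follows essentially the same route as the paper's proof: reduce to the equality \eqref{S4towerFun} via Lemma \ref{S4AltGibbs}, observe that both sides are expectations of bounded continuous functionals of $\mathfrak{L}$ (using Lemma \ref{ContinuousGibbsCond} for the inner conditional expectation with the top boundary held fixed at $\infty$), and pass to the limit by weak convergence. The only difference is presentational — you spell out slightly more carefully that fixing the $Y^+(V_T)$-argument at $(\infty)^{T_1-T_0+1}$ leaves a continuous function of the remaining coordinates, a point the paper leaves implicit.
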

\begin{proof}[Proof of Lemma \ref{S4WeakGibbs}] Let $A$, $B$, $Y(A)$ and $Y(B)$ be as in Lemma \ref{S4AltGibbs} and Definition \ref{YVec}. Suppose that $f, h$ are defined by
$$f\big( x_{i,j} : (i,j) \in A \big): = \prod_{(i,j) \in A} f_{i,j}(x_{i,j}), \mbox{and } h\big(x_{i,j} : (i,j) \in B \big): = \prod_{(i,j) \in B} f_{i,j}(x_{i,j}),$$
where $f_{i,j}$ are bounded, continuous real functions.  From Lemma \ref{S4AltGibbs} we know that
\begin{equation*}
\begin{split}
& \mathbb{E}_{\mathbb{P}_n}\Big[   h\big( L_i(j) : (i,j) \in B \big) \cdot  f\big( L_i(j) : (i,j) \in A \big)  \Big]  = \\
&\mathbb{E}_{\mathbb{P}_n} \bigg[ h\big( L_i(j) : (i,j) \in B \big) \cdot G_f(\vec{x}, \vec{y}, (\infty)^{T_1 - T_0 +1}, L_\topc\llbracket T_0, T_1 \rrbracket )  \bigg],
\end{split}
\end{equation*}
where $\vec{x} =  (L_{1}(T_0), \dots L_{\topc-1}(T_0))$, $\vec{y} = (L_{1}(T_1), \dots L_{\topc-1}(T_1))$, and $G_f$ is as in Lemma \ref{ContinuousGibbsCond}.

From Lemma \ref{ContinuousGibbsCond} we know that $ G_f$ is a bounded, continuous function. Consequently, we can take the limit as $n \rightarrow \infty$ above and using the weak convergence of $\mathbb{P}_n$ to $\mathbb{P}$ conclude that
\begin{equation*}
\begin{split}
& \mathbb{E}_{\mathbb{P}}\Big[   h\big( L_i(j) : (i,j) \in B \big) \cdot  f\big( L_i(j) : (i,j) \in A \big)  \Big]  = \\
&\mathbb{E}_{\mathbb{P}} \bigg[ h\big( L_i(j) : (i,j) \in B \big) \cdot G_f(\vec{x}, \vec{y}, (\infty)^{T_1 - T_0 +1}, L_\topc\llbracket T_0, T_1 \rrbracket )    \bigg],
\end{split}
\end{equation*}
which in view of Lemma \ref{S4AltGibbs} concludes the proof of this lemma.
\end{proof}

%

\subsection{Proof of the lemmas in Section \ref{Section4.3} }\label{Section11.2} In what follows, we prove the lemmas in Section \ref{Section4.3}, whose statements are recalled for the readers convenience. Before we begin, we note the following immediate consequence of Proposition \ref{KMT} and Chebyshev's inequality 
\begin{equation}\label{ChebyshevIneq}
\mathbb{P}\left(\Delta(T,z) > T^{1/4}\right) \leq Ce^{\tilde{\alpha} (\log T)^2} e^{(z - pT)^2/T} e^{-a T^{1/4}},
\end{equation}
which will be used several times in the proofs below. 
\begin{lemma}\label{LemmaHalf} [Lemma \ref{LemmaHalfS4}] Let $\ell$ have distribution $\mathbb{P}^{0,T,x,y}_{H^{RW}}$, with $H^{RW}$ satisfying the assumptions in Definition \ref{AssHR}. Let $M_1, M_2 \in \mathbb{R}$ and $p \in \mathbb{R}$ be given. Then, we can find $W_0 = W_0(p,M_2 - M_1) \in \mathbb{N}$, such that for $T \geq W_0$, $x \geq M_1 T^{1/2}$, $y \geq pT + M_2 T^{1/2}$ and $s \in [0,T]$ we have
\begin{equation}\label{halfEq1}
\mathbb{P}^{0,T,x,y}_{H^{RW}}\Big( \ell(s)  \geq \frac{T-s}{T} \cdot M_1 T^{1/2} + \frac{s}{T} \cdot \big(p T + M_2 T^{1/2}\big) - T^{1/4} \Big) \geq \frac{1}{3}.
\end{equation}
\end{lemma}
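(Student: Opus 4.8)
The plan is to reduce the statement about the $H^{RW}$ random walk bridge to the corresponding statement about a Brownian bridge via the strong coupling of Proposition \ref{KMT}, using the monotone coupling Lemma \ref{MonCoup} to first normalize the entrance and exit data to their extreme (smallest) allowed values. Concretely, the event in \eqref{halfEq1} is increasing in the path $\ell$ (raising $\ell$ pointwise preserves it), and by Lemma \ref{MonCoup} the law $\mathbb{P}^{0,T,x,y}_{H^{RW}}$ is stochastically monotone in $(x,y)$ (taking $\vec z = (-\infty)^{T+1}$, so these are genuine random walk bridges). Hence it suffices to prove \eqref{halfEq1} for the boundary values $x = M_1 T^{1/2}$ and $y = pT + M_2 T^{1/2}$; any larger $x,y$ only increases the probability.

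Next I would invoke Proposition \ref{KMT}. After the affine shift relating $\mathbb{P}^{0,T,x,y}_{H^{RW}}$ to $\mathbb{P}^{0,T,0,y-x}_{H^{RW}}$ (as in the derivation of \eqref{KMTS7}), there is a probability space carrying a Brownian bridge $B^{\sigma_p}$ with variance $\sigma_p^2$ and a coupled copy of $\ell$ such that $\Delta := \sup_{0\le t\le T}\big|\sqrt{T}B^{\sigma_p}_{t/T} + \tfrac{t}{T}(y-x) + x - \ell(t)\big|$ has exponential moments controlled by $Ce^{\tilde\alpha(\log T)^2}e^{|y-x-pT|^2/T}$. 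With the extremal boundary values, $y - x - pT = (M_2 - M_1)T^{1/2}$, so $e^{|y-x-pT|^2/T} = e^{(M_2-M_1)^2}$ is a constant depending only on $M_2 - M_1$; therefore $\mathbb{P}(\Delta > T^{1/4}) \le Ce^{\tilde\alpha(\log T)^2}e^{(M_2-M_1)^2}e^{-aT^{1/4}}$ by Chebyshev (cf. \eqref{ChebyshevIneq}), which tends to $0$ and can be made smaller than, say, $1/12$ once $T \ge W_0(p, M_2 - M_1)$ for a suitable $W_0$. On the complement of $\{\Delta > T^{1/4}\}$ we have $\ell(s) \ge \sqrt{T}B^{\sigma_p}_{s/T} + \tfrac{s}{T}(y-x) + x - T^{1/4}$, and with the extremal boundary values $\tfrac{s}{T}(y-x) + x = \tfrac{T-s}{T}M_1T^{1/2} + \tfrac{s}{T}(pT + M_2T^{1/2})$ is exactly the deterministic line appearing in \eqref{halfEq1}. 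So it remains to show $\mathbb{P}(\sqrt{T}B^{\sigma_p}_{s/T} \ge 0) \ge 1/3 + 1/12$ for all $s \in [0,T]$, uniformly.

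For the Brownian bridge step: $B^{\sigma_p}_{u}$ is for fixed $u \in [0,1]$ a centered Gaussian (variance $\sigma_p^2 u(1-u)$), hence $\mathbb{P}(B^{\sigma_p}_u \ge 0) = 1/2$ for all $u \in (0,1)$, and at the endpoints $B^{\sigma_p}_0 = B^{\sigma_p}_1 = 0$ so the probability is $1$. Thus $\mathbb{P}(\sqrt{T}B^{\sigma_p}_{s/T} \ge 0) \ge 1/2$ for every $s \in [0,T]$. Combining, for $T \ge W_0$ we get $\mathbb{P}^{0,T,x,y}_{H^{RW}}\big(\ell(s) \ge \text{(the line)} - T^{1/4}\big) \ge 1/2 - 1/12 > 1/3$, which is the claim (the slack between $1/2$ and $1/3$ is comfortably more than enough; one only needs $\mathbb{P}(\Delta > T^{1/4})$ below $1/6$). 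I do not expect any genuine obstacle here: the argument is the standard "monotone coupling to extremal data, then KMT strong coupling, then explicit Gaussian computation" template described in the paragraph following the five lemmas in Section \ref{Section4.3}. The only point requiring a little care is bookkeeping the affine shift in Proposition \ref{KMT} (which is stated for bridges from $(0,0)$ to $(T,z)$) and verifying that under the extremal boundary data the coupling error term $e^{|y-x-pT|^2/T}$ is genuinely $O(1)$ rather than growing with $T$ — which is exactly why we must first reduce to the extremal boundary values before applying the coupling.
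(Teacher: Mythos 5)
Your proposal is correct and follows essentially the same route as the paper's proof: monotone coupling (Lemma \ref{MonCoup}) to reduce to the extremal boundary data $x = M_1 T^{1/2}$, $y = pT + M_2 T^{1/2}$, then the KMT-type strong coupling of Proposition \ref{KMT} plus the Chebyshev bound \eqref{ChebyshevIneq} to compare $\ell(s)$ to the Brownian bridge, for which the Gaussian probability $\mathbb{P}(B^{\sigma_p}_{s/T}\ge 0)=1/2$ is explicit. Your observation that the reduction to extremal boundary data is what makes $e^{|y-x-pT|^2/T}$ an $O(1)$ constant is exactly the key bookkeeping point, and the numerics ($1/12$ vs.\ the paper's $1/6$) are an inessential difference.
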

\begin{proof}
In view of Lemma \ref{MonCoup} with $\vec{z} = (-\infty)^n$, we know that
$$ \mathbb{P}^{0,T,c,d}_{H^{RW}}\left( \ell(s)  \geq  \frac{T-s}{T} \cdot M_1 T^{1/2} + \frac{s}{T} \cdot \big(p T + M_2 T^{1/2}\big) - T^{1/4} \right) \geq  $$
$$  \mathbb{P}^{0,T,x,y}_{H^{RW}}\left( \ell(s)  \geq  \frac{T-s}{T} \cdot M_1 T^{1/2} + \frac{s}{T} \cdot \big(p T + M_2 T^{1/2}\big) - T^{1/4} \right),$$
whenever $c \geq x$ and $d \geq y$ and so it suffices to prove the lemma when $x =   M_1 T^{1/2}$ and $y =pT + M_2 T^{1/2}$, which we assume in the sequel. Suppose we have the same coupling as in Proposition \ref{KMT} and let $\mathbb{P}$ denote the probability measure on the space afforded by that proposition. Then, the left side of (\ref{halfEq1}) equals
$$\mathbb{P}\Big( x + \ell^{(T,y-x)}(s) \geq \frac{T-s}{T} \cdot M_1 T^{1/2} + \frac{s}{T} \cdot \big(p T + M_2 T^{1/2}\big) - T^{1/4}\Big) \geq$$
$$\geq \mathbb{P}\Big( T^{1/2}B^{\sigma}_{s/T} \geq 0 \mbox{ and } \Delta(T,y-x) \leq T^{1/4} \Big) \geq 1/2 - \mathbb{P}\big(\Delta(T,y-x) > T^{1/4}\big).$$
To get the first expression from (\ref{halfEq1}) we used the fact that $\ell(s)$ and $x + \ell^{(T,y-x)}(s)$ have the same law. The first inequality follows from the coupling to a Brownian bridge, and the last inequality uses that $\mathbb{P}(B^{v}_{s/T} \geq 0) = 1/2$ for every $v > 0$ and $s \in [ 0,T]$. From (\ref{ChebyshevIneq}) we have
\begin{equation*}
\mathbb{P}\left(\Delta(T,y-x) > T^{1/4}\right) \leq Ce^{\tilde{\alpha} (\log T)^2} e^{(M_2 - M_1)^2} e^{-a T^{1/4}},
\end{equation*}
which is at most $1/6$ if we take $W_0$ sufficiently large and $T \geq W_0$, which implies (\ref{halfEq1}).
\end{proof}

\begin{lemma}\label{LemmaMinFree}[Lemma \ref{LemmaMinFreeS4}] Let $\ell$ have distribution $\mathbb{P}^{0,T,0,y}_{H^{RW}}$, with $H^{RW}$ satisfying the assumptions in Definition \ref{AssHR}. Let $M > 0$, $p \in \mathbb{R}$ and $\epsilon > 0$ be given. Then, we can find $W_1=W_1(M,p, \epsilon) \in \mathbb{N}$ and $A=A(M,p, \epsilon) > 0$, such that for $T \geq W_1$, $ y \geq p T -  MT^{1/2}$ we have
\begin{equation}\label{minFree1}
\mathbb{P}^{0,T,0,y}_{H^{RW}}\Big( \inf_{s \in [ 0, T]}\big( \ell(s) -  ps \big) \leq -AT^{1/2} \Big) \leq \epsilon.
\end{equation}
\end{lemma}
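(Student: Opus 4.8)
\textbf{Proof plan for Lemma~\ref{LemmaMinFreeS4}.}
The strategy mirrors that of Lemma~\ref{LemmaHalfS4}: reduce to an extreme choice of endpoints via stochastic monotonicity, then transfer the estimate to a Brownian bridge via the strong coupling of Proposition~\ref{KMT}, where the infimum of a Brownian bridge has an explicit distribution. First I would observe that the event $\{\inf_{s\in[0,T]}(\ell(s)-ps)\le -AT^{1/2}\}$ is decreasing in $\ell$ (lowering the curve makes the infimum smaller, hence the event more likely). Since the relevant law $\mathbb{P}^{0,T,0,y}_{H^{RW}}$ is the case $\vec z=(-\infty)^T$ of $\mathbb{P}^{1,T,x,y}_{H,H^{RW}}$, Lemma~\ref{MonCoup} (part II, monotone coupling) gives that the probability in \eqref{minFree1} is nonincreasing in $y$. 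Therefore it suffices to prove the bound for the smallest allowed endpoint, i.e.\ for $y = pT - MT^{1/2}$, which I fix from now on.

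Next I would invoke Proposition~\ref{KMT} to place, on a common probability space with measure $\mathbb P$, a Brownian bridge $B^\sigma$ with $\sigma^2=\sigma_p^2$ and a curve $\ell^{(T,y)}$ of law $\mathbb{P}^{0,T,0,y}_{H^{RW}}$ with $\Delta(T,y)=\sup_{0\le t\le T}|\sqrt T\,B^\sigma_{t/T}+\tfrac tT y-\ell^{(T,y)}(t)|$. On the event $\{\Delta(T,y)\le T^{1/4}\}$ we have, for all $s\in[0,T]$,
\begin{equation*}
\ell^{(T,y)}(s)-ps \;\ge\; \sqrt T\,B^\sigma_{s/T} + \tfrac sT y - ps - T^{1/4} \;=\; \sqrt T\,B^\sigma_{s/T} - \tfrac sT M T^{1/2} - T^{1/4} \;\ge\; \sqrt T\,B^\sigma_{s/T} - MT^{1/2} - T^{1/4},
\end{equation*}
using $\tfrac sT y - ps = -\tfrac sT M T^{1/2} \ge -MT^{1/2}$. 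Hence the event in \eqref{minFree1} is contained, up to the complement of $\{\Delta(T,y)\le T^{1/4}\}$, in the event $\{\inf_{u\in[0,1]}\sqrt T\,B^\sigma_u \le -(A-M)T^{1/2}+T^{1/4}\}$, i.e.\ $\{\inf_{u\in[0,1]} B^\sigma_u \le -(A-M)+T^{-1/4}\}$. Using the classical formula $\mathbb P(\inf_{u\in[0,1]}B^\sigma_u\le -c)=\exp(-2c^2/\sigma^2)$ for $c>0$, this probability is at most $\exp(-2(A-M-1)^2/\sigma_p^2)$ for $T\ge 1$ (so that $T^{-1/4}\le 1$, and taking $A\ge M+2$). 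Choosing $A=A(M,p,\epsilon)$ large enough makes this $\le \epsilon/2$.

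The remaining term is $\mathbb P(\Delta(T,y)>T^{1/4})$, which by \eqref{ChebyshevIneq} (Chebyshev applied to Proposition~\ref{KMT}) is at most $C e^{\tilde\alpha(\log T)^2} e^{(y-pT)^2/T} e^{-aT^{1/4}} = C e^{\tilde\alpha(\log T)^2} e^{M^2} e^{-aT^{1/4}}$, which tends to $0$ as $T\to\infty$; so for $W_1=W_1(M,p,\epsilon)$ sufficiently large and $T\ge W_1$ this is $\le\epsilon/2$. Summing the two bounds gives \eqref{minFree1}. I do not anticipate a genuine obstacle here; the only mild care is bookkeeping the order of quantifiers (choose $A$ first as a function of $M,p,\epsilon$, then $W_1$ as a function of $M,p,\epsilon$), and making sure the monotonicity reduction is legitimately an application of Lemma~\ref{MonCoup} with the degenerate boundary data $\vec z=(-\infty)^T$ and equal left endpoints $x=x'=0$.
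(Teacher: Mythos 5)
Your proof is correct and follows essentially the same route as the paper: reduce to the extremal endpoint $y = pT - MT^{1/2}$ via the monotone coupling (Lemma~\ref{MonCoup} with $\vec z = (-\infty)^T$), transfer to a Brownian bridge via Proposition~\ref{KMT}, and combine the explicit bridge-infimum tail $e^{-2c^2/\sigma^2}$ with the Chebyshev bound \eqref{ChebyshevIneq} on $\Delta$. The only cosmetic differences are that the paper truncates at $\Delta(T,z) > T^{1/2}$ while you use $T^{1/4}$ (both work), and the paper tilts the reference line to slope $y/T$ before coupling whereas you keep slope $p$ and absorb the difference through $\tfrac sT y - ps \ge -MT^{1/2}$; these are the same computation organized slightly differently.
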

\begin{proof}
Fix $\epsilon > 0$. In view of Lemma \ref{MonCoup} with $\vec{z} = (-\infty)^n$, we know that whenever $z_2 \geq z_1$
\begin{align*}
&\mathbb{P}^{0,T,0,z_2}_{H^{RW}}\Big( \min_{s \in [ 0, T]}\big( \ell(s) -  ps \big) \leq - AT^{1/2} \Big)  \leq \mathbb{P}^{0,T,0,z_1}_{H^{RW}}\Big( \min_{s \in [0,T]}\big( \ell(s) -  ps \big) \leq - AT^{1/2} \Big)\leq\\
&\mathbb{P}^{0,T,0,z_1}_{H^{RW}}\Big(  \min_{s \in [0,T]}\big( \ell(s) -  \frac{s}{T}( pT- MT^{1/2}) \big) \leq (M- A)T^{1/2} \Big),
\end{align*}
and so it suffices to prove that the latter probability with $z_1 =   pT -  MT^{1/2}$ is less than $\epsilon$. Suppose we have the same coupling as in Proposition \ref{KMT} and let $\mathbb{P}$ denote the probability measure on the space afforded by that proposition. Below we set $z = pT -  MT^{1/2}$. Then, we have
\begin{align*}
&\mathbb{P}^{0,T,0,z}_{H^{RW}}\Big( \min_{s \in  [0,T]}\big( \ell(s) -  \frac{s}{T}( pT- MT^{1/2}) \big) \leq  (M- A)T^{1/2} \Big) =\\
&\mathbb{P}\Big(  \min_{s \in [0,T]}\big(\ell^{(T,z)}(s) -  \frac{s}{T}( pT- MT^{1/2}) \big) \leq  (M- A)T^{1/2}\Big) \leq \\
& \mathbb{P}\Big(  \min_{s \in  [0,T]} T^{1/2} B^{\sigma}_{s/T}\leq   (M- A - 1)T^{1/2}\Big) + \mathbb{P} \big( \Delta(T,z) > T^{1/2} \big).
\end{align*}
 Using (\ref{ChebyshevIneq}), we can make the second probability smaller than $\epsilon/2$ by choosing $W_1$ large.
By basic properties of Brownian bridges, we know that the first probability (for $A \geq M+1$) is given by
$$  \mathbb{P}\Big( \min_{s \in [0,1]} B^{1}_s \leq - \sigma^{-1} (M- A - 1)  \Big) =  \mathbb{P}\Big( \max_{s \in [0,1]} B^{1}_s \geq \sigma^{-1}(M- A - 1) \Big) =e^{-2\sigma^{-2}(M- A - 1)^2},$$
where the last equality can be found in \cite[Chapter 4, (3.40)]{KS}. Thus by making $A$ sufficiently large we can make the above less than $\epsilon/2$.
\end{proof}

\begin{lemma}\label{LemmaTail}[Lemma \ref{LemmaTailS4}] Let $\ell$ have distribution $\mathbb{P}^{0,T,x,y}_{H^{RW}}$, with $H^{RW}$ satisfying the assumptions in Definition \ref{AssHR}. Let $M_1,M_2 > 0$ and $p \in \mathbb{R}$ be given. Then, we can find $W_2 = W_2(M_1,M_2,p) \in \mathbb{N}$, such that for $T \geq W_2$, $ x \geq -M_1T^{1/2}$, $ y \geq pT -  M_1T^{1/2}$ and $\rho \in \{-1, 0 , 1\}$ we have
\begin{equation}\label{halfEq2}
\mathbb{P}^{0,T,x,y}_{H^{RW}}\bigg( \ell(\lfloor T/2\rfloor + \rho)  \geq \frac{M_2T^{1/2} + p T}{2} - T^{1/4} \bigg) \geq (1/2) (1 - \Phi^{v}(M_1 + M_2) ),
\end{equation}
where $\Phi^{v}$ is the cumulative distribution function  of a Gaussian random variable with mean $0$ and variance $v = \sigma_p^2/4$.
\end{lemma}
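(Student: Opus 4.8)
\textbf{Proof plan for Lemma \ref{LemmaTailS4}.}
The plan is to follow the same recipe used for Lemmas \ref{LemmaHalfS4} and \ref{LemmaMinFreeS4}: first reduce to the extreme endpoint values using monotonicity, then transfer to a Brownian bridge via the strong coupling of Proposition \ref{KMT}, and finally carry out the Gaussian computation at the midpoint. First I would observe that the event $\{ \ell(\lfloor T/2\rfloor + \rho) \geq \tfrac{M_2 T^{1/2} + pT}{2} - T^{1/4}\}$ is increasing in $\ell$, so by Lemma \ref{MonCoup} (applied with $\vec{z} = (-\infty)^{T+1}$, comparing $(x,y)$ with $(x',y') = (-M_1 T^{1/2},\, pT - M_1 T^{1/2})$) it suffices to prove \eqref{halfEq2S4} for these smallest admissible endpoint values. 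This is the easy reduction.

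Next I would invoke Proposition \ref{KMT}: on the coupling probability space $\mathbb{P}$ we have a Brownian bridge $B^{\sigma}$ with $\sigma^2 = \sigma_p^2$ and a random curve $\ell^{(T,z)}$ of law $\mathbb{P}^{0,T,0,z}_{H^{RW}}$ with $z = y - x = pT$. Writing $\ell(s)$ in distribution as $x + \ell^{(T,z)}(s) = -M_1 T^{1/2} + \ell^{(T,pT)}(s)$ and using $|\sqrt{T}B^\sigma_{s/T} + \tfrac{s}{T}z - \ell^{(T,z)}(s)| \leq \Delta(T,z)$, I would bound the probability in \eqref{halfEq2S4} from below by
\begin{equation*}
\mathbb{P}\Big( \sqrt{T}\, B^{\sigma}_{(\lfloor T/2\rfloor+\rho)/T} \geq \tfrac{(M_1+M_2)T^{1/2}}{2} + \tfrac{p\rho}{?}\,, \ \Delta(T,pT) \leq T^{1/4}\Big) - (\text{error}),
\end{equation*}
where after keeping track of the deterministic drift term $\tfrac{s}{T}z = \tfrac{s}{T}pT = ps$ at $s = \lfloor T/2\rfloor+\rho$ and the $-M_1 T^{1/2}$ shift, the threshold on $\sqrt T B^\sigma_{s/T}$ becomes (up to an additive $O(1)$ coming from $\rho$ and the floor, absorbed for $T$ large) at most $(M_1+M_2)T^{1/2}/2 + T^{1/4}$; here I would use $T^{1/4}$ to swallow the $-T^{1/4}$ on the left and the $O(1)$ discretization terms. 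Then $\sqrt{T}B^\sigma_{s/T}$ with $s/T \to 1/2$ is Gaussian with mean $0$ and variance $T\cdot \sigma_p^2 \cdot \tfrac{s}{T}(1-\tfrac{s}{T}) \to \tfrac{\sigma_p^2}{4}\cdot T$, i.e.\ $B^\sigma_{s/T}/\sqrt{(s/T)(1-s/T)}$ has variance $\sigma_p^2$, so $\sqrt T B^\sigma_{s/T} \sim \mathcal N(0, v T)$ with $v=\sigma_p^2/4$ in the limit; dividing the threshold by $T^{1/2}$ gives a Gaussian tail $1 - \Phi^v(M_1+M_2 + o(1))$. By continuity of $\Phi^v$ and choosing $W_2$ large enough, this is at least (say) $\tfrac{2}{3}(1-\Phi^v(M_1+M_2))$, while the error term $\mathbb{P}(\Delta(T,pT) > T^{1/4}) \leq C e^{\tilde\alpha(\log T)^2}e^{0}e^{-aT^{1/4}}$ (using $z = pT$ so $|z-pT|^2/T = 0$, via \eqref{ChebyshevIneq}) is at most $\tfrac{1}{6}(1-\Phi^v(M_1+M_2))$ for $T$ large; combining gives the claimed bound with the constant $1/2$.

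The main obstacle I anticipate is purely bookkeeping rather than conceptual: carefully controlling the discrepancy between $\lfloor T/2\rfloor + \rho$ and $T/2$ (for $\rho \in \{-1,0,1\}$), i.e.\ showing that replacing the exact midpoint by $\lfloor T/2\rfloor+\rho$ changes the mean of $\sqrt{T}B^\sigma_{s/T}$ by $0$ (it is always $0$) and the variance by $o(T)$, and that the deterministic drift $ps$ evaluated at $s = \lfloor T/2\rfloor + \rho$ differs from $pT/2$ by $O(1)$, hence is absorbable into the $-T^{1/4}$ slack for $T\geq W_2$. One should also make sure the Gaussian variance is exactly $v = \sigma_p^2/4$ in the limit and that the ``$+o(1)$'' inside $\Phi^v$ does not cost us the constant; this is handled by first fixing $M_1,M_2$, noting $1-\Phi^v(M_1+M_2) > 0$, and then taking $W_2$ large depending on $M_1,M_2,p$ so that both the discretization error and the coupling error are each a small fraction of this fixed positive quantity. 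No new ideas beyond those already deployed in Lemmas \ref{LemmaHalfS4} and \ref{LemmaMinFreeS4} are needed.
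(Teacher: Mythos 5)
Your proposal is correct and follows essentially the same route as the paper's proof: reduce to the extreme endpoints $x=-M_1T^{1/2}$, $y=pT-M_1T^{1/2}$ via the monotone coupling of Lemma~\ref{MonCoup}, transfer to a Brownian bridge via Proposition~\ref{KMT} with $z=y-x=pT$ (so that $|z-pT|^2/T=0$ in \eqref{ChebyshevIneq}), and read off the Gaussian tail at the near-midpoint $s/T\to 1/2$ where the variance of $B^{\sigma}_{s/T}$ tends to $v=\sigma_p^2/4$. One small arithmetic slip worth flagging: after absorbing the $-M_1T^{1/2}$ shift the threshold on $\sqrt{T}B^{\sigma}_{s/T}$ is $(M_1+\tfrac{M_2}{2})T^{1/2}+O(1)$, not $\tfrac{(M_1+M_2)}{2}T^{1/2}$; this is harmless because $M_1+\tfrac{M_2}{2}\le M_1+M_2$, so after dividing by $T^{1/2}$ the tail is still $\ge 1-\Phi^{v}(M_1+M_2)+o(1)$, exactly as the paper (which uses the same conservative replacement of $M_1+\tfrac{M_2}{2}$ by $M_1+M_2$).
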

\begin{proof}
 In view of Lemma \ref{MonCoup} it suffices to prove the lemma when $z_1 =  -M_1T^{1/2} $, and $ z_2  =  p T -  M_1T^{1/2} $. Set $\Delta z = z_2 - z_1$, $t_{\rho} = \frac{\lfloor T/2 \rfloor + \rho}{T}$, and observe that
$$ \mathbb{P}^{0,N,z_1,z_2}_{H^{RW}}\bigg(\ell(T \cdot t_\rho)  \geq \frac{M_2T^{1/2} + p T}{2}- T^{1/4} \bigg) =  \mathbb{P}^{0,T,0,\Delta z}_{H^{RW}}\bigg(\ell(T \cdot t_\rho)  \geq \frac{M_2T^{1/2} + p T}{2}-z_1 - T^{1/4} \bigg).$$
Suppose we have the same coupling as in Proposition \ref{KMT} and let $\mathbb{P}$ denote the probability measure on the space afforded by that proposition. Then, we have
\begin{align*}
&\mathbb{P}^{0,T,0,\Delta z}_{H^{RW}}\bigg(\ell(T  t_\rho)  \geq \frac{M_2T^{1/2} + p T}{2}- z_1 - T^{1/4} \bigg) =
\mathbb{P}\bigg( \ell^{(T,\Delta z)}(T  t_\rho) \geq   \frac{M_2T^{1/2} + p T}{2} - z_1 - T^{1/4} \bigg)=\\
&\mathbb{P}\bigg( \ell^{(T,\Delta z)}(T  t_\rho) \geq   \frac{(2M_1 + M_2)T^{1/2} + \Delta z}{2}- T^{1/4}  \bigg)\geq
\mathbb{P}\bigg( B^{\sigma}_{t_{\rho}} \geq  \frac{M_2 + 2M_1}{2} \mbox{ and } \Delta(T,\Delta z) \leq T^{1/4}  \bigg).
\end{align*}
Since $B^{\sigma}_{t_\rho}$ has the distribution of a normal random variable with mean $0$ and variance $v_{\rho} = \sigma_p^2 t_{\rho} (1 - t_{\rho})$, and $\Phi^{v}$ is decreasing on $\mathbb{R}_{ > 0}$  we conclude that the last expression is bounded from below by
$$ 1 - \Phi^{v_{\rho}}(M_1 + M_2) - \mathbb{P}\big(\Delta(T,z) > T^{1/4}\big) \geq 1 - \Phi^{v_{\rho}}(M_1 + M_2)  - Ce^{\tilde{\alpha}(\log T)^2} e^{-a T^{1/4}}.$$
In the last inequality we used (\ref{ChebyshevIneq}). The above is at least $(1/2) \big(1 - \Phi^{v}(M_1 + M_2) \big) $ if $W_2$ is taken sufficiently large and $T \geq W_2$.
\end{proof}

\begin{lemma}\label{LemmaAway}[Lemma \ref{LemmaAwayS4}] Let $\ell$ have distribution $\mathbb{P}^{0,T,x,y}_{H^{RW}}$, with $H^{RW}$ satisfying the assumptions in Definition \ref{AssHR}. Let $p \in \mathbb{R}$ be given. Then, we can find $W_3 = W_3(p) \in \mathbb{N}$, such that for $T \geq W_3$, $ x \geq T^{1/2}$, $ y \geq pT +  T^{1/2}$
\begin{equation}\label{away}
\mathbb{P}^{0,T,x,y}_{H^{RW}}\Big( \inf_{s \in [0,T]} \big( \ell(s) -ps \big)+ T^{1/4} \geq 0 \Big) \geq \frac{1}{2} \left(1 - \exp\left(\frac{-2}{\sigma_p^2}\right)\right).
\end{equation}
\end{lemma}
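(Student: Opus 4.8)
\textbf{Proof plan for Lemma \ref{LemmaAwayS4}.}
The plan is to run the same three-move template used for Lemmas \ref{LemmaHalfS4}--\ref{LemmaTailS4}: (i) reduce to extremal endpoints via stochastic monotonicity, (ii) transfer the event to a Brownian bridge via the strong coupling of Proposition \ref{KMT}, (iii) compute the resulting Brownian probability explicitly. First I would note that the event $\big\{ \inf_{s\in[0,T]}(\ell(s)-ps) + T^{1/4} \ge 0\big\}$ is increasing in the path $\ell$, so by Lemma \ref{MonCoup} (with $\vec z = (-\infty)^T$, so that $\mathbb{P}^{0,T,x,y}_{H^{RW}}$ is exactly the free random walk bridge law) its probability is non-decreasing in both endpoints $x$ and $y$. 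Hence it suffices to prove \eqref{awayS4} for the smallest admissible choice $x = T^{1/2}$, $y = pT + T^{1/2}$, which I assume from now on.

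Next I would invoke Proposition \ref{KMT}: on the coupling probability space there is a Brownian bridge $B^{\sigma}$ with $\sigma^2 = \sigma_p^2$ and a curve $\ell^{(T,y-x)}$ of law $\mathbb{P}^{0,T,0,y-x}_{H^{RW}}$ with $\ell(s) \stackrel{d}{=} x + \ell^{(T,y-x)}(s)$. Writing $z = y-x = pT$ and using $|z - pT| = 0$, the Chebyshev bound \eqref{ChebyshevIneq} gives $\mathbb{P}(\Delta(T,z) > T^{1/4}) \le C e^{\tilde\alpha(\log T)^2} e^{-aT^{1/4}}$, which can be made $\le \tfrac14(1 - e^{-2/\sigma_p^2}) - \tfrac12(\cdots)$-small, more precisely smaller than any fixed positive constant, once $T \ge W_3(p)$ for $W_3$ chosen large. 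On the complementary event $\{\Delta(T,z) \le T^{1/4}\}$ we have, for every $s\in[0,T]$,
\begin{equation*}
\ell(s) - ps + T^{1/4} = x + \ell^{(T,z)}(s) - ps + T^{1/4} \ge T^{1/2} + \sqrt{T}\,B^{\sigma}_{s/T} + \tfrac{s}{T}z - ps - T^{1/4} + T^{1/4} \ge \sqrt{T}\big(1 + B^{\sigma}_{s/T}\big),
\end{equation*}
using $x = T^{1/2}$, $\tfrac{s}{T}z = ps$, and canceling the two $T^{1/4}$ terms (one from $\Delta$, one from the event). Therefore on $\{\Delta(T,z)\le T^{1/4}\}$ the event of interest contains $\big\{\inf_{t\in[0,1]} B^{\sigma}_t \ge -1\big\}$.

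It then remains to lower bound $\mathbb{P}\big(\inf_{t\in[0,1]} B^{\sigma}_t \ge -1\big)$, and to subtract the error probability. By the reflection principle for the Brownian bridge (see e.g. \cite[Chapter 4, (3.40)]{KS} or the identity used in the proof of Lemma \ref{LemmaMinFreeS4}), $\mathbb{P}\big(\inf_{t\in[0,1]} B^{\sigma}_t < -1\big) = \mathbb{P}\big(\sup_{t\in[0,1]} B^{\sigma}_t > 1\big) = \exp(-2/\sigma^2) = \exp(-2/\sigma_p^2)$, so $\mathbb{P}\big(\inf_{t\in[0,1]} B^{\sigma}_t \ge -1\big) = 1 - e^{-2/\sigma_p^2}$. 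Combining, for $T \ge W_3(p)$ with $W_3$ large enough that the coupling error is at most $\tfrac12(1 - e^{-2/\sigma_p^2})$, we get the left side of \eqref{awayS4} is at least $\big(1 - e^{-2/\sigma_p^2}\big) - \tfrac12\big(1 - e^{-2/\sigma_p^2}\big) = \tfrac12\big(1 - e^{-2/\sigma_p^2}\big)$, as claimed. The only mildly delicate point is bookkeeping the two separate $T^{1/4}$ slacks — the one built into the event statement and the one coming from the coupling error $\Delta$ — and checking they line up so that the Brownian threshold is exactly $-1$ (i.e. $-T^{1/2}/T^{1/2}$); everything else is routine.
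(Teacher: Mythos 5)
Your proof is correct and follows essentially the same three-move template as the paper's own proof: stochastic monotonicity via Lemma \ref{MonCoup} to reduce to the extremal boundary $x=T^{1/2}$, $y=pT+T^{1/2}$; the KMT-type coupling of Proposition \ref{KMT} together with the tail bound \eqref{ChebyshevIneq} to replace the random walk bridge by the Brownian bridge $B^{\sigma}$ at cost $o(1)$; and the reflection-principle identity $\mathbb{P}(\inf_{t\in[0,1]}B^{\sigma}_t\geq -1)=1-e^{-2/\sigma_p^2}$. The only cosmetic difference is that the paper first shifts to $\mathbb{P}^{0,T,0,\Delta z}_{H^{RW}}$ and tracks the threshold there, whereas you keep $\ell(s)=x+\ell^{(T,z)}(s)$; the bookkeeping with the two $T^{1/4}$ slacks is the same.
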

\begin{proof}
In view of Lemma \ref{MonCoup} it suffices to prove the lemma when $z_1 =  T^{1/2} $ and $z_2 =    pT +  T^{1/2} $. Set $\Delta z = z_2 - z_1$, and observe that
$$\mathbb{P}^{0,T,z_1,z_2}_{H^{RW}}\Big( \min_{s \in [ 0,T]}\big( \ell(s) -ps \big)+ T^{1/4} \geq 0 \Big) = \mathbb{P}^{0,T,0,\Delta z}_{H^{RW}}\Big( \min_{s \in [0,T]} \big( \ell(s) -ps \big)+ T^{1/4} \geq -z_1 \Big).$$
Suppose we have the same coupling as in Proposition \ref{KMT} and let $\mathbb{P}$ denote the probability measure on the space afforded by that proposition. Then, we have
\begin{align*}
&\mathbb{P}^{0,T,0,\Delta z}_{H^{RW}}\Big( \min_{s \in [ 0,T]} \big( \ell(s) -ps \big) + T^{1/4} \geq -z_1 \Big) = \mathbb{P}\Big( \min_{s \in [ 0,T ]} \big( \ell^{(T, \Delta z)}(s) - ps \big)\geq  - T^{1/4}  -z_1 \Big) =\\
&\mathbb{P}\Big( \min_{s \in [ 0,T ]} \big( \ell^{(T, \Delta z)}(s) - \frac{s}{T} \Delta z \big)  \geq  - T^{1/4} - T^{1/2} \Big)\geq
\mathbb{P}\Big( \min_{s \in [0,1]} B^{\sigma}_s \geq - 1 \mbox{ and } \Delta(T,\Delta z) \leq T^{1/4}  \Big).
\end{align*}
We can lower-bound the above expression by
$$\mathbb{P}\Big( \min_{s \in [0,1]} B^{\sigma}_s \geq - 1 \Big) - \mathbb{P}\big(\Delta(T,\Delta z) > T^{1/4}   \big).$$
 By basic properties of Brownian bridges, we know that
$$ \mathbb{P}\Big( \min_{s \in [0,1]} B^{\sigma}_s \geq - 1 \Big)  = \mathbb{P}\Big( \min_{s \in [0,1]} B^{1}_s \geq - \sigma^{-1} \Big) =  \mathbb{P}\Big( \max_{s \in [0,1]} B^{1}_s \leq \sigma^{-1} \Big) = 1 - e^{-2\sigma^{-2}},$$
where the last equality can be found, for example, in \cite[Chapter 4, (3.40)]{KS}. Also by  (\ref{ChebyshevIneq})
$$\mathbb{P}\big(\Delta(T,\Delta z) > T^{1/4}\big)  \leq Ce^{\tilde{\alpha}(\log T)^2} e^{1} e^{-a T^{1/4}},$$
and the latter is at most $(1/2)(1 - e^{-2\sigma^{-2}})$ if $W_3$ is taken sufficiently large and $N \geq W_3$. Combining the above estimates, we conclude (\ref{away}).
\end{proof}

\begin{lemma}\label{MOCLemma}[Lemma \ref{MOCLemmaS4}]  Let $\ell$ have distribution $\mathbb{P}^{0,T,0,y}_{H^{RW}}$, with $H^{RW}$ satisfying the assumptions in Definition \ref{AssHR}. Let $M > 0$ and $p \in \mathbb{R}$ be given. For each positive $\epsilon$ and $\eta$, there exist a $\delta > 0$ and $W_4 = W_4(M, p, \epsilon, \eta) \in \mathbb{N}$, such that  for $T \geq W_4$ and $|y - pT| \leq MT^{1/2}$ we have
\begin{equation}\label{MOCeq}
\mathbb{P}^{0,T,0,y}_{H^{RW}}\Big( w\big({f^\ell},\delta\big) \geq \epsilon \Big) \leq \eta,
\end{equation}
where $f^\ell(u) = T^{-1/2}\big(\ell(uT) - puT\big)$  for $u \in [0,1]$.
\end{lemma}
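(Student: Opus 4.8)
\textbf{Proof plan for Lemma \ref{MOCLemmaS4}.}

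The plan is to reduce the modulus-of-continuity estimate for the random walk bridge $\ell$ to the corresponding (classical, well-understood) estimate for a Brownian bridge, using the strong coupling of Proposition \ref{KMT}. First I would use the monotone coupling Lemma \ref{MonCoup}: the event $\{w(f^\ell,\delta)\geq\epsilon\}$ is \emph{not} itself monotone in $\ell$, so one cannot directly reduce to extreme endpoints as in the previous four lemmas. Instead I would exploit that $\mathbb{P}^{0,T,0,y}_{H^{RW}}$ is the law of a genuine random walk bridge and simply invoke Proposition \ref{KMT} with the given $T$ and the endpoint $z = y$ satisfying $|y-pT|\leq MT^{1/2}$. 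This produces, on a common probability space $\mathbb{P}$, a Brownian bridge $B^\sigma$ with $\sigma^2 = \sigma_p^2$ and a curve $\ell^{(T,y)}$ of law $\mathbb{P}^{0,T,0,y}_{H^{RW}}$ such that $\Delta(T,y) = \sup_{0\le t\le T}\big|\sqrt{T}B^\sigma_{t/T} + \tfrac{t}{T}y - \ell^{(T,y)}(t)\big|$ has a finite exponential moment, and by \eqref{ChebyshevIneq}, $\mathbb{P}(\Delta(T,y) > T^{1/4}) \leq C e^{\tilde\alpha(\log T)^2}e^{M^2}e^{-aT^{1/4}}$, which is $\leq \eta/3$ once $T \geq W_4$ for $W_4$ large enough depending on $M, p, \eta$.

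Next I would compare the rescaled curves. Writing $f^\ell(u) = T^{-1/2}(\ell^{(T,y)}(uT) - puT)$ and introducing the rescaled Brownian bridge path $g(u) := B^\sigma_u + u\,T^{-1/2}(y - pT)$ for $u\in[0,1]$, one checks directly that $T^{-1/2}\big(\sqrt T B^\sigma_{u} + u y - puT\big) = g(u)$ so that $\sup_{u\in[0,1]}|f^\ell(u) - g(u)| \leq T^{-1/2}\Delta(T,y)$. On the event $\{\Delta(T,y)\leq T^{1/4}\}$ this sup is at most $T^{-1/4}$, hence $w(f^\ell,\delta) \leq w(g,\delta) + 2T^{-1/4}$ for every $\delta$. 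Taking $T$ large enough (again absorbing into $W_4$) that $2T^{-1/4} \leq \epsilon/2$, on this event $\{w(f^\ell,\delta)\geq\epsilon\}$ forces $w(g,\delta)\geq \epsilon/2$. Therefore
\begin{equation*}
\mathbb{P}^{0,T,0,y}_{H^{RW}}\big(w(f^\ell,\delta)\geq\epsilon\big) \leq \mathbb{P}\big(w(g,\delta)\geq\epsilon/2\big) + \mathbb{P}\big(\Delta(T,y)>T^{1/4}\big).
\end{equation*}

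It remains to bound $\mathbb{P}(w(g,\delta)\geq\epsilon/2)$. Here $g$ is a Brownian bridge $B^\sigma$ plus the deterministic affine function $u\mapsto u\,T^{-1/2}(y-pT)$, whose slope is bounded in absolute value by $M$. Thus $w(g,\delta) \leq w(B^\sigma,\delta) + M\delta$, and choosing $\delta$ small enough that $M\delta \leq \epsilon/4$, we get $\mathbb{P}(w(g,\delta)\geq\epsilon/2) \leq \mathbb{P}(w(B^\sigma,\delta)\geq\epsilon/4)$. The modulus of continuity of a Brownian bridge with fixed variance $\sigma_p^2$ satisfies the standard estimate: for any $\epsilon',\eta'>0$ there is $\delta>0$ with $\mathbb{P}(w(B^{\sigma_p},\delta)\geq\epsilon') \leq \eta'$ (this follows from Brownian sample-path continuity, e.g.\ via the reflection principle or \cite[Theorem 7.3]{Bill} applied to the Wiener measure, and the bridge is absolutely continuous with respect to Brownian motion on $[0,1-\delta]$ for $\delta$ small, or one simply uses tightness of Brownian bridge). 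Applying this with $\epsilon' = \epsilon/4$ and $\eta' = \eta/3$ fixes $\delta$ (possibly shrinking it further to also satisfy $M\delta\le\epsilon/4$), and combining the three pieces gives the bound $\eta$.

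The main obstacle, and the only genuinely non-routine point, is that the event in question is not monotone, so Lemma \ref{MonCoup} cannot be used to pin down the endpoints as in Lemmas \ref{LemmaHalfS4}--\ref{LemmaAwayS4}; one must instead work with the strong coupling directly for the specified endpoint $y$ and carry the $|y-pT|\le MT^{1/2}$ control through the exponential-moment bound \eqref{ChebyshevIneq}. Everything else is bookkeeping: tracking how the $T^{-1/2}$ rescaling interacts with $\Delta(T,y)$, and invoking the classical Brownian-bridge modulus-of-continuity estimate.
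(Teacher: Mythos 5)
Your proposal is correct and follows essentially the same route as the paper: invoke the strong coupling of Proposition \ref{KMT} directly at the prescribed endpoint $y$, control the coupling error $\Delta(T,y)$ via \eqref{ChebyshevIneq}, split the modulus of continuity into a Brownian bridge piece, an affine drift piece of slope $\leq M$, and the coupling error, and bound each separately. Your observation that $\{w(f^\ell,\delta)\geq\epsilon\}$ is not a monotone event, so Lemma \ref{MonCoup} cannot be used to pin down endpoints as in the preceding four lemmas, correctly explains why this proof proceeds differently, and matches what the paper implicitly does.
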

\begin{proof}
 The strategy is to use the strong coupling between $\ell$ and a Brownian bridge afforded by Proposition \ref{KMT}. This will allow us to argue that with high probability the modulus of continuity of $f^\ell$ is close to that of a Brownian bridge, and since the latter is continuous a.s., this will lead to the desired statement of the lemma. We now turn to providing the necessary details.

Let $\epsilon, \eta > 0$ be given and fix $\delta \in (0,1)$, which will be determined later. Suppose we have the same coupling as in Proposition \ref{KMT} and let $\mathbb{P}$ denote the probability measure on the space afforded by that proposition. Then, we have
\begin{equation}\label{MOC1}
\mathbb{P}^{0,T,0,z}_{H^{RW}}\Big( w\big({f^\ell},\delta\big) \geq \epsilon \Big) = \mathbb{P}\Big(w\big({f^{\ell^{(T,z)}}} ,\delta\big) \geq \epsilon \Big).
\end{equation}
By definition, we have
\begin{equation*}
w\big({f^{\ell^{(N,z)}}} ,\delta\big) = T^{-1/2} \sup_{\substack{x,y \in [0,1]\\ |x-y| \leq \delta}} \Big|\ell^{(T,z)}(xT) - pxT- \ell^{(T,z)}(yT) + pyT \Big|.
\end{equation*}
From Proposition \ref{KMT} and  the above, we conclude that
\begin{equation}\label{MOC2}
w\big({f^{\ell^{(T,z)}}} ,\delta\big) \leq T^{-1/2} \sup_{\substack{x,y \in [0,1]\\ |x-y| \leq \delta}} \Big|T^{1/2}B^\sigma_{x}  -T^{1/2}B^\sigma_{y} +(z- pT)(x-y)  \Big| +2 T^{-1/2}\Delta(T,z) .
\end{equation}
From (\ref{MOC1}), (\ref{MOC2}), the triangle inequality and the assumption $|z - pT| \leq MT^{1/2}$, we see that
\begin{equation}\label{MOC3}
\mathbb{P}^{0,T,0,z}_{H^{RW}}\Big( w\big({f^\ell},\delta\big) \geq \epsilon \Big) \leq  \mathbb{P}\Big(w\big({B^\sigma},\delta\big) + \delta M +2 T^{-1/2}\Delta(T,z) \geq \epsilon \Big).
\end{equation}
If $(I) = \mathbb{P}\Big(w\big({B^\sigma},\delta\big) \geq \epsilon /3\Big)$ , $(II) =  \mathbb{P}\big(\delta M  \geq \epsilon /3\big) $ and $(III) = \mathbb{P}\big(2 T^{-1/2}\Delta(T,z)  \geq \epsilon /3 \big) $ we have
$$\mathbb{P}\Big(w\big({B^\sigma},\delta\big) + \delta M +2 T^{-1/2}\Delta(T,z) \geq \epsilon \Big) \leq (I) + (II) + (III).$$
By (\ref{ChebyshevIneq}), we have
$$\mathbb{P}\big(\Delta(T,z) > T^{1/4}\big) \leq Ce^{\tilde{\alpha}(\log T)^2} e^{M^2} e^{-a T^{1/4}}.$$
Consequently, if we pick $W_4$ sufficiently large and $T \geq W_4$, we can ensure that $2T^{-1/4} < \epsilon/3$ and $ Ce^{\tilde{\alpha}(\log T)^2} e^{M^2} e^{-a T^{1/4}} < \eta/3$, which would imply $(III) \leq \eta /3$.

Since $B^{\sigma}$ is a.s. continuous, we know that $w({B^\sigma},\delta) $ goes to $0$ as $\delta$ goes to $0$, hence we can find $\delta_0$ sufficiently small so that if $\delta < \delta_0$, we have $(I) < \eta/3$. Finally, if $\delta M < \epsilon /3$ then $(II) = 0$. Combining all the above estimates with (\ref{MOC3}), we see that for $\delta$ sufficiently small, $W_4$ sufficiently large and $T \geq W_4$, we have $\mathbb{P}^{0,T,0,z}_{H^{RW}}\left( w\big(f^\ell,\delta\big) \geq \epsilon \right) \leq (2/3) \eta < \eta$, as desired.

\end{proof}

\begin{lemma}\label{NoExplode}[Lemma \ref{NoExplodeS4}] Let $H$ be as in Definition \ref{Pfree}, and suppose it is convex,  increasing and  $\lim_{x \rightarrow \infty} H(x) = \infty$. For such a choice of $H$, we let $\ell$ have law $\mathbb{P}_{H,H^{RW}}^{0, 2T, x ,y, \vec{z}}$ as in Section \ref{Section4.2}, where $H^{RW}$ satisfies the assumptions in Definition \ref{AssHR}. Let $M, \epsilon > 0$ and $p \in \mathbb{R}$ be given. Then, we can find a constant $W_5 = W_5(M,p,\epsilon) \in \mathbb{N}$, so that the following holds. If $T \geq W_5$, $\vec{z} \in [-\infty, \infty)^{2T+1}$ with $z_{ T+1} \geq pT + 2MT^{1/2}$ and $x,y \in \mathbb{R}$ with $x \geq -MT^{1/2}$ and $y \geq  - MT^{1/2} + 2pT$, then we have
\begin{equation}\label{NE1v2}
\mathbb{P}_{H,H^{RW}}^{0,2T,x,y, \vec{z}} \big( \ell(T) \leq pT+ MT^{1/2} \big) \leq \epsilon.
\end{equation}
\end{lemma}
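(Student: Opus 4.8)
The key structural observation is that the event $\{\ell(T) \le pT + MT^{1/2}\}$ is \emph{monotone decreasing} in the boundary data $(x, y, \vec z)$: if we lower $x$, $y$, or any entry of $\vec z$, then the coupled copy of $\ell$ decreases pointwise (by Lemma \ref{MonCoup}, part II, which applies since $H, H^{RW}$ are convex and $H$ is increasing), so the event becomes more likely. Consequently it suffices to prove \eqref{NE1v2} for the \emph{extreme admissible} boundary data, namely $x = -MT^{1/2}$, $y = -MT^{1/2} + 2pT$, and $\vec z$ replaced by the vector $\vec w$ all of whose entries equal $-\infty$ except $w_{T+1} = pT + 2MT^{1/2}$. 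This choice has two virtues simultaneously: the endpoints are as low as allowed, and $\vec w$ has $-\infty$ in every coordinate except the middle one, which will make the measure essentially a random walk bridge penalized by a single interaction term. So the first step is this reduction to extreme data.

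\textbf{Second step: rewrite the penalized bridge measure.} With $\vec z = \vec w$ as above, the density of $\mathbb{P}_{H,H^{RW}}^{0,2T,x,y,\vec w}$ in \eqref{S24RWB} collapses: since $H(-\infty) = 0$, all interaction factors $e^{-H(w_{i+1} - y_i)}$ are $1$ except the one corresponding to $i = T$, namely $e^{-H(w_{T+1} - y_T)} = e^{-H(pT + 2MT^{1/2} - \ell(T))}$. Thus $\mathbb{P}_{H,H^{RW}}^{0,2T,x,y,\vec w}$ is precisely the law of an $H^{RW}$ random walk bridge $S(x,y;0,2T)$ from $(0,x)$ to $(2T,y)$, reweighted by the single factor $e^{-H(pT + 2MT^{1/2} - S_T)} / \mathcal Z$, where $\mathcal Z = \mathbb{E}^{0,2T,x,y}_{H^{RW}}[e^{-H(pT + 2MT^{1/2} - S_T)}]$. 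In particular, for any Borel set $A \subset \mathbb{R}$,
\begin{equation*}
\mathbb{P}_{H,H^{RW}}^{0,2T,x,y,\vec w}(\ell(T) \in A) = \frac{\mathbb{E}^{0,2T,x,y}_{H^{RW}}\big[ {\bf 1}\{S_T \in A\} \, e^{-H(pT + 2MT^{1/2} - S_T)} \big]}{\mathbb{E}^{0,2T,x,y}_{H^{RW}}\big[ e^{-H(pT + 2MT^{1/2} - S_T)} \big]}.
\end{equation*}
On the event $\{S_T \le pT + MT^{1/2}\}$ we have $pT + 2MT^{1/2} - S_T \ge MT^{1/2}$, so the weight is at most $e^{-H(MT^{1/2})}$, which tends to $0$ as $T \to \infty$ since $H(x) \to \infty$. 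Hence the numerator is bounded by $e^{-H(MT^{1/2})}$. It remains to bound the denominator $\mathcal Z$ from below by a constant independent of $T$ (for large $T$); then \eqref{NE1v2} follows by taking $W_5$ large enough that $e^{-H(MT^{1/2})} / c_0 \le \epsilon$ for a uniform constant $c_0 > 0$.

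\textbf{Third step: lower bound on $\mathcal Z$.} It suffices to find a fixed lower bound on $\mathbb{P}^{0,2T,x,y}_{H^{RW}}(S_T \ge pT + 2MT^{1/2} + 1)$, since on that event the weight is at least $e^{-H(-1)}$, a positive constant. This is exactly the kind of statement governed by the random walk bridge estimates of Section \ref{Section4.3}: with the extreme endpoints $x = -MT^{1/2}$ and $y = 2pT - MT^{1/2}$, the bridge $S$ over $[0,2T]$ has well-behaved (not-too-low) endpoints relative to the line of slope $p$, and we want its midpoint value $S_T$ to land a fixed multiple of $(2T)^{1/2}$ above $pT$. This is precisely Lemma \ref{LemmaTailS4} (applied with $T \mapsto 2T$, $\rho = 0$, $M_1$ comparable to $M/\sqrt2$, and $M_2$ chosen large enough that the target height $pT + 2MT^{1/2} + 1$ is at most $\tfrac12(M_2 (2T)^{1/2} + p \cdot 2T) - (2T)^{1/4}$ for large $T$), which yields a bound of the form $\tfrac12(1 - \Phi^v(M_1 + M_2)) > 0$ uniformly in large $T$. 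The main obstacle, such as it is, is purely bookkeeping: verifying that after the shift-invariance and rescaling in Lemma \ref{LemmaTailS4} the constants line up so that the height $pT + 2MT^{1/2} + 1$ really is below the threshold in \eqref{halfEq2S4} — this requires nothing more than choosing $M_2$ as a fixed (large) function of $M$ and absorbing the $+1$ and $(2T)^{1/4}$ corrections for $T$ large. Assembling the three steps: reduce to extreme data, bound numerator by $e^{-H(MT^{1/2})} \to 0$, bound denominator below by $e^{-H(-1)} \cdot \tfrac12(1 - \Phi^v(M_1+M_2)) =: c_0 > 0$, and pick $W_5$ so that $e^{-H(MT^{1/2})} \le c_0 \epsilon$ for $T \ge W_5$.
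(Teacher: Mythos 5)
Your proof is correct and takes essentially the same approach as the paper's: reduce to extreme boundary data via Lemma \ref{MonCoup}, express the event probability as a ratio of $H^{RW}$-bridge expectations with a single interaction weight at $T$, bound the numerator by $e^{-H(MT^{1/2})}$ using monotonicity of $H$, and show the denominator is bounded below by a fixed positive constant for $T$ large. The only variation is in the last step: the paper lower-bounds the normalizing constant by applying the KMT strong coupling (Proposition \ref{KMT}) directly, whereas you invoke the already-packaged Lemma \ref{LemmaTailS4} — both are valid and amount to the same estimate.
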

\begin{proof}
Let $\epsilon , M > 0 $ and $p \in \mathbb{R}$ be given. Notice that if $\ell \leq \tilde{\ell}$ (meaning $\ell(i) \leq \tilde{\ell}(i)$ for $i \in \llbracket 0, 2T\rrbracket $), then ${\bf 1}\{ \ell(T) \leq pT+ MT^{1/2} \} \geq {\bf 1} \{\tilde{\ell}(T) \leq pT+ MT^{1/2}  \}$, which means in view of Lemma \ref{MonCoup} that it suffices to prove (\ref{NE1v2}) when $x = -MT^{1/2}$, $y = -MT^{1/2} + 2pT$ and $z_{T+1} = pT + 2MT^{1/2}$ and $z_i = -\infty$ for all $i \neq T+1$, which we assume in the sequel.

 One can rewrite (\ref{NE1v2}) as
\begin{equation}\label{NEe2}
\mathbb{E}_{H^{RW}}^{0,2T,x,y} \Big[  {\bf 1} \big\{ \ell(T) \leq pT+ M T^{1/2}\big\}  e^{-H( pT + 2MT^{1/2} - \ell(T))} \Big] \leq \epsilon  \mathbb{E}_{H^{RW}}^{0,2T,x,y} \Big[ e^{-H( pT + 2MT^{1/2}- \ell(T))} \Big].
\end{equation}
Using that $H$ is monotonically increasing, we see that
$$\mathbb{E}_{H^{RW}}^{0,2T,x,y} \Big[  {\bf 1} \big\{ \ell(T) \leq pT+ M T^{1/2}\big\} \cdot e^{-H( pT + 2MT^{1/2} - \ell(T))} \Big]  \leq  e^{-H(  MT^{1/2} )}. $$
And so it suffices to show that
\begin{equation}\label{NEe3}
\epsilon^{-1} \leq   \mathbb{E}_{H^{RW}}^{0,2T,x,y} \Big[ e^{H(  MT^{1/2} ) -H( pT + 2MT^{1/2}- \ell(T))} \Big].
\end{equation}

Suppose we have the same coupling as in Proposition \ref{KMT} and let $\mathbb{P}$ denote the probability measure on the space afforded by that proposition. Setting $z = 2pT$, we have
\begin{align*}
&\mathbb{E}_{H^{RW}}^{0,2T,x,y} \Big[  e^{H(  MT^{1/2} ) -H( pT + 2MT^{1/2}- \ell(T))} \Big]  = \mathbb{E}_{\mathbb{P}} \Big[   e^{H(MT^{1/2} ) -H( pT + 3MT^{1/2} - \ell^{(2T,z)}(T))} \Big] \geq \\
&  e^{H(MT^{1/2})}  \mathbb{E}_{\mathbb{P}} \Big[   e^{-H( pT + 3MT^{1/2} - \ell^{(2T,z)}(T))} {\bf 1} \big\{ \sqrt{2T}B^{\sigma}_{1/2} \geq (3M + 1) T^{1/2}\big\} \cdot {\bf 1} \big\{ \Delta(2T,z) \leq T^{1/2}\big\}  \Big] \geq\\
&e^{H(  M T^{1/2} ) -H( 0)} \cdot \mathbb{E}_{\mathbb{P}} \Big[  {\bf 1} \big\{ \sqrt{2T}B^{\sigma}_{1/2} \geq (3M + 1) T^{1/2}\big\} \cdot {\bf 1} \big\{ \Delta(2T,z) \leq T^{1/2}\big\}   \Big],
\end{align*}
where in the last inequality we used that $H$ is increasing. Note that the factor $3MT^{1/2}$ is introduced above because $\ell(T)$ and $\ell^{(2T,z)}(T) - MT^{1/2}$ are equal in law by the formulation of $\ell^{(2T,z)}$ in Proposition \ref{KMT}.
We now observe that
$$\mathbb{E}_{\mathbb{P}} \Big[  {\bf 1} \big\{ \sqrt{2T}B^{\sigma}_{1/2} \geq (3M + 1) T^{1/2}\big\} \cdot {\bf 1} \big\{ \Delta(2T,z) \leq T^{1/2}\big\}   \Big] \geq \big(1 - \Phi^v(3M+1)\big) - \mathbb{P}\big(\Delta(2T,z) > T^{1/2} \big),$$
where $\Phi^v$ is the cumulative distribution function of a mean $0$ variance $v = \sigma_p^2/2$ Gaussian variable. Consequently, by (\ref{ChebyshevIneq}), we know that we can choose $W_5$ sufficiently large so that if $T \geq W_5$
$$ \big(1 - \Phi^v(3M+1)\big) - \mathbb{P}\big(\Delta(2T,z) > T^{1/2} \big) \geq (1/2)  \big(1 - \Phi^v(3M+1)\big).$$
 Combining all of the above inequalities, we conclude that if $T \geq W_5$ then
$$ \mathbb{E}_{H^{RW}}^{0,2T,x,y} \Big[ e^{H(  MT^{1/2} ) -H( pT + 2MT^{1/2}- \ell(T))} \Big] \geq e^{H(  M T^{1/2} ) -H( 0)}  \cdot (1/2) \cdot \big(1 - \Phi^v(3M+1)\big).$$
Finally, by possibly making $W_5$ bigger we see that the above implies (\ref{NEe3}) as we can make $ e^{H(  M T^{1/2} ) -H( 0)} $ arbitrarily large in view of our assumption that $\lim_{x \rightarrow \infty} H(x) = \infty$.
\end{proof}

%
\section{KPZ scaling theory for the log-gamma polymer}
\label{appendixKPZscaling}

The KPZ universality conjecture posits that a wide range of stochastic interface models converge to a universal limit, the KPZ fixed point, under $3:2:1$ scaling of time, space and fluctuations. There are two non-universal, model-dependent coefficients in this conjecture. The physics work \cite{krug1992amplitude} provides a conjecture for how to compute these non-universal constants. In this appendix, we explain how, based on this theory, the free energy of the log-gamma polymer should be scaled to obtain the Airy$_2$ process \cite{prahofer2002scale}. We will follow the exposition of KPZ scaling theory from \cite{spohn2012kpz}.

For any interface model in the KPZ universality class, described by a height function $H(x,t)$, it is believed that there should exist deterministic model-dependent functions $h,\kappa$ and $d$, such that for fixed $r\in \mathbb{R}$,
\begin{equation}
x\mapsto \frac{H\left(r N+x \kappa(r) N^{2/3},N\right)+N h\left(r+ x N^{-1/3}  \kappa(r)\right) }{d(r) N^{1/3}},
\label{eq:kappad}
\end{equation}
converges as $N$ goes to infinity to a universal limit process (in $x$), depending only on the type of initial condition one starts with. For example, with narrow-wedge type initial data, this limit process should be the Airy$_2$ process. We will explain below how to compute $h,\kappa$ and $d$ for the log-gamma polymer (though the approach applies more generally). This will parallel the exposition in \cite{spohn2012kpz}. 

\medskip

From the partition function \eqref{PartitionFunct} of the log-gamma polymer with parameter $\theta$, we define the \emph{height function}
$$ H(n,N) = \log Z^{n,N},$$
where the coordinate $N$ will play the role of time in KPZ scaling theory, while the coordinate $n$ will play the role of space. We define the \emph{slope field} associated to $H$ as
$$ u(n,N) = H(n,N) - H(n-1,N).$$
An important input of KPZ scaling theory is a family of translation invariant and stationary distributions of the slope field, usually parameterized by the density of the slope field $\rho=\mathbb{E}[u(0,N)]$. A priori, the slope field $u(n,N)$ can take any value in $\mathbb{R}$, but from the definition of the log-gamma polymer model, $\mathbb E[u(n,N)]>\mathbb E[\log d_{n,N}]=-\Psi(\theta)$, where $d_{n,N}$ is the Boltzmann weight at location $(n,N)$ as in Section \ref{sec:introlog}. Hence, we will consider densities $\rho$ in the range $(-\Psi(\theta), \infty)$. In the case of the log-gamma polymer, such stationary measures were discovered in \cite[Theorem 3.3]{Sep12} and we will denote them by $\mu_{\rho}$ for any $\rho \in (-\Psi(\theta), \infty)$. For the log-gamma polymer, these measures are more conveniently parameterized by a real number $\alpha\in (0, \theta)$ such that, under $\mu_{\rho}$, the slope field is i.i.d as $n$ varies, and distributed as $-\log G(\alpha)$ where $G(\alpha)$ is a random variable following the gamma distribution with shape parameter $\alpha$. Hence, the density $\rho$ is related to the parameter $\alpha$ via
\begin{equation}
\rho=-\Psi(\alpha), \;\;\;\alpha\in (0, \theta), \;\;\rho\in (-\Psi(\theta), \infty).
\label{eq:relationrhoalpha}
\end{equation}
Partial theoretical justifications for the validity of the KPZ scaling theory are based on the uniqueness of spatially ergodic stationary measures of the slope field (see \cite{JanRas} and \cite{spohn2012kpz}), but we will not delve into that. We will simply assume that necessary hypotheses hold and compute the constants.

We introduce the \emph{instantaneous current} $j(\rho)$, which equals the average increment of the height function per unit of time under the stationary slope field $\mu_{\rho}$. In the case of the log-gamma polymer,  it was shown in \cite[Theorem 3.3]{Sep12} that if for any $N$, the time $N$ slope field is distributed as $\mu_{\rho}$, then for each $n$, $H(n,N+1)-H(n,N)$ is distributed as $-\log G(\theta-\alpha)$ where $G(\theta-\alpha)$ is a random variable following the gamma distribution with shape parameter $\theta-\alpha$. This implies that
\begin{equation} \label{eq:defj}
j(\rho) = -\Psi(\theta-\alpha),
\end{equation}
where we recall that $\alpha\in (0,\theta)$ is bijectively related to $\rho\in (-\Psi(\theta), \infty)$ via \eqref{eq:relationrhoalpha}.
From this formula, the KPZ scaling theory (see \cite{spohn2012kpz}) produces the law of large numbers via the Legendre transform of the function  $-j(\rho)$:
\begin{equation}
\lim_{N\to\infty} \frac{H(r N,N)}{N} =  \inf_{\rho\in (-\Psi(\theta), \infty)} \lbrace r\rho + j(\rho) \rbrace =  - \sup_{\alpha\in (0, \theta)} \lbrace  r \Psi(\alpha) + \Psi(\theta-\alpha) \rbrace.
\label{eq:Legendre}
\end{equation}
The supremum is attained for $\alpha$ solving
\begin{equation}
r=g_{\theta}(\alpha) \quad\textrm{where}\quad g_{\theta}(\alpha) = \Psi'(\theta-\alpha)/\Psi'(\alpha)
\label{eq:rgrelation}
\end{equation}  as in \eqref{DefLittleg}. Thus, we define the function (as in \eqref{HDefLLN})
\begin{equation}
h_{\theta}(x)  =  \lim_{N\to\infty} \frac{-H(x N,N)}{N}= x \Psi(g_{\theta}^{-1}(x)) + \Psi(\theta- g_{\theta}^{-1}(x)).
\label{eq:defhtheta}
\end{equation}
This law of large numbers was rigorously proved for the log-gamma polymer \cite[Eq. (2.7)]{Sep12}.

\medskip

We now introduce two functions $\lambda$ and $A$ through which the $\kappa$ and $d$ in \eqref{eq:kappad} are defined. We first define $\lambda(\rho) = j''(\rho)$, and assume (as will be the case for the log-gamma polymer) that $\lambda(\rho)\neq 0$. We can think of $\rho$ as a function of $r$ by combining \eqref{eq:relationrhoalpha} and \eqref{eq:rgrelation}; $\rho$ takes on the meaning of the local density of the slope field around the location $(n,N)$ with $n=rN$, $N\to \infty$. Note that this local density is precisely the minimizer in \eqref{eq:Legendre}, this was rigorously proved in \cite[Theorem 4.1]{georgiou2013ratios}.

The function $A(\rho)$ is defined as the \emph{integrated covariance of the slope field} by
\begin{equation}
A(\rho) =  \left(\sum_{j\in \mathbb Z} \mathbb E_{\mu_{\rho}} \left[  u(0;0)u(j;0)\right]  - \rho^2 \right).
\label{eq:defA}
\end{equation}
In the case of the log-gamma polymer, the stationary slope field is i.i.d. and distributed as $-\log G(\alpha)$ at each point, so that the integrated covariance of the slope field is simply the variance at one point, that is
$A(\rho)=\Psi'(\alpha)$ where $\alpha$ is related to $\rho$ from \eqref{eq:relationrhoalpha}.

As described above, we will parameterize $\lambda$ and $A$ by the slope $r$ rather than the density $\rho$ and we will denote them by $\lambda_{\theta}(r)$ and  $A_{\theta}(r)$. Hence, we set
$$ A_{\theta}(r) = \Psi'(g_\theta^{-1}(r)).$$
In order to determine  $\lambda_{\theta}(r)$, we will compute the second derivative of $j(\rho)$ from \eqref{eq:defj} in terms of $\alpha=g_{\theta}^{-1}(r)$, so that 
$$ \lambda_{\theta}(r) =  \frac{1}{\partial_{\alpha} \rho} \partial_{\alpha} \left( \frac{\partial_{\alpha} j}{\partial_{\alpha} \rho}  \right) = \frac{\Psi''(\theta-\alpha)\Psi'(\alpha)+\Psi'(\theta-\alpha)\Psi''(\alpha)}{-\Psi'(\alpha)^3} = \frac{g_{\theta}'(\alpha)}{\Psi'(\alpha)}$$ 
Using that $h_\theta'(r) = \Psi(g_{\theta}^{-1}(r))$, we find that 
$$ \lambda_{\theta}(r)= \frac{1}{h_\theta''(r)}.$$
Note that the fact that $\lambda_{\theta}$ is the inverse of $h_{\theta}''$ could also be deduced from the fact that the functions $-h_{\theta}(x)$ and $-j(\rho)$ are Legendre transforms of each other.

Following \cite{spohn2012kpz}, from the $\lambda_\theta$ and $A_\theta$ as above, we define 
\begin{equation}
d_{\theta}(r) = \left( \frac{1}{2} \lambda_{\theta}(r) A_{\theta}(r)^2 \right)^{1/3}\qquad \textrm{and}\qquad \kappa_{\theta}(r) = \left( 2 \lambda_{\theta}(r)^2 A_{\theta}(r) \right)^{1/3}
\end{equation} 
(this is equivalent to the definitions from \eqref{Defdvariant} and \eqref{DefKappa}). (We note that there is a misprint in the arxiv version of \cite{spohn2012kpz} in the definition of the spatial scaling $\left(2 \lambda^2 A t^2 \right)^{1/3}$, and this was corrected in the published version of the paper \cite{spohn2012kpz}.) The KPZ scaling theory predicts that for any fixed $r\in \mathbb R$, the function in \eqref{eq:kappad} converges (as $N\to\infty$) to the Airy$_2$ process. Using the expressions for $d_{\theta}$, $\kappa_{\theta}$ and Taylor expansion of the function $h_{\theta}$, we see that the function \eqref{eq:kappad} is the same as the function $x\mapsto 2^{1/2}\tilde{f}_N^{LG}(x)  + x^2$  up to an $O(N^{-1/3})$ deterministic error,  where $\tilde{f}_N^{LG}$ is as in Definition \ref{DefFLG2}. In particular, we see that the KPZ scaling theory predicts that $\tilde{f}_N^{LG}$ converges to $2^{-1/2}(\mathcal{A}(x) - x^2)$, where $\mathcal{A}$ is the Airy$_2$ process, as claimed in Conjecture \ref{Conj}.

\bibliographystyle{alpha} 
\bibliography{PD}

\end{document}